\documentclass[11pt,reqno]{amsart}
\usepackage{fullpage}

\usepackage{amsfonts,amssymb,amsmath}
\usepackage{mathrsfs}
\usepackage{dsfont}
\usepackage{mathtools}
\usepackage{amsthm}
\usepackage{mathtools}
\usepackage{leftindex,tensor}
\usepackage[OT1]{fontenc}
\usepackage{MnSymbol}
\usepackage{manfnt}
\usepackage[all,cmtip]{xy}
\usepackage{braket}

\usepackage{tikz-cd}
\tikzset{%
   symbol/.style={%
        draw=none,
        every to/.append style={%
            edge node={node [sloped, allow upside down, auto=false]{$#1$}}}
    }
}

\usepackage[pagebackref,hyperindex,linktocpage=true]{hyperref}
\hypersetup{
    colorlinks,
    linkcolor={red!50!black},
    citecolor={blue!50!black},
    urlcolor={blue!80!black}
}

\newtheorem{theo}{Theorem}[section]
\newtheorem{theo-def}{Theorem-Definition}[section]
\newtheorem{lem}[theo]{Lemma}
\newtheorem{lem-def}[theo]{Lemma-Definition}
\newtheorem{prop}[theo]{Proposition}
\newtheorem{cor}[theo]{Corollary}
\newtheorem*{claim}{Claim}

\theoremstyle{definition}
\newtheorem{defi}[theo]{Definition}
\newtheorem{ex}[theo]{Example}

\numberwithin{equation}{section}

\def\a{\alpha}

\def\L{\mathcal{L}}

\def\k{\mathds{k}}

\def\L{\mathcal{L}}

\def\O{\mathcal{O}}
\def\A{\mathcal{A}}

\def\NN{\mathbb{N}}
\def\ZZ{\mathbb{Z}}

\def\QQ{\mathbb{Q}}

\def\bfB{\mathbf{B}}
\def\bff{\mathbf{f}}
\def\bfO{\mathbf{O}}
\def\bfU{\mathbf{U}}

\def\ab{\mathrm{ab}}
\def\ad{\mathrm{ad}}

\def\der{\mathrm{der}}
\def\mod{\mathrm{mod}}

\def\pos{\mathrm{pos}}
\def\la{\lambda}
\def\La{\Lambda}

\DeclareMathOperator{\gr}{gr}
\DeclareMathOperator{\Ind}{Ind}

\DeclareMathOperator{\Spec}{Spec}
\DeclareMathOperator{\Supp}{Supp}
\DeclareMathOperator{\Hom}{Hom}
\DeclareMathOperator{\Rep}{Rep}

\newcommand{\epic}{\twoheadrightarrow}
\newcommand{\into}{\hookrightarrow}

\title{Quantized integral models of reductive monoids}

\author{Jingren Chi}
\email{jrenchi@amss.ac.cn}
\address{Morningside Center of Mathematics and State Key Laboratory of Mathematical Sciences, Academy of Mathematics and Systems Science, Chinese
Academy of Sciences, Beijing 100190, China;}

\author{Simon Jacques}
\email{simon.jacques@amss.ac.cn}
\address{State Key Laboratory of Mathematical Sciences, Academy of Mathematics and Systems Science, Chinese
Academy of Sciences, Beijing 100190, China;}

\date{}

\begin{document}
\begin{abstract}
    We construct quantized integral models of reductive monoids and establish their finite-generation properties. We also describe the quantized coordinate rings of the basic affine spaces explicitly in terms of Lusztig's dual canonical bases. The results will be used in the companion paper \cite{CJ2} to establish a classification theorem for split reductive monoids over arbitrary connected base schemes. 
\end{abstract}
\maketitle

\tableofcontents

\section{Introduction}
\subsection{Motivation}
The construction of Chevalley group schemes as integral models of split reductive groups is an important step towards the classification theorem for reductive group schemes in \cite{SGA3-3new}. There have been at least three approaches to this problem: Demazure's approach in [\emph{loc. cit.}, Expos\'e XXV], Kostant's approach in \cite{Kostant-Z} using an integral form of the universal enveloping algebra, and more recently, Lusztig's construction in \cite{LusztigGroupScheme} using his theory of the (dual) canonical bases \cite{Lusztig-IntroQuantumGroup}. Since the canonical bases are ``quantum" in nature, Lusztig actually constructs quantizations of the integral coordinate rings, i.e. certain (non-commutative) $\ZZ[v,v^{-1}]$-algebras that specialize to the integral coordinate rings of reductive group schemes when $v\mapsto1$. \par 
The major goal of this paper is to construct and study certain subalgebras of Lusztig's quantized coordinate rings, which will provide the integral models of reductive monoids in the companion paper \cite{CJ2}. One important property we will establish is that these are finitely generated $\ZZ[v,v^{-1}]$-algebras. Then after specialization, the classical coordinate algebras are finitely generated over the noetherian ring $\ZZ$, hence finitely presented. Along the way, we also give explicit descriptions of the quantized invariant subring under the action of the maximal unipotent subgroup. Besides being of independent interest, these will also provide technical ingredients in the proof of the uniqueness part of the classification theorem in \cite{CJ2}. 

\subsection{Main results}
Let $G$ be a pinned split reductive group scheme over $\ZZ$ with Borel subgroup $B$ and split maximal torus $T$ such that $T\subset B$. Let $X\coloneqq X^*(T)$ and $Y\coloneqq X_*(T)$ be respectively the weight and the coweight lattice. Let $X^+\subset X$ be the submonoid consisting of dominant weights (with respect to $B$). Then the positive roots (determined by $B$) define a partial order $\le$ on $X$.\par 
Let $\dot{\bfU}$ be Lusztig's modified quantized enveloping algebra over $\QQ(v)$ associated to the based root datum of $(G,B,T)$. Let $\dot{\bfB}$ be the canonical basis of $\dot{\bfU}$ and let \[\dot{\bfB}^*\subset\Hom_{\QQ(v)}(\dot{\bfU},\QQ(v))\] 
be the dual canonical basis.\par 
Let $\A\coloneqq\ZZ[v,v^{-1}]$ and let ${}_\A\bfO$ be the free $\A$-submodule of $\Hom_{\QQ(v)}(\dot{\bfU},\QQ(v))$ spanned by $\dot{\bfB}^*$. Then ${}_\A\bfO$ is a (non-commutative) Hopf $\A$-algebra and the specialization ${}_\ZZ\bfO\coloneqq{}_\A\bfO\otimes_\A\ZZ$ (where $v\mapsto 1$) is a commutative Hopf algebra and it is shown in \cite{LusztigGroupScheme} that the affine group scheme $\Spec({}_\ZZ\bfO)$ is isomorphic to $G$. 
The canonical basis has a partition into the so-called ``two-sided cells":
\[\dot{\bfB}=\bigsqcup_{\la\in X^+}\dot{\bfB}[\la]\]
where each $\dot{\bfB}[\la]$ is a finite subset. Let $\dot{\bfB}^*=\bigsqcup\limits_{\la\in X^+}\dot{\bfB}[\la]^*$ be the corresponding partition of the dual canonical basis, which we view as a replacement of the Peter-Weyl decomposition (available in characteristic $0$) over a general base scheme.\par 
For any subset $P\subset X^+$, we define ${}_\A\bfO(P)$ to be the $\A$-submodule of ${}_\A\bfO$ spanned by elements in the set $\bigsqcup\limits_{\la\in P}\dot{\bfB}[\la]^*$ and for any $\A$-algebra $R$ define ${}_R\bfO(P)\coloneqq{}_\A\bfO(P)\otimes_\A R$. 
We say that the set $P$ is downward closed if for any $\la\in P$ and $\mu\in X^+$ with $\mu\le\la$ we have $\mu\in P$. 
\begin{theo}[Corollary \ref{cor:grO-finite}]
    Suppose $P\subset X^+$ is a finitely generated submonoid that is downward closed. Then ${}_R\bfO(P)$ is a finitely generated $R$-subalgebra of ${}_R\bfO$. 
\end{theo}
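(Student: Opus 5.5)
The plan is to reduce finite generation of ${}_R\bfO(P)$ to finite generation of an associated graded algebra, as the reference to Corollary \ref{cor:grO-finite} suggests. First, ${}_\A\bfO(P)$ is indeed a subalgebra. Lusztig's results on the multiplication of dual canonical bases give the following: the product of an element of $\dot{\bfB}[\la]^*$ with one of $\dot{\bfB}[\mu]^*$ is an $\A$-combination of elements of $\dot{\bfB}[\nu]^*$ with $\nu\le\la+\mu$. Hence ${}_\A\bfO(P)$ is closed under multiplication whenever $P$ is a downward closed submonoid. The same estimate yields a filtration of ${}_\A\bfO(P)$. Choose a linear functional $\ell: X\to\QQ$ that is strictly positive on simple roots. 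Filter by $F_{\le c}=\bigoplus_{\ell(\la)\le c}{}_\A\bfO(\{\la\})$; the products respect this filtration. The associated graded algebra $\gr{}_\A\bfO(P)$ is then $X^+$-graded with pieces ${}_\A\bfO(\{\la\})$, $\la\in P$. I would refine the filtration to the partial order itself, i.e.\ use the quotients $F_{\le\la}/F_{<\la}$, so that the grading is genuinely by $P$.

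Second, identify the graded pieces. Each ${}_\A\bfO(\{\la\})$ should be $\A$-free and isomorphic to $\Lambda_\la^*\otimes{}^\omega\Lambda_\la^*$, the $\A$-form of $V(\la)^*\otimes V(\la)$, viewed as matrix coefficients. The product in the graded ring, which is the leading part of the product $\la\times\mu\to\la+\mu$, should be the Cartan-component projection $V(\la)^*\otimes V(\mu)^*\to V(\la+\mu)^*$ tensored with its counterpart on the other side. So $\gr{}_\A\bfO(P)$ becomes the $P$-graded part of $\bigoplus_{\la}{}_\A\bfO^{N^-}\otimes{}_\A\bfO^{N}$-type algebras. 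More precisely, it embeds as the diagonal (Segre-type) subalgebra $\bigoplus_{\la\in P}A_\la\otimes A'_\la$ of $A\otimes A'$, where $A=\bigoplus_{\la\in X^+}\Lambda_\la^*$ is the quantized coordinate ring of the basic affine space (its Cartan products are governed by dual canonical bases, as in the abstract). This is the "quantized Vinberg degeneration" picture.

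Third, prove finite generation of this graded algebra. The algebra $A$ is generated over $\A$ by the finitely many $\Lambda_{\varpi_i}^*$ together with invertible central-ish torus elements. I expect this from surjectivity of Cartan multiplication $\Lambda_\la^*\otimes\Lambda_\mu^*\to\Lambda_{\la+\mu}^*$ over $\A$, which follows since canonical basis elements map to canonical basis elements (Lusztig's $b\mapsto b\diamondsuit$ compatibility). Write $P$ as generated by $\la_1,\dots,\la_r$. Then $\bigoplus_{\la\in P}A_\la\otimes A'_\la$ is generated by the pieces of degree $\la_1,\dots,\la_r$: surjectivity gives $A_\la\otimes A'_\la$ as the image of $\bigotimes_j(A_{\la_j}\otimes A'_{\la_j})^{\otimes n_j}$ for $\la=\sum n_j\la_j$. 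Hence the graded algebra is generated by a finite free $\A$-module. Finally, lift the generators to ${}_\A\bfO(P)$. The standard induction on the filtration shows that a filtered algebra whose graded algebra is finitely generated is itself finitely generated. This requires the filtration to be well-founded, which holds since $\{\mu\in X^+:\mu\le\la\}$ is finite. Base change to $R$ preserves everything because all modules are free with bases compatible with the filtration.

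The main obstacle is the second step. One must show that the leading term of the product in ${}_\A\bfO$ is exactly the Cartan product on both sides, and that this Cartan product is \emph{surjective over $\A$}, not just over $\QQ(v)$. For the first I would use Lusztig's description of the cell $\dot{\bfB}[\la]$ via the based module $\Lambda_\la\otimes{}^\omega\Lambda_\la$. For integral surjectivity, I would try the fact that the canonical basis element $\eta_{\la+\mu}$-component of $b\diamondsuit_{\la}\otimes\eta_\mu$ maps onto the canonical basis of $\Lambda_{\la+\mu}$, dualized.
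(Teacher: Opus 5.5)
Your proposal is correct and follows essentially the same route as the paper. You filter by the dominance order, made well-founded (the paper does this with a strictly monotone $h\colon X^+\to\NN$). You identify $\gr({}_R\bfO(P))$ with the degree-zero (diagonal) subalgebra of ${}_R\bfO_{G/U}(P)\otimes_R{}_R\bfO_{U^-\backslash G}(P)$, as in Proposition \ref{prop:grO-decription}. You get generation of each factor in the generator degrees $\la_j$ from the $\A$-split embedding ${}_\A\La_{\la+\mu}\into{}_\A\La_\la\otimes_\A{}_\A\La_\mu$ given by \cite[27.1.7]{Lusztig-IntroQuantumGroup}, which is Lemma \ref{lem:h_c-identity}. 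Then you lift the generators.

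The only real difference is how you justify ``graded product $=$ Cartan product on both sides'': you use the action on Weyl modules, whereas the paper checks it through structure-constant identities and the ideal of Lemma \ref{lem:ideal-in-grO}. If you argue via Weyl modules, you must work with the $\A$-lattices spanned by $\dot{\bfB}_{0,\la}^*$ and $\sigma(\dot{\bfB}_{\la,0})^*$ (matched via $\beta_\la$). Your aside about generation by fundamental-weight pieces plus torus elements is not needed, and it is inaccurate for general $X$.
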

The basic strategy for the proof is as follows. First we filter ${}_R\bfO(P)$ using the dominance order and reduce finite generation to the corresponding statement for its associated graded algebra. Then we embed the associated graded algebra into the tensor product of two sub-algebras that are described explicitly by the dual canonical basis, for which the finite-generation properties will be more straightforward. It turns out that these subalgebras are precisely the invariants under (the quantization of) the groups $U$ and $U^-$, where $U$ (resp. $U^-$) is the unipotent radical of $B$ (resp. the opposite Borel $B^-$). Therefore we also get an explicit description of these invariant subalgebras in terms of the dual canonical basis, see Proposition \ref{prop:U-invariant} and Corollary \ref{cor:U-inv-O(P)}.\par
In the companion paper \cite{CJ2} we show that when $P$ satisfies some further properties (so that it is a \emph{weight monoid} in the sense of \emph{loc. cit.}), then $\Spec({}_\ZZ\bfO(P))$ is a reductive monoid scheme over $\ZZ$ with unit group $G$. This will prove the existence part of the classification theorem. The uniqueness part says that on a general connected base scheme, any reductive monoid with unit group $G$ is isomorphic to the base change of one of the integral models above. This uniqueness statement will rely on Proposition \ref{prop:U-invariant} and also a technical result that we prove in this paper using the theory of canonical bases, see Proposition \ref{prop:nilpotent-deformation}.\par  
Besides, let us mention some other results that will be used in \cite{CJ2} and may also be of independent interest: Lemma \ref{lem:B-dot-X-ab} proves a simple formula for the product of certain dual canonical basis elements and will be applies when studying very flact reductive monoids; Proposition \ref{prop:iota} and Lemma \ref{lem:B-dot-functorial} establish some basic functorial properties of the canonical bases for certain morphisms of root data, and will be used when studying the decomposition of reductive monoids into groups and monoids with zero; Theorem \ref{theo:O(L)-normal} proves normality of certain classical specialization of the algebras ${}_R\bfO(P)$ and will be used when establishing fiberwise normality of reductive monoids. 

\subsection{Related work} 
In \cite{Ganev}, Ganev constructs the quantized coordinate ring of the Vinberg monoid over $\mathbb{C}$ as the Rees algebra associated to certain canonical filtrations on the quantized coordinate rings of semisimple groups. In \cite{Zhu2020}, generalizing the approach in \cite{XiaoZhu2019} over a field, Zhu defines the Vinberg monoid over $\ZZ$, also using the Rees algebra construction. See also the work of van den Hove \cite{Hove} and Nath \cite{Nath} which are closely related to Zhu's construction. In particular, the canonical bases are not used in these works. On the other hand, our construction of the integral models is closer to the work of Bao and Song~\cite{BaoSongAffineEmbeddings}, in which integral models for affine embeddings of symmetric spaces are constructed using the theory of canonical bases for quantum symmetric pairs developed by Bao and Wang~\cite{Bao-Wang}. Compared to previous works, the new contributions of the current paper are the finite-generation results for the integral models and the explicit description of the quantized integral unipotent invariant subalgebras; these will be crucial ingredients in our classification theorem for split reductive monoids in \cite{CJ2}.

\subsection{Organization of the article}
In \S\ref{sec:quantum-canonical-bases} we fix notation on root data and reductive groups, and review basic facts on Lusztig's modified quantum groups and canonical bases. In \S\ref{sec:Lusztig-group-scheme} we first review Lusztig's construction of Chevalley groups over $\ZZ$ using the dual canonical bases, and then establish some basic functorial properties of the (dual) canonical bases. Then \S\ref{sec:finite-coordinate-ring} is devoted to the proof of finite generation of the quantized integral coordinate rings of reductive monoids. In \S\ref{sec:reps-and-invariants} we use the dual canonical bases to study the (quantization of the) subalgebra of unipotent invariants in the coordinate algebra.

\subsection{Notations and conventions}
The symbol $\NN\coloneqq\ZZ_{\ge0}$ denotes the set of \emph{non-negative} integers. We will often use the Kronecker delta symbol: $\delta_{x,y}=1$ if $x=y$ and $\delta_{x,y}=0$ if $x\ne y$.\par 
Following Lusztig \cite{Lusztig-IntroQuantumGroup}, we let $v$ be an indeterminate and let $\mathcal{A}\coloneqq\ZZ[v, v^{-1}]$ be the Laurent polynomial ring with coefficients in $\ZZ$. Its fraction field is $\QQ(v)$. We will usually use the letter $R$ to denote a general commutative $\A$-algebra. On the other hand, we will usually use $\k$ (resp. $k$) to denote a general commutative ring (resp. field) that is equipped with an $\A$-algebra structure via $v\mapsto1$ (so this is supposed to be a ``classical" object, instead of a general ``quantum" object that we denote by $R$). The rings $\ZZ$ and $\QQ$ are always viewed as $\A$-algebras by mapping $v$ to $1$. All commutative rings (or $\A$-algebras) are assumed to be associative and unital unless specified otherwise. More precisely, the coefficient rings will be commutative and unital, while algebras over them may be non-commutative or non-unital when specified.
\subsection*{Acknowledgment}
This paper is an update of the first part of the first version of the preprint \cite{CJ2}, which was done without using AI tools. During the preparation of the current version, ChatGPT 6 Astra was used for proofreading and text improvements. This research was funded by the Hua Loo-Keng Center for Mathematical Sciences, AMSS, CAS. The research of J.C. was also supported by National Key R\&D Program of China No.2023YFA1009701, National Natural Science Foundation of China (Grant No.12288201, No.12231001), CAS Project for Young Scientists in Basic Research (Grant No.YSBR-033).

\section{Quantized enveloping algebras and canonical bases}\label{sec:quantum-canonical-bases}
\subsection{Review of basic definitions}
\subsubsection{Based root data}\label{subsubsec:based-root-data}
We first recall the standard notion of root data. For our purposes, we will only consider finite type root data.
\begin{defi}\label{def:based-root-data}
   A \emph{based root datum} is a quintuple $\Psi=(\Delta, Y, X,\{\alpha_i,i\in\Delta\}, \{\alpha_i^\vee,i \in\Delta\})$ consisting of:
\begin{itemize}
    \item a finite set $\Delta$;
    \item two finitely generated free abelian groups $X, Y$ and a perfect pairing $\langle \cdot, \cdot \rangle\colon Y \times X \rightarrow \mathbb{Z}$;
    \item an embedding $\Delta\into X$ ($i\mapsto\alpha_i$) and an embedding $\Delta\into Y$ ($i \mapsto \alpha_i^\vee$);
\end{itemize} 
such that the matrix $A=(a_{ij})_{i,j\in\Delta}$, with entries $a_{ij}\coloneqq\langle\alpha_i^\vee,\alpha_j\rangle$ for all $i,j\in\Delta$, is a Cartan matrix.\footnote{We require $A$ to be a Cartan matrix (instead of only a generalized Cartan matrix) since our root data are of finite type. Recall that a Cartan matrix admits a positive diagonal symmetrization that is positive definite. So the images of the embeddings $\Delta\into X$ and $\Delta\into Y$ are linearly independent. Thus a based root datum is both ``$X$-regular and $Y$-regular" in the sense of \cite[\S2.2.2]{Lusztig-IntroQuantumGroup}.}
\end{defi}
For a based root datum $\Psi$ as above, the abelian group $X$ (resp. $Y$) is called the \emph{weight lattice} (resp. \emph{coweight lattice}). The \emph{root lattice}, denoted $\ZZ^\Delta$, is the free abelian group with basis $\Delta$, viewed as subgroup of $X$. The \emph{positive root monoid} is the submonoid $X_{\pos}=\NN^\Delta\subset X$ generated by the simple roots $\Delta$. The set of \emph{dominant weights} of $\Psi$ is defined to be 
\[X^+\coloneqq \{ \mu \in X \mid \langle \alpha_i^\vee, \mu \rangle \geq 0, \forall i \in\Delta\}.\]
It is a submonoid of the weight lattice $X$. Let $X^+_\pos$ be the submonoid of $X$ generated by the dominant weight monoid $X^+$ and the positive root monoid $X_\pos=\NN^\Delta$.\par
For \(\lambda, \lambda'\) in \(X\) we write \(\lambda \geq \lambda'\) or \(\lambda' \leq \lambda\) if \(\lambda - \lambda' \in \sum_{i\in\Delta} \NN\a_i\). The Weyl group $W=W(\Psi)$ is defined to be the finite subgroup of \(\operatorname{Aut}(Y)\) generated by the involutions \(s_i\colon y \mapsto y - \langle y, \a_i \rangle \a_i^\vee, \ (i \in\Delta)\), or equivalently the finite subgroup of \(\operatorname{Aut}(X)\) generated by the involutions \(s_i\colon x \mapsto x - \langle \a_i^\vee, x \rangle \a_i, \ (i \in\Delta)\). Let \(l\colon W \to \NN\) be the standard length function on \(W\) with respect to \(\{s_i \mid i \in\Delta\}\) and let \(w_0 \in W\) be the unique element such that \(l(w_0)\) is maximal.

\begin{defi}\label{def:bilinear-form}
    Let $\{d_i,i\in\Delta\}$ be relatively prime positive integers such that the matrix $(d_ia_{ij})_{i,j\in\Delta}$ is symmetric. There is a unique symmetric bilinear form on $\ZZ^\Delta$, denoted by $\nu,\nu'\mapsto\nu\cdot\nu'$, such that $i\cdot i=2d_i$ and $i\cdot j=d_ia_{ij}$ for all\footnote{Here, in accordance with the conventions in \cite{Lusztig-IntroQuantumGroup}, for simplicity we write $i\cdot j$ instead of $\alpha_i\cdot\alpha_j$} $i,j\in\Delta$. 
    For any $\nu=\sum_{i\in\Delta}\nu_i i\in\ZZ^\Delta$ and any $\la\in X$, we set:
    \[\nu\circ\la\coloneqq\sum_{i\in\Delta}\nu_i\langle\alpha_i^\vee,\la\rangle d_i=\sum_{i\in\Delta}\nu_i\langle\alpha_i^\vee,\la\rangle (i\cdot i/2),\quad\quad\mathrm{tr}(\nu)\coloneqq\sum_{i\in\Delta}\nu_i\in\ZZ.\]
    See \cite[22.1.3]{Lusztig-IntroQuantumGroup}.
\end{defi}

\subsubsection{Notations on reductive groups}\label{sec:notation-reductive-group}
Fix a based root datum 
\[\Psi=(\Delta, Y, X, \{\alpha_i,i\in\Delta\}, \{\alpha_i^\vee,i \in\Delta\})\] 
for the rest of this section. Let us first introduce some notations on reductive groups that will be used throughout the article. We have a split reductive group scheme with pinning $(G,B,T,...)$ over $\Spec(\ZZ)$ with based root datum $\Psi$. Here $T\subset G$ is a split maximal torus and $B\subset G$ is a Borel subgroup containing $T$, all defined over $\Spec(\ZZ)$. By definition we have $X=X^*(T)$, $Y=X_*(T)$ and $W=N_G(T)/T$ is identified with the Weyl group of $T$ in $G$. We will review the construction of $G$ in \S\ref{sec:Lusztig-group-scheme}, following Lusztig's approach. Let $B^-\subset G$ be the Borel subgroup opposite to $B$. Let $U\subset B$ and $U^-\subset B^-$ be the unipotent radicals. For any scheme $S$, we get group schemes $G_S, T_S$ etc. after base change $S$. When $S=\Spec(\k)$ we also write $G_\k, T_\k$ etc. To simplify notations, we will often omit these subscripts when the base scheme is clear from the context. On the other hand, we will sometimes add a subscript $\Psi$ to emphasize the dependence of $G$ on the root data, typically when there are several different root data involved.\par 
We let $G_{\der}$ be the derived group of $G$, $Z_G$ be the center of $G$ and $Z\subset Z_G$ the maximal torus of $Z_G$. Let $T_{\der}\coloneqq T\cap G_{\der}$, $B_{\der}\coloneqq B\cap G_{\der}$ and $Z_{\der}\coloneqq Z\cap G_{\der}$. Then $G_{\der}$ is a split semisimple group scheme over $S$ with maximal torus $T_{\der}$ and Borel subgroup $B_{\der}$. Let $G_{\ab}\coloneqq G/G_{\der}$ be the maximal abelian quotient of $G$. Then $G_{\ab}$ is a split torus over $S$ and we have canonical isomorphisms 
\[G_{\ab}\cong Z/Z_{\der}\cong T/T_{\der}.\] 
Let $G_{\ad}\coloneqq G/Z_G$ be the adjoint group of $G$ and let $T_{\ad}\coloneqq T/Z_G$. Then $X_{\ad}\coloneqq X^*(T_{\ad})$ is identified with the root lattice in $X$ and it contains the positive root monoid: $X_\pos=\NN^\Delta\subset X_\ad$.\par 
Let $X_Z\coloneqq X^*(Z)$, $X_{\ab}\coloneqq X^*(G_{\ab})$ and $X_{\der}\coloneqq X^*(T_{\der})$. Then we have natural embeddings $X_{\ab}\subset X^+\subset X$ and $X_{\ab}\subset X_Z$ that commute with the natural quotient map $X\to X_Z$ (induced by restricting characters along $Z\into T$). From the isomorphism above we get a canonical isomorphism $X_{\der}\cong X/X_{\ab}$. 

\subsubsection{Gaussian binomial coefficients}
We will define various algebras and modules associated to the based root datum $\Psi$ over the Laurent polynomial ring $\A=\ZZ[v,v^{-1}]$ and its fraction field $\QQ(v)$. First we review some standard notations.\par
For $i\in\Delta$ we set $v_i = v^{d_i}$ and we use the following notations
\[\begin{bmatrix}a\\ t\end{bmatrix}_i=\frac{\prod_{s=0}^{t-1}(v_i^{a-s}-v_i^{-a+s})}{\prod_{s=1}^t(v_i^s-v_i^{-s})},\quad a\in\ZZ,t\in\NN,\]
\[[n]_i\coloneqq\begin{bmatrix}n\\1\end{bmatrix}_i=\frac{v_i^n-v_i^{-n}}{v_i-v_i^{-1}},\quad n\in\ZZ,\]
\[[n]_i^!\coloneqq\prod_{s=1}^n[s]_i,\quad n\in\NN.\]
By convention we have
\[[0]_i=0,\quad[0]_i^!=1,\quad\begin{bmatrix}a\\ 0\end{bmatrix}_i=1,\quad\forall a\in\ZZ.\]
These elements, a priori in $\QQ(v)$, actually lie in $\A=\ZZ[v,v^{-1}]$ (see \cite[\S1.3]{Lusztig-IntroQuantumGroup}). Also, the subscript ``$i$" on the left-hand sides will be dropped in case $v_i$ is replaced by $v$ on the right-hand sides.

\subsection{The positive/negative half of quantized enveloping algebra}
Let \(\mathbf{f}\) be the associative \(\QQ(v)\)-algebra with \(1\) defined as in \cite[\S1.2.5]{Lusztig-IntroQuantumGroup}, or equivalently by the generators $\{\theta_i,i\in\Delta\}$ and the quantum Serre relations
\begin{equation}\label{eq:quantum Serre relations}
    \sum_{\substack{p, p' \in \NN \\ p + p' = 1 - \langle\alpha_i^\vee, \alpha_j\rangle}} (-1)^{p'} \left( \frac{\theta_i^p}{[p]_i^!} \right) \theta_j \left( \frac{\theta_i^{p'}}{[p']_i^!} \right)=0,\quad\forall i\ne j\text{ in }\Delta.
\end{equation}
We have a direct sum decomposition \(\mathbf{f} = \bigoplus_{\nu \in \NN^\Delta} \mathbf{f}_{\nu}\) as $\QQ(v)$-vector space such that:
\begin{itemize}
    \item for each $\nu=\sum_{i\in\Delta}\nu_ii\in\NN^\Delta$, the subspace $\bff_\nu$ is finite dimensional and its elements can be represented by linear combinations of words in $\{\theta_i,i\in\Delta\}$ in which \(\theta_i\) occurs $\nu_i$ times for all \(i \in\Delta\).
    \item we have $1\in\bff_0$, $\theta_i\in\bff_i,\forall i\in\Delta$ and for each $\nu,\nu'\in\NN^\Delta$, $\bff_\nu\cdot\bff_{\nu'}\subset\bff_{\nu+\nu'}$.  
\end{itemize}
If $x\in\bff_\nu$, we say that $x$ is homogeneous and write $|x|=\nu$.\par 
For \(i\in\Delta\), \(n \in \ZZ\) we set 
\[\theta_i^{(n)} = \begin{cases}
    \frac{\theta_i^n}{[n]_i^!} \in \mathbf{f} &\text{ if } n \geq 0,\\
    0 &\text{ if }n < 0.
\end{cases}\] 
Let \({}_{\mathcal{A}}\mathbf{f}\) be the unital \(\mathcal{A}\)-subalgebra of \(\mathbf{f}\) generated by the elements \(\theta_i^{(n)}\) with \(i \in\Delta,\ n \in \NN\) (cf. \cite[\S1.4.7]{Lusztig-IntroQuantumGroup}). We have \({}_{\mathcal{A}}\mathbf{f} = \bigoplus_{\nu \in \NN^\Delta} {}_{\A}\mathbf{f}_{\nu}\) where ${}_{\A}\mathbf{f}_{\nu}\coloneqq\mathbf{f}_{\nu}\cap {}_{\mathcal{A}}\mathbf{f}$. 

Let $\sigma\colon\bff\to\bff^{\mathrm{op}}$ be the unique $\QQ(v)$-algebra isomorphism such that $\sigma(\theta_i)=\theta_i$ for all $i\in\Delta$. Clearly $\sigma$ restricts to an $\A$-algebra isomorphism $\sigma\colon{}_\A\mathbf{f}\to{}_\A\mathbf{f}^{\mathrm{op}}$. The \emph{bar involution} is the unique $\QQ$-algebra involution ${\bar{}}\colon\bff\to\bff$ that sends $v$ to $v^{-1}$ and fixes $\theta_i$ for all $i\in\Delta$.
\begin{defi}
    Let $r\colon\bff\to\bff\otimes\bff$ be the unique $\QQ(v)$-linear map such that 
    \[r(\theta_i)=1\otimes\theta_i+\theta_i\otimes1,\quad\forall i\in\Delta\]
    and more generally for any sequence $i_1,\dotsc,i_n$ in $\Delta$ defining a monomial $x=\theta_{i_1}\theta_{i_2}\dotsm\theta_{i_n}\in\bff$,
    \[r(x)=1\otimes x+x\otimes 1+\sum (\prod_{\substack{a\in[1,s],c\in[1,t]\\ j_a>k_c}} v^{i_{j_a}\cdot i_{k_c}})\theta_{i_{j_1}}\dotsm\theta_{i_{j_s}}\otimes\theta_{i_{k_1}}\dotsm\theta_{i_{k_t}}\]
    where the sum runs over all nonempty subsets $j_1<\dotsm<j_s$ of $[1,n]$ with nonempty complement $k_1<\dotsm<k_t$ ($s+t=n$, $s,t\ge1$).\par 
    Let $\bar{r}\colon\bff\to\bff\otimes\bff$ be the unique $\QQ(v)$-linear map given by the same formula as $r$ except $v$ is replaced by $v^{-1}$. For details, see \cite[1.2.6, 1.2.12]{Lusztig-IntroQuantumGroup} and \cite[3.8]{Lusztig-Infinity}.
\end{defi}

Let \(\mathbf{B}\) be the canonical $\QQ(v)$-basis of \(\mathbf{f}\) defined in \cite[§14.4]{Lusztig-IntroQuantumGroup}. It is also an \(\mathcal{A}\)-basis of \({}_{\mathcal{A}}\mathbf{f}\) (by \cite[14.2.3(a)]{Lusztig-IntroQuantumGroup}). Let $\bfB_\nu\coloneqq\mathbf{f}_\nu\cap\bfB$ for any $\nu\in\NN^\Delta$. Then $\bfB=\bigsqcup_{\nu\in\NN^\Delta}\bfB_\nu$ and by \cite[14.4.3(c)]{Lusztig-IntroQuantumGroup} we have $\sigma(\bfB_\nu)=\bfB_\nu$.\par 
Given $i\in\Delta$ and $n\ge0$, we let $\bfB_{i;\ge n}\coloneqq\bfB\cap\theta_i^n\bff$ and ${}^\sigma\bfB_{i;\ge n}\coloneqq\bfB\cap\bff\theta_i^n$. Let $\bfB_{i;n}\coloneqq\bfB_{i;\ge n}\setminus\bfB_{i;\ge n+1}$ and ${}^\sigma\bfB_{i;n}\coloneqq{}^\sigma\bfB_{i;\ge n}\setminus{}^\sigma\bfB_{i;\ge n+1}$. \par 
For any $\la\in X^+$, we define
\[\bfB(\la)\coloneqq\bigcap_{i\in\Delta}\left(\bigcup_{0\le n\le\langle\alpha_i^\vee,\la\rangle}{}^\sigma\bfB_{i;n}\right).\]
\begin{lem}\label{lem:B-union}
    Let $\L\subset X^+$ be a subset with the following property: for any $N\ge0$ there exists $\la\in\L$ such that $\langle\alpha_i^\vee,\la\rangle\ge N$ for all $i\in\Delta$. Then we have $\bfB=\bigcup_{\la\in\L}\bfB(\la)$. 
\end{lem}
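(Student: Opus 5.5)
Fix $b\in\bfB$. Since $\bfB=\bigsqcup_\nu\bfB_\nu$, we may take $b\in\bfB_\nu$ for some $\nu=\sum_i\nu_i i\in\NN^\Delta$. For each $i\in\Delta$ the sets ${}^\sigma\bfB_{i;\ge n}$ decrease in $n$, so $b$ lies in exactly one ${}^\sigma\bfB_{i;n}$ provided $b\notin{}^\sigma\bfB_{i;\ge n}$ for $n$ large. Call that index $n_i(b)$. Given this, the plan is to show $n_i(b)\le\nu_i$ for every $i$. Then we choose $N\coloneqq\max_i\nu_i$ and, by the hypothesis on $\L$, some $\la\in\L$ with $\langle\alpha_i^\vee,\la\rangle\ge N$ for all $i$. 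We get $n_i(b)\le\nu_i\le\langle\alpha_i^\vee,\la\rangle$ for every $i$, so $b\in\bigcup_{0\le n\le\langle\alpha_i^\vee,\la\rangle}{}^\sigma\bfB_{i;n}$ for each $i$, that is, $b\in\bfB(\la)$. The reverse inclusion $\bigcup_{\la}\bfB(\la)\subset\bfB$ is trivial.

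The key step is the bound: if $b\in{}^\sigma\bfB_{i;\ge n}=\bfB\cap\bff\theta_i^n$ with $b\in\bff_\nu$, then $n\le\nu_i$. We have $b=x\theta_i^n$ for some $x\in\bff$. Decompose $x=\sum_\mu x_\mu$ into homogeneous components. Then $x_\mu\theta_i^n\in\bff_{\mu+ni}$, and these lie in distinct graded pieces. Comparing degrees with $b\in\bff_\nu$ gives $b=x_{\nu-ni}\theta_i^n$, with $x_\mu\theta_i^n=0$ for $\mu\ne\nu-ni$. If $\nu-ni\notin\NN^\Delta$, then $\bff_{\nu-ni}=0$, which forces $b=0$ and contradicts $b$ being a basis element. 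Hence $\nu-ni\in\NN^\Delta$, so $n\le\nu_i$. In particular $b\notin{}^\sigma\bfB_{i;\ge\nu_i+1}$, so $n_i(b)$ is well defined and satisfies $n_i(b)\le\nu_i$.

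The only point needing care is this grading argument, namely that $\bff\theta_i^n\cap\bff_\nu=\bff_{\nu-ni}\theta_i^n$. It follows from $\bff_\mu\bff_{\mu'}\subset\bff_{\mu+\mu'}$ and the direct sum decomposition of $\bff$. No deeper property of the canonical basis is used beyond $\bfB$ being homogeneous and nonzero.
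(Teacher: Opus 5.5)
Your proof is correct and follows the paper's argument: for each $i$ take the index $n$ with $b\in{}^\sigma\bfB_{i;n}$, then use the hypothesis on $\L$ to find $\la\in\L$ with $\langle\alpha_i^\vee,\la\rangle$ at least all these indices. The only difference is that you justify, via the $\NN^\Delta$-grading (giving the bound $n\le\nu_i$), that this index exists, whereas the paper simply asserts the partition $\bfB=\bigsqcup_{n\ge0}{}^\sigma\bfB_{i;n}$.
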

\begin{proof}
    For any $i\in\Delta$ we have a partition $\bfB=\bigsqcup_{n\ge0}{}^\sigma\bfB_{i;n}$. Thus for any $b\in\bfB$ and $i\in\Delta$ there is an integer $m(b,i)\ge0$ such that $b\in{}^\sigma\bfB_{i;m(b,i)}$. By assumption there exists $\la\in\L$ such that $\langle\alpha_i^\vee,\la\rangle\ge m(b,i)$ for all $i\in\Delta$ and then we have $b\in\bfB(\la)$.
\end{proof}
By \cite[14.2.3]{Lusztig-IntroQuantumGroup}, the canonical basis $\bfB$ is characterized up to sign by the bar involution and a non-degenerate bilinear form $(\cdot,\cdot)\colon\bff\times\bff\to\QQ(v)$ (defined in \cite[1.2.3, 1.2.5]{Lusztig-IntroQuantumGroup}) as follows:
\begin{equation}\label{eq:pairing characterizing B}
    \pm\bfB=\{x\in{}_\A\bff\mid \bar{x}=x, (x,x)\in1+v^{-1}\ZZ[[v^{-1}]]\cap\QQ(v)\}.
\end{equation}

\subsection{The modified quantized enveloping algebra}
The Drinfeld-Jimbo quantized enveloping algebra $\bfU$ (associated to the based root datum $\Psi$) is a $\QQ(v)$-algebra generated by two copies of $\bff$ (denoted by $\bfU^+$ and $\bfU^-$), together with the group algebra of the \emph{coweight} lattice $Y$. See \cite[\S3]{Lusztig-IntroQuantumGroup} for the precise definition, which we do not reproduce here since we will not use it. Instead we will need Lusztig's modified form of $\bfU$, denoted $\dot{\bfU}$, which (after suitable completion) also contains the two copies ($\bfU^{+},\bfU^-$) of $\bff$, but with the group algebra of $Y$ in $\bfU$ replaced by the algebra of functions with finite support on the \emph{weight} lattice $X$ (i.e. the $\QQ(v)$-algebra $\bigoplus_{\zeta\in X}\QQ(v)1_\zeta$ with basis $\{1_\zeta,\zeta\in X\}$). The algebra $\dot{\bfU}$ is simpler than $\bfU$ in several aspects: it captures only the most essential representations of $\bfU$ (the so-called ``type $1$" weight modules); also one can construct its integral model (an $\A$-algebra ${}_{\A}\dot{\mathbf{U}}$ such that ${}_{\A}\dot{\mathbf{U}}\otimes_\A\QQ(v)=\dot{\bfU}$) more easily.\par 

Following \cite[\S31.1.3]{Lusztig-IntroQuantumGroup}, we define the \(\A\)-algebra  \({}_{\A}\dot{\mathbf{U}}\) (without using $\bfU$) as follows. It is generated by the symbols \( x^{+}1_{\zeta}x^{\prime -} \), \( x^{-}1_{\zeta}x^{\prime +} \) with \( x \in {}_{\A}\mathbf{f}_{\nu} \), \( x^{\prime} \in {}_{\A}\mathbf{f}_{\nu^{\prime}} \) for various \(\nu, \nu^{\prime}\) and \(\zeta \in X\); these symbols are subject to the following relations (if \( x \) or \( x' \) in \( x^+ 1_\zeta x^{\prime -} \) or \( x^- 1_\zeta x^{\prime +} \) is \( 1 \), we omit writing it):
\begin{itemize}
    \item The maps \(_{\A}\mathbf{f} \rightarrow {}_{\A}\dot{\mathbf{U}}\), \( x \mapsto x^{\pm}1_{\zeta} \) are \(\A\)-linear for any \(\zeta \in X\);
    \item For any $m, n\in\NN$, $\zeta \in X$ and $i \neq j$ in $\Delta$, 
    \[\theta^{(n)+}_{i}1_{\zeta}\theta^{(m)-}_j = \theta^{(m)-}_j1_{\zeta+n\alpha_i+m\alpha_j}\theta^{(n)+}_i;\]
    \item  For any \( m, n \in \NN \), \(\zeta \in X\) and \( i \in \Delta\),
    \[\theta^{(n)\pm}_{i}1_{\mp\zeta}\theta^{(m)\mp}_{i} = \sum_{t\in \NN}\begin{bmatrix} m + n - \langle \a_i^\vee, \zeta \rangle \\ t \end{bmatrix}_{i} \theta^{(m-t)\mp}_{i}1_{\mp\zeta \pm (n+m-t)\a_i} \theta^{(n-t)\pm}_{i};\]
    \item For any \( x \in {}_{\A}\mathbf{f}_{\nu} \) and \( \zeta \in X \),
    \[ x^\pm 1_\zeta = 1_{\zeta \pm \nu} x^\pm;\]
    \item  For any \( x, x' \in {}_{\A}\mathbf{f} \) and \( \zeta \in X \),
    \[(x^\pm 1_\zeta)(1_{\zeta'} x^{\prime \mp}) = \delta_{\zeta, \zeta'} x^\pm 1_\zeta x^{\prime \mp};\]
    \item  For any \( x \in {}_{\A}\mathbf{f}_{\nu} \), \( x' \in {}_{\A}\mathbf{f} \) and \( \zeta \in X \),
    \[(x^\pm 1_\zeta)(1_{\zeta'} x^{\prime \pm}) = \delta_{\zeta, \zeta'} 1_{\zeta \pm \nu} (x x')^\pm.\]
\end{itemize}
 The algebra \({}_{\A}\dot{\mathbf{U}}\) does not have a unit element in general; instead, it has a family of idempotents \( 1_\lambda \) (\( \lambda \in X \)) such that $1_\lambda 1_{\lambda'} = \delta_{\lambda, \lambda'} 1_\lambda$ for any $\lambda, \lambda' \in X$. They induce a decomposition
\begin{equation}\label{eq:U-dot-decomposition}
     {}_{\A}\dot{\mathbf{U}} = \bigoplus_{\lambda, \lambda' \in X} 1_\lambda ({}_{\A}\dot{\mathbf{U}}) 1_{\lambda'}.
\end{equation}
From the defining relations above we see that $1_\lambda ({}_{\A}\dot{\mathbf{U}}) 1_{\lambda'}\ne0$ only if $\la-\la'\in\ZZ^\Delta$. \par
There are two natural ${}_{\A}\mathbf{f}\otimes{}_{\A}\mathbf{f}^{\mathrm{op}}$-module structures on ${}_{\A}\dot{\mathbf{U}}$ giving rise to two isomorphisms of \( \A \)-modules: 
\[{}_{\A}\mathbf{f} \otimes  \A[X] \otimes{}_{\A}\mathbf{f} \to {}_{\A}\dot{\mathbf{U}}, \quad x \otimes  \lambda \otimes  x' \mapsto x^{\pm} 1_\lambda x^{\prime \mp}.\]
Moreover, the $\A$-algebra ${}_{\A}\dot{\mathbf{U}}$ has extra structures that resemble (but not literally the same as) a Hopf algebra. The \emph{coproduct} on $ {}_{\A}\dot{\mathbf{U}}$ is given, under the decomposition \eqref{eq:U-dot-coproduct}, by a family of $\A$-linear maps
\begin{equation}\label{eq:U-dot-coproduct}
    \Delta\colon {}_{\A}\dot{\mathbf{U}}\to\prod_{\la_1,\la_1',\la_2,\la_2'\in X}1_{\la_1} {}_{\A}\dot{\mathbf{U}}1_{\la_1'}\otimes_\A 1_{\la_2} {}_{\A}\dot{\mathbf{U}}1_{\la_2'}
\end{equation}
defined as in \cite[\S23.1.5, \S23.2.3]{Lusztig-IntroQuantumGroup}. Here the right hand side contains the tensor product ${}_{\A}\dot{\mathbf{U}}\otimes_\A {}_{\A}\dot{\mathbf{U}}$ as a subalgebra. We recall that in \emph{loc. cit.} Lusztig uses the coproduct on the (usual) quantum enveloping algebra $\bfU$ to define maps
\[\Delta_{\la_1,\la_1';\la_2,\la_2'}\colon\;\;{}_{\la_1+\la_2}\dot{\bfU}_{\la_1'+\la_2'}\to{}_{\la_1}\dot{\bfU}_{\la_1'}\otimes{}_{\la_2}\dot{\bfU}_{\la_2'},\quad\forall\la_1,\la_2,\la_1',\la_2'\in X\]
which preserve the $\A$-structures and combine into the homomorphism $\Delta$ above. As explained in \cite[\S3.9]{Lusztig-Infinity}, the coproduct $\Delta$ can also be defined directly (without recourse to $\bfU$): it is uniquely characterized by the following requirements:
\begin{enumerate}
    \item For any homogeneous elements $x\in\bff$ with $r(x)=\sum_j(x_j\otimes x_j')$, $\bar{r}(x)=\sum_k({}_kx\otimes{}_kx')$ where $x_j,x_j',{}_kx,{}_kx'\in\bff$ are homogeneous, we have (see Definition \ref{def:bilinear-form} for the notations)
    \begin{align}\label{eq:coproduct-formula}
        \Delta(x^+)&=\sum_{j;\la\in X}v^{|x_j'|\circ\la}x_j^+1_\la\otimes (x_j')^+, \\
        \Delta(x^-)&=\sum_{k;\la\in X}v^{-|{}_kx|\circ\la}({}_kx)^-\otimes1_\la({}_kx')^-.
    \end{align}
    \item For any $\la\in X$ we have
    \[\Delta(1_\la)=\sum_{\la'\in X}1_{\la'}\otimes 1_{\la-\la'}.\]
\end{enumerate}

The \emph{counit} of ${}_{\A}\dot{\mathbf{U}}$ is the unique $\A$-algebra homomorphism such that
\[\epsilon(1_\zeta)=\delta_{\zeta,0},\quad\forall \zeta\in X;\quad\epsilon(\theta_i)=0,\quad\forall i\in\Delta.\]
The \emph{antipode} of ${}_\A\dot{\bfU}$ is an $\A$-algebra isomorphism $S:{}_\A\dot{\bfU}\to{}_A\dot{\bfU}^{\mathrm{op}}$ given explicitly in \cite[23.1.7]{Lusztig-IntroQuantumGroup}.

We recall the following isomorphisms (see \cite[\S23.1.6, §31.1.4]{Lusztig-IntroQuantumGroup}):
\begin{itemize}
    \item The $\A$-algebra isomorphism $\sigma\colon{}_\A\dot{\bfU}\to{}_A\dot{\bfU}^{\mathrm{op}}$ defined by
\[\sigma(\theta_i^+)=\theta_i^+,\quad \sigma(\theta_i^-)=\theta_i^-,\quad \sigma(1_\lambda)=1_{-\lambda}.\]
    \item The $\A$-algebra automorphism $\omega\colon {}_{\A}\dot{\mathbf{U}} \to {}_{\A}\dot{\mathbf{U}}$ defined by
\[\omega(\theta_i^+)=\theta_i^-,\quad\omega(\theta_i^-)=\theta_i^+,\quad\omega(1_\lambda)=1_{-\lambda}.\]
    \item The semi-linear $\A$-algebra automorphism $\bar{}:{}_\A\dot{\bfU}\to{}_\A\dot{\bfU}$, called the \emph{bar involution}, defined by requiring that the elements $\theta_i^{\pm}1_\zeta$ for $i\in\Delta,\zeta\in X$ are all fixed and
    \[\overline{a\cdot u}=\bar{a}\cdot\bar{u},\quad\forall a\in\A, u\in{}_\A\dot{\bfU}\]
    where $\bar{a}(v)\coloneqq a(v^{-1})$ for any $a(v)\in\A=\ZZ[v,v^{-1}]$. In particular, all the Gaussian binomial coefficients $[n]_i^!$ and all the elements $\theta_i^{(n)^{\pm}}1_\zeta$ are fixed under the bar involution.
\end{itemize}
We remark that the antipode $S$ differs from $\sigma$ by a factor which is the product of a sign and a power of $v$. Also, we have $\sigma^2=1$ and $\omega^2=1$ but in general $S^{-1}$ is not equal to $S$. 

\begin{defi}
    For any $\A$-algebra $R$, we denote ${}_{R}\dot{\mathbf{U}}\coloneqq{}_{\A}\dot{\mathbf{U}}\otimes_{\A} R $. When $R=\QQ(v)$ we simply write $\dot{\bfU}={}_{\QQ(v)}\dot{\bfU}$.
\end{defi}

Let \(\dot{\mathbf{B}}\) be the canonical basis of \({}_{\A}\dot{\mathbf{U}}\) defined in \cite[25.2]{Lusztig-IntroQuantumGroup}. Let us review its construction and basic properties. 
\begin{defi}\label{def:B-dot}
    For any $\eta,\zeta\in X$, denote ${}_\eta\dot{\bfB}_\zeta\coloneqq\dot{\bfB}\cap 1_\eta\dot{\bfU}1_\zeta$. By construction $\dot{\bfB}$ admits a partition:
    \[\dot{\bfB}=\bigsqcup_{\eta,\zeta\in X}{}_\eta\dot{\bfB}_\zeta.\]
    The canonical basis $\dot{\bfB}$ is in bijection with $\bfB\times\bfB\times X$: an element of $\dot{\bfB}$ corresponding to $(b_1,b_2,\zeta)$ is denoted by $b_1\diamondsuit_\zeta b_2$, and it equals $b_1^+b_2^-1_\zeta$ plus an $\A$-linear combination of elements of the form $c_1^+c_2^-1_\zeta$ with $c_1,c_2\in\bfB$ satisfying the following conditions:
    \begin{itemize}
        \item $\mathrm{tr}|c_1|<\mathrm{tr}|b_1|$ and $\mathrm{tr}|c_2|<\mathrm{tr}|b_2|$,
        \item $|c_1|-|c_2|=|b_1|-|b_2|$.
    \end{itemize}
    See \cite[Proof of Theorem 25.2.1]{Lusztig-IntroQuantumGroup}. In particular, we have $b_1\diamondsuit_\zeta b_2\in{}_\eta\dot{\bfB}_\zeta$ where $\eta=\zeta-|b_2|+|b_1|$.
\end{defi}
For any $\A$-algebra $R$ and any \(b \in \dot{\mathbf{B}}\), we shall denote the element \(b \otimes_{\mathcal{A}}1\in {}_{\mathcal{A}}\dot{\mathbf{U}}\otimes_{\A}R={}_R\dot{\mathbf{U}}\) also by \(b\). Then \(\dot{\bfB}\) can also be viewed as an $R$-basis of \({}_R\dot{\mathbf{U}}\). \par

The structure constants of ${}_{\A}\dot{\mathbf{U}}$ with respect to the basis $\dot{\bfB}$ are the family of elements $m_{ab}^c,\hat{m}^{ab}_c\in\A$ for any $a,b,c\in\dot{\bfB}$ defined by:
\[ab=\sum_{c\in\dot{\bfB}}m_{ab}^c c,\quad\forall a,b\in\dot{\bfB},\]
\[\Delta (c)=\sum_{a,b\in\dot{\bfB}}\hat{m}_c^{ab}a\otimes b,\forall c\in\dot{\bfB}.\]
We refer to \cite[\S25.4]{Lusztig-IntroQuantumGroup} for a list of identities satisfied by the structure constants. We record some finiteness properties.
\begin{lem}\label{lem:structure-constant-finite}
\begin{enumerate}
    \item[(i)] For any $c\in\dot{\bfB}$ the set $\{(a,b)\in\dot{\bfB}\times\dot{\bfB}\mid m_{ab}^c\ne0\}$ is finite.
    \item[(ii)] For any $a,b\in\dot{\bfB}$, the sets $\{c\in\dot{\bfB}\mid\hat{m}_c^{ab}\ne0\}$ and $\{c\in\dot{\bfB}\mid m^c_{ab}\ne0\}$ are both finite.
\end{enumerate}
\end{lem}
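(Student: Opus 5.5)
The plan is to reduce everything to weight considerations via the decomposition \eqref{eq:U-dot-decomposition} and Definition \ref{def:B-dot}, together with the fact that each ${}_\eta\dot{\bfB}_\zeta$ behaves well with respect to weights.

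For (ii), the product $m^c_{ab}$: since $a\in 1_\eta\dot{\bfU}1_\zeta$ and $b\in 1_{\eta'}\dot{\bfU}1_{\zeta'}$, we have $ab=0$ unless $\zeta=\eta'$, and then $ab\in 1_\eta\dot{\bfU}1_{\zeta'}$. Moreover $ab$ is a finite $\A$-linear combination of elements of $\dot{\bfB}$ simply because $\dot{\bfB}$ is an $\A$-basis of ${}_\A\dot{\bfU}$. So $\{c\mid m^c_{ab}\neq0\}$ is finite; this part is essentially immediate. For the coproduct, $\Delta(c)$ lands a priori in the product over $(\la_1,\la_1',\la_2,\la_2')$, so finiteness of $\{c\mid \hat m_c^{ab}\ne0\}$ for fixed $a,b$ is the real content. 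If $a\in{}_{\la_1}\dot{\bfB}_{\la_1'}$ and $b\in{}_{\la_2}\dot{\bfB}_{\la_2'}$, then $\hat m_c^{ab}\neq0$ forces $c\in{}_{\la_1+\la_2}\dot{\bfB}_{\la_1'+\la_2'}$, since $\Delta$ is assembled from the maps $\Delta_{\la_1,\la_1';\la_2,\la_2'}$ on ${}_{\la_1+\la_2}\dot{\bfU}_{\la_1'+\la_2'}$. But ${}_\eta\dot{\bfB}_\zeta$ is infinite in general, so we need an additional bound. Write $c=b_1\diamondsuit_\zeta b_2$ and use the triangularity in Definition \ref{def:B-dot} together with the explicit formulas \eqref{eq:coproduct-formula}. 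The component of $\Delta(c)$ in a fixed weight block is computed from $\Delta(b_1^+)\Delta(b_2^-)\Delta(1_\zeta)$, and the leading parts $r(b_1),\bar r(b_2)$ contain terms $b_1\otimes1$, $1\otimes b_1$ with full degree in one factor. Hence $\mathrm{tr}|b_1|+\mathrm{tr}|b_2|$ is bounded in terms of the degrees of $a$ and $b$ via the positive and negative parts. I expect this degree bookkeeping to be the main obstacle. If it becomes messy, I would instead invoke the identity from \cite[\S25.4]{Lusztig-IntroQuantumGroup} relating $\hat m$ to $m$ through the dual pairing and the bilinear form on $\dot{\bfU}$, and then transfer the finiteness from (i).

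For (i), fix $c\in{}_\eta\dot{\bfB}_\zeta$. If $m^c_{ab}\neq0$, then $a\in{}_\eta\dot{\bfB}_\xi$ and $b\in{}_\xi\dot{\bfB}_\zeta$ for some $\xi$. The aim is to bound the pairs $(a,b)$ by a degree argument. Write $a=a_1\diamondsuit_\xi a_2$ and $b=b_1\diamondsuit_\zeta b_2$, and expand $ab$ in the basis $\{x^+y^-1_\zeta\}$, using the commutation relations to move $-$ past $+$. This gives terms $x^+y^-1_\zeta$ with $\mathrm{tr}|x|+\mathrm{tr}|y|\le \mathrm{tr}|a_1|+\mathrm{tr}|a_2|+\mathrm{tr}|b_1|+\mathrm{tr}|b_2|$, and the top-degree term $(a_1b_1')^+\cdots$ survives. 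This only gives a lower bound on $c$, though, so I would rather use the positivity and the bilinear form. The expected route is the characterization via $\dot{\bfB}$ being almost orthonormal for the form $(\,,\,)$ on $\dot{\bfU}$ \cite[26.1]{Lusztig-IntroQuantumGroup}, together with the adjointness $(ab,c)=(b,\rho(a)c)$. This rewrites $m^c_{ab}$ as a coefficient of $a'$ in $c\,\rho(b)$-type products up to lower terms. Finiteness then follows from (ii) applied to the fixed element $c$ and each $b$, once the set of $b$ is shown finite via finiteness of ${}_\xi\dot{\bfB}_\zeta$ intersected with bounded degree. To get that degree bound I would use the anti-dominance/cell argument: $m^c_{ab}\ne0$ forces the two-sided cells of $a$ and $b$ to be $\le$ that of $c$ in the cell order \cite[29.1]{Lusztig-IntroQuantumGroup}. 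Each cell $\dot{\bfB}[\la]$ is finite and there are finitely many dominant $\la'\le\la$, so only finitely many $a,b$ occur.
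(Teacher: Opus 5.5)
Your (i) and the $m^c_{ab}$-half of (ii) are fine once (i) is cut down to its last sentence. Since $\dot{\bfU}[\ge\mu]$ is a two-sided ideal, $m^c_{ab}\ne0$ forces $\lambda(a),\lambda(b)\le\lambda(c)$, where $\lambda(x)\in X^+$ is the cell of $x$. Since $\dot{\bfB}[\le\lambda(c)]$ is finite, this proves (i); the paper just cites \cite[Lemma 1.8]{LusztigGroupScheme}. The bilinear-form detour before that sentence should be deleted. It is superfluous, and as written it is unjustified: almost-orthonormality of $\dot{\bfB}$ controls leading terms, not exact vanishing of structure constants.

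The genuine gap is the finiteness of $\{c\mid\hat m_c^{ab}\ne0\}$, which you rightly call the real content.

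\emph{The degree bookkeeping bounds the wrong thing.} Triangularity (Definition~\ref{def:B-dot}) and \eqref{eq:coproduct-formula} show that the degrees of any $a\otimes b$ occurring in $\Delta(b_1\diamondsuit_\zeta b_2)$ add up to \emph{at most} $\mathrm{tr}|b_1|+\mathrm{tr}|b_2|$. That bounds $(a,b)$ in terms of $c$, whereas you need to bound $c$ for fixed $(a,b)$. Rewriting the products $x^+y^-1_\xi$ produced by $\Delta(b_1^+)\Delta(b_2^-1_\zeta)$ in the basis $\dot{\bfB}$ creates terms of every lower degree, down to idempotents. Already in rank one, $\theta_i^+1_{-\zeta}\theta_i^-=\theta_i^-1_{-\zeta+2\alpha_i}\theta_i^++[2-\langle\alpha_i^\vee,\zeta\rangle]_i1_{-\zeta+\alpha_i}$. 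So a fixed pair such as $1_{\xi_1}\otimes1_{\xi_2}$ receives, a priori, contributions from $c$ of arbitrarily large degree. Their eventual cancellation is invisible to the extreme terms $b_1\otimes1$, $1\otimes b_1$ of $r(b_1)$.

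\emph{The fallback is not an argument.} The form on $\dot{\bfU}$ is adjoint for multiplication via $\rho$; it is not a Hopf pairing relating $\Delta$ to the product. In any case it cannot certify exact vanishing of $\hat m_c^{ab}$.

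\emph{The missing idea} is a cell bound in the opposite direction to your (i):
\[\hat m_c^{ab}\ne0\ \Longrightarrow\ \lambda(c)\le\lambda(a)+\lambda(b).\]
Finiteness of $\dot{\bfB}[\le\lambda(a)+\lambda(b)]$ then finishes exactly as in (i). The paper gets this by citing \cite[Lemma 1.16]{LusztigGroupScheme}. A self-contained proof is the argument given for Lemma~\ref{lem:filtration-O-multiplicative}(i), which only concerns structure constants and so does not use the lemma under review. It runs as follows:
\begin{itemize}
\item Suppose $\lambda(c)\not\le\lambda(a)+\lambda(b)$. All highest weights of $\Lambda_{\lambda(a)}\otimes\Lambda_{\lambda(b)}$ are $\le\lambda(a)+\lambda(b)$, so $\Delta(c)$ acts by zero on it.
\item Choose the offending pair with $(\lambda(a),\lambda(b))$ minimal. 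Then only terms from $\dot{\bfB}[\lambda(a)]\otimes\dot{\bfB}[\lambda(b)]$ can act nontrivially.
\item The images of $\dot{\bfB}[\lambda(a)]$ and $\dot{\bfB}[\lambda(b)]$ in the respective endomorphism algebras are linearly independent, which forces $\hat m_c^{ab}=0$, a contradiction.
\end{itemize}
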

\begin{proof}
    (i) This is \cite[Lemma 1.8]{LusztigGroupScheme}.\par 
    (ii) The statement for $\hat{m}^{ab}_c$ is \cite[Lemma 1.16]{LusztigGroupScheme}. The statement for $m_{ab}^c$ is clear since any element of ${}_\A\dot{\bfU}$ (in particular $ab$) can be written as a finite $\A$-linear combination of elements in $\dot{\bfB}$. 
\end{proof}

The canonical basis $\dot{\mathbf{B}}$ admits a partition indexed by the dominant weights \cite[§29.1.1]{Lusztig-IntroQuantumGroup}
$$\dot{\mathbf{B}}=\bigsqcup_{\lambda \in X^+} \dot{\mathbf{B}}[\lambda]$$ such that each component is a finite set (by \cite[\S29.1.6]{Lusztig-IntroQuantumGroup}), called the \emph{two-sided cells} of $\dot{\bfB}$. For any subset $P\subset X^+$, we denote 
$$\dot{\mathbf{B}}[P]\coloneqq\bigsqcup_{\lambda\in P} \dot{\mathbf{B}}[\lambda]$$
and we let $\dot{\bfU}[P]$ be the subspace of $\dot{\bfU}$ spanned by elements in $\dot{\bfB}[P]$. In particular, for any $\la\in X^+$, we have subsets
\[X^+_{\ge\la}\coloneqq\{\mu\in X^+\mid \mu\ge\la\},\quad X^+_{>\la}\coloneqq\{\mu\in X^+\mid \mu>\la\}\]
and we denote
\[\dot{\bfU}[\ge\la]\coloneqq\dot{\bfU}[X^+_{\ge\la}],\quad\dot{\bfU}[>\la]\coloneqq\dot{\bfU}[X^+_{>\la}].\]
\begin{prop}\label{prop:B-dot-involution}
    For any $\lambda\in X^+$, we have $\sigma(\dot{\bfB}[\lambda])=\dot{\bfB}[-w_0(\lambda)]$ and $\omega(\dot{\bfB}[\lambda])=\dot{\bfB}[-w_0(\lambda)]$. 
\end{prop}
\begin{proof}
    In \cite{Lusztig-IntroQuantumGroup}, this result is proved up to signs. The more refined statement will follow by combining a result of Kashiwara, as explained in \cite[3.8]{Lusztig-semifield}. More precisely, by \cite[Theorem 4.3.2(ii)]{Kashiwara-modified} we have $\sigma(\dot{\bfB})=\dot{\bfB}$ (note that in \emph{loc.cit.} the involution $\sigma$ is denoted by ``$*$"). Then by \cite[29.3.1(a)]{Lusztig-IntroQuantumGroup} we get that $\sigma(\dot{\bfB}[\lambda])=\dot{\bfB}[-w_0(\lambda)]$. On the other hand, by \cite[Lemma 4.14, Proposition 4.4]{Lusztig-Infinity}, we have $\sigma\omega\dot{\bfB}[\lambda]=\dot{\bfB}[\lambda]$ (in \emph{loc. cit.}, the composition $\sigma\omega$ is denoted by ``$\#$"). Then it follows that $\omega(\dot{\bfB}[\lambda])=\dot{\bfB}[-w_0(\lambda)]$. See also \cite[3.16(a)]{Lusztig-semifield} for the last equality.
\end{proof}

We recall the following parametrization of the two-sided cells $\dot{\bfB}[\la]$.
\begin{prop}[see \cite{Lusztig-Infinity} Proposition 4.4]\label{prop:parametrize-B[la]}
    For any $\la\in X^+$, there is a unique bijection 
    \[\beta_\la\colon\bfB(\la)\times\bfB(\la)\to\dot{\bfB}[\la]\] 
    satisfying
    \[\beta_\la(b_1,b_2)-b_1^-1_\la\sigma(b_2)^+\in\dot{\bfU}[>\la],\quad\forall b_1,b_2\in\bfB(\la).\]
\end{prop}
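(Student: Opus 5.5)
The plan is to work inside the quotient $\dot{\bfU}[\ge\la]/\dot{\bfU}[>\la]$ and use Lusztig's description of the canonical basis in terms of based modules. By \cite[\S29.1]{Lusztig-IntroQuantumGroup}, $\dot{\bfU}[\ge\la]$ and $\dot{\bfU}[>\la]$ are two-sided ideals. Moreover, the quotient $\dot{\bfU}[\ge\la]/\dot{\bfU}[>\la]$ is isomorphic, as a $\dot{\bfU}$-bimodule, to $\La_\la\otimes{}^\omega\La_\la$; here $\La_\la$ is the simple module of highest weight $\la$. Under this isomorphism, the image of $\dot{\bfB}[\la]$ corresponds to the tensor product basis $\{b_1^-\eta_\la\otimes \sigma(b_2)^+\xi_{\la}\}$ of the canonical bases of the two factors. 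Recall that the canonical basis of $\La_\la$ is $\{b^-\eta_\la \mid b\in\bfB,\ b^-\eta_\la\neq0\}$ (\cite[14.4.11]{Lusztig-IntroQuantumGroup}).

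\textbf{Step 1.} Identify the indexing set. Show that for $b\in\bfB$ we have $b^-\eta_\la\ne0$ if and only if $b\in\bfB(\la)$. The annihilator of $\eta_\la$ in $\bff$ is the left ideal $\sum_i\bff\theta_i^{\langle\alpha_i^\vee,\la\rangle+1}$. By \cite[14.3.2, 14.4.11]{Lusztig-IntroQuantumGroup}, this ideal is spanned by $\bigcup_i{}^\sigma\bfB_{i;\ge\langle\alpha_i^\vee,\la\rangle+1}$. The complement of this union in $\bfB$ is exactly $\bfB(\la)$ by its definition. The same statement holds for the right-handed (lowest weight, via $\sigma$) version, because $\sigma(\bfB)=\bfB$. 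So $\bfB(\la)$ indexes the canonical basis of each factor.

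\textbf{Step 2.} Define $\beta_\la$ and check the congruence. Take the element $b_1^-1_\la\sigma(b_2)^+\in\dot{\bfU}$. Its image in the quotient is $b_1^-\eta_\la\otimes\sigma(b_2)^+\xi_\la$, up to the identification of \cite[29.1.7]{Lusztig-IntroQuantumGroup} (see also \cite[Proposition 4.4]{Lusztig-Infinity}). Lusztig's theorem says that the projection $\dot{\bfB}[\la]\to\dot{\bfU}[\ge\la]/\dot{\bfU}[>\la]$ is injective, with image exactly this tensor basis. We therefore let $\beta_\la(b_1,b_2)$ be the unique element of $\dot{\bfB}[\la]$ lying over that basis vector. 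Then $\beta_\la(b_1,b_2)-b_1^-1_\la\sigma(b_2)^+\in\dot{\bfU}[>\la]$ by construction, and $\beta_\la$ is a bijection because the tensor basis is in bijection with $\bfB(\la)\times\bfB(\la)$.

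\textbf{Step 3.} Prove uniqueness. Suppose $\beta'$ is another bijection with the same property. Then $\beta_\la(b_1,b_2)-\beta'(b_1,b_2)\in\dot{\bfU}[>\la]$. Since $\dot{\bfB}$ is a basis and $\dot{\bfB}[\la]$ is disjoint from $\dot{\bfB}[>\la]$, a difference of two elements of $\dot{\bfB}[\la]$ can lie in the span of $\dot{\bfB}[>\la]$ only if the two elements are equal.

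The main obstacle is Step 2, specifically getting the exact element $b_1^-1_\la\sigma(b_2)^+$ with no sign or power of $v$. Lusztig's bimodule identification is stated with the orders and with $\omega$/$\sigma$ twists. I would first check it on $1_\la$ itself (corresponding to $b_1=b_2=1$). I would then use the left and right module structures, together with the fact that $\sigma$ preserves $\bfB$, to transport the result to general $(b_1,b_2)$. The sign issue would be settled using the refined form $\sigma(\dot{\bfB})=\dot{\bfB}$ from the proof of Proposition~\ref{prop:B-dot-involution}.
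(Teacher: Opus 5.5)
Your Steps 1 and 3 are correct. $\bfB(\la)$ is exactly the set of $b\in\bfB$ with $b\eta_\la\neq0$ (Theorem~\ref{theo:Weyl module}(1)), and uniqueness holds because $\dot{\bfU}[>\la]$ is spanned by $\dot{\bfB}[>\la]$, which is disjoint from $\dot{\bfB}[\la]$.

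The problem is Step~2, which carries the entire content of the statement. The paper gives no proof; it only quotes \cite[Proposition 4.4]{Lusztig-Infinity}. You cite that same result for the claim that the classes of $\dot{\bfB}[\la]$ in $\dot{\bfU}[\ge\la]/\dot{\bfU}[>\la]$ are \emph{exactly} the classes of the elements $b_1^-1_\la\sigma(b_2)^+$. That claim is the proposition itself, so as a proof your argument is circular. If you are willing to cite it, the citation alone is the proof and the surrounding scaffolding adds nothing.

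The bimodule picture does not supply this claim either. Via $\dot{\bfU}[\ge\la]/\dot{\bfU}[>\la]\cong\mathrm{End}(\La_\la)$ you only know that $\dot{\bfB}[\la]$ maps to \emph{some} basis \cite[29.1.6]{Lusztig-IntroQuantumGroup}. The classes $[b_1^-1_\la\sigma(b_2)^+]$ form another basis, consisting of bar-invariant elements of the $\A$-form. Bar-invariance and integrality alone do not force two such bases to coincide. You need an extra input at the level of a $\ZZ[v^{-1}]$-lattice: almost-orthonormality, or Lusztig's theory of based modules.

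Your proposed repair does not provide this input. That repair is to check $b_1=b_2=1$ and then transport via the left and right module structures. Multiplying $[1_\la]$ by $b_1^-$ on the left and by $\sigma(b_2)^+$ on the right tautologically gives $[b_1^-1_\la\sigma(b_2)^+]$. But multiplication in $\dot{\bfU}$ does not send canonical basis elements to canonical basis elements, since the $m_{ab}^c$ are arbitrary elements of $\A$. So this says nothing about whether the class is that of a single element of $\dot{\bfB}$, let alone about signs or powers of $v$.

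A version of your idea that does work runs as follows.
\begin{itemize}
\item Start from $1_\la\sigma(b_2)^+$, not from $1_\la$. It already lies in $\dot{\bfB}[\la]$ by \cite[Lemma 4.5]{Lusztig-Infinity}.
\item Its class has left weight $\la$ and is killed by every $\theta_i^+$, since $\theta_i^+1_\la\in 1_{\la+\alpha_i}\dot{\bfU}\subset\dot{\bfU}[>\la]$.
\item Invoke Lusztig's structure theorem for based modules \cite[Ch.~27]{Lusztig-IntroQuantumGroup}: a cell subquotient $M[\ge\la]/M[>\la]$ is isomorphic, as a based module, to a sum of copies of $\La_\la$ with its canonical basis. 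Hence $b_1^-$ applied to a weight-$\la$ basis vector is again a basis vector.
\item For this you must first check that the finite-dimensional left modules $\dot{\bfU}[\ge\la]1_\zeta/\dot{\bfU}[>\la]1_\zeta$, with the images of $\dot{\bfB}$, are based modules. One way is to realize them as cell subquotients of ${}^\omega\La_{\la''}\otimes\La_{\la'}$ for large $\la'',\la'$.
\end{itemize}
That structure theorem is the missing idea.

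A smaller point concerns Step~1. If you turn the right action into a left one via $\sigma$, the class of $1_\la\sigma(b_2)^+$ corresponds to $b_2^+\xi_{-\la}$, not to $\sigma(b_2)^+\xi_{-\la}$. The latter is indexed by $\sigma(\bfB(\la))$, which in general differs from $\bfB(\la)$. So $\sigma(\bfB)=\bfB$ is not the reason the right-hand index set is $\bfB(\la)$.
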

Similar to $\bfB$, the canonical basis $\dot{\bfB}$ can be characterized (up to sign) by a certain bilinear form on $\dot{\bfU}$. To explain this we introduce more notations. For any $x\in\dot\bfU$, denote $x^\#\coloneqq\sigma\omega(x)=\omega\sigma(x)$ so that $x\mapsto x^\#$ defines an isomorphism of $\A$-algebras ${}_\A\dot{\bfU}\to{}_\A\dot{\bfU}^{\mathrm{op}}$ that exchanges the two weight indices in the decomposition \eqref{eq:U-dot-decomposition}. For any $\la,\la'\in X$ with $\nu\coloneqq\la-\la'\in\ZZ^\Delta$ and any $x\in1_\lambda {}_{\A}\dot{\mathbf{U}} 1_{\lambda'}$, let $\rho(x)=v^{\nu\circ(\la+\la')/2}x^\#$. This defines an $\A$-algebra isomorphism $\rho\colon{}_\A\dot{\bfU}\to{}_\A\dot{\bfU}^{\mathrm{op}}$ such that $\rho^2=1$. See \cite[\S3.7]{Lusztig-Infinity} and compare \cite[19.1.1]{Lusztig-IntroQuantumGroup} (in the latter reference the involution $\rho$ is defined for the usual quantized enveloping algebra $\bfU$). 

\begin{prop}\label{prop:bilinear pairing characterizing dotB}
    There exists a unique symmetric bilinear pairing $(\cdot,\cdot)\colon\dot{\bfU}\times\dot{\bfU}\to\QQ(v)$ satisfying:
    \begin{enumerate}
        \item The decomposition \eqref{eq:U-dot-decomposition} is orthogonal with respect to $(\cdot,\cdot)$;
        \item $(ux,y)=(x,\rho(u)y)$ for all $x,y,u\in\dot{\bfU}$;
        \item For all $x,x'\in\bff$ and $\la\in X$ we have $(x^-1_\la,x'^-1_\la)=(x,x')$.
    \end{enumerate}
    Moreover, we have
    \[\pm\dot{\bfB}=\{\beta\in{}_\A\dot{\bfU}\mid \bar{\beta}=\beta,(\beta,\beta)\in1+v^{-1}\QQ[[v^{-1}]]\cap\QQ(v)\}.\]
\end{prop}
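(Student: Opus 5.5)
The plan is to construct the pairing by descent from the pairing on $\bff$, using the description of $\dot{\bfU}$ as $\bff\otimes\A[X]\otimes\bff$, and then to deduce the characterization of $\pm\dot{\bfB}$ from Lusztig's almost-orthonormality results. This is essentially \cite[\S26.1]{Lusztig-IntroQuantumGroup} transported to the present conventions (compare \cite[\S3.7--3.9]{Lusztig-Infinity}), and I would follow that route.

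\emph{Uniqueness.} By (1), it suffices to determine $(x,y)$ for $x,y\in 1_\la\dot{\bfU}1_{\la'}$. Every such element is a sum of terms $u^+x'^-1_{\la'}$ with $u,x'\in\bff$. By (2), $(u^+x'^-1_{\la'},y)=(x'^-1_{\la'},\rho(u^+)y)$, and $\rho(u^+)$ is a scalar multiple of $\sigma(u)^-$ times an idempotent. The right-hand side is therefore a pairing of two elements of $\bff^-1_{\la'}$, after rewriting $\rho(u^+)y$ in the basis $\bff^-\bff^+1_{\la'}$ and observing that only the $\bff^-1_{\la'}$-component survives. To see this last point, pair against $x'^-1_{\la'}$ and apply (2) again to the $+$-parts: a term $z^-w^+1_{\la'}$ with $|w|\neq0$ pairs to zero, because moving $w^+$ across turns it into a lowering element that sends $1_{\la'}$ into a different weight space, where (1) forces the pairing to vanish. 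Condition (3) then fixes the remaining values, so the pairing is unique.

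\emph{Existence.} I would not construct the pairing by hand. Instead I would invoke Lusztig's form from \cite[26.1.2]{Lusztig-IntroQuantumGroup}. It is defined via the modules $\Lambda_{\la'}\otimes{}^\omega\Lambda_{\la''}$ and the inverse limit over $\la''-\la'=\la$, and it is shown there to satisfy (1), the adjunction with $\rho$, and $(x^-1_\la,x'^-1_\la)=(x,x')$. Symmetry follows because $(y,x)$ also satisfies (1)--(3) (using $\rho^2=1$), so uniqueness gives $(x,y)=(y,x)$. The main work here is checking that our $\rho$, defined with the factor $v^{\nu\circ(\la+\la')/2}$, matches Lusztig's convention; I would verify this on the generators $\theta_i^\pm1_\zeta$.

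\emph{Characterization.} By \cite[26.3.1]{Lusztig-IntroQuantumGroup}, for $b,b'\in\dot{\bfB}$ we have $(b,b')\in\delta_{b,b'}+v^{-1}\ZZ[[v^{-1}]]\cap\QQ(v)$. Together with bar-invariance of $\dot{\bfB}$, this shows that $\pm\dot{\bfB}$ is contained in the given set. Conversely, let $\beta=\sum c_b b$ with $c_b\in\A$ be bar-invariant. Then $c_b=\bar c_b$ for all $b$, so each $c_b$ is symmetric under $v\mapsto v^{-1}$. Expanding $(\beta,\beta)=\sum c_b^2+(\text{terms in }v^{-1}\QQ[[v^{-1}]])$ and comparing leading coefficients, the condition $(\beta,\beta)\in1+v^{-1}\QQ[[v^{-1}]]$ forces all $c_b\in\ZZ$ with $\sum c_b^2=1$. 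Hence $\beta=\pm b$ for a single $b$, which is the standard argument of \cite[14.2.2]{Lusztig-IntroQuantumGroup}. The main obstacle is the existence and normalization step, that is, matching conventions with Lusztig's construction; the rest is formal.
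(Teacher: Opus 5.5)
Your overall route is the paper's. The paper proves this proposition purely by citation: \cite[Theorem 26.1.2]{Lusztig-IntroQuantumGroup} for the pairing and \cite[Theorem 26.3.1]{Lusztig-IntroQuantumGroup} for the description of $\pm\dot{\bfB}$.

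The problem is the uniqueness argument you write out yourself, on which your derivation of symmetry also rests. After expanding in the spanning set $\bff^-\bff^+1_{\la'}$, a term $z^-w^+1_{\la'}$ with $|w|\neq0$ does \emph{not} in general pair to zero with $x'^-1_{\la'}$. There are two reasons your justification fails:
\begin{itemize}
\item Condition (2) only lets you move the leftmost factor, which here is $z^-$, not $w^+$.
\item Every move allowed by (2) keeps both arguments in one and the same block $1_\mu\dot{\bfU}1_{\mu'}$, so (1) can never produce the vanishing you invoke.
\end{itemize}
Concretely, in rank one put $E=\theta^+$, $F=\theta^-$ and $n=\langle\alpha^\vee,\la'\rangle$. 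Use the relation $EF^{(2)}1_{\la'}=F^{(2)}E1_{\la'}+[n-1]F1_{\la'}$, the identity $(a,uy)=(\rho(u)a,y)$ (a consequence of (2) and $\rho^2=1$), the formula $\rho(E1_{\la'-2\alpha})=v^{n-3}F1_{\la'-\alpha}$, and (3). These give
\[(F1_{\la'},F^{(2)}E1_{\la'})=v^{n-3}[2](\theta^{(2)},\theta^{(2)})-[n-1](\theta,\theta)=\frac{v^{-n}}{(1-v^{-2})^{2}}\neq0.\]
Your procedure would assign this pairing the value $0$, so the formula it produces is wrong.

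The repair is to expand in the other order. Write $\rho(u^+1_\mu)y$ as a combination of elements $w^+z^-1_{\la'}$. Then, with $\mu'=\la'-|z|$,
\[(x'^-1_{\la'},w^+z^-1_{\la'})=(\rho(w^+1_{\mu'})x'^-1_{\la'},z^-1_{\la'}).\]
Since $\rho(w^+1_{\mu'})$ is a power of $v$ times $1_{\mu'}\sigma(w)^-$, condition (3) evaluates this term as a power of $v$ times $(\sigma(w)x',z)$. Every term contributes and nothing has to vanish. This proves uniqueness, after which your symmetry argument goes through. Alternatively, cite uniqueness from 26.1.2, as the paper does.

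A smaller imprecision is in the characterization. When the $c_b$ contain positive powers of $v$, $(\beta,\beta)$ is not $\sum_b c_b^2$ plus an element of $v^{-1}\QQ[[v^{-1}]]$. Let $N$ be the top $v$-degree among the $c_b$. The coefficient of $v^{2N}$ in $(\beta,\beta)$ is the sum of the squares of the $v^N$-coefficients of the $c_b$, which forces $N=0$. This is the argument of \cite[14.2.2]{Lusztig-IntroQuantumGroup}, and presumably what you meant by comparing leading coefficients.
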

\begin{proof}
    The first statement is \cite[Theorem 26.1.2]{Lusztig-IntroQuantumGroup} and the second statement is \emph{loc. cit. }Theorem 26.3.1. In the notation there, we have $\mathbf{A}=\QQ[v^{-1}]]\cap\QQ(v)$, see \emph{loc. cit.} \S14.2.1.
\end{proof}

\subsection{Highest weight modules}
Recall the following notion from \cite[3.4.1, 31.1.5]{Lusztig-IntroQuantumGroup}:

\begin{defi}\label{defi:unital Udotmodule}
    Let $R$ be a commutative $\A$-algebra. A ${}_{R}\dot{\mathbf{U}}$-module $M$ is said to be \emph{unital} if for any $m\in M$, we have $1_{\lambda}\cdot m=0$ for all but finitely many $\lambda\in X$, and $\sum_{\lambda\in X}1_{\lambda}\cdot m=m$. Let ${}_R\dot{\bfU}\text{-}\mod$ denote the category of \emph{unital} ${}_R\dot{\bfU}$-modules (we will not consider non-unital ${}_R\dot{\bfU}$ modules) and let ${}_R\dot{\bfU}\text{-}\mod^{ft}$ be its full subcategory consisting of modules that are finitely generated as $R$-modules. 
\end{defi}

Let us recall the construction of the basic building blocks of the category ${}_R\dot{\bfU}\text{-}\mod$.\par
Let $\lambda \in X^+$ be a dominant integral weight. Let $\Lambda_\lambda$ be the $\QQ(v)$-vector space $\mathbf{f}/T_\lambda$ where $T_\lambda\coloneqq\sum_{i\in\Delta} \mathbf{f}\theta_i^{\langle\a_i^{\vee},\lambda\rangle+1}$. Let $\eta_\lambda = 1 + T_\lambda\in\La_\la$. The modified form ${}_{\A}\dot{\mathbf{U}}$ acts naturally on $\Lambda_\lambda$ through $\theta_i^+1_{\lambda'}\cdot \eta_\lambda = 0$ for $i\in\Delta, \lambda' \in X$ and $x^{-}1_{\lambda'}\cdot \eta_\lambda = \delta_{\lambda,\lambda'} x + T_\lambda$ for $x \in \mathbf{f}, \lambda' \in X$. Let
$${}_{\A}\Lambda_{\lambda} \coloneqq{}_{\A}\dot{\mathbf{U}}\cdot \eta_\lambda\subset\La_\la$$
be the ${}_{\A}\dot{\mathbf{U}}$-submodule of $\La_\la$ generated by the highest weight vector $\eta_\la$. For any $\A$-algebra $R$, denote
${}_{R}\Lambda_{\lambda}\coloneqq{}_{\A}\Lambda_{\lambda}\otimes_{\A}R$. We recall some basic properties of $\La_\la$. 
\begin{theo}\label{theo:Weyl module}\cite{Lusztig-IntroQuantumGroup}
Let $R$ be a commutative $\A$-algebra.
\begin{enumerate}
    \item The $\QQ(v)$-vector space $T_\lambda$ is a coordinate space for $\mathbf{B}$ and the map $b\mapsto b\eta_\la$ defines a bijection from $\bfB(\la)$ onto a basis of $\La_\la$. 
    \item The module ${}_R\Lambda_{\lambda}$ is free as an $R$-module and belongs to the category ${}_R\dot{\bfU}\text{-}\mod^{ft}$.
    \item The weights of ${}_R\Lambda_{\lambda}$ lie in the set $\{\mu\in X\mid w_0(\lambda)\leq \mu\leq \lambda\}$ and the highest weight space is the rank-one free $R$-module with basis $\eta_\la$.
\end{enumerate}
\end{theo}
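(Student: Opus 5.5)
The plan is to reduce all three statements to properties of the canonical basis $\bfB$ of $\bff$ established in \cite{Lusztig-IntroQuantumGroup}. The structural input is the right-handed version of \cite[14.3.2]{Lusztig-IntroQuantumGroup}: for every $i\in\Delta$ and $n\ge0$, the subspace $\bff\theta_i^n$ is spanned by ${}^\sigma\bfB_{i;\ge n}=\bfB\cap\bff\theta_i^n$. This follows from the left-handed statement for $\theta_i^n\bff$ by applying $\sigma$, using $\sigma(\bfB)=\bfB$.

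For (1), I would first note that $T_\la=\sum_i\bff\theta_i^{\langle\a_i^\vee,\la\rangle+1}$ is a sum of subspaces each spanned by a subset of $\bfB$. Hence $T_\la$ is itself spanned by the subset
\[\bigcup_{i\in\Delta}{}^\sigma\bfB_{i;\ge\langle\a_i^\vee,\la\rangle+1},\]
so it is a coordinate space. By definition of $\bfB(\la)$, this subset is exactly $\bfB\setminus\bfB(\la)$. It follows that the images $b\eta_\la$ for $b\in\bfB(\la)$ form a basis of $\La_\la=\bff/T_\la$. The one genuinely non-formal point in the whole argument is the spanning property of $\bff\theta_i^n$ by canonical basis elements, together with the finiteness of $\bfB(\la)$. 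For the latter I would use finite dimensionality of $\La_\la$, via integrability and the Weyl-type character results in \cite[\S6]{Lusztig-IntroQuantumGroup}. I expect this step to be the main obstacle; everything else is bookkeeping.

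For (2), I would show that ${}_\A\La_\la={}_\A\dot{\bfU}\eta_\la$ equals the image of ${}_\A\bff$ under $x\mapsto x^-1_\la\eta_\la$. Indeed, ${}_\A\dot{\bfU}$ is spanned by elements $x^-1_\zeta x'^+$, and $x'^+$ kills $\eta_\la$ unless $x'=1$, while $1_\zeta\eta_\la=\delta_{\zeta,\la}\eta_\la$. Since $\bfB$ is an $\A$-basis of ${}_\A\bff$, this image is spanned over $\A$ by the vectors $b\eta_\la$. Those with $b\notin\bfB(\la)$ vanish because $b\in T_\la$, and the remaining ones are linearly independent by (1). So ${}_\A\La_\la$ is free with finite basis $\{b\eta_\la\}_{b\in\bfB(\la)}$, and freeness survives base change to any $R$. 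Each $b\eta_\la$ is a weight vector of weight $\la-|b|$, so ${}_R\La_\la$ is a unital module with finitely many weights, hence lies in ${}_R\dot{\bfU}\text{-}\mod^{ft}$.

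For (3), every weight has the form $\la-|b|\le\la$. The weight-$\la$ part corresponds to $\bfB_0=\{1\}$, so the highest weight space is $R\eta_\la$. For the lower bound I would invoke the Weyl group symmetry of the weights of the integrable module $\La_\la$, obtained from Lusztig's braid group operators \cite[\S5.2]{Lusztig-IntroQuantumGroup}. If $\mu$ is a weight, then so is $w_0\mu$, hence $w_0\mu\le\la$. Applying $w_0$, which maps $\NN^\Delta$ to $-\NN^\Delta$, yields $\mu\ge w_0\la$. Since the weights of ${}_R\La_\la$ are those of ${}_\A\La_\la$, the statement holds over every $R$.
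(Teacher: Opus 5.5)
Your proposal is correct and is essentially the paper's proof with the citations unpacked. The paper invokes Lusztig's Theorem 14.4.11 for (1), whose proof is exactly your $\sigma$-twisted use of 14.3.2; it derives (2) from (1) together with 3.4.1 and 31.1.5, as you do; and it treats (3) as a consequence of the construction plus a citation to 18.1.1. One small simplification: finiteness of $\bfB(\la)$ already follows from your argument for (3), since the weights lie in the finite set $\{\mu\mid w_0(\la)\le\mu\le\la\}$ and each weight space is a quotient of some finite-dimensional $\bff_\nu$, so no appeal to the character results of \S 6 is needed.
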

\begin{proof}
    The assertion (1) is \cite[Theorem 14.4.11]{Lusztig-IntroQuantumGroup}. Together with \cite[3.4.1, 31.1.5]{Lusztig-IntroQuantumGroup}, it implies (2). The assertion (3) is a consequence of the construction of $\Lambda_{\lambda}$, and is also described in \cite[18.1.1]{Lusztig-IntroQuantumGroup}.
\end{proof}

The following result gives a characterization of the two-sided cells $\dot{\mathbf{B}}[\lambda]$ in terms of the modules $\Lambda_{\lambda}$.
\begin{lem}\label{lem:characterization subset Blambda}
    For any $\lambda\in X^+$, an element $b\in \dot{\mathbf{B}}$ lies in $\dot{\bfB}[\la]$ if and only if the following conditions are satisfied:
    \begin{enumerate}
        \item  $b\cdot\Lambda_{\lambda}\neq 0$, and 
        \item if $b\cdot\Lambda_{\tau}\neq 0$ for some $\tau \in X^+$, then $\tau\geq\lambda$.
    \end{enumerate}
    In particular,  we have 
 \begin{equation}\label{eq:Blambda inside leq lambda}
     \Set{b\in \dot{\mathbf{B}}|b\cdot\Lambda_{\lambda}\neq 0}\subset \dot{\mathbf{B}}[\leq \lambda].
 \end{equation}
\end{lem}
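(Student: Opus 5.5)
We will take Lusztig's definition of the two-sided cells from \cite[\S29.1]{Lusztig-IntroQuantumGroup} as the starting point. For $\la\in X^+$, let $\dot{\bfU}[\ge\la]'$ be the set of $u\in\dot{\bfU}$ that annihilate $\La_\tau$ for every $\tau\in X^+$ with $\tau\not\ge\la$. Lusztig shows that this subspace is spanned by $\dot{\bfB}\cap\dot{\bfU}[\ge\la]'$ \cite[29.1.2--29.1.3]{Lusztig-IntroQuantumGroup}. The cell is then $\dot{\bfB}[\la]=(\dot{\bfB}\cap\dot{\bfU}[\ge\la]')\setminus\bigcup_{\mu>\la}(\dot{\bfB}\cap\dot{\bfU}[\ge\mu]')$. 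Consequently $\dot{\bfU}[\ge\la]'=\dot{\bfU}[\ge\la]$, which gives the following.

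\emph{(a)} If $b\in\dot{\bfB}[\mu]$, then $b$ kills $\La_\tau$ whenever $\tau\not\ge\mu$.

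\emph{(b)} If $b\in\dot{\bfB}$ kills every $\La_\tau$ with $\tau\not\ge\la$, then $b\in\dot{\bfB}[\mu]$ for some $\mu\ge\la$.

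The one genuinely nontrivial input is the following claim.

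\emph{(c)} Every $b\in\dot{\bfB}[\la]$ satisfies $b\cdot\La_\la\ne0$.

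To prove (c), write $b=\beta_\la(b_1,b_2)$ using Proposition~\ref{prop:parametrize-B[la]}. The difference $b-b_1^-1_\la\sigma(b_2)^+$ lies in $\dot{\bfU}[>\la]$, and by (a) it kills $\La_\la$, because $\la\not>\la$. So it suffices to find $w\in\La_\la$ with $b_1^-1_\la\sigma(b_2)^+w\ne0$.

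Since the weight-$\la$ space of $\La_\la$ is $\QQ(v)\eta_\la$ (Theorem~\ref{theo:Weyl module}(3)), the vector $\sigma(b_2)^+w$ is a scalar multiple $c\,\eta_\la$. Then $b_1^-1_\la\,c\,\eta_\la=c\,b_1\eta_\la$, and $b_1\eta_\la\ne0$ by Theorem~\ref{theo:Weyl module}(1), because $b_1\in\bfB(\la)$. It remains to choose $w$ of weight $\la-|b_2|$ with $c\ne0$.

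For this I would use the contravariant form on $\La_\la$ from \cite[19.1.2]{Lusztig-IntroQuantumGroup}. With respect to it, $x^+$ is adjoint, up to a nonzero scalar and a power of $v$, to $\rho(x^+)$, which is essentially $\sigma(x)^-$ twisted by $\#$. Hence $(\sigma(b_2)^+w,\eta_\la)$ equals, up to a unit, $(w,\sigma\sigma(b_2)^-\eta_\la)=(w,b_2^-\eta_\la)$. The vector $b_2^-\eta_\la$ is nonzero because $b_2\in\bfB(\la)$, and the form is nondegenerate, so some $w$ gives $c\ne0$. Matching the twists in $\rho$ exactly is the main technical obstacle; only nonvanishing of the scalar is needed, so the bookkeeping is harmless.

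With (a), (b), (c) in hand, the lemma is formal.

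\emph{Forward direction.} If $b\in\dot{\bfB}[\la]$, then (c) gives condition (1). By (a), $b\cdot\La_\tau\ne0$ forces $\tau\ge\la$, which is condition (2).

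\emph{Converse.} Suppose $b$ satisfies (1) and (2). Condition (2) says $b$ kills all $\La_\tau$ with $\tau\not\ge\la$, so by (b) we have $b\in\dot{\bfB}[\mu]$ for some $\mu\ge\la$. Applying (a) to $\mu$, condition (1) gives $\la\ge\mu$. Hence $\mu=\la$.

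\emph{The inclusion \eqref{eq:Blambda inside leq lambda}.} If $b\cdot\La_\la\ne0$ and $b\in\dot{\bfB}[\mu]$, then (a) forces $\la\ge\mu$, i.e.\ $b\in\dot{\bfB}[\le\la]$.
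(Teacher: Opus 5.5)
Your argument is correct. For (a) and (b) it agrees with the paper, which simply cites \cite[29.1.3, 29.1.4]{Lusztig-IntroQuantumGroup}. The ingredients there are the annihilator definition of the cells, the fact that $\dot{\bfU}[\ge\la]'$ is spanned by canonical basis elements, and the partition $\dot{\bfB}=\bigsqcup_{\la}\dot{\bfB}[\la]$. The partition is what justifies your identification $\dot{\bfU}[\ge\la]'=\dot{\bfU}[\ge\la]$. You pass over why each $b$ lies in only finitely many $\dot{\bfU}[\ge\mu]'$, but that is part of what Lusztig's 29.1.3 supplies.

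You differ from the paper on (c). The paper imports the nonvanishing of cell elements on $\La_\la$ from Lusztig; it later cites \cite[29.1.6]{Lusztig-IntroQuantumGroup} for exactly this fact in Lemma \ref{lem:B(lambda)-characterization}. You instead derive it from the leading-term parametrization of Proposition \ref{prop:parametrize-B[la]} together with the nondegenerate contravariant form on $\La_\la$. Your adjointness step is sound, because for homogeneous $x$, $\rho(x^+)$ acts on weight vectors as a power of $v$ times $\sigma(x)^-$. The argument has the merit of showing concretely that the leading term $b_1^-1_\la\sigma(b_2)^+$ acts on $\La_\la$ as a nonzero rank-one operator.

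However, this route is not more elementary than the citation. Proposition \ref{prop:parametrize-B[la]}, i.e.\ \cite[Prop.~4.4]{Lusztig-Infinity}, builds on Lusztig's Chapter~29 theory of cells, where (c) is already established, so you are reversing the natural logical order of the literature. Inside the paper this is harmless, since that proposition is an imported result stated before this lemma. Still, citing Lusztig directly, as the paper does, is shorter and more natural.
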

\begin{proof}
    This is a consequence of \cite[Lemma 29.1.3, 29.1.4]{Lusztig-IntroQuantumGroup}. 
\end{proof}
\begin{cor}\label{cor:B[la]-W-la}
    For any $\la\in X^+$, the set $\{\mu\in X|1_{\mu}\in \dot{\mathbf{B}}[\lambda]\}$ coincides with $W\cdot\la$, the Weyl group orbit of $\la$. 
\end{cor}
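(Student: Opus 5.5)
The plan is to apply Lemma \ref{lem:characterization subset Blambda} directly to the idempotents $1_\mu$, which lie in $\dot{\bfB}$ (they are the elements $1\diamondsuit_\mu 1$). By that lemma, $1_\mu\in\dot{\bfB}[\la]$ if and only if (1) $1_\mu\cdot\La_\la\neq0$, and (2) every $\tau\in X^+$ with $1_\mu\cdot\La_\tau\neq0$ satisfies $\tau\ge\la$. Now $1_\mu$ acts on a unital module as the projection onto the $\mu$-weight space. Hence $1_\mu\cdot\La_\tau\ne0$ exactly when $\mu$ is a weight of $\La_\tau$. So everything reduces to a statement about weights: $\mu\in W\la$ if and only if $\mu$ is a weight of $\La_\la$, and every dominant $\tau$ having $\mu$ as a weight satisfies $\tau\ge\la$.

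To study weights of $\La_\tau$ over $\QQ(v)$, I will use two facts.
\begin{itemize}
\item The character of $\La_\tau$ is $W$-invariant. This comes from the $\mathfrak{sl}_2$-type action of $\theta_i^{(n)\pm}$ on each $i$-string: the weight space dimensions are symmetric under $s_i$, since $\La_\tau$ is integrable and finite dimensional.
\item By Theorem \ref{theo:Weyl module}(3), every weight $\mu$ of $\La_\tau$ satisfies $\mu\le\tau$, and the weight $\tau$ itself occurs.
\end{itemize}

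Suppose first that $\mu=w\la$. Then $\mu$ is a weight of $\La_\la$ by $W$-invariance, which gives (1). For (2), if $\mu$ is a weight of $\La_\tau$, then so is $\la=w^{-1}\mu$, and therefore $\la\le\tau$.

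Conversely, suppose (1) and (2) hold for $\mu$. Let $\mu^+$ be the dominant element of $W\mu$. By (1) and $W$-invariance, $\mu^+$ is a weight of $\La_\la$, so $\mu^+\le\la$. On the other hand, $\mu$ is a weight of $\La_{\mu^+}$, because $\mu^+$ is the highest weight there and $W$-invariance moves it to $\mu$. Applying (2) with $\tau=\mu^+$ gives $\mu^+\ge\la$. Hence $\mu^+=\la$, and so $\mu\in W\la$.

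The main obstacle is justifying the $W$-invariance of the weights of $\La_\tau$ in the quantum setting. I would get it from integrability of $\La_\tau$ (the operators $\theta_i^{(n)\pm}$ act locally nilpotently) together with the standard $\mathbf{U}_{v_i}(\mathfrak{sl}_2)$ theory on weight strings. Alternatively, I would cite Lusztig's braid group symmetries $T_i$ (\cite[\S5]{Lusztig-IntroQuantumGroup}), which map the $\mu$-weight space isomorphically onto the $s_i\mu$-weight space. A smaller point is checking that $1_\mu$ indeed belongs to $\dot{\bfB}$; this is immediate from Definition \ref{def:B-dot} with $b_1=b_2=1$, since there are no lower terms.
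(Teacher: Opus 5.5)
Your proposal is correct and follows essentially the same route as the paper. You reduce, via Lemma \ref{lem:characterization subset Blambda} and the fact that $1_\mu$ projects onto the $\mu$-weight space, to a statement about weights of the $\La_\tau$, and then conclude using $W$-invariance of those weights (the paper cites \cite[Proposition 5.2.7]{Lusztig-IntroQuantumGroup}, the braid-group action you mention) together with Theorem \ref{theo:Weyl module}(3); the paper phrases the last step as ``the set is a union of $W$-orbits whose only dominant element is $\la$, and every orbit meets $X^+$'', which is your two-inclusion argument repackaged.
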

\begin{proof}
    For any $\mu\in X$, the element $1_{\mu}$ acts on a $\dot{\bfU}$-module as the projection onto the $\mu$-weight space. Thus by Lemma \ref{lem:characterization subset Blambda},  we have $1_{\mu}\in\dot{\mathbf{B}}[\lambda]$ if and only if $\mu$ is a weight for $\Lambda_{\lambda}$ and whenever it is a weight for $\Lambda_{\tau}$ for some $\tau\in X^+$ then $\tau\geq\lambda$.
    Since the Weyl group $W$ permutes the weights of any integrable (in particular, finite dimensional) $\dot{\bfU}$-module (see \cite[Proposition 5.2.7]{Lusztig-IntroQuantumGroup}), the set  $\{\mu\in X|1_{\mu}\in \dot{\mathbf{B}}[\lambda]\}$ is a union of $W$-orbits. According to the description of the weights of $\La_\la$ (see Theorem \ref{theo:Weyl module}), the only dominant weight in this set is $\lambda$. Since any $W$-orbit in $X$ meets $X^+$, we see that the set is precisely the $W$-orbit of $\la$.
\end{proof}

\begin{lem}\label{lem:B(lambda)-characterization}
    Let $\lambda\in X^+$ and $b\in\bf{B}$. The following are equivalent:
    \begin{enumerate}
        \item[(i)] $b\in\bf{B}(\lambda)$;
        \item[(ii)] $b^-1_\lambda\in\dot{\bf{B}}[\lambda]$;
        \item[(iii)] $1_\lambda\sigma(b)^+\in\dot{\bf{B}}[\lambda]$;
        \item[(iv)] $b^+1_{-\lambda}\in\dot{\bf{B}}[-w_0(\lambda)]$.
    \end{enumerate}
\end{lem}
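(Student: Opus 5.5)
We first prove (i)$\Leftrightarrow$(ii), then obtain (iii) and (iv) from it using the involutions in Proposition~\ref{prop:B-dot-involution}.

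\emph{(i)$\Leftrightarrow$(ii).} Suppose first that $b\in\bfB(\la)$. Proposition~\ref{prop:parametrize-B[la]} with $b_1=b$, $b_2=1$ gives $\beta_\la(b,1)\in\dot{\bfB}[\la]$ and
\[\beta_\la(b,1)-b^-1_\la\in\dot{\bfU}[>\la].\]
(Here $1\in\bfB(\la)$, since $1\in{}^\sigma\bfB_{i;0}$ for every $i$.) We must show $\beta_\la(b,1)=b^-1_\la$. By Definition~\ref{def:B-dot}, the element $1\diamondsuit_\la b=b^-1_\la$ plus lower terms; but $\mathrm{tr}|c_1|<0$ is impossible, so there are no lower terms and $b^-1_\la\in\dot{\bfB}$ (the ordering of $b^+b^-$ versus $b^-1$ is the convention in Definition~\ref{def:B-dot} with $b_1=1$). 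Hence $\beta_\la(b,1)$ and $b^-1_\la$ are two elements of the basis $\dot{\bfB}$ whose difference lies in the span of $\dot{\bfB}[>\la]$. If they were distinct, linear independence would force both to lie in $\dot{\bfB}[>\la]$, contradicting $\beta_\la(b,1)\in\dot{\bfB}[\la]$. So they coincide, and (ii) holds.

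Conversely, suppose $b^-1_\la\in\dot{\bfB}[\la]$. By Lemma~\ref{lem:characterization subset Blambda}(1), $b^-1_\la\cdot\La_\la\neq0$. Since $\La_\la$ has $\la$-weight space spanned by $\eta_\la$ and $b^-$ lowers weights, this means $b\eta_\la\ne0$ in $\La_\la=\bff/T_\la$. By Theorem~\ref{theo:Weyl module}(1), $T_\la$ is a coordinate subspace for $\bfB$, namely it is spanned by $\bfB\setminus\bfB(\la)$. Therefore $b\notin T_\la$ forces $b\in\bfB(\la)$.

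\emph{(ii)$\Leftrightarrow$(iii).} Apply $\sigma$. We have $\sigma(b^-1_\la)=1_{-\la}\sigma(b)^-$, which equals $\sigma(b)^-1_{-\la+|b|}$. This is not directly (iii), so instead compose with $\omega$: $\sigma\omega(b^-1_\la)=\sigma(b^+1_{-\la})=1_\la\sigma(b)^+$. Proposition~\ref{prop:B-dot-involution} shows that both $\sigma$ and $\omega$ send $\dot{\bfB}[\mu]$ to $\dot{\bfB}[-w_0\mu]$, so $\#=\sigma\omega$ preserves each $\dot{\bfB}[\la]$. Since $\#$ is an involution, $b^-1_\la\in\dot{\bfB}[\la]$ if and only if $1_\la\sigma(b)^+\in\dot{\bfB}[\la]$.

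\emph{(ii)$\Leftrightarrow$(iv).} Apply $\omega$ alone: $\omega(b^-1_\la)=b^+1_{-\la}$, and $\omega$ maps $\dot{\bfB}[\la]$ bijectively onto $\dot{\bfB}[-w_0\la]$.

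The main obstacle is the identification $b^-1_\la\in\dot{\bfB}$, including the sign. This rests on matching the ordering conventions of Definition~\ref{def:B-dot} and using that a $\diamondsuit$-element with trivial first component has no lower terms. If that matching fails, one can instead use Proposition~\ref{prop:bilinear pairing characterizing dotB}: $b^-1_\la$ is bar-invariant, and $(b^-1_\la,b^-1_\la)=(b,b)\in1+v^{-1}\ZZ[[v^{-1}]]$ by \eqref{eq:pairing characterizing B}, so $b^-1_\la\in\pm\dot{\bfB}$. The sign is then fixed because the coefficient of $b^-1_\la$ in the basis element $1\diamondsuit_\la b$ is $+1$.
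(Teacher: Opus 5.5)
Your proof is correct. For (ii)$\Rightarrow$(i) and for (ii)$\Leftrightarrow$(iii)$\Leftrightarrow$(iv) it is the paper's argument. Nonvanishing of $b^-1_\la$ on $\La_\la$ forces $b\eta_\la\neq0$, i.e.\ $b\notin T_\la$, and the involutions $\omega$ and $\sigma\omega$ together with Proposition~\ref{prop:B-dot-involution} transport the statement between the three cells.

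The only real difference is (i)$\Rightarrow$(ii), and through it (i)$\Rightarrow$(iii). The paper simply cites \cite[Lemma 4.5]{Lusztig-Infinity}. You instead derive it from three ingredients: the parametrization $\beta_\la$ of Proposition~\ref{prop:parametrize-B[la]}, the fact that $1\diamondsuit_\la b=b^-1_\la\in\dot{\bfB}$ (Definition~\ref{def:B-dot} leaves no room for lower terms when the first factor is $1$), and linear independence of $\dot{\bfB}$. This makes the step self-contained relative to facts already recorded in the paper, at essentially no cost.

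The sign issue you flag as the main obstacle is not actually one. Suppose you only knew $b^-1_\la=\epsilon c$ with $c\in\dot{\bfB}$ and $\epsilon=\pm1$, which is what your bilinear-form fallback gives. The same linear-independence argument first forces $c=\beta_\la(b,1)$. Then $(1-\epsilon)\beta_\la(b,1)\in\dot{\bfU}[>\la]$ forces $\epsilon=1$, so no separate sign argument is needed.
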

\begin{proof}
The implications (i)$\Rightarrow$(ii) and (i)$\Rightarrow$(iii) follow from \cite[Lemma 4.5]{Lusztig-Infinity}. \par 
(ii)$\Rightarrow$(i): By \cite[29.1.6]{Lusztig-IntroQuantumGroup}, the element $b^-1_\lambda\in\dot{\bf{B}}[\lambda]$ does not act by zero on $\Lambda_\lambda$. This implies that $b\eta_\lambda\ne0$ and hence $b\in\bfB(\lambda)$. \par 
Finally the equivalences (ii)$\Leftrightarrow$(iii)$\Leftrightarrow$(iv) follow from Proposition \ref{prop:B-dot-involution}. 
\end{proof}

Besides the two-sided cells, there is another family of finite subsets of $\dot{\bfB}$ that will be useful. For any $\A$-algebra $R$ and ${}_R\dot{\bfU}$-module $M$, we let ${}^\omega M$ be the ${}_R\dot{\bfU}$-module with the same underlying $R$-module as $M$ and with $u\in{}_R\dot{\bfU}$ acting on ${}^\omega M$ by the operator $\omega(u)$ on $M$. Recall from \cite[\S1.9]{LusztigGroupScheme} that for any $\lambda,\lambda'\in X^+$ there is a unique subset $\dot{\bf{B}}_{\lambda,\lambda'}\subset\dot{\bf{B}}$ mapping bijectively to a basis of ${}^\omega\Lambda_\lambda\otimes \Lambda_{\lambda'}$ under the map $a\mapsto a(\xi_{-\lambda}\otimes \eta_{\lambda'})$, where $\xi_{-\la}\in{}^\omega\La_\la$ is the lowest-weight vector corresponding to $\eta_\la$. We have an explicit description of these sets in the following special cases: 
\[\dot{\bfB}_{\lambda,0}=\{b^+1_{-\lambda};b\in\bfB(\lambda)\},\quad\dot{\bfB}_{0,\lambda}=\{b^-1_{\lambda};b\in\bfB(\lambda)\}=\omega(\dot{\bfB}_{\lambda,0}).\]
(See \cite[\S3.4]{Lusztig-IntroQuantumGroup}. In the notation of \emph{loc.cit.}, $\bfB_\la=\bfB_\la'=\bfB(\la)$.)\par 
 By \cite[Lemma 4.5]{Lusztig-Infinity} we have $\dot{\bfB}_{\lambda,0}\subset\sigma(\dot{\bfB}[\lambda])=\dot{\bfB}[-w_0(\lambda)]$ and $\dot{\bfB}_{0,\lambda}\subset\dot{\bfB}[\lambda]$. In particular for any $\lambda,\lambda'\in X^+$ with $\lambda\ne\lambda'$ we have
\[\dot{\bfB}_{\lambda,0}\cap\dot{\bfB}_{\lambda',0}=\varnothing,\quad\dot{\bfB}_{0,\lambda}\cap\dot{\bfB}_{0,\lambda'}=\varnothing.\]

\section{Lusztig's construction of the Chevalley group schemes}\label{sec:Lusztig-group-scheme}
In this section we first review Lusztig's construction of the quantized coordinate ring of the split reductive group scheme from \S\ref{sec:notation-reductive-group}. Then we study the canonical filtration on the quantized coordinate ring determined by the two-sided cells of the canonical bases. Finally in \S\ref{subsec:functorial-properties} we prove some basic functorial properties of the canonical bases.\par 
We keep the notations from \S\ref{sec:quantum-canonical-bases}. In particular, we fix a based root datum $\Psi$ as in \S\ref{subsubsec:based-root-data}. 

\subsection{Dual canonical bases and quantized coordinate rings}
\begin{defi}
    For any \(a \in \dot{\mathbf{B}}\) we define an $\A$-linear map $a^*\colon{}_{\A}\dot{\mathbf{U}}\to\A$ by $a' \mapsto \delta_{a,a'}$ for all $a' \in \dot{\mathbf{B}}$. In particular,  $1_0^*=\epsilon$ is the counit of ${}_\A\dot{\bfU}$. Then $\dot{\bfB}^*\coloneqq\{a^*, a\in\dot{\bfB}\}$ is an $\A$-linearly independent subset of $\mathrm{Hom_{\A-mod}}({}_\A\dot{\mathbf{U}},\A)$ called the \emph{dual canonical basis}.\par 
    For any $\la\in X^+$, denote $\dot{\bfB}[\la]^*\coloneqq\{b^*, b\in\dot{\bfB}[\la]\}$. Then we get a partition
    \[\dot{\bfB}^*=\bigsqcup_{\la\in X^+}\dot{\bfB}[\la]^*.\]
    More generally, for any subset $P\subset X^+$, we denote 
    \[\dot{\bfB}[P]^*\coloneqq\bigsqcup_{\la\in P}\dot{\bfB}[\la]^*.\]
\end{defi}

Let ${}_\A\bfO$ be the free $\A$-submodule of $\mathrm{Hom_{\A-mod}}({}_\A\dot{\mathbf{U}},\A)$ spanned by $\dot{\bfB}^*$. It inherits a natural Hopf algebra structure from ${}_\A\dot{\bfU}$ with 
\begin{itemize}
    \item unit element $1_0^*$;
    \item multiplication rule (well-defined by Lemma \ref{lem:structure-constant-finite})
    \[a^*b^*=\sum_{c\in\dot{\bfB}}\hat{m}_c^{ab}c^*,\quad\forall a,b\in\dot{\bfB};\]
    \item comultiplication rule (well-defined by Lemma \ref{lem:structure-constant-finite})
    \[c^*\mapsto\sum_{a,b\in\dot{\bfB}}m_{ab}^c a^*\otimes b^*,\quad\forall c\in\dot{\bfB};\]
    \item counit ${}_\A\bfO\to\A$ defined by 
    \[\varphi\mapsto\sum_{\zeta\in X}\varphi(1_\zeta),\quad\forall\varphi\in{}_\A\bfO;\]
    \item antipode $S^*$, which is defined to be the dual of the antipode $S$ of ${}_\A\dot{\bfU}$.
\end{itemize}
For any commutative $\A$-algebra $R$ we define ${}_R\bfO\coloneqq{}_\A\bfO\otimes_\A R$. Then ${}_R\bfO$ is a Hopf $R$-algebra and it is commutative if $v\mapsto1$ in $R$. When $R=\ZZ$ and $v\mapsto1$, we simply write $\bfO\coloneqq{}_\ZZ\bfO$. Consider the following subsets of $\dot{\mathbf{B}}$ (see \cite[25.2.6]{Lusztig-IntroQuantumGroup}):
\[\dot{\mathbf{B}}^{\ge0}\coloneqq\Set{b^+1_{\lambda}| b\in {\mathbf{B}},\lambda\in X},\quad\dot{\mathbf{B}}^{\le0}\coloneqq\Set{b^-1_{\lambda}| b\in {\mathbf{B}}, \lambda\in X}\]
and the corresponding subset of the dual canonical basis $\dot{\bfB}^*$:
\[\dot{\mathbf{B}}^{\ge0,*}\coloneqq\{b^*,b\in\dot{\mathbf{B}}^{\ge0}\},\quad \dot{\mathbf{B}}^{\le0,*}\coloneqq\{b^*,b\in\dot{\mathbf{B}}^{\le0}\}.\]
Let ${}_R\bfO^{\ge0}$ (resp. ${}_R\bfO^{\le0}$) be the $R$-submodule of ${}_R\bfO$ spanned by $\dot{\mathbf{B}}^{\ge0,*}$ (resp. $\dot{\mathbf{B}}^{\le0,*}$). We can identify the intersection ${}_R\bfO^{\ge0}\cap{}_R\bfO^{\le0}$ with $R[X]$ as $R$-modules (for each $\chi\in X$, the element $e^\chi\in R[X]$ is identified with $1_\chi^*\in{}_R\bfO$). Then we have natural projections
\[\pi^{\ge0}\colon{}_R\bfO\epic{}_R\bfO^{\ge0},\quad\pi^{\le0}\colon{}_R\bfO\epic{}_R\bfO^{\le0},\quad\pi^0\colon{}_R\bfO\to R[X].\]
According to \cite[\S3.5]{LusztigGroupScheme}, there are natural Hopf $R$-algebra structures on ${}_R\bfO^{\ge0}$ and ${}_R\bfO^{\le0}$ such that the maps $\pi^{\ge0}$, $\pi^{\le0}$ and $\pi^0$ are surjective homomorphisms of Hopf $R$-algebras. 

\begin{defi}
    Similar to $\dot{\bfB}^*$, we define the dual canonical basis $\bfB^*\subset\mathrm{Hom_{\A\text{-}\mod}}({}_\A\bff,\A)$ and let ${}_\A\bff^\circ$ be the free $\A$-submodule of $\mathrm{Hom}_{\A\text{-}\mod}({}_\A\bff,\A)$ spanned by $\bfB^*$. For any commutative $\A$-algebra $R$ we denote ${}_R\bff^\circ\coloneqq{}_\A\bff^\circ\otimes_\A R$. As in \cite[\S3.8]{LusztigGroupScheme}, after the specialization $v\mapsto1$, the cocommutative Hopf algebra structure on ${}_\ZZ\bff$ induces a commutative Hopf algebra structure on ${}_\ZZ\bff^\circ$.  
\end{defi}

We have natural projections
\[\pi^{>0},\pi^{<0}\colon{}_R\bfO\epic{}_R\bff^\circ\]
defined by $\pi^{>0}((b^+1_\la)^*)=b^*$ (resp. $\pi^{<0}((b^-1_\la)^*)=b^*$) for all $b\in\bfB,\la\in X$, and all other dual canonical basis elements map to zero. Clearly $\pi^{>0}$ (resp. $\pi^{<0}$) factors through $\pi^{\ge0}$ (resp. $\pi^{\le0}$).\par
Now let $\k$ be a commutative ring, viewed as $\A$-algebra via $v\mapsto1$. The maps $\pi^{>0}$ and $\pi^{<0}$ are homomorphisms of $\k$-Hopf algebras by \eqref{eq:coproduct-formula} and induce $\k$-algebra isomorphisms
\[{}_\k\bfO^{\ge0}\cong{}_\k\bff^\circ\otimes_\k\k[X],\quad{}_\k\bfO^{\le0}\cong{}_\k\bff^\circ\otimes_\k\k[X].\]
By the above discussions $G\coloneqq\Spec({}_\k\bfO)$ is an affine flat group scheme over $\k$ and we have the following closed subgroup schemes of $G$: 
\begin{align*}
    T\coloneqq \Spec(\k[X]),\  B&\coloneqq\Spec({}_\k\bfO^{\ge0}),\ U\coloneqq\Spec({}_\k\bff^\circ),\\
    B^-&\coloneqq\Spec({}_\k\bfO^{\le0}),\ U^-\coloneqq\Spec({}_\k\bff^\circ)
\end{align*}
where $U$ (resp. $U^-$) embeds into $G$ via $\pi^{>0}$ (resp. $\pi^{<0}$). Moreover, we have $B=TU$ and $B^-=TU^-$.  

\begin{theo}\cite{LusztigGroupScheme}\label{theo:Chevalley-Lusztig group scheme}
    The group scheme $G\coloneqq\mathrm{Spec}({}_\k\bfO)$ is a split Chevalley group scheme over $\k$ associated to the based root datum $\Psi$. Moreover, $T$ is a maximal torus and $B$ is a Borel subgroup of $G$ that form part of the pinning of $G$.
\end{theo}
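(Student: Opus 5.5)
This theorem is a citation of Lusztig's result \cite{LusztigGroupScheme}. My plan is to reconstruct the main lines of his argument rather than reprove everything from scratch.

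\emph{Step 1: flat Hopf algebra and rational points.} Since ${}_\k\bfO$ is free over $\k$ with basis $\dot{\bfB}^*$, and is a commutative Hopf algebra when $v\mapsto 1$, $G$ is a flat affine group scheme. I would first treat $\k=\QQ$. There ${}_\QQ\dot{\bfU}$ is the modified enveloping algebra of the split semisimple-plus-torus Lie algebra attached to $\Psi$. By the Peter--Weyl type decomposition, ${}_\QQ\bfO$ is the algebra of matrix coefficients of the finite-dimensional unital modules, which are the direct sums of the ${}_\QQ\La_\la$. The sets $\dot{\bfB}[\la]^*$ span exactly the matrix coefficients of $\La_\la$; this uses Lemma \ref{lem:characterization subset Blambda} and Proposition \ref{prop:parametrize-B[la]}, which give the cardinality $|\bfB(\la)|^2=\dim\La_\la^2$. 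Tannakian reconstruction then identifies $G_\QQ$ with the split connected reductive group with root datum $\Psi$.

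\emph{Step 2: the integral structure.} Next I would show that $T$, $U$, $U^-$ are respectively a split torus, and smooth unipotent groups isomorphic to affine spaces. For $T$ this is immediate from $T=\Spec\k[X]$. For $U$, I would show that ${}_\ZZ\bff^\circ$ is a polynomial ring. To do so, use PBW-type bases of ${}_\A\bff$ attached to a reduced expression of $w_0$, whose dual basis spans the same lattice as $\bfB^*$ because the two bases are related by unitriangular transition matrices. Then I would prove that the multiplication map $U^-\times T\times U\to G$ is an open immersion onto the nonvanishing locus of a suitable function. Concretely, for each fundamental-type dominant $\la$, this function is the product of highest-weight matrix coefficients $\langle\xi_{-\la},\,\cdot\,\eta_\la\rangle$. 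The comparison uses the triangularity of $\beta_\la(b_1,b_2)$ with respect to $b_1^-1_\la\sigma(b_2)^+$ from Proposition \ref{prop:parametrize-B[la]}.

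\emph{Step 3: fiberwise criterion.} From Step 2, $G$ contains a smooth open dense ``big cell'' in every fiber. Together with the translates by Weyl group representatives, which come from the quantum Weyl group operators $T_i$ of Lusztig, these cover $G$. Hence $G$ is smooth with connected fibers. The geometric fibers are then reductive with maximal torus $T$, Borel subgroup $B=TU$ and root datum $\Psi$. This follows by comparing the Lie algebra of $G_\k$ with the specialization of ${}_\k\dot{\bfU}$, and using that unipotent radicals of the fibers are trivial because the ${}_\k\La_\la$ separate points. By SGA3 rigidity, a smooth affine group scheme with split reductive fibers of constant type $\Psi$ is the Chevalley group. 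The pinning comes from the elements $\theta_i^\pm 1_\zeta$.

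\emph{Main obstacle.} The crux is Step 2, the open-cell statement over $\ZZ$. One must show the multiplication map is an isomorphism onto a principal open subset integrally, not just rationally. This requires that the coordinate ring of the cell equals ${}_\ZZ\bff^\circ\otimes\ZZ[X]\otimes{}_\ZZ\bff^\circ$ on the nose. I would attack it by checking that the product map ${}_\A\bfO\to{}_\A\bfO^{\le0}\otimes{}_\A\bfO^{\ge0}$ is injective with image becoming surjective after inverting the highest-weight coefficients. The argument would track the unitriangular expansions from Definition \ref{def:B-dot} on each two-sided cell.
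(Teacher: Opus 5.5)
\paragraph{Summary.} The paper does not reprove this theorem: it cites \cite{LusztigGroupScheme}, and singles out ${}_\k\hat{\bfU}$ together with the description \eqref{eq:be in G} of $G(\k)$ by group-like elements as the key ingredient. Your overall architecture is sensible: a flat Hopf algebra, an integral big cell cut out by a highest-weight coefficient, a fiberwise analysis, and then the isomorphism theorem of SGA3. However, Step 3 has a genuine gap, because your reason for reductivity of the geometric fibers is not valid.

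\paragraph{The gap.} That the ${}_\k\La_\la$ separate points only says they form a faithful family, and every affine group has one. Lie--Kolchin forces $R_u(G_{\bar k})$ to act trivially only on \emph{simple} modules. In positive characteristic the Weyl modules are not simple, so your argument can only be completed in characteristic $0$.

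The other data you list do not close the gap either. The group $\mathbb{G}_m\ltimes\mathbb{G}_a^2$ (with weights $\pm1$) has an open cell $U^-\times T\times U$, one-dimensional root spaces of weights $\pm\alpha$, and $C(T)=T$, yet it is not reductive. A Lie-algebra criterion such as $[\mathfrak{g}_\alpha,\mathfrak{g}_{-\alpha}]\neq0$ is not available on all the fibers you must treat. For example, for $PGL_2$ in characteristic $2$ one has $h_\alpha=\alpha^\vee\otimes1=0$. Your identification of the coroots, and hence of the fiberwise root datum with $\Psi$, rests on the same missing input.

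\paragraph{What is missing.} You need a rank-one, group-level construction, and this is exactly where ${}_\k\hat{\bfU}$ and \eqref{eq:be in G} enter. At $v=1$ the elements $x_i(t)=\sum_{n\ge0,\zeta\in X}t^n\,\theta_i^{(n)+}1_\zeta$ and their analogues $y_i(t)$ are group-like, since the divided powers have binomial coproduct. One then has to show, from the rank-one relations of ${}_\k\dot{\bfU}$, that they define a homomorphism $SL_2\to G$ restricting to $\alpha_i^\vee$ on the diagonal torus. The element $n_i=x_i(1)y_i(-1)x_i(1)$ then normalizes $T$ and acts by $s_i$.

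With this in hand, the $T$-weights of $\mathrm{Lie}\,R_u(G_{\bar k})$ form a $W$-stable set of roots; weight $0$ is excluded because the big cell gives $C_G(T)=T$. So a nontrivial $R_u$ would contain $U_{\pm\alpha_i}$ for some $i$, and hence the non-solvable image of $SL_2$, which is a contradiction.

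\paragraph{Smaller points.} First, in Step 2 the comparison map must land in ${}_\k\bff^\circ\otimes\k[X]\otimes{}_\k\bff^\circ$. Into ${}_\k\bfO^{\le0}\otimes{}_\k\bfO^{\ge0}$ it is the comorphism of $B^-\times B\to G$, whose fibers are $T$-torsors, so it can never become surjective after localization. With the correct target, injectivity holds because ${}_\k\dot{\bfU}$ is spanned by the $b_1^-1_\zeta b_2^+$. Surjectivity after inverting $1_{\la_0}^*$, for a single regular $\la_0\in X^+$, holds because $(b^-1_\la)^*/1_\la^*$ restricts to the coordinate $b^*$ pulled back from $U^-$, and $\bfB=\bigcup_n\bfB(n\la_0)$ by Lemma \ref{lem:B-union}; the same argument works for $U$ and $T$. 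Note that fundamental weights need not lie in $X$, so you should use one regular $\la_0$ rather than a product over fundamental weights.

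Second, the covering of $G$ by Weyl translates of the cell is asserted without proof; it is essentially a Bruhat-type statement. You do not need it: ${}_\ZZ\bfO$ is finitely generated by Corollary \ref{cor:grO-finite} with $P=X^+$, whose proof is independent of this theorem. Moreover, a finite-type group over a field that contains a nonempty smooth open subscheme is smooth.

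Third, Step 1 is never used later. Its claim that $\dot{\bfB}[\la]^*$ spans the matrix coefficients of $\La_\la$ is false: the dual canonical basis is adapted to the filtration ${}_\QQ\bfO_{\le\la}$, not to the isotypic decomposition. For $SL_2$, the element $bc\in\dot{\bfB}[\alpha]^*$ equals $\tfrac12\bigl((ad+bc)-1\bigr)$, which involves the trivial representation.
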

An important ingredient in Lusztig's proof is the following object:
\begin{defi}\label{defi:hatU}
    For any commutative $\A$-algebra $R$, the \emph{(quantized) distribution algebra} for $\Psi$ is the $R$-module ${}_R\hat{\bfU}\coloneqq\mathrm{Hom}_{\A\text{-}\mod}({}_\A\bfO,R)$. More concretely, ${}_R\hat{\bfU}$ consists of infinite formal sums $\sum_{a\in \dot{\bfB}}\xi_aa$ with $\xi_a\in R$. If $R=\QQ(v)$, we simply write $\hat{\bfU}\coloneqq{}_{\QQ(v)}\hat{\bfU}$. 
\end{defi}

The canonical basis $\dot{\bfB}$ gives rise to the linear topology on ${}_R\hat{\bfU}$ in which the $R$-submodules $\{\prod_{b\in\dot{\bfB}\setminus F}Rb\}$, where $F$ runs over all finite subsets of $\dot{\bfB}$, form a basis of open neighborhoods of $0$. Under the identification ${}_R\hat{\bfU}=\mathrm{Hom}_{\mathrm{Mod}_R}({}_R\bfO,R)$, this is the weak* topology, i.e. the coarsest topology on ${}_R\hat{\bfU}$ such that for any $f\in{}_R\bfO$, the $R$-linear map 
\[\mathrm{ev}_f\colon{}_R\hat{\bfU}\to R,\quad\xi\mapsto\xi(f)\]
is continuous, where $R$ is equipped with the discrete topology. \par
The Hopf algebra structure of ${}_R\bfO$ defines an $R$-algebra structure on ${}_R\hat{\bfU}$ and we have a natural embedding of $R$-algebras ${}_{R}\dot{\bfU}\hookrightarrow {}_R\hat{\bfU}$. The coproduct \eqref{eq:U-dot-coproduct} for ${}_R\dot{\bfU}$ extends uniquely to a continuous $R$-algebra homomorphism
\[\hat{\Delta}\colon {}_R\hat{\bfU}\to {}_R\hat{\bfU}\hat{\otimes}{}_R\hat{\bfU}\]
where the completed tensor product ${}_R\hat{\bfU}\hat{\otimes}{}_R\hat{\bfU}$ consists of (infinite) formal sums $\sum_{b,c\in\dot{\bfB}}r_{bc}(b\otimes c)$ where the coefficients $r_{bc}\in R$. The counit $\varepsilon$ and antipode $S$ extend uniquely to continuous homomorphisms $\hat{\varepsilon}\colon {}_R\hat{\bfU}\to R$ and $\hat{S}\colon {}_R\hat{\bfU}\to {}_R\hat{\bfU}^{\mathrm{op}}$.\par 
By a \emph{continuous ${}_R\hat{\bfU}$-module}, we mean a ${}_R\hat{\bfU}$-module $M$ such that the scalar multiplication map ${}_R\hat{\bfU}\times M\to M$ is continuous when $M$ is equipped with the discrete topology. It is easy to see that this condition is equivalent to requiring that the annihilator of any element $m\in M$ is open in ${}_R\hat{\bfU}$. Let ${}_R\hat{\bfU}\text{-}\mod$ be the category of \emph{continuous} ${}_R\hat{\bfU}$-modules and let ${}_R\hat{\bfU}\text{-}\mod^{ft}$ be the full subcategory consisting of modules that are finitely generated $R$-modules.
\begin{prop}\label{prop:uni-mod-equiv}
    The forgetful functor induces an equivalence of categories 
    \[{}_R\hat{\bfU}\text{-}\mod^{ft}\simeq{}_R\dot{\bfU}\text{-}\mod^{ft}.\]
\end{prop}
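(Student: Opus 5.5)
We construct functors in both directions and check that they are mutually quasi-inverse. The forgetful functor $F\colon{}_R\hat{\bfU}\text{-}\mod^{ft}\to{}_R\dot{\bfU}\text{-}\mod^{ft}$ restricts along the embedding ${}_R\dot{\bfU}\into{}_R\hat{\bfU}$.

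First we check that $F$ lands in unital modules. Let $M$ be continuous and finitely generated over $R$, and let $m\in M$. The annihilator of $m$ is open, so it contains $\prod_{b\notin S}Rb$ for some finite $S\subset\dot{\bfB}$. Hence $1_\la m=0$ for all $\la$ outside a finite set. Consider the formal sum $\sum_{\zeta\in X}1_\zeta\in{}_R\hat{\bfU}$, which is the unit of ${}_R\hat{\bfU}$ because it is dual to the counit of ${}_R\bfO$. By continuity, its action on $m$ equals the finite sum $\sum_\zeta 1_\zeta m$. This sum therefore equals $m$. The functor $F$ is faithful, since the underlying $R$-modules agree.

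Next we construct the inverse functor. Let $M$ be a unital ${}_R\dot{\bfU}$-module that is finitely generated over $R$. The key claim is that for each $m\in M$ the set $\{b\in\dot{\bfB}\mid bm\neq0\}$ is finite. Granting this, we define the action of $\xi=\sum_a\xi_a a\in{}_R\hat{\bfU}$ by $\xi m\coloneqq\sum_a\xi_a\,am$, which is a finite sum. The action is continuous by construction.

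It remains to verify associativity, $(\xi\xi')m=\xi(\xi' m)$. The product in ${}_R\hat{\bfU}$ has coefficients $(\xi\xi')_c=\sum_{a,b}\xi_a\xi'_b m^c_{ab}$, which are finite sums by Lemma \ref{lem:structure-constant-finite}(i). Everything then reduces to the identity $ab\,m=\sum_c m_{ab}^c\,cm$ in the $\dot{\bfU}$-module $M$. After this reduction all the sums involved are finite, because only finitely many $c$ act nonzero on $m$.

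Full faithfulness follows from the same density argument. A ${}_R\dot{\bfU}$-linear map $f$ commutes with each $b\in\dot{\bfB}$. Since the $\hat{\bfU}$-action on any fixed element is a finite combination of actions of elements of $\dot{\bfB}$, $f$ also commutes with the action of ${}_R\hat{\bfU}$. Any continuous ${}_R\hat{\bfU}$-module structure extending the given $\dot{\bfU}$-action must agree with the one defined above, because $\dot{\bfU}$ is dense in ${}_R\hat{\bfU}$ and the action is continuous. This gives essential surjectivity and shows the two functors are quasi-inverse.

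The main obstacle is the finiteness claim. Choose finitely many $R$-generators $m_1,\dots,m_r$ of $M$. Because $M$ is unital, each $m_j$ is a finite sum of weight components. Hence $M=\bigoplus_{\la\in\Omega}1_\la M$ for a finite set $\Omega$ of weights.

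A basis element $b\in{}_\eta\dot{\bfB}_\zeta$ kills $M$ unless both $\eta,\zeta\in\Omega$. This alone is not enough, since ${}_\eta\dot{\bfB}_\zeta$ is infinite. We therefore use the two-sided cell structure. We first try to show that $b\cdot M=0$ unless $b\in\dot{\bfB}[\la]$ for $\la$ in a finite set of dominant weights, say those $\la$ with $W\la\cap\Omega\neq\varnothing$ or dominated by such weights. Each $\dot{\bfB}[\la]$ is finite, so this suffices.

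To prove this, we use the ideal property of the cells. By \cite[29.1]{Lusztig-IntroQuantumGroup}, $\dot{\bfU}[\ge\la]$ is a two-sided ideal. Moreover, $b\in\dot{\bfB}[\la]$ lies in the ideal generated by $1_{\mu}$ with $\mu\in W\la$, modulo $\dot{\bfU}[>\la]$.

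An alternative route avoids this structure theory. Over $\QQ(v)$, and by a base-change argument in general, $M$ is integrable. Elements $\theta_i^{(n)\pm}1_\zeta$ with $n$ large act by zero on $M$ because weights are bounded. Then the triangular expansion $b_1\diamondsuit_\zeta b_2=b_1^+b_2^-1_\zeta+\dots$ from Definition \ref{def:B-dot} shows that only finitely many $b\in{}_\eta\dot{\bfB}_\zeta$ act nonzero. Indeed, $b_1^+b_2^-1_\zeta$ kills $M$ once $\mathrm{tr}|b_2|$ exceeds the weight-range bound. The lower terms have smaller trace, so an induction on trace works.

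We would try the second route first, as it is more elementary.
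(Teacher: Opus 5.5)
Your formal framework is sound and matches the paper. This covers unitality of a continuous finitely generated module, extending the action through the structure constants (with associativity reduced to finite sums via Lemma~\ref{lem:structure-constant-finite}), and uniqueness by density. However, everything rests on the claim that only finitely many $b\in\dot{\bfB}$ act nonzero on a given $m\in M$, and the proposal does not establish it. The paper does not prove this claim either: it takes the extension (equivalently, this finiteness) from \cite[1.10(a)]{LusztigGroupScheme}.

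\textbf{Your second route fails.} In $b_1\diamondsuit_\zeta b_2=b_1^+b_2^-1_\zeta+\sum c_1^+c_2^-1_\zeta$, the leading term does die on $M$ once $\mathrm{tr}|b_2|$ is large. The lower terms with small $\mathrm{tr}|c_2|$ need not die, and an induction on trace gives no reason for their sum to vanish, because the vanishing is a cancellation.

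Here is a concrete instance. Take the root datum of $SL_2$ ($X=\ZZ$, $\alpha=2$) and $M={}_R\La_1$. The element $\theta^{(a)-}1_{2a+1}\theta^{(a)+}\in{}_1\dot{\bfB}_1$ equals $\sum_{t=0}^{a}(-1)^t\theta^{(a-t)+}\theta^{(a-t)-}1_1$. For $a\ge2$ the leading term kills $\La_1$. The monomials with $a-t=0$ and $a-t=1$ each act as the identity on the weight-$1$ line, and only their coefficients $(-1)^a,(-1)^{a-1}$ make them cancel. So triangularity alone cannot detect why $b$ kills $M$.

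More conceptually, knowing that all but finitely many monomials $c_1^+c_2^-1_\zeta$ kill $M$ only shows that the annihilator of $M$ has finite corank in each $1_\eta{}_R\dot{\bfU}1_\zeta$. A subspace of finite codimension need not contain any vector of a given basis (e.g.\ the hyperplane of vectors with coordinate sum $0$). So some input relating $\dot{\bfB}$ to annihilators of modules is unavoidable.

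\textbf{Your first route names such an input but does not carry it out.} The ingredient you state only gives $\dot{\bfB}[\la]\subset J_\la+\dot{\bfU}[>\la]$, where $J_\la$ is the ideal generated by the $1_\mu$ with $\mu\in W\la$. The remainder in $\dot{\bfU}[>\la]$ forces an upward induction through $X^+_{>\la}$. That induction has no base case unless you already know that all sufficiently high cells kill $M$, which is the claim itself.

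What you actually need is this: for a finite downward closed $P\subset X^+$ determined by the weights of $M$, the whole ideal ${}_R\dot{\bfU}[X^+\setminus P]$ annihilates $M$. Either prove this from the cell theory over $\A$ (not just over $\QQ(v)$), or cite \cite[1.10(a)]{LusztigGroupScheme} as the paper does.
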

\begin{proof}
    Let $M$ be a continuous ${}_R\hat{\bfU}$-module that is finitely generated as an $R$-module. Choose $R$-module generators $m_1,\dotsc,m_s$. By continuity, the annihilator of each $m_j$ is open and so their finite intersection, which is the annihilator of $M$, is open. Consequently all but finitely many elements in $\dot{\bfB}$ act by $0$ on $M$. In particular, there is a finite subset $X_M\subset X$ such that $1_\zeta M=0$ for all $\zeta\notin X_M$. Therefore the finitely many idempotents $1_\la$ for $\la\in X_M$ give a decomposition $M=\oplus_{\la\in X_M}1_\la M$. Thus $M$ is a unital ${}_R\dot{\bfU}$-module.\par 
    Conversely let $M$ be a unital ${}_R\dot{\bfU}$-module that is finitely generated as an $R$-module. By \cite[1.10(a)]{LusztigGroupScheme}, the ${}_R\dot{\bfU}$-action on $M$ extends to a continuous ${}_R\hat{\bfU}$-action.
\end{proof}
Following \cite{LusztigGroupScheme} we define the following sub-algebras of ${}_R\hat{\bfU}$:
\begin{align}\label{eq:subalgebra-U-hat}
    {}_R\hat{\bfU}^{\ge0}&\coloneqq\{\sum_{\la\in X,b\in\bfB}\xi_{\la,b}b^+1_\la\mid\xi_{\la,b}\in R\},\quad{}_R\hat{\bfU}^{+}\coloneqq\{\sum_{b\in\bfB,\la\in X}\xi_bb^+1_\la\mid \xi_b\in R\},\\
    {}_R\hat{\bfU}^{\le0}&\coloneqq\{\sum_{\la\in X,b\in\bfB}\xi_{\la,b}b^-1_\la\mid\xi_{\la,b}\in R\},\quad{}_R\hat{\bfU}^{-}\coloneqq\{\sum_{b\in\bfB,\la\in X}\xi_bb^-1_\la\mid \xi_b\in R\},\\
    \quad{}_R\hat{\bfU}^0&\coloneqq{}_R\hat{\bfU}^{\ge0}\cap{}_R\hat{\bfU}^{\le0}=\{\sum_{\la\in X}\xi_\la 1_\la\mid\xi_\la\in R\}.
\end{align}

For a commutative $\A$-algebra $\k$ with $v\mapsto1$, the group of $\k$-valued points $G(\k)$ is identified with a subgroup of the multiplicative monoid of ${}_\k\hat{\bfU}$:
\begin{equation}\label{eq:be in G}
    G(\k)=\{\xi\in{}_\k\hat{\bfU}\mid\hat{\Delta}(\xi)=\xi\otimes\xi\text{ and }\hat{\varepsilon}(\xi)=1\}.
\end{equation} 
Under this identification, the inverse map on $G(\k)$ is given by $\hat{S}$ and we have a description of the following subgroups:
\[T(\k)=G(\k)\cap{}_\k\hat{\bfU}^0,\quad B(\k)=G(\k)\cap{}_\k\hat{\bfU}^{\ge0},\quad U(\k)=G(\k)\cap{}_\k\hat{\bfU}^{+},\]
\[B^-(\k)=G(\k)\cap{}_\k\hat{\bfU}^{\le0},\quad U^-(\k)=G(\k)\cap{}_\k\hat{\bfU}^{-}.\]

The following technical result will be used in \cite{CJ2} to prove a result about embedding reductive monoids into products of matrix monoids.

\begin{prop}\label{prop:unip-element-trivial}
    Let $\k$ be a commutative $\A$-algebra with $v\mapsto1$. Let $x\in U^-(\k)=G(\k)\cap{}_\k\hat{\bfU}^{-}$ be an element which we write in the form
    \[x=\sum_{b\in\bfB,\la\in X}x_b(b^-1_\la),\quad x_b\in\k.\]
    Let $\la_1,\dotsc,\la_N\in X^+\setminus\{0\}$ be elements that generate the abelian group $X$. Suppose that for any $i$ and any $b\in\bfB(\la_i)\setminus\{1\}$ we have $x_b=0$. Then we have $x=1$. In other words, $x_b=0$ for all $b\in\bfB\setminus\{1\}$. 
\end{prop}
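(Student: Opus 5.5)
The plan is to let $x$ act on the Weyl modules ${}_\k\La_{\la_i}$ and read off its coefficients from the image of the highest weight vector. By Proposition \ref{prop:uni-mod-equiv}, each ${}_\k\La_{\la_i}$ is a continuous ${}_\k\hat{\bfU}$-module. In $x\cdot\eta_{\la_i}$ the only summands that can contribute are the $b^-1_{\la_i}$, and they give $b\eta_{\la_i}$. By Theorem \ref{theo:Weyl module}(1), $b\eta_{\la_i}$ vanishes for $b\notin\bfB(\la_i)$, while the vectors $b\eta_{\la_i}$ with $b\in\bfB(\la_i)$ form a basis. Hence
\[x\eta_{\la_i}=\sum_{b\in\bfB(\la_i)}x_b\, b\eta_{\la_i}=x_1\eta_{\la_i},\]
where the last equality is the hypothesis. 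Applying $\hat\varepsilon(x)=1$ should force $x_1=1$, so $x$ fixes each $\eta_{\la_i}$.

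Next I would translate this into a statement about the group. Since $x\in U^-(\k)\subset G(\k)$, the point $x$ fixes the highest weight vector of every $\La_{\la_i}\otimes\k$, i.e. the corresponding matrix coefficients $\eta^*_{\la_i}(g\,\cdot\,\eta_{\la_i})$ evaluated at $x$ equal $1$. I then want to conclude that $x$ fixes the highest weight vector of $\La_\mu$ for every $\mu$ in the monoid generated by the $\la_i$. The mechanism is the tensor product: the group-like condition $\hat\Delta(x)=x\otimes x$ gives an action of $x$ on $\La_{\la}\otimes\La_{\la'}$ by $x\otimes x$. This fixes $\eta_\la\otimes\eta_{\la'}$, which generates a copy of $\La_{\la+\la'}$; more precisely, there is a $\dot{\bfU}$-map $\La_{\la+\la'}\to\La_\la\otimes\La_{\la'}$ sending $\eta\mapsto\eta\otimes\eta$, and it is injective over $\A$ by Weyl module theory. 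We may therefore take $\mu=\sum n_i\la_i$ with all $\langle\alpha_j^\vee,\mu\rangle$ arbitrarily large. This is possible because the $\la_i$ are nonzero dominant and generate $X$, so their sum is regular dominant modulo $X_{\ab}$. For semisimple directions this is fine, but the central part needs care.

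From there, the same computation as in the first step gives $x_b=0$ for all $b\in\bfB(\mu)\setminus\{1\}$. Lemma \ref{lem:B-union}, applied to $\L$ = the monoid generated by the $\la_i$, then yields $x_b=0$ for all $b\ne1$. This uses the hypothesis that $\L$ contains elements with $\langle\alpha_j^\vee,\la\rangle\ge N$ for all $j$. That hypothesis holds if each simple coroot pairs positively with some $\la_i$, and this follows because the $\la_i$ generate $X$ and $\alpha_j^\vee\neq0$ on $X$.

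The main obstacle I expect is the tensor-product step. Over a general ring $\k$ the map $\La_{\la+\la'}\to\La_\la\otimes\La_{\la'}$ need not be injective. I would first try to avoid injectivity by working directly in ${}_\k\bfO$: the function $g\mapsto\langle\eta^*,g\eta\rangle$ for $\La_{\la+\la'}$ is the product of those for $\la$ and $\la'$, since the highest weight matrix coefficients multiply. More generally, the coefficients $x_b$ for $b\in\bfB(\mu)$ are values at $x$ of the functions $(b^-1_\mu)^*$. I would then show these lie in the subalgebra generated by the corresponding functions for the $\la_i$, using the product formula for dual canonical basis elements $(b^-1_\la)^*(b'^-1_{\la'})^*$, which is related to Lemma \ref{lem:B-dot-X-ab}. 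If that fails, the fallback is a flatness argument over $\ZZ$: prove the claim for the universal point, using the coordinate ring of $U^-$, which is torsion-free, so that injectivity can be checked after tensoring with $\QQ$.
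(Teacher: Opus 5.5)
Your skeleton is the paper's argument in module-theoretic language: show that $x$ fixes $\eta_\mu$ for every $\mu$ in the monoid $\L$ generated by the $\la_i$, read off $x_b=0$ for $b\in\bfB(\mu)\setminus\{1\}$ from the basis $\{b\eta_\mu\}$, and conclude with Lemma \ref{lem:B-union}. The gap is exactly the step you flag, and you have misjudged it. To pass from $(x\otimes x)(\eta_\la\otimes\eta_{\la'})=\eta_\la\otimes\eta_{\la'}$ to $x\eta_{\la+\la'}=\eta_{\la+\la'}$, you need $\tau\colon{}_\k\La_{\la+\la'}\to{}_\k\La_\la\otimes_\k{}_\k\La_{\la'}$ to be injective for an \emph{arbitrary} $\k$. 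You assert this may fail, but it always holds. By \cite[27.1.7]{Lusztig-IntroQuantumGroup}, the map $\tau$ over $\A$ sends an $\A$-basis of ${}_\A\La_{\la+\la'}$ into an $\A$-basis of ${}_\A\La_\la\otimes_\A{}_\A\La_{\la'}$. Hence it has an $\A$-linear left inverse $\psi$, and this survives every base change. This is the missing idea, and it is precisely Lemma \ref{lem:h_c-identity}: the $h_c(a,b)$ are the matrix entries of $\psi$, and the paper's identity $x_d=\sum_{a,b}h_d(a,b)x_ax_b$ is just $\psi\tau=1$ applied to $x\eta_{\la+\la_i}$. Injectivity over $\A$, which is all you claim, is not stable under base change; what is needed is that $\tau$ is split. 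The split statement is genuinely nontrivial in positive characteristic (dually, it is the surjectivity over every field of the multiplication of global sections of line bundles on the flag variety). So it has to be imported, not bypassed.

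Neither fallback supplies it. The first is a sound reformulation: $x$ is group-like, hence a ring homomorphism ${}_\k\bfO\to\k$ agreeing with $1$ on the $(b^-1_{\la_i})^*$. So it would suffice that each $(b^-1_\mu)^*$ with $\mu\in\L$ lies in the subalgebra they generate. But that generation statement is the dual form of the same split injectivity; the paper derives it in the proof of Corollary \ref{cor:grO-finite} from $\sum_{a,b}h_c(a,b)a^*b^*=c^*$. Lemma \ref{lem:B-dot-X-ab} only concerns multiplication by $1_\chi^*$ with $\chi\in X_{\ab}$. The highest weight coefficients carry no information either, since $x(1_\la^*)=x_1=1$ for every $x\in U^-(\k)$.

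The second fallback breaks down. The universal ring for your hypothesis is ${}_\ZZ\bff^\circ/J$, with $J$ generated by the $b^*$ for $b\in\bigcup_i\bfB(\la_i)\setminus\{1\}$. Nothing tells you this ring is torsion-free, so passing to $\QQ$ proves the statement only for $\QQ$-algebras. Likewise, injectivity of $\tau$ over $\ZZ$ (which does follow from $\QQ$) does not give injectivity over $\mathbb{F}_p$ unless the cokernel is torsion-free. That is once more the split-injectivity statement.
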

\begin{proof}
    Since $x\in G(\k)$, we have $x_{1}=1$ and $\hat{\Delta}(x)=x\otimes  x$. Since
    \[x\otimes  x=\sum_{a,b\in\bfB}\sum_{\la,\mu\in X}x_ax_b(a^-1_\la\otimes  b^-1_\mu),\]
    we must have 
    \[\hat{\Delta}(x)=\sum_{c\in\bfB,\nu\in X}x_c\cdot\hat{\Delta}(c^-1_\nu)=\sum_{a,b,c\in\bfB}\sum_{\la,\mu,\nu\in X} x_c\hat{m}_{c^-1_\nu}^{a^-1_\la,b^-1_\mu}a^-1_\la\otimes  b^-1_\mu\]
    and then we deduce that for any $a,b\in\bfB$ and $\la,\mu\in X$,
    \[x_ax_b=\sum_{c\in\bfB}\sum_{\nu\in X}x_c\hat{m}_{c^-1_\nu}^{a^-1_\la,b^-1_\mu}.\]
    Let $\L\subset X^+$ be the submonoid generated by the elements $\la_1,\dotsc,\la_N$. By Lemma \ref{lem:B-union} we have $\bfB=\bigcup_{\la\in\L}\bfB(\la)$. We check that the condition in this Lemma is satisfied as follows. From the assumption that $\la_1,\dotsc,\la_N$ generate the abelian group $X$, each simple coroot has positive pairing with at least one of generators $\la_j$, so its pairing with sufficiently large multiples of $\sum_{j=1}^N\la_j$ will be greater than any fixed positive number.\par  
    We now prove by induction on the length of the expression of $\la\in\L$ on the generators $\la_1,\dotsc,\la_N$, that for any $b\in\bfB(\la)\setminus\{1\}$ we have $x_b=0$. This will imply that $x=1$. The case $\la=0$ is clear since $\bfB(0)=\{1\}$. Suppose the statement is true for $\la$ and fix any $1\le i\le N$. We take $\mu=\la_i$ in the equality above and let $a\in\bfB(\la)$, $b\in\bfB(\la_i)$. Then we have $\hat{m}_{c^-1_\nu}^{a^-1_\la,b^-1_{\la_i}}=0$ unless $\nu=\la+\la_i$ and $c\in\bfB(\la+\la_i)$. Consequently, we get that
    \[x_ax_b=\sum_{c\in\bfB(\la+\la_i)}x_c\hat{m}_{c^-1_{\la+\la_i}}^{a^-1_\la,b^-1_{\la_i}}.\]
    Consider the function $h$ from Lemma \ref{lem:h_c-identity} below. For any element $d\in\bfB(\la+\la_i)\setminus\{1\}$, we get
    \begin{equation*}
        \begin{split}
            &\sum_{a\in\bfB(\la),b\in\bfB(\la_i)}h_{d^-1_{\la+\la_i}}(a^-1_\la,b^-1_{\la_i})x_ax_b \\&=\sum_{c\in\bfB(\la+\la_i)}x_c\sum_{a\in\bfB(\la),b\in\bfB(\la_i)}h_{d^-1_{\la+\la_i}}(a^-1_\la,b^-1_{\la_i})\hat{m}_{c^-1_{\la+\la_i}}^{a^-1_\la,b^-1_{\la_i}}\\
            &=x_{d}.
        \end{split}
    \end{equation*}
    By assumption and induction hypothesis, we have $x_ax_b=0$ unless $a=b=1$. Moreover, since $d\ne1$ we get by Lemma \ref{lem:h_c-identity} below that
    \[x_d=h_{d^-1_{\la+\la_i}}(1_\la,1_{\la_i})=0.\]
    This finishes the inductive argument, and we conclude that $x_b=0$ for all $b\ne1$. 
\end{proof}

The following lemma contains special cases of results in \cite[\S1.15]{LusztigGroupScheme}, and will be used to prove Proposition \ref{prop:unip-element-trivial}.
\begin{lem}\label{lem:h_c-identity}
    Let $\la,\la'\in X^+$. For each $c\in\dot{\bfB}_{0,\la+\la'}$, there exists a function 
    \[h_c\colon\dot{\bfB}_{0,\la}\times\dot{\bfB}_{0,\la'}\to\A\] 
    satisfying the following properties:
    \begin{itemize}
        \item $h_c(1_\la,1_{\la'})=0$ if $c\ne1_{\la+\la'}$;
        \item For any $c,c'\in\dot{\bfB}_{0,\la+\la'}$ the following identity holds
        \[\sum_{a\in\dot{\bfB}_{0,\la},b\in\dot{\bfB}_{0,\la'}}h_c(a,b)\hat{m}_{c'}^{a,b}=\delta_{c,c'},\quad\forall c,c'\in\dot{\bfB}_{0,\la+\la'}.\]
    \end{itemize}
\end{lem}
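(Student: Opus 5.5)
We will prove Lemma \ref{lem:h_c-identity} by recognizing the structure constants $\hat{m}_{c'}^{a,b}$, for $a\in\dot{\bfB}_{0,\la}$, $b\in\dot{\bfB}_{0,\la'}$, $c'\in\dot{\bfB}_{0,\la+\la'}$, as the matrix coefficients of a morphism of based modules, and then taking $h$ to be a left inverse of that matrix. The key object is the $\dot{\bfU}$-linear map
\[\chi\colon\La_{\la+\la'}\to\La_\la\otimes\La_{\la'},\qquad \eta_{\la+\la'}\mapsto\eta_\la\otimes\eta_{\la'},\]
which exists because $\eta_\la\otimes\eta_{\la'}$ is a highest weight vector of weight $\la+\la'$ annihilated by the $\theta_i^+$ and satisfying the integrability relations. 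It is injective since $\La_{\la+\la'}$ is simple over $\QQ(v)$, and it restricts to ${}_\A\La_{\la+\la'}\to{}_\A\La_\la\otimes{}_\A\La_{\la'}$ because ${}_\A\dot{\bfU}$ preserves the tensor product of $\A$-forms via the coproduct.

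First we compute $\chi$ in the bases $\{c\eta_{\la+\la'}\}$ and $\{a\eta_\la\otimes b\eta_{\la'}\}$, with $c\in\dot{\bfB}_{0,\la+\la'}$ and $(a,b)\in\dot{\bfB}_{0,\la}\times\dot{\bfB}_{0,\la'}$. We have
\[\chi(c\eta_{\la+\la'})=\Delta(c)(\eta_\la\otimes\eta_{\la'})=\sum_{a,b}\hat{m}_c^{ab}\,a\eta_\la\otimes b\eta_{\la'}.\]
Only terms with $a\eta_\la\ne0$ and $b\eta_{\la'}\ne0$ survive. Elements of $\dot{\bfB}$ not in $\dot{\bfB}_{0,\la}$ that act nontrivially on $\eta_\la$ need separate control. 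We plan to use Lusztig's result \cite[1.15]{LusztigGroupScheme} (equivalently the description $b^-1_\la\eta_\la=b\eta_\la$ and the vanishing of $b\eta_\la$ for other canonical basis elements, cf.\ \cite[25.2]{Lusztig-IntroQuantumGroup}): a canonical basis element $u\in\dot{\bfB}$ with $u1_\la\ne0$ acts on $\eta_\la$ either by zero or as $b^-1_\la$ with $b\in\bfB(\la)$. This restricts the sum to $(a,b)\in\dot{\bfB}_{0,\la}\times\dot{\bfB}_{0,\la'}$. Hence the matrix of $\chi$ in these $\A$-bases is $M=(\hat{m}_{c}^{ab})$, indexed by rows $(a,b)$ and columns $c$.

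Next we need an $\A$-valued left inverse of $M$, and this is the main obstacle: injectivity over $\QQ(v)$ only gives a $\QQ(v)$-valued left inverse. We will show that $\chi$ is a split injection of $\A$-modules. Dually, its transpose ${}_\A\La_\la^*\otimes{}_\A\La_{\la'}^*\to{}_\A\La_{\la+\la'}^*$ is surjective, which is precisely Lusztig's statement in \cite[1.15]{LusztigGroupScheme}. Alternatively, surjectivity can be checked after every specialization $\A\to k$: the vector $\eta_\la\otimes\eta_{\la'}$ generates a submodule whose top weight space is one-dimensional and which maps onto the costandard quotient, so $\chi\otimes k$ remains injective. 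Since all modules involved are free of finite rank, an injection that stays injective under every specialization to a field is split, and we obtain $h=(h_c(a,b))$ with $hM=\mathrm{id}$. This is the second property.

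Finally, for the first property we use the weight grading. The vector $\eta_\la\otimes\eta_{\la'}$ spans the $(\la+\la')$-weight space of the target, and it is the image of $\eta_{\la+\la'}=1_{\la+\la'}\eta_{\la+\la'}$. We choose the splitting to respect weights, which is possible because $\chi$ is weight-preserving and splits on each weight space. Then $h_c(1_\la,1_{\la'})$ vanishes unless $c$ has weight $\la+\la'$, that is, unless $c=1_{\la+\la'}$.
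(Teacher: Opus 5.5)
Your outline matches the paper's proof: your $\chi$ is the paper's $\tau$, its matrix in the bases $\{c\eta_{\la+\la'}\}$ and $\{a\eta_\la\otimes b\eta_{\la'}\}$ is $(\hat m_c^{ab})$, and $h$ is read off from an $\A$-linear left inverse $\psi$. The first property does not need a weight-compatible $\psi$. For \emph{any} left inverse, $\psi(\eta_\la\otimes\eta_{\la'})=\psi\chi(\eta_{\la+\la'})=\eta_{\la+\la'}$, which gives $h_c(1_\la,1_{\la'})=\delta_{c,1_{\la+\la'}}$ at once.

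The gap is the one step with real content: that $\chi$ is split injective over $\A$. Citing \cite[\S1.15]{LusztigGroupScheme} for this is circular in substance, since the paper notes that the lemma you are proving is itself a special case of that section. Your alternative argument does not work. The image of $\chi\otimes k$ is the submodule generated by $\eta_\la\otimes\eta_{\la'}$. Being generated by a highest weight vector with one-dimensional top weight space holds for every nonzero quotient of the Weyl module ${}_k\La_{\la+\la'}$, for instance its simple head, so it says nothing about $\ker(\chi\otimes k)$. Likewise, a surjection from that image onto a costandard module bounds nothing about the kernel; the map from the Weyl module to the costandard module even has simple image.

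Injectivity of $\chi\otimes k$ for every residue field $k$ of $\A$ (including finite fields, where $v$ becomes a root of unity) is, after dualizing, the surjectivity of the product map $({}_k\La_\la)^*\otimes({}_k\La_{\la'})^*\to({}_k\La_{\la+\la'})^*$. Classically this is projective normality of $G/B$, a Frobenius-splitting theorem, and it cannot come from highest-weight bookkeeping. The paper supplies it through based modules: by \cite[27.1.7]{Lusztig-IntroQuantumGroup}, $\tau$ maps the canonical basis of ${}_\A\La_{\la+\la'}$ into the canonical basis of ${}_\A\La_\la\otimes_\A{}_\A\La_{\la'}$. Hence the image of $\tau$ is spanned by part of an $\A$-basis, and a left inverse $\psi$ is immediate. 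With this input in place of your splitting paragraph, the rest of your argument goes through.
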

\begin{proof}
Let $\tau\colon{}_\A\Lambda_{\lambda+\lambda'}\to{}_\A\Lambda_\lambda\otimes_\A {}_\A\Lambda_{\lambda'}$ be the unique ${}_\A\dot{\bfU}$-module homomorphism such that $\tau(\eta_{\lambda+\lambda'})=\eta_\la\otimes \eta_{\la'}$. Then we have the following commutative diagram:
\[\xymatrix{
{}_\A\hat{\bfU}\ar[r]^{\hat{\Delta}}\ar[d] & {}_\A\hat{\bfU}\hat{\otimes}_{\A}{}_\A\hat{\bfU}\ar[d] \\
{}_\A\La_{\la+\la'}\ar[r]^{\tau} & {}_\A\La_\la\otimes_{\A} {}_\A\La_{\la'}
}\]
where the left (resp. right) vertical map sends $u\in{}_\A\hat{\bfU}$ (resp. $u\in{}_\A\hat{\bfU}\hat{\otimes}_\A{}_\A\hat{\bfU}$) to $u\cdot\eta_{\la+\la'}$ (resp. $u\cdot(\eta_\la\otimes \eta_{\la'})$).\par 
By \cite[27.1.7]{Lusztig-IntroQuantumGroup}, $\tau$ maps an $\A$-basis of ${}_\A\La_{\la+\la'}$ into an $\A$-basis of ${}_\A\La_\la\otimes_{\A} {}_\A\La_{\la'}$. So it is a split injection and we can find an $\A$-linear map 
\[\psi\colon{}_\A\La_\la\otimes_{\A} {}_\A\La_{\la'}\to{}_\A\La_{\la+\la'}\]
such that $\psi\tau=1$. See also \cite[1.13(a)]{LusztigGroupScheme} for a more general result.\par  
For each $c\in\dot{\bfB}_{0,\la+\la'}$, there exists a function $h_c\colon\dot{\bfB}_{0,\la}\times\dot{\bfB}_{0,\la'}\to\A$ such that 
\[\psi(a\eta_\la\otimes  b\eta_{\la'})=\sum_{c\in\dot{\bfB}_{0,\la+\la'}}h_c(a,b)c\eta_{\la+\la'}.\]
Since $\psi(\eta_\la\otimes \eta_{\la'})=\psi\tau(\eta_{\la+\la'})=\eta_{\la+\la'}$, we see that for any $c\in\dot{\bfB}_{0,\la+\la'}$ with $c\ne1_{\la+\la'}$ we have $h_c(1_\la,1_{\la'})=0$. \par 
The commutative diagram above implies that
\[\tau(c\eta_{\la+\la'})=\sum_{a\in\dot{\bfB}_{0,\la},b\in\dot{\bfB}_{0,\la'}}\hat{m}_c^{ab}a\eta_{\la}\otimes  b\eta_{\la'}.\]
Therefore the identity $\psi\tau=1$ implies that 
\[\sum_{a\in\dot{\bfB}_{0,\la},b\in\dot{\bfB}_{0,\la'}}h_c(a,b)\hat{m}_{c'}^{a,b}=\delta_{c,c'},\quad\forall c,c'\in\dot{\bfB}_{0,\la+\la'}.\]
\end{proof}

\subsection{The canonical filtration}
Recall from \S\ref{subsubsec:based-root-data} that $X^+_\pos$ denotes the submonoid of $X$ generated by the dominant weight monoid $X^+$ and the positive root monoid $X_\pos=\NN^\Delta$.
\begin{defi}\label{def:O-le-lambda}
    Let $R$ be a commutative $\A$-algebra. For any $\la\in X^+_\pos$, let ${}_R\bfO_{\le\la}$ be the $R$-submodule of ${}_R\bfO$ spanned by elements in $\bigcup\limits_{\mu\in X^+,\mu\le\la}\dot{\bfB}[\mu]^*$. Similarly, let ${}_R\bfO_{<\la}$ be the $R$-submodule of ${}_R\bfO$ spanned by elements in $\bigcup\limits_{\mu\in X^+,\mu<\la}\dot{\bfB}[\mu]^*$.
    In the classical situation, i.e. when $R=\k$ is an $\A$-algebra with $v\mapsto1$, we also use the more familiar notation $\k[G]_{\le\la}={}_\k\bfO_{\le\la}$ and $\k[G]_{<\la}={}_\k\bfO_{<\la}$. 
\end{defi}

\begin{lem}\label{lem:filtration-O-multiplicative}
    For any commutative $\A$-algebra $R$ and any elements $\lambda,\mu\in X^+_\pos$ we have: 
    \begin{enumerate}
        \item[(i)] ${}_R\mathbf{O}_{\leq\lambda}\cdot{}_R\mathbf{O}_{\leq\mu}\subset{}_R\mathbf{O}_{\leq\lambda+\mu}$;
        \item[(ii)] If $\la,\mu\in X^+$ are dominant weights, then $1_\la^*\cdot 1_\mu^*=1_{\la+\mu}^*$.
    \end{enumerate}
    Consequently, we have an $R$-algebra embedding $R[X^+]\into{}_R\bfO$ sending $e^\la$ to $1_\la^*$. Here $R[X^+]$ is the monoid algebra of $X^+$ with $R$-basis $\{e^\la,\la\in X^+\}$. 
\end{lem}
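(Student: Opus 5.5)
The plan is to dualize: the multiplication on ${}_R\bfO$ is dual to the coproduct $\Delta$ of ${}_\A\dot{\bfU}$, and the two-sided cells are detected by the modules $\La_\tau$ (Lemma \ref{lem:characterization subset Blambda}). It suffices to work over $\A$ and then base change, since everything is defined by structure constants $\hat m_c^{ab}\in\A$.

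\textbf{Part (i).} Let $a\in\dot{\bfB}[\la']$ and $b\in\dot{\bfB}[\mu']$ with $\la'\le\la$ and $\mu'\le\mu$ dominant. We must show that $\hat m_c^{ab}\ne0$ forces $c\in\dot{\bfB}[\nu]$ for some dominant $\nu\le\la+\mu$. Let $\nu$ be the cell of $c$. I would use the following characterization, valid for any $c\in\dot{\bfB}[\nu]$: by \cite[29.1.6]{Lusztig-IntroQuantumGroup}, $c$ acts nontrivially on $\La_\nu$, and $\dot{\bfU}[\ge\nu]$ is a two-sided ideal. Equivalently, $\dot{\bfU}[\not\le\tau]$ (the span of cells $\nu\not\le\tau$) is exactly the annihilator of $\La_\tau$ inside the span of all cells, or more precisely the span of $\dot{\bfB}\setminus\dot{\bfB}[\le\tau]$. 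The key claim is that
\[
\Delta\bigl(\dot{\bfU}[\not\le\la+\mu]\bigr)\subset \dot{\bfU}[\not\le\la]\hat\otimes\dot{\bfU}+\dot{\bfU}\hat\otimes\dot{\bfU}[\not\le\mu].
\]
Dually, this gives ${}_\A\bfO_{\le\la}\cdot{}_\A\bfO_{\le\mu}\subset{}_\A\bfO_{\le\la+\mu}$.

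To prove the claim, I would use that the $\dot{\bfU}[\le\tau]$ pieces are the "coefficient spaces" of modules whose weights are $\le\tau$. More concretely, ${}_\A\bfO_{\le\tau}$ should equal the span of matrix coefficients of finite-dimensional modules all of whose dominant weights, or equivalently all of whose weights, are $\le\tau$. This follows from Lemma \ref{lem:characterization subset Blambda} together with the cell filtration of Lusztig's based modules \cite[\S29]{Lusztig-IntroQuantumGroup}. The tensor product of a module with weights $\le\la$ and one with weights $\le\mu$ has weights $\le\la+\mu$, and the matrix coefficients multiply accordingly. The main obstacle is this identification of ${}_\A\bfO_{\le\tau}$ with the coefficient space over $\A$, and not only over $\QQ(v)$. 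Over $\QQ(v)$ it follows from semisimplicity via the Peter–Weyl decomposition, $\dot{\bfU}/\dot{\bfU}[\not\le\tau]\cong\bigoplus_{\nu\le\tau}\mathrm{End}(\La_\nu)$.

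Once the $\QQ(v)$ statement is known, integrality is automatic. The products $a^*b^*=\sum\hat m_c^{ab}c^*$ have coefficients in $\A$, and the vanishing of $\hat m_c^{ab}$ for $c\notin\dot{\bfB}[\le\la+\mu]$ can be checked after tensoring with $\QQ(v)$. Base change to $R$ then follows. When $\la$ or $\mu$ lies in $X^+_\pos$ but is not dominant, the same argument applies, since "$\le$" is still additive.

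\textbf{Part (ii).} For dominant $\la,\mu$ I compute $\hat m_c^{1_\la,1_\mu}$. This is the coefficient of $1_\la\otimes1_\mu$ in $\Delta(c)$. Using the triangular form $c=b_1\diamondsuit_\zeta b_2$ and the coproduct formulas \eqref{eq:coproduct-formula}, a term $1_\la\otimes1_\mu$ in $\Delta(b_1^+b_2^-1_\zeta)$ arises only from the $1\otimes1$ parts of $r(b_1)$ and $\bar r(b_2)$. The same holds for the lower-order terms. I would rather argue cleanly via the pairing with modules, as follows.

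By part (i), $c$ lies in $\dot{\bfB}[\le\la+\mu]$ and has weight $(\la+\mu,\la+\mu)$. The only element of $\dot{\bfB}$ in $1_{\la+\mu}\dot{\bfU}1_{\la+\mu}$ lying in a cell $\le\la+\mu$ is $1_{\la+\mu}$ itself. Indeed, by Proposition \ref{prop:parametrize-B[la]} and Lemma \ref{lem:characterization subset Blambda}, such an element acts on $\La_{\nu}$ for some $\nu\le\la+\mu$, and the weight $\la+\mu$ then forces $\nu=\la+\mu$ with the element acting on the highest weight line. The coefficient of $1_\la\otimes1_\mu$ in $\Delta(1_{\la+\mu})$ is $1$. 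Hence $1_\la^*1_\mu^*=1_{\la+\mu}^*$.

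The embedding $R[X^+]\into{}_R\bfO$ then follows, because the elements $1_\la^*$ are distinct members of a basis.
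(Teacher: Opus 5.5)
Your proof is correct. Part (ii) is essentially the paper's argument: weights force $c\in 1_{\la+\mu}\dot{\bfU}1_{\la+\mu}$, part (i) forces $c\in\dot{\bfB}[\le\la+\mu]$, and together these give $c=1_{\la+\mu}$ with $\hat m^{1_\la,1_\mu}_{1_{\la+\mu}}=1$.

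For part (i) you organize the argument differently from the paper.

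\emph{The paper's route.} It argues by minimal counterexample. Suppose $\hat m_c^{ab}\ne0$ with $\la(c)\not\le\la(a)+\la(b)$, and choose $(\la(a),\la(b))$ minimal. By \cite[Lemma 4.21]{Lusztig-Infinity}, $\Delta(c)$ kills the single module $\La_{\la(a)}\otimes\La_{\la(b)}$. Minimality then discards every term outside $\dot{\bfB}[\la(a)]\otimes\dot{\bfB}[\la(b)]$. Finally, linear independence of the images of these cells \cite[29.1.6]{Lusztig-IntroQuantumGroup} gives $\hat m_c^{ab}=0$.

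\emph{Your route.} You prove the global statement $\Delta(\dot{\bfU}[\not\le\la+\mu])\subset\dot{\bfU}[\not\le\la]\hat\otimes\dot{\bfU}+\dot{\bfU}\hat\otimes\dot{\bfU}[\not\le\mu]$ over $\QQ(v)$. You do this by representing the quotients faithfully on $\bigoplus_{\nu\le\tau}\La_\nu$ and using complete reducibility of $\La_{\nu'}\otimes\La_{\nu''}$. What you actually need is only injectivity of $\dot{\bfU}/\dot{\bfU}[\not\le\tau]\to\bigoplus_{\nu\le\tau}\mathrm{End}(\La_\nu)$; you do not need the full Peter--Weyl isomorphism, and you need nothing over $\A$. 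This injectivity comes from the cell theory, not from semisimplicity alone: apply Lemma~\ref{lem:characterization subset Blambda} and \cite[29.1.6]{Lusztig-IntroQuantumGroup} to a \emph{minimal} cell in the support of a kernel element. That minimal-cell step is exactly what the paper's minimality hypothesis encodes, so the two proofs share the same core.

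\emph{Comparison.} Your packaging gives a conceptual by-product: over $\QQ(v)$, $\bfO_{\le\tau}$ is the space of matrix coefficients of $\bigoplus_{\nu\le\tau}\La_\nu$. The paper's version is more economical, since it tests on only one module.

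Two small inaccuracies in your write-up:
\begin{enumerate}
\item $\dot{\bfU}[\not\le\tau]$ is the annihilator of $\bigoplus_{\nu\le\tau}\La_\nu$, not of $\La_\tau$ alone. The annihilator of $\La_\tau$ has codimension $(\dim\La_\tau)^2$, so it is strictly larger as soon as some dominant $\nu<\tau$ exists.
\item The ``obstacle'' over $\A$ that you flag is not one. As you then observe yourself, the vanishing of $\hat m_c^{ab}\in\A$ can be checked in $\QQ(v)$.
\end{enumerate}
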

\begin{proof}
    We may and do assume that $R=\A$.\par  
    (i) For any $a\in\dot{\bf{B}}$, we let $\lambda(a)$ denote the unique element $\lambda\in X^+$ such that $a\in\dot{\bf{B}}[\lambda]$. We need to show that for any $a,b,c\in\dot{\bf{B}}$ such that $\hat{m}_c^{ab}\ne0$, we have $\lambda(c)\le\lambda(a)+\lambda(b)$. Suppose on the contrary that there exists such a triple $a,b,c$ with $\hat{m}_c^{ab}\ne0$ but $\lambda(c)\nleq\lambda(a)+\lambda(b)$. We may assume that among all such triples with $c$ fixed, the pair $(\lambda(a),\lambda(b))$ is minimal. In other words, we assume that if $x,y\in\dot{\bf{B}}$ satisfy the following conditions:
    \begin{itemize}
        \item $\hat{m}_c^{xy}\ne0$ and $\lambda(c)\nleq \lambda(x)+\lambda(y)$,
        \item $\lambda(x)\le\lambda(a)$ and $\lambda(y)\le\lambda(b)$.
    \end{itemize}
    Then we have $\lambda(x)=\lambda(a)$ and $\lambda(y)=\lambda(b)$. \par 
    Since $\lambda(c)\nleq\lambda(a)+\lambda(b)$, by \cite[Lemma 4.21]{Lusztig-Infinity} we get that $\Delta(c)=\sum_{x,y\in\dot{\bf{B}}}\hat{m}_c^{xy}x\otimes  y$ acts by $0$ on $\Lambda_{\lambda(a)}\otimes \Lambda_{\lambda(b)}$. If a summand $x\otimes  y$ with $\hat{m}_c^{xy}\ne0$ does not act by $0$ on $\Lambda_{\lambda(a)}\otimes \Lambda_{\lambda(b)}$, then by \cite[\S29.1.3(d)]{Lusztig-IntroQuantumGroup} we have $\lambda(x)\le\lambda(a)$ and $\lambda(y)\le\lambda(b)$ and therefore by our assumption above, $\lambda(x)=\lambda(a)$ and $\lambda(y)=\lambda(b)$. This means that the sum
    \[\sum_{x\in\dot{\bf{B}}[\lambda(a)],y\in\dot{\bf{B}[\lambda(b)]}}\hat{m}_c^{xy}x\otimes  y\]
    acts by $0$ on $\Lambda_{\lambda(a)}\otimes \Lambda_{\lambda(b)}$.\par 
    From \cite[\S29.1.6]{Lusztig-IntroQuantumGroup} we see that under the natural map $\dot{\bf{U}}[\ge\lambda(a)]\to\mathrm{End}(\Lambda_{\lambda(a)})$, the set $\dot{\bf{B}}[\lambda(a)]$ maps to a basis (and similarly for $\lambda(b)$). Thus, the images of the nonzero summands above in $\mathrm{End}(\Lambda_{\lambda(a)}\otimes \Lambda_{\lambda(b)})$ are linearly independent and hence all vanish. Then $a\otimes  b$ would also act by $0$ on $\Lambda_{\lambda(a)}\otimes \Lambda_{\lambda(b)}$ since $\hat{m}_c^{ab}\ne0$ by assumption. This contradicts the fact that $a$ (resp. $b$) does not act by $0$ on $\Lambda_{\lambda(a)}$ (resp. $\Lambda_{\lambda(b)}$).\par 
    (ii) For any $c\in\dot{\bfB}$ with $\hat{m}_c^{1_\la,1_\mu}\ne0$, we must have $c\in 1_{\la+\mu}\dot\bfU 1_{\la+\mu}\cap\dot{\bfB}[\le\la+\mu]$ by (i) and the definition of coproduct for $\dot{\bfU}$. From Proposition \ref{prop:parametrize-B[la]} we deduce that the only possibility is $c=1_{\la+\mu}$ with $\hat{m}_{1_{\la+\mu}}^{1_\la,1_\mu}=1$. Therefore $1_\la^*1_\mu^*=1_{\la+\mu}^*$.
\end{proof}
As a consequence we can prove the following result, which will be used in the study of very flat reductive monoids in \cite{CJ2}.
\begin{lem}\label{lem:B-dot-X-ab}
    Let $\chi\in X_{ab}$. In the notation of Definition \ref{def:B-dot} we have
    \[1_\chi^*\cdot(b_1\diamondsuit_{\zeta-\chi}b_2)^*=(b_1\diamondsuit_{\zeta}b_2)^*,\quad\forall b_1,b_2\in\bfB,\forall\zeta\in X.\]
    In particular $1_0^*$ is the multiplicative unit. Moreover, for any $\la\in X^+$ 
    the map $b^*\mapsto 1_\chi^*b^*$ defines a canonical bijection:
    \[\dot{\bfB}[\la]^*\xrightarrow{\sim}\dot{\bfB}[\la+\chi]^*.\]
\end{lem}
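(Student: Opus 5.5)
The plan is to realize multiplication by $1_\chi^*$ as the transpose of a ``translation by $\chi$'' automorphism of ${}_\A\dot{\bfU}$, and then to check that this automorphism permutes $\dot{\bfB}$ in the expected way. Since $\chi\in X_{\ab}$, we have $\langle\alpha_i^\vee,\chi\rangle=0$ for all $i\in\Delta$. In the defining relations of ${}_\A\dot{\bfU}$ recalled in \S2.3, the weight $\zeta$ enters only through the integers $\langle\alpha_i^\vee,\zeta\rangle$ and through shifts by elements of $\ZZ^\Delta$. Hence there is a well-defined $\A$-algebra automorphism
\[t_\chi\colon{}_\A\dot{\bfU}\to{}_\A\dot{\bfU},\qquad x^+1_\zeta x'^-\mapsto x^+1_{\zeta+\chi}x'^-,\qquad x^-1_\zeta x'^+\mapsto x^-1_{\zeta+\chi}x'^+,\]
with inverse $t_{-\chi}$. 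It clearly commutes with the bar involution.

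\textbf{Step 1: $1_\chi^*$ is a character.} By Corollary \ref{cor:B[la]-W-la} and Proposition \ref{prop:parametrize-B[la]}, together with $W\chi=\{\chi\}$ and $\bfB(\chi)=\{1\}$ (all pairings of $\chi$ with simple coroots vanish), we get $\dot{\bfB}[\chi]=\{1_\chi\}$. The module $\La_\chi$ has rank one, spanned by $\eta_\chi$ of weight $\chi$. I would use \cite[29.1.6]{Lusztig-IntroQuantumGroup} (the image of $\dot{\bfB}[\chi]$ is a basis of $\mathrm{End}(\La_\chi)$) together with Lemma \ref{lem:characterization subset Blambda}. By condition (2) there, $b\cdot\La_\chi\ne0$ forces $\chi\ge\la(b)$, i.e.\ $b\in\dot{\bfB}[\le\chi]$. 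Since $\chi$ is minimal in $X^+$ (any $\mu\in X^+$ with $\mu\le\chi$ has $\chi-\mu\in\NN^\Delta$ pairing to $\le0$ against a suitable dominant coweight, forcing $\mu=\chi$), the only element of $\dot{\bfB}$ acting nontrivially on $\La_\chi$ is $1_\chi$, and it acts by $1$. Therefore the action of $u$ on $\La_\chi$ is multiplication by $1_\chi^*(u)$, so $1_\chi^*\colon{}_\A\dot{\bfU}\to\A$ is an algebra homomorphism.

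\textbf{Step 2: transpose of $t_{-\chi}$.} By definition of the product on ${}_\A\bfO$,
\[(1_\chi^*\cdot a^*)(u)=(1_\chi^*\otimes a^*)(\Delta u).\]
I claim that $(1_\chi^*\otimes\mathrm{id})\circ\Delta=t_{-\chi}$; this step should be pure bookkeeping. Both sides are algebra maps into the suitable completion, so it suffices to check the claim on $x^\pm$ and $1_\zeta$ using \eqref{eq:coproduct-formula}.

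For $x^+$, only the terms with $x_j\in\A$ and $\la=\chi$ survive. The resulting factor is $v^{|x|\circ\chi}=1$, since $\nu\circ\chi=0$ for every $\nu$. This gives $x^+$ placed in the weight-shifted position.

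For $1_\zeta$ we get $1_{\zeta-\chi}$, and the $x^-$ case is analogous.

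Consequently $1_\chi^*a^*=a^*\circ t_{-\chi}=(t_\chi a)^*$, provided $t_\chi$ maps $\dot{\bfB}$ bijectively onto itself.

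\textbf{Step 3 (main obstacle): $t_\chi$ preserves $\dot{\bfB}$ and sends $b_1\diamondsuit_{\zeta-\chi}b_2$ to $b_1\diamondsuit_\zeta b_2$.} I would use Proposition \ref{prop:bilinear pairing characterizing dotB}. The form $(u,u')\mapsto(t_\chi u,t_\chi u')$ satisfies properties (1)--(3): property (3) is immediate from the definition of $t_\chi$, and property (2) holds because $t_\chi$ commutes with $\sharp$ up to replacing $\chi$ by $-\chi$ appropriately, while the exponent $\nu\circ(\la+\la')/2$ in $\rho$ is unchanged under $\la,\la'\mapsto\la+\chi,\la'+\chi$. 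By uniqueness of the form, $t_\chi$ is an isometry. Since $t_\chi$ also commutes with bar, it preserves $\pm\dot{\bfB}$. The triangularity in Definition \ref{def:B-dot} then shows that $t_\chi(b_1\diamondsuit_{\zeta-\chi}b_2)=b_1^+b_2^-1_\zeta+(\text{lower terms})$, which removes the sign ambiguity and identifies the image as $b_1\diamondsuit_\zeta b_2$. I expect the compatibility of $t_\chi$ with $\rho$ and $\sharp$ to require the most care.

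\textbf{Conclusion.} The displayed formula follows from Steps 2 and 3. Taking $b_1=b_2=1$ and $\chi=0$ shows that $1_0^*$ is the multiplicative unit. For the statement about cells, $t_\chi$ intertwines the action on $\La_{\la}\otimes\La_\chi\cong\La_{\la+\chi}$ with the action on $\La_\la$, using that $\La_\chi$ is one-dimensional and that $\Delta$ is compatible with $t_\chi$ as in Step 2. Lemma \ref{lem:characterization subset Blambda} then shows that $t_\chi(\dot{\bfB}[\la])=\dot{\bfB}[\la+\chi]$, which gives the bijection $\dot{\bfB}[\la]^*\xrightarrow{\sim}\dot{\bfB}[\la+\chi]^*$.
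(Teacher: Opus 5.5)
Your proposal is correct, and its core coincides with the paper's argument. The paper also shows that the $1_\chi$-component of $\Delta(u)$ is the translate $T_\chi(u)$, which is your $t_{-\chi}$. It then proves $T_\chi(\dot{\bfB})\subset\pm\dot{\bfB}$ exactly as in your Step 3: compatibility with $\rho$ and with the bilinear form characterizing $\pm\dot{\bfB}$, then leading terms to fix the sign.

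The differences are in the surrounding steps.

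\textbf{Step 2.} The paper does not first prove that $1_\chi^*$ is a character. It expands $\Delta(b_1^+b_2^-1_\zeta)$ explicitly from \eqref{eq:coproduct-formula}. It then observes, by weights, that the only term lying in $1_\chi\dot{\bfU}1_\chi\otimes 1_{\eta-\chi}\dot{\bfU}1_{\zeta-\chi}$ is $1_\chi\otimes b_1^+b_2^-1_{\zeta-\chi}$. Your route through the rank-one module $\La_\chi$ is more conceptual and lets you check the identity only on generators, at the cost of the extra Step 1.

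\textbf{Cells.} The paper avoids representation theory here. Lemma \ref{lem:filtration-O-multiplicative} gives $1_\chi^*b^*\in\dot{\bfB}[\le\la+\chi]^*$. Since $-\chi\in X_{\ab}$ as well and $1_{-\chi}^*1_\chi^*=1_0^*$, the same bound applied to $b^*=1_{-\chi}^*(1_\chi^*b^*)$ forces equality. This is shorter than your identification of $\La_\la\otimes\La_\chi$ with a twist of $\La_{\la+\chi}$. Your version does work, with one adjustment: for that tensor order the relevant map is $(\mathrm{id}\otimes 1_\chi^*)\circ\Delta$, not the map from Step 2. The same computation shows it also equals $t_{-\chi}$; alternatively, use $\La_\chi\otimes\La_\la$ instead.

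\textbf{Unit statement.} Setting $b_1=b_2=1$ only yields $1_0^*1_\zeta^*=1_\zeta^*$, which does not show $1_0^*$ is the unit. Instead take $\chi=0$ with arbitrary $b_1,b_2,\zeta$, or simply note that $(1_0^*\otimes\mathrm{id})\circ\Delta=\mathrm{id}$.
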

\begin{proof}
    Take any $b_1,b_2\in\bfB$ and $\zeta,\eta\in X$ such that $b_1^+b_2^-1_\zeta\in 1_\eta\dot{\bfU}1_\zeta$. We write
    \[r(b_1)=\sum_{j=1}^{m_1}x_j\otimes x'_j,\quad \bar{r}(b_2)=\sum_{k=1}^{m_2}{}_ky\otimes {}_ky'\]
    where $m_1,m_2\in\ZZ_{\ge1}$, $x_j,x_j',{}_ky,{}_ky'$ are all homogeneous elements in $\bff$, and we require that $x_j=1$ (resp. ${}_ky=1$) if and only if $j=1$ (resp. $k=1$). Then we have $x_1'=b_1$, ${}_1y'=b_2$ and \[|x_j|+|x_j'|=|b_1|,\quad\forall j;\quad |{}_ky|+|{}_ky'|=|b_2|,\quad\forall k.\]
    From \eqref{eq:coproduct-formula} we get
    \[\Delta(b_1^+)=\sum_{j=1}^{m_1}\sum_{\la_1\in X}v^{|x_j'|\circ\la_1}x_j^+1_{\la_1}\otimes (x_j')^+,\]
    \[\Delta(b_2^-1_\zeta)=\sum_{k=1}^{m_2}\sum_{\la_2\in X}v^{-|{}_ky|\circ\la_2}1_{\zeta-|b_2|-\la_2}({}_ky)^-\otimes 1_{\la_2}({}_ky')^-.\]
    Combined together we get
    \[\Delta(b_1^+b_2^-1_\zeta)=\sum_{j=1}^{m_1}\sum_{k=1}^{m_2}\sum_{\substack{\la_1,\la_2\in X\\ \la_1+\la_2=\zeta-|b_2|}}v^{|x_j'|\circ\la_1-|{}_ky|\circ\la_2}x_j^+1_{\la_1}({}_ky)^-\otimes(x_j')^+1_{\la_2}({}_ky')^-.\]
    The only term in this sum that lie in $1_\chi\dot{\bfU}1_\chi\otimes 1_{\eta-\chi}\dot{\bfU}1_{\zeta-\chi}$ is $1_\chi\otimes b_1^+b_2^-1_{\zeta-\chi}$, for which $j=k=1$ and $\la_1=\chi\in X_{ab}$.\par
    Now for the element $b_1\diamondsuit_\zeta b_2\in\dot{\bfB}$ we write 
    \[\Delta(b_1\diamondsuit_\zeta b_2)=\sum_{b\in\dot{\bfB}}b\otimes\varphi(b),\quad\varphi(b)\in\hat{\bfU}.\]
    From the discussion above we deduce that 
    \[\varphi(1_\chi)=T_\chi(b_1\diamondsuit_\zeta b_2),\quad \chi\in X_{ab}. \]
    where $T_\chi$ is the $\QQ(v)$-linear automorphism on $\dot{\bfU}$ defined by 
    \[T_\chi(c_1^+c_2^-1_\xi)=c_1^+c_2^-1_{\xi-\chi},\quad\forall c_1,c_2\in\bfB, \xi\in X.\]
    We note that $T_\chi$ restricts to bijections $1_\eta\dot{\bfU}1_\zeta\to1_{\eta-\chi}\dot{\bfU}1_{\zeta-\chi}$ for all $\eta,\zeta\in X$ and moreover $T_\chi(u_1u_2)=T_\chi(u_1)T_\chi(u_2)$ for all $u_1,u_2\in\dot{\bfU}$. One verifies from the definitions that $T_\chi$ commutes with the involution $\rho$ and preserves the bilinear form in Proposition \ref{prop:bilinear pairing characterizing dotB}. Consequently we have $T_\chi(\dot{\bfB})\subset\pm\dot{\bfB}$ and by looking at the leading terms we deduce that  
    \[\varphi(1_\chi)=T_\chi(b_1\diamondsuit_\zeta b_2)=b_1\diamondsuit_{\zeta-\chi} b_2,\quad\forall b_1,b_2\in\bfB, \zeta\in X.\]
    Then by the definition of the product on ${}_\A\bfO$ we conclude that
    \[1_\chi^*\cdot(b_1\diamondsuit_{\zeta-\chi}b_2)^*=(b_1\diamondsuit_{\zeta}b_2)^*,\quad\forall b_1,b_2\in\bfB,\forall\zeta\in X.\]
    In particular we have $1_\chi^*\cdot b^*\in\dot{\bfB}^*$ and $1_0^*\cdot b^*=b^*$ for all $b\in\dot{\bfB}$, so $1_0^*$ is the identity in $\bfO$. If $b\in\dot{\bfB}[\la]$, then we have $1_\chi^*\cdot b^*\in\dot{\bfB}^*[\le\la+\chi]$ by Lemma \ref{lem:filtration-O-multiplicative}. Moreover since $1_\chi^*1_{-\chi}^*=1_0^*$ is the identity by Lemma \ref{lem:filtration-O-multiplicative}, we must have $1_\chi^*\cdot b^*\in\dot{\bfB}[\la+\chi]^*$.
\end{proof}

\subsection{Some basic functorial properties}\label{subsec:functorial-properties}
In this subsection we prove some basic functorial properties for the canonical bases. The results will be needed in \cite{CJ2} during the study of the decomposition of reductive monoids.\par 
Suppose we have a decomposition of the simple roots $\Delta=\Delta_1\sqcup\Delta_2$ such that $\langle\alpha_i^\vee,\alpha_j\rangle=0$ for any $i\in\Delta_1, j\in\Delta_2$. For $k=1,2$, let $\bff^{(k)}$ (resp. ${}_\A\bff^{(k)}$) be the $\QQ(v)$-algebra (resp. $\A$-algebra) defined in the same way as $\bff$ (resp. ${}_\A\bff$) with $\Delta$ replaced by $\Delta_k$. Let $\bfB^{(k)}$ be the canonical basis of $\bff^{(k)}$ and let $(\cdot,\cdot)_k\colon\bff^{(k)}\times\bff^{(k)}\to\QQ(v)$ be the bilinear form that characterizes $\pm\bfB^{(k)}$ as \eqref{eq:pairing characterizing B} above. 
\begin{prop}\label{prop:decomposition f=f1f2}
    We have a canonical isomorphism of $\QQ(v)$-algebras $\bff\cong\bff^{(1)}\otimes_{\QQ(v)}\bff^{(2)}$ and $\A$-algebras ${}_\A\bff\cong{}_\A \bff^{(1)}\otimes_\A{}_\A\bff^{(2)}$, under which $\bfB$ is mapped bijectively onto $\bfB^{(1)}\otimes\bfB^{(2)}$ and the bilinear form factorizes as $(\cdot,\cdot)=(\cdot,\cdot)_1\otimes(\cdot,\cdot)_2$. In other words, we have 
    \begin{itemize}
        \item any element $b\in\bfB$ can be written uniquely as $b=b_1\otimes b_2$ where $b_1\in\bfB^{(1)}$ and $b_2\in\bfB^{(2)}$,
        \item for any elements $x,x'\in\bff^{(1)}$ and $y,y'\in\bff^{(2)}$, we have $(xy,x'y')=(x,x')_1(y,y')_2$.
    \end{itemize}
\end{prop}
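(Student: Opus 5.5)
The plan is to define the map explicitly, check that it is an algebra isomorphism (first over $\QQ(v)$, then over $\A$), and then identify canonical bases through the characterization \eqref{eq:pairing characterizing B}.

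\textbf{The algebra map.} Define $\phi\colon\bff^{(1)}\otimes_{\QQ(v)}\bff^{(2)}\to\bff$ by $x\otimes y\mapsto xy$, using the obvious maps $\bff^{(k)}\to\bff$ sending $\theta_i\mapsto\theta_i$. For $i\in\Delta_1$, $j\in\Delta_2$ we have $a_{ij}=0$, so the quantum Serre relation \eqref{eq:quantum Serre relations} for the pair $(i,j)$ reads $\theta_i\theta_j=\theta_j\theta_i$. Hence the images of $\bff^{(1)}$ and $\bff^{(2)}$ commute, and $\phi$ is an algebra homomorphism. Conversely, the defining relations of $\bff$ are the Serre relations within each $\Delta_k$ together with these commutation relations. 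So $\theta_i\mapsto\theta_i\otimes1$ or $1\otimes\theta_i$ gives an inverse, and $\phi$ is an isomorphism.

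\textbf{Integral forms.} The subalgebra ${}_\A\bff$ is generated by the divided powers $\theta_i^{(n)}$. Each of these lies in the image of ${}_\A\bff^{(1)}\otimes{}_\A\bff^{(2)}$, and that image is an $\A$-subalgebra because the two factors commute. Conversely, the image is contained in ${}_\A\bff$. This gives the $\A$-form statement, and ${}_\A\bff^{(1)}\otimes_\A{}_\A\bff^{(2)}$ is free with basis $\bfB^{(1)}\otimes\bfB^{(2)}$.

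\textbf{The bilinear form.} Recall from \cite[1.2.3]{Lusztig-IntroQuantumGroup} that $(\cdot,\cdot)$ is determined by $(\theta_i,\theta_j)=\delta_{ij}(1-v_i^{-2})^{-1}$ together with $(x,y'y'')=(r(x),y'\otimes y'')$, where $\bff\otimes\bff$ carries the twisted multiplication. I would first show that $r$ is compatible with the factorization. For $x\in\bff^{(1)}_\nu$ and $y\in\bff^{(2)}_{\nu'}$ one has $\nu\cdot\nu'=0$, so the twist $v^{|x_2|\cdot|y_1|}$ in $r(xy)=r(x)r(y)$ is trivial. Concretely, $r(xy)=\sum x_1y_1\otimes x_2y_2$ with $r(x)=\sum x_1\otimes x_2$ and $r(y)=\sum y_1\otimes y_2$, where $r(x)\in\bff^{(1)}\otimes\bff^{(1)}$ and similarly for $y$.

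Next I would prove $(xy,x'y')=(x,x')_1(y,y')_2$ for homogeneous elements by induction on $\mathrm{tr}|x'y'|$, as follows.
\begin{itemize}
\item Write $x'y'=\theta_i z$ and expand $(xy,\theta_iz)=(r(xy),\theta_i\otimes z)$.
\item Only the components of $r(xy)$ whose left factor has degree $i$ contribute.
\item Assume $i\in\Delta_1$, so that $x'=\theta_i x''$ and $z=x''y'$; the case $i\in\Delta_2$ is symmetric using the commutation. The relevant components then come from $r(x)$ alone, namely $(x_1\otimes x_2y)$ with $|x_1|=i$, and they carry no twist factor because the factors are orthogonal.
\item By induction this gives $(x,\theta_ix'')_1(y,y')_2$.
\item The base cases and the orthogonality of distinct weight spaces handle the remaining terms.
\end{itemize}
This bookkeeping with the twisted multiplication is the main technical step. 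It is routine but needs care with the powers of $v$, all of which vanish because $\Delta_1\perp\Delta_2$.

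\textbf{Canonical bases.} The bar involution satisfies $\overline{xy}=\bar x\bar y$, so $b_1b_2$ is bar-invariant for $b_k\in\bfB^{(k)}$. It lies in ${}_\A\bff$, and $(b_1b_2,b_1b_2)=(b_1,b_1)_1(b_2,b_2)_2\in1+v^{-1}\ZZ[[v^{-1}]]$. By \eqref{eq:pairing characterizing B}, $b_1b_2\in\pm\bfB$.

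To fix the sign, I would use Lusztig's positivity normalization: the coefficient of the leading monomial is $1$ modulo $v^{-1}$. Alternatively, use the fact that $\bfB$ is characterized by $b\equiv$ its PBW or monomial leading term modulo $v^{-1}\mathcal{L}$, with $\mathcal{L}$ the $\ZZ[v^{-1}]$-lattice. The lattice factorizes as $\mathcal{L}=\mathcal{L}_1\otimes\mathcal{L}_2$ by the orthogonality of the form, so the crystal-lattice characterization passes to the tensor product.

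Finally, $\{b_1b_2\}$ is a basis of ${}_\A\bff$ contained in $\pm\bfB$, and it therefore equals $\bfB$ once the signs are fixed. The sign issue is the point I expect to be the obstacle; I would resolve it via the $\mathcal{L}$-characterization.
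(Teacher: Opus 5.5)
\textbf{Where the proposal stops being a proof.} Your first three steps are sound. The algebra isomorphism and the integral forms are handled as in the paper. Your factorization of the form via $r$ is a valid variant of the paper's argument: you induct on the degree of $x'y'$, after reducing to monomials so that $x'=\theta_ix''$, whereas the paper uses the derivations ${}_ir$ and induction on $\mathrm{tr}|x|$. The gap is the last step, which is where the real content lies. From \eqref{eq:pairing characterizing B} you only get $b_1b_2\in\pm\bfB$, and neither proposed way of fixing the sign is an argument.
\begin{itemize}
\item The ``coefficient of the leading monomial'' is not well defined, since monomials in the $\theta_i$ are not a basis of $\bff$. It is also not how $\bfB$ is singled out in \cite{Lusztig-IntroQuantumGroup}.
\item The lattice argument cannot work as stated. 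The $\ZZ[v^{-1}]$-lattice $\mathcal{L}$ is stable under $x\mapsto -x$, so its factorization $\mathcal{L}=\mathcal{L}_1\otimes\mathcal{L}_2$ says nothing about $b$ versus $-b$; this factorization does follow from what you proved.
\item What would actually have to factorize is the \emph{signed} crystal $B(\infty)\subset\mathcal{L}/v^{-1}\mathcal{L}$. That needs two inputs you do not have. First, the compatibilities $\tilde f_i(xy)=(\tilde f_ix)y$ for $i\in\Delta_1$ and $\tilde f_j(xy)=x\,\tilde f_j(y)$ for $j\in\Delta_2$; these are true because ${}_ir$ kills $\bff^{(2)}$. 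Second, the external comparison theorem identifying Lusztig's $\bfB$, signs included, with Kashiwara's global basis.
\item Orthogonality of the form plays no role in this. A PBW-based criterion would similarly need a factorization of PBW bases plus a comparison with the book's definition of $\bfB$.
\end{itemize}

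\textbf{The missing idea.} The paper instead uses Lusztig's own inductive definition of the signs. By \cite[14.3.2, 14.4.2]{Lusztig-IntroQuantumGroup},
$\bfB_\nu=\bigcup_{i,\,0<n\le\nu_i}\pi_{i,n}(\bfB_{\nu-ni}\cap\pm\bfB_{i;0})$, where $\pi_{i,n}(b)$ is the unique element of $\pm\bfB$ with $\pi_{i,n}(b)-\theta_i^{(n)}b\in\theta_i^{n+1}\bff$. The tensor decomposition gives $\theta_i^n\bff\cap\bff^{(1)}=\theta_i^n\bff^{(1)}$, so $\pi^{(1)}_{i,n}$ is the restriction of $\pi_{i,n}$. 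For $b_2\in\bfB\cap\bfB^{(2)}$ one then proves $b_1b_2\in\bfB$ by induction on $|b_1|$:
\begin{itemize}
\item write $b_1=\pi^{(1)}_{i,n}(b_1')$ with $i\in\Delta_1$ and $b_1'\notin\theta_i\bff^{(1)}$;
\item by induction $b_1'b_2\in\bfB$, and $b_1'b_2\notin\theta_i\bff$;
\item $b_1b_2-\theta_i^{(n)}b_1'b_2=(b_1-\theta_i^{(n)}b_1')b_2\in\theta_i^{n+1}\bff$;
\item hence $b_1b_2=\pi_{i,n}(b_1'b_2)\in\bfB$.
\end{itemize}
Taking $b_2=1$ gives $\bfB^{(1)}\subset\bfB$, and symmetrically $\bfB^{(2)}\subset\bfB$. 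Applying the claim to arbitrary $b_2\in\bfB^{(2)}$ then gives $\bfB^{(1)}\bfB^{(2)}\subset\bfB$, and your basis-counting conclusion goes through.
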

\begin{proof}
    For $k=1,2$, the natural embeddings $\bff^{(k)}\into\bff$ are compatible with the coproducts. Hence the restrictions of the pairing $(\cdot,\cdot)$ to $\bff^{(k)}$ coincide with $(\cdot,\cdot)_k$ by their defining properties in \cite[Proposition 1.2.3]{Lusztig-IntroQuantumGroup}. For any $i\in\Delta_1,j\in\Delta_2$ and $p,p'\in\NN$, the quantum Serre relation \eqref{eq:quantum Serre relations} becomes $\theta_i^{(p)}\theta_j^{(p')}=\theta_j^{(p')}\theta_i^{(p)}$. Thus the canonical embeddings $\bff^{(k)}\into\bff$ (for $k=1,2$) together with the multiplication on $\bff$ induce a $\QQ(v)$-algebra isomorphism $\bff^{(1)}\otimes_{\QQ(v)}\bff^{(2)}\cong\bff$. \par 
    For each $i\in\Delta$, recall from \cite[1.2.13]{Lusztig-IntroQuantumGroup} the $\QQ(v)$-linear map ${}_ir\colon\bff\to\bff$ satisfying the following properties:
    \begin{enumerate}
        \item[(i)] ${}_ir(1)=0$,
        \item[(ii)] ${}_ir(\theta_j)=\delta_{ij}$ for all $j\in\Delta$,
        \item[(iii)] ${}_ir(xy)={}_ir(x)\cdot y+v^{|x|\cdot i}x\cdot{}_ir(y)$ for all homogeneous $x,y\in\bff$,
        \item[(iv)] $(\theta_i y,x)=(\theta_i,\theta_i)(y,{}_ir(x))$ for all $x,y\in\bff$.
    \end{enumerate}
    Properties (i),(ii),(iii) uniquely characterize the map ${}_ir$. So if $i\in\Delta_k$ ($k=1,2$), the restriction of ${}_ir$ to $\bff^{(k)}$ is the similar map defined for $\bff^{(k)}$. Moreover, (iv) is a consequence of (i),(ii),(iii) and if $i\in\Delta_k$ ($k=1,2$), a similar identity holds for all $x,y\in\bff^{(k)}$ with $(\cdot,\cdot)$ replaced by $(\cdot,\cdot)_k$. \par
    Next we show the identity
    \begin{equation}\label{eq:pairing-factorize}
        (xy,x'y')=(x,x')_1(y,y')_2,\quad\forall x,x'\in\bff^{(1)}, y,y'\in\bff^{(2)}.
    \end{equation}
    Clearly it suffices to show this under the assumption that $x,x',y,y'$ are all homogeneous. We use induction on $\mathrm{tr}(|x|)\in\NN$. Since homogeneous elements of different degrees are orthogonal under the pairings, we have $(y,x'y')=0$ whenever $|x'|\ne0$ and \eqref{eq:pairing-factorize} holds if $x=1$. Suppose \eqref{eq:pairing-factorize} has been proved for some homogeneous $x\in\bff^{(1)}$ and arbitrary $y,x',y'$. Take any $i\in\Delta_1$. By (ii) and (iii) we get that ${}_ir(x'y')={}_ir(x')y'$ for all $x'\in\bff^{(1)},y'\in\bff^{(2)}$. Then by (iv) and the induction hypothesis we get
    \begin{align*}
        (\theta_i xy,x'y')&=(\theta_i,\theta_i)(xy,{}_ir(x')y')=(\theta_i,\theta_i)_1(x,{}_ir(x'))_1(y,y')_2\\
        &=(\theta_ix,x')_1(y,y')_2.
    \end{align*}
    This finishes the proof of \eqref{eq:pairing-factorize}. By \cite[Theorem 14.2.3]{Lusztig-IntroQuantumGroup}, we deduce that $b_1b_2\in\pm\bfB$ for any $b_1\in\bfB^{(1)}$ and $b_2\in\bfB^{(2)}$. It remains to remove the sign ambiguities.
    \begin{claim}
        For any $b_2\in\bfB\cap\bfB^{(2)}$ and $b_1\in\bfB^{(1)}$, we have $b_1b_2\in\bfB$.
    \end{claim}
    \begin{proof}[Proof of Claim]
        Recall from \cite[Theorem 14.3.2]{Lusztig-IntroQuantumGroup} that for any $i\in\Delta$ and $n\in\NN$, there is a unique bijection $\pi_{i,n}\colon\pm\bfB_{i;0}\to\pm\bfB_{i;n}$ such that for any $b\in\pm\bfB_{i;0}$ we have $\pi_{i,n}(b)-\theta_i^{(n)}b\in\theta_i^{n+1}\bff$. If $i\in\Delta_k$ ($k=1,2$), then we have similar maps $\pi_{i,n}^{(k)}\colon\pm\bfB_{i;0}^{(k)}\to\pm\bfB_{i;n}^{(k)}$ defined for $\bff^{(k)}$. Since $\theta_i^{(n)}\bff\cap\bff^{(k)}=\theta_i^{(n)}\bff^{(k)}$, we see that the map $\pi_{i,n}^{(k)}$ coincides with the restriction of $\pi_{i,n}$. 
        Also recall from \cite[14.4.2]{Lusztig-IntroQuantumGroup} that for any $\nu=\sum_{i\in \Delta}\nu_i i\in\NN^\Delta$ we have $\bfB_{\nu}=\bigcup_{i\in\Delta,0<n\le\nu_i}\pi_{i,n}(\bfB_{\nu-ni}\cap(\pm\bfB_{i;0}))$. A similar description holds for $\bfB^{(k)}$ ($k=1,2$).\par 
        We prove the claim by induction on $|b_1|\in\NN^{\Delta_1}$. If $b_1=1$ the claim is obvious. Now assume $|b_1|\ne0$. There exists $i\in\Delta_1$, $n\in\ZZ_{>0}$ and $b_1'\in\bfB^{(1)}_{|b_1|-ni}\cap(\pm\bfB^{(1)}_{i;0})$ such that $\pi_{i,n}^{(1)}(b_1')=b_1$. So we have $b_1'\notin\theta_i\bff$ and $b_1-\theta_i^{(n)}b_1'\in\theta_i^{n+1}\bff^{(1)}$. By induction hypothesis we have $b_1'b_2\in\bfB$. Clearly we also have $b_1'b_2\notin\theta_i\bff$ and hence $b_1'b_2\in\bfB_{|b_1|+|b_2|-ni}\cap(\pm\bfB_{i;0})$. Moreover, 
        \[b_1b_2-\theta_i^{(n)}b_1'b_2=(b_1-\theta_i^{(n)}b_1')b_2\in\theta_i^{n+1}\bff\]
        and therefore $b_1b_2=\pi_{i,n}(b_1'b_2)\in\bfB_{|b_1|+|b_2|}$. 
    \end{proof}
    Applying the claim to $b_2=1\in\bfB\cap\bfB^{(2)}$ we get that $\bfB^{(1)}\subset\bfB$. Similarly one shows that $\bfB^{(2)}\subset\bfB$. Then we apply the claim to any $b_2\in\bfB^{(2)}\subset\bfB$ and get a canonical embedding $\bfB^{(1)}\times\bfB^{(2)}\into\bfB$. This must be a bijection since both sides form a $\QQ(v)$-basis for $\bff$. 
\end{proof}

Let $\ZZ^{\Delta_2,\perp}\coloneqq\{\la\in X\mid\langle\alpha_j^\vee,\la\rangle=0,\forall j\in\Delta_2\}$. Then by assumption we have $\ZZ^{\Delta_1}\subset\ZZ^{\Delta_2,\perp}$. 
Let $X_1\subset X$ be a sub-lattice satisfying $\ZZ^{\Delta_1}\subset X_1\subset\ZZ^{\Delta_2,\perp}$ and let $Y_1\coloneqq\mathrm{Hom}_\ZZ(X_1,\ZZ)$. Then $\Delta_1,X_1,Y_1$ determine a based root datum $\Psi_1$, from which we define the modified quantized enveloping algebra $\dot{\bfU}_{\Psi_1}$ with its canonical basis $\dot{\bfB}_{\Psi_1}$. The positive/negative part of $\dot{\bfU}_{\Psi_1}$ is the subalgebra $\bff^{(1)}\subset\bff$ associated to $\Delta_1$ as before. From the defining relations we see that there is a natural embedding of (non-unital) $\QQ(v)$-algebras 
\begin{equation}\label{eq:morphism iota}
    \iota\colon\dot{\bfU}_{\Psi_1}\into\dot{\bfU}.
\end{equation} 
Note however that in general $\iota$ does not preserve the comultiplications. On the other hand, there is a unique projection $p\colon\dot{\bfU}\to\dot{\bfU}_{\Psi_1}$ such that $p\circ\iota$ is the identity on $\dot{\bfU}_{\Psi_1}$ and $p$ sends any $\theta_j^{\pm}1_{\chi}$ (with $j\in\Delta_2$ and $\chi\in X$) and any $1_\zeta$ (with $\zeta\notin X_1$) to $0$.
\begin{lem}\label{lem:map p}
    The map $p$ is a homomorphism of non-unital $\QQ(v)$-algebras that is compatible with the coproduct $\Delta_{\Psi_1}$ on $\dot{\bfU}_{\Psi_1}$ and $\Delta$ on $\dot{\bfU}$. In other words, for any $x\in\dot{\bfU}$ we have $(p\otimes p)(\Delta(x))=\Delta_{\Psi_1}(p(x))$. 
\end{lem}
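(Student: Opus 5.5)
To prove Lemma \ref{lem:map p}, I will first make $p$ explicit on the triangular basis and then check multiplicativity and compatibility with coproducts on generators.

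\emph{Explicit form of $p$.} By Proposition \ref{prop:decomposition f=f1f2} we have $\bff=\bff^{(1)}\otimes\bff^{(2)}$. Let $p_{\bff}\colon\bff\to\bff^{(1)}$ be the projection that is the identity on $\bff^{(1)}$ and kills $\bff^{(1)}\cdot\bigoplus_{\nu\neq0}\bff^{(2)}_\nu$. This is an algebra homomorphism: it is $\mathrm{id}\otimes\epsilon_2$, where $\epsilon_2$ is the augmentation of $\bff^{(2)}$, and $\bff^{(1)},\bff^{(2)}$ commute. Using the basis $x^+1_\zeta x'^-$ of $\dot{\bfU}$, define
\[p(x^+1_\zeta x'^-)=p_{\bff}(x)^+1_\zeta p_{\bff}(x')^-\ \text{ if }\zeta\in X_1,\qquad p(x^+1_\zeta x'^-)=0\ \text{ otherwise}.\]
Clearly $p\circ\iota=\mathrm{id}$, and $p$ kills the prescribed elements. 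Uniqueness will follow once $p$ is shown to be an algebra map, since $\dot{\bfU}$ is generated by the $\theta_i^{\pm}1_\chi$ and the $1_\zeta$.

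\emph{Multiplicativity.} The defining relations of $\dot{\bfU}$ mix weights only by elements of $\ZZ^\Delta$. Moreover $\ZZ^{\Delta_2}\cap X_1=0$, because $\ZZ^{\Delta_2}$ pairs nondegenerately with the coroots of $\Delta_2$ while $X_1\subset\ZZ^{\Delta_2,\perp}$. Decompose $\dot{\bfU}=\dot{\bfU}'\oplus K$, where $\dot{\bfU}'$ is spanned by the $x^+1_\zeta x'^-$ with $x,x'\in\bff^{(1)}$ and $\zeta\in X_1$, and $K$ is spanned by the remaining basis elements. Then $p$ is the projection onto $\dot{\bfU}'$ along $K$, and it suffices to show that $K$ is a two-sided ideal and $\dot{\bfU}'$ is a subalgebra. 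The latter is the image of $\iota$.

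For $K$, I will argue with a $\ZZ^{\Delta_2}$-grading: $\deg(x^+1_\zeta x'^-)=(|x|_2,|x'|_2)$, together with the class of $\zeta$ modulo $X_1$. The commutation relations for $i\neq j$ and the $i=j$ relations stay within $\Delta_1$ or within $\Delta_2$. So straightening a product containing a factor in $K$ either keeps a positive $\Delta_2$-degree or, when $\Delta_2$-parts cancel through the $i=j$ relation, keeps the weight outside $X_1$. The weight cannot re-enter $X_1$: a $\theta^{\pm}_j$ with $j\in\Delta_2$ shifts the weight by $\pm\alpha_j$, and the weights stay in the coset $\zeta+\ZZ^{\Delta}$. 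The elements of that coset lying in $X_1$ differ from each other only by $\ZZ^{\Delta_1}$ (because $\ZZ^{\Delta_2}\cap X_1=0$), while the $\Delta_2$-degree change is recorded. I expect this bookkeeping to be the main obstacle. A cleaner route is an alternative definition: realize $p(u)$ through the action of $u$ on $\dot{\bfU}$-modules restricted to the $\Delta_2$-highest-weight-zero part, which makes multiplicativity automatic.

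\emph{Coproduct compatibility.} Both $(p\otimes p)\circ\Delta$ and $\Delta_{\Psi_1}\circ p$ are algebra maps, so it suffices to check them on the generators $\theta_i^{\pm}1_\la$ and $1_\la$.
\begin{itemize}
\item For $1_\la$: $(p\otimes p)\Delta(1_\la)=\sum 1_{\la'}\otimes1_{\la-\la'}$, summed over $\la',\la-\la'\in X_1$. This sum is zero if $\la\notin X_1$ and equals $\Delta_{\Psi_1}(1_\la)$ otherwise.
\item For $i\in\Delta_2$: the coproduct terms \eqref{eq:coproduct-formula} involve $\theta_i$ in one factor, so they are killed; this matches $\Delta_{\Psi_1}(p(\theta_i^{\pm}1_\la))=0$.
\item For $i\in\Delta_1$: the formulas agree term by term. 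The only point to check is that the exponents $|x_j'|\circ\la$ computed in $\Psi$ and in $\Psi_1$ coincide, which holds because the $d_i$ and the pairings $\langle\alpha_i^\vee,\la\rangle$ for $i\in\Delta_1$ and $\la\in X_1$ are the same. The choice of $d_i$ may differ by a constant on each component; this is harmless because the form restricted to $\Delta_1$ is determined up to the normalization, and the normalization is fixed componentwise by the gcd condition.
\end{itemize}
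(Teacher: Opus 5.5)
The gap is in the multiplicativity step, namely the claim that $K$ is a two-sided ideal. Your bookkeeping sketch for it is not just incomplete; it is wrong at the decisive point. It uses the hypothesis on $X_1$ only through $\ZZ^{\Delta_2}\cap X_1=0$, and that cannot suffice.

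Take $j\in\Delta_2$ and $\zeta\in X_1$. Then $1_\zeta\theta_j^-$ and $\theta_j^+1_\zeta$ both lie in $K$, while the $i=j$ relation gives
\[
(1_\zeta\theta_j^-)(\theta_j^+1_\zeta)=\theta_j^-1_{\zeta+\alpha_j}\theta_j^+=\theta_j^+1_{\zeta-\alpha_j}\theta_j^- -[\langle\alpha_j^\vee,\zeta\rangle]_j\,1_\zeta .
\]
The last term has $\Delta_2$-degree $0$ and weight $\zeta\in X_1$, so it lies in $\dot{\bfU}'$. This is exactly the situation your dichotomy (``positive $\Delta_2$-degree, or weight outside $X_1$'') says cannot arise. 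The term vanishes only because $\langle\alpha_j^\vee,\zeta\rangle=0$, i.e.\ because $X_1\subset\ZZ^{\Delta_2,\perp}$.

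Without this hypothesis the lemma fails. Suppose $X_1$ contained some $\omega$ with $\langle\alpha_j^\vee,\omega\rangle=1$; this is compatible with $\ZZ^{\Delta_2}\cap X_1=0$. Applying $p$ to $\theta_j^+1_{\omega-\alpha_j}\theta_j^--\theta_j^-1_{\omega+\alpha_j}\theta_j^+=1_\omega$ would give $0=1_\omega$. Your alternative route `via modules' is only gestured at. It would need the same input (weights in $X_1$ have $\Delta_2$-weight $0$), plus a faithfulness statement to produce an element of $\dot{\bfU}_{\Psi_1}$.

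The repair is short. Define $p$ on Lusztig's generators $x^{\pm}1_\zeta x'^{\mp}$ by your formula and check the relations of \cite[31.1.3]{Lusztig-IntroQuantumGroup}. Everything is immediate except the $i=j\in\Delta_2$ relation. There $p$ kills the left side when $m+n>0$. On the right side only the term with $t=m=n$ survives; its idempotent is $1_\xi$ with $\xi\in X_1$, and its coefficient is $\begin{bmatrix}\pm\langle\alpha_j^\vee,\xi\rangle\\ n\end{bmatrix}_j=\begin{bmatrix}0\\ n\end{bmatrix}_j=0$. The paper itself takes the existence of $p$ as an algebra map for granted and spends its proof on the coproduct.

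Granting multiplicativity, your coproduct check is correct but follows a different route from the paper's. The paper works on the basis $x^-1_\zeta\sigma(y)^+$ and invokes \cite[Lemma 3.10]{Lusztig-Infinity}, i.e.\ that the coproduct of $\dot{\bfU}$ is determined by $r$ on $\bff$, together with compatibility of $\bff\to\bff^{(1)}$ with $r$. You instead compare the two algebra maps $(p\otimes p)\circ\Delta$ and $\Delta_{\Psi_1}\circ p$ on generators. That is more elementary, but it depends on the multiplicativity you have not yet proved.

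Finally, your remark on the $d_i$ should be replaced. The normalization is not fixed componentwise by a gcd condition, and rescaling on a component of $\Delta_1$ would change $\bff^{(1)}$ itself. One must take for $\Psi_1$ the restriction of the $d_i$ of $\Psi$; this is already implicit in $\bff^{(1)}\subset\bff$ and in $\iota$. With that convention the exponents $|x_j'|\circ\la$ agree.
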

\begin{proof}
    The natural projection $\bff\to\bff^{(1)}$ is clearly compatible with coproducts. We note that elements of the form $x^-1_\zeta\sigma(y)^+$ for $x,y\in\bfB$ and $\zeta\in X$ form a $\QQ(v)$-basis of $\dot{\bfU}$. Then the statement follows easily from \cite[Lemma 3.10]{Lusztig-Infinity}, which indicates that the comultiplication on $\dot{\bfU}$ (resp. $\dot{\bfU}_{\Psi_1}$) is uniquely determined by that on $\bff$ (resp. $\bff^{(1)}$). 
\end{proof}

\begin{prop}\label{prop:iota}
    For each $\la\in X_1^+$, the embedding $\iota$ maps $\dot{\bfB}_{\Psi_1}[\la]$ bijectively onto $\dot{\bfB}[\la]$.
\end{prop}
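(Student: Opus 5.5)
We first show that $\iota$ maps $\dot{\bfB}_{\Psi_1}$ into $\dot{\bfB}$, and then identify the two-sided cells on both sides.

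\emph{Step 1: $\iota(\dot{\bfB}_{\Psi_1})\subset\dot{\bfB}$.} We use the characterization of $\pm\dot{\bfB}$ in Proposition \ref{prop:bilinear pairing characterizing dotB}. The map $\iota$ commutes with the bar involutions, since both fix the generators $\theta_i^\pm1_\zeta$. It sends ${}_\A\dot{\bfU}_{\Psi_1}$ into ${}_\A\dot{\bfU}$, and it commutes with $\sigma$, $\omega$ and hence with $\rho$. For $i\in\Delta_1$ and $\zeta\in X_1$ the exponent $\nu\circ(\la+\la')/2$ depends only on pairings with $\alpha_i^\vee$, $i\in\Delta_1$, which agree in $\Psi_1$ and $\Psi$.

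Next we check that $\iota$ preserves the bilinear forms. Consider the pullback of $(\cdot,\cdot)$ along $\iota$. It makes the decomposition \eqref{eq:U-dot-decomposition} of $\dot{\bfU}_{\Psi_1}$ orthogonal and satisfies the adjunction $(ux,y)=(x,\rho(u)y)$. By Proposition \ref{prop:decomposition f=f1f2} it also restricts to $(\cdot,\cdot)_1$ on $x^-1_\la$ with $x\in\bff^{(1)}$. The uniqueness in Proposition \ref{prop:bilinear pairing characterizing dotB} then forces it to equal the form of $\Psi_1$. Hence $\iota(\dot{\bfB}_{\Psi_1})\subset\pm\dot{\bfB}$.

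To remove the sign, we use the description in Definition \ref{def:B-dot}. The element $b_1\diamondsuit_\zeta b_2$ of $\dot{\bfB}_{\Psi_1}$, with $b_1,b_2\in\bfB^{(1)}$, equals $b_1^+b_2^-1_\zeta$ plus lower terms $c_1^+c_2^-1_\zeta$. By Proposition \ref{prop:decomposition f=f1f2} we have $\bfB^{(1)}\subset\bfB$, so its image has positive leading coefficient with respect to the same monomial basis. Therefore $\iota(b_1\diamondsuit_\zeta b_2)=b_1\diamondsuit_\zeta b_2\in\dot{\bfB}$.

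\emph{Step 2: cells.} Fix $\la\in X_1^+$. By Proposition \ref{prop:parametrize-B[la]} the cell $\dot{\bfB}_{\Psi_1}[\la]$ is parametrized by $\bfB^{(1)}(\la)^2$. The cell $\dot{\bfB}[\la]$ is parametrized by $\bfB(\la)^2$. Since $\langle\alpha_j^\vee,\la\rangle=0$ for $j\in\Delta_2$, any $b\in\bfB(\la)$ satisfies $b\notin\bff\theta_j$ for all $j\in\Delta_2$. Writing $b=b_1b_2$ via Proposition \ref{prop:decomposition f=f1f2}, this forces $b_2=1$, because a nontrivial $b_2\in\bfB^{(2)}$ lies in some $\bff\theta_j$ (this is where the factorization and the commuting of the two parts are used). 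Hence $\bfB(\la)=\bfB^{(1)}(\la)$, and the two cells have the same cardinality.

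It therefore suffices to show that $\iota(\dot{\bfB}_{\Psi_1}[\la])\subset\dot{\bfB}[\la]$; injectivity of $\iota$ then gives bijectivity. We use the characterization in Lemma \ref{lem:characterization subset Blambda} via Weyl modules. The restriction along $\iota$ of the $\dot{\bfU}$-module $\La_\la$ contains the $\dot{\bfU}_{\Psi_1}$-module $\La^{(1)}_\la$: it is the submodule generated by $\eta_\la$, which is $\bff^{(1)}/T^{(1)}_\la$ because $\bfB(\la)=\bfB^{(1)}(\la)$. In fact the restriction equals $\La^{(1)}_\la$ plus weight spaces outside $X_1$, and these are killed by $\iota(\dot{\bfU}_{\Psi_1})$.

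We then check the two conditions of Lemma \ref{lem:characterization subset Blambda}.
\begin{enumerate}
\item For condition (1), an element $\beta\in\dot{\bfB}_{\Psi_1}[\la]$ acts nontrivially on $\La^{(1)}_\la$, so $\iota(\beta)$ acts nontrivially on $\La_\la$.
\item For condition (2), suppose $\iota(\beta)\La_\tau\neq0$ for some $\tau\in X^+$. We need $\tau\ge\la$. Take a nonzero vector in $\iota(\beta)\La_\tau$ of weight in $X_1$. Its $\dot{\bfU}_{\Psi_1}$-span sits inside a finite-dimensional $\dot{\bfU}_{\Psi_1}$-module whose composition factors are $\La^{(1)}_{\tau'}$, where $\tau'$ is the weight of a $\Delta_1$-highest weight vector in $\La_\tau$. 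Thus $\beta$ acts nontrivially on some $\La^{(1)}_{\tau'}$, and the $\Psi_1$ version of the lemma gives $\tau'\ge\la$. Since $\tau\ge\tau'$ in $\Psi$, we conclude $\tau\ge\la$.
\end{enumerate}

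\emph{Main obstacle.} The hardest step is the last one, handling condition (2). Lemma \ref{lem:characterization subset Blambda} is phrased for modules $\La_\tau$ rather than for arbitrary composition factors, so one must pass from ``acts nontrivially on a module with a Weyl filtration'' to ``acts nontrivially on a Weyl module''. We would do this using Lusztig's based filtrations (\cite[29.1.3]{Lusztig-IntroQuantumGroup}), or else avoid the issue by arguing directly with the leading terms $b_1^-1_\la\sigma(b_2)^+$ modulo $\dot{\bfU}[>\la]$ from Proposition \ref{prop:parametrize-B[la]}. In the latter approach one needs to show that $\iota(\dot{\bfU}_{\Psi_1}[>\la])\subset\dot{\bfU}[>\la]$, which reduces to the same cell comparison by induction on $\la$ in the dominance order.
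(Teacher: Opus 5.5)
Your argument is correct. Step~1 is the same as the paper's: restrict $\sigma,\omega,\rho$ and the form along $\iota$, apply Proposition~\ref{prop:bilinear pairing characterizing dotB}, then fix the sign via the leading term $b_1^+b_2^-1_\zeta$ using $\bfB^{(1)}\subset\bfB$.

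Step~2 takes a different route from the paper.

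\textbf{The paper's route.} It reduces to simply connected $\Psi$ via \cite[23.2.5, 25.2.2]{Lusztig-IntroQuantumGroup}. Then $X=X_1\oplus X_2$, $\dot{\bfU}=\dot{\bfU}_{\Psi_1}\otimes\dot{\bfU}_{\Psi_2}$ and $\La_{\la'}=\La_{\la'_1}\otimes\La_{\la'_2}$, so $\iota(x)$ only sees $\La_{\la'_1}\otimes(\La_{\la'_2})_0$. This gives $\iota(\dot{\bfU}_{\Psi_1}[\ge\la])\subset\dot{\bfU}[\ge\la]$, hence $\iota(b)\in\dot{\bfB}[\mu]$ with $\mu\ge\la$. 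The reverse inequality comes from the common module $\La_\la$. Bijectivity follows because both cells have $(\dim\La_\la)^2$ elements.

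\textbf{Your route.} You check the two conditions of Lemma~\ref{lem:characterization subset Blambda} for $\iota(\beta)$ directly, for arbitrary $X_1\subset X$. You restrict $\La_\tau$ to its $X_1$-weight part $M=\bigoplus_{\mu\in X_1}(\La_\tau)_\mu$. This is a finite-dimensional unital $\dot{\bfU}_{\Psi_1}$-module whose highest weights $\tau'$ are weights of $\La_\tau$, so $\tau\ge\tau'\ge\la$. You get the cardinality from the explicit equality $\bfB(\la)=\bfB^{(1)}(\la)$. That equality is right: a nontrivial $b_2\in\bfB^{(2)}$ lies in some $\bff\theta_j$, by the description of $\bfB_\nu$ recalled in the proof of Proposition~\ref{prop:decomposition f=f1f2} together with $\sigma(\bfB)=\bfB$.

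\textbf{What each buys.} Your version avoids changing the root datum, but it needs semisimplicity of $M$. The paper's tensor factorization exhibits the branching explicitly and needs no complete reducibility.

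\textbf{Your ``main obstacle''.} It is not a real one. Over $\QQ(v)$, finite-dimensional unital $\dot{\bfU}_{\Psi_1}$-modules are semisimple by the complete reducibility theorem \cite[Ch.~6]{Lusztig-IntroQuantumGroup}. So $M\cong\bigoplus_k\La^{(1)}_{\tau'_k}$, and $\beta M\neq0$ immediately gives $\beta\La^{(1)}_{\tau'_k}\neq0$ for some $k$; no based filtrations are needed. You should state this explicitly. As written, ``composition factors \dots\ Thus'' is a non sequitur, since an element can kill every composition factor of a filtered module without killing the module. Your fallback, proving $\iota(\dot{\bfU}_{\Psi_1}[>\la])\subset\dot{\bfU}[>\la]$, is essentially what the paper does.

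\textbf{Minor point.} For $\la\in X_1^+$ all weights of $\La_\la$ lie in $\la-\NN^{\Delta_1}\subset X_1$. So the restriction of $\La_\la$ is exactly $\La^{(1)}_\la$; weight spaces outside $X_1$ only occur for general $\La_\tau$.
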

\begin{proof}
    It is clear that the embedding $\iota$ respects the orthogonal decomposition \eqref{eq:U-dot-decomposition} for $\dot{\bfU}_{\Psi_1}$ and $\dot{\bfU}$. Also one easily checks that the involutions $\sigma,\omega,\rho$ for $\dot{\bfU}$ restrict along $\iota$ to similar involutions defined for $\dot{\bfU}_{\Psi_1}$. Then one sees that the pairing $(\cdot,\cdot)$ on $\dot{\bfU}$ restricts along $\iota$ to the similarly defined pairing on $\dot{\bfU}_{\Psi_1}$. Thus we have $\iota(\dot{\bfB}_{\Psi_1})\subset\pm\dot{\bfB}$ by Proposition \ref{prop:bilinear pairing characterizing dotB}. Since the canonical base for $\bff^{(1)}$ is a subset of $\bfB$ by Proposition \ref{prop:decomposition f=f1f2}, we get that $\iota(\dot{\bfB}_{\Psi_1})\subset\dot{\bfB}$ by comparing the leading term.\par 
    Next we show the inclusion
    \[\iota(\dot{\bfU}_{\Psi_1}[\ge\la])\subset\dot{\bfU}[\ge\la],\qquad\forall\la\in X_1^+.\]
    For this we may assume that $\Psi$ is simply connected (so $\Psi_1$ is also simply connected) by \cite[23.2.5, 25.2.2]{Lusztig-IntroQuantumGroup}. Let $\{\omega_i\in\QQ^\Delta,i\in\Delta\}$ be the fundamental weights (i.e. $\langle\alpha_i^\vee,\omega_j\rangle=\delta_{ij}$ for all $i,j\in\Delta$). Then we have $X_1=\bigoplus_{i\in \Delta_1}\ZZ\omega_i$ and $X=X_1\oplus X_2$ where $X_2\coloneqq\bigoplus_{j\in\Delta_2}\ZZ\omega_j$. Let $\Psi_2$ be the root datum determined by $(X_2, \Delta_2)$. Then we have $\dot{\bfU}=\dot{\bfU}_{\Psi_1}\otimes\dot{\bfU}_{\Psi_2}$ and moreover
    \[X^+=X_1^+\oplus X_2^+,\quad X_1^+=\bigoplus_{i\in\Delta_1}\NN\omega_i,\quad X_2^+=\bigoplus_{i\in \Delta_2}\NN\omega_i.\]
    Thus any $\la'\in X^+$ is written uniquely as $\la'=\la_1'+\la_2'$ with $\la_1'\in X_1^+$, $\la_2'\in X_2^+$, and we have $\La_{\la'}=\La_{\la_1'}\otimes\La_{\la_2'}$ where (for $k=1,2$) $\La_{\la_k'}$ is the simple highest weight module for $\dot{\bfU}_{\Psi_k}$ with highest weight $\la_k'$. Then for any $x\in\dot{\bfU}_{\Psi_1}[\ge\la]$ such that $\iota(x)$ acts by a nonzero endomorphism on $\La_{\la'}$, we get that $x$ acts by a nonzero endomorphism on $\La_{\la_1'}$ and the zero weight space of $\La_{\la_2'}$ is nontrivial. This implies that $\la'\ge\la$ and hence $\iota(x)\in\dot{\bfU}[\ge\la]$ by Lemma \ref{lem:characterization subset Blambda}. The inclusion above follows. \par 
    Consequently for any $b\in\dot{\bfB}_{\Psi_1}[\la]$, we have $\iota(b)\in\dot{\bfB}[\mu]$ with $\mu\in X^+$ and $\mu\ge\la$. Since the irreducible module $\La_\la$ of highest weight $\la$ for $\dot{\bfU}_{\Psi_1}$ and $\dot{\bfU}$ are canonically identified, the element $\iota(b)$ acts by a nonzero endomorphism on it and therefore $\la\ge\mu$ by Lemma \ref{lem:characterization subset Blambda}. So we must have $\la=\mu$. On the other hand, since the sets $\dot{\bfB}_{\Psi_1}[\la]$ and $\dot{\bfB}[\la]$ have the same cardinality, which is the square of the $\QQ(v)$-dimension of $\La_\la$, the embedding $\iota$ induces a bijection between the sets $\dot{\bfB}_{\Psi_1}[\la]$ and $\dot{\bfB}[\la]$. 
\end{proof}

Now we consider two based root data with the same set of simple roots and Cartan matrix: 
\[\Psi=(\Delta,X,Y,...),\quad\Psi'=(\Delta,X',Y',...)\]
and let $G,G'$ be the corresponding Chevalley group schemes (over a common fixed base scheme $S$). Let $\dot{\bfU}'$ (resp. $\dot{\bfB}'$) be the modified quantized enveloping algebra associated to $\Psi'$ (resp. its canonical basis). 

\begin{lem}\label{lem:B-dot-functorial}
    Let $\varphi\colon G'\to G$ be a homomorphism of group schemes that is compatible with the pinnings and determines a homomorphism of root data $f\colon X\to X'$ (which restricts to the identity on the root lattice $\ZZ^\Delta$). Let $\varphi^*\colon\O_S[G]\to\O_S[G']$ be the induced homomorphism on coordinate rings. Then we have:
    \begin{enumerate}
        \item For any $b_1,b_2\in\bfB$ and any $\zeta\in X$ we have 
        \[\varphi^*((b_1\diamondsuit_\zeta b_2)^*)=(b_1\diamondsuit_{f(\zeta)}b_2)^*.\]
        \item For any $\la\in X^+$ and $b_1,b_2\in\bfB(\la)$, in the notation of Proposition \ref{prop:parametrize-B[la]} we have
        \[\varphi^*(\beta_\la(b_1,b_2)^*)=\beta_{f(\la)}(b_1,b_2)^*.\]
        In particular, $\varphi^*$ induces a canonical bijection $\dot{\bfB}[\la]^*\xrightarrow{\sim}\dot{\bfB}'[f(\la)]^*$.
    \end{enumerate}
\end{lem}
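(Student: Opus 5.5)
The plan is to realize $\varphi^*$ at the quantum level as the dual of a natural algebra map $\phi\colon\dot{\bfU}'\to\dot{\bfU}$, and to show that $\phi$ carries canonical basis elements to canonical basis elements.

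\textbf{Definition of $\phi$.} Since $f$ is the identity on $\ZZ^\Delta$, there is an $\A$-linear map
\[\phi\colon{}_\A\dot{\bfU}'\to{}_\A\hat{\bfU},\qquad \phi(x^+1_{\zeta'}y^-)=\sum_{\zeta\in X,\ f(\zeta)=\zeta'}x^+1_{\zeta}y^-.\]
It respects the defining relations of \S2.3. The only relation to check is the $\theta_i^\pm\theta_i^\mp$ commutation relation, which involves $\langle\alpha_i^\vee,\zeta\rangle$. Here we need $\langle\alpha_i^\vee,\zeta\rangle=\langle\alpha_i^{\vee\prime},f(\zeta)\rangle$. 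This holds because $f$ is a morphism of root data, so its dual sends $\alpha_i^{\vee\prime}$ to $\alpha_i^\vee$.

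\textbf{Compatibility with $\varphi^*$.} Using \eqref{eq:coproduct-formula} and the formula $\Delta(1_\la)=\sum 1_{\la'}\otimes1_{\la-\la'}$, one checks that $\phi$ is compatible with coproducts. The powers $v^{|x|\circ\la}$ depend only on the pairings with coroots, so they are unchanged under $\zeta\mapsto f(\zeta)$. Dualizing, $\phi$ induces an algebra map ${}_\A\bfO\to{}_\A\bfO'$. At $v=1$ this map sends $1_\zeta^*$ to $1_{f(\zeta)}^*$ on the torus, and it agrees with $\varphi^*$ on the subgroups $U$, $U^-$ and $T$. Hence it equals $\varphi^*$, for instance by comparing points via \eqref{eq:be in G}: $G'(\k)\to G(\k)$ is induced by $\phi$ on ${}_\k\hat{\bfU}$.

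\textbf{Proof of (1).} The key claim is
\[\phi(b_1\diamondsuit_{\zeta'}b_2)=\sum_{f(\zeta)=\zeta'}b_1\diamondsuit_\zeta b_2 .\]
Given this, the dual map sends $(b_1\diamondsuit_\zeta b_2)^*$ to $(b_1\diamondsuit_{f(\zeta)}b_2)^*$, which is (1).

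To prove the claim, fix $\zeta$ with $f(\zeta)=\zeta'$ and consider the composite $P_\zeta(u)\coloneqq\phi(u)1_\zeta$. It is a bijection from $\dot{\bfU}'1_{\zeta'}$ onto $\dot{\bfU}1_\zeta$, preserving both the triangular bases $c_1^+c_2^-1$ and the bar involution, and intertwining $\rho$. Compatibility with $\rho$ holds because the exponent $\nu\circ(\la+\la')/2$ is again computed through coroot pairings, so it is unchanged under $f$.

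The main obstacle is to show that $P_\zeta$ preserves the bilinear form of Proposition \ref{prop:bilinear pairing characterizing dotB} between $\dot{\bfU}'1_{\zeta'}$ and $\dot{\bfU}1_\zeta$. The form is determined by properties (1)–(3) of that proposition. I would use the fact that on $1_\eta\dot{\bfU}1_\zeta$ the form can be computed from the pairing on $\bff$ together with scalars depending only on $\langle\alpha_i^\vee,\zeta\rangle$, as in \cite[26.1]{Lusztig-IntroQuantumGroup}; uniqueness then transports the form along $P_\zeta$.

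With this in hand, Proposition \ref{prop:bilinear pairing characterizing dotB} gives $P_\zeta(\dot{\bfB}')\subset\pm\dot{\bfB}$. The leading term $b_1^+b_2^-1_\zeta$ then fixes the sign and identifies the image with $b_1\diamondsuit_\zeta b_2$, by Definition \ref{def:B-dot}. This is the same argument used for $T_\chi$ in Lemma \ref{lem:B-dot-X-ab}.

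\textbf{Proof of (2).} Lemma \ref{lem:characterization subset Blambda} characterizes cells via the modules $\La_\la$. Via $\phi$, the module $\La_\la$ pulls back to $\La'_{f(\la)}$, since both are $\bff$ modulo the same ideal $T_\la$. Hence $P_\zeta$ maps $\dot{\bfU}'[\ge f(\la)]$ into $\dot{\bfU}[\ge\la]$, after restricting to those $\la$ with $f(\la)$ given. Applying this to Proposition \ref{prop:parametrize-B[la]}, noting that $\bfB(\la)=\bfB'(f(\la))$ since both depend only on the pairings, and using uniqueness of $\beta$, we get
\[\phi(\beta'_{f(\la)}(b_1,b_2))1_\la=\beta_\la(b_1,b_2).\]
Dualizing as in (1) gives $\varphi^*(\beta_\la(b_1,b_2)^*)=\beta_{f(\la)}(b_1,b_2)^*$. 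Both cells $\dot{\bfB}[\la]$ and $\dot{\bfB}'[f(\la)]$ are parametrized by $\bfB(\la)\times\bfB(\la)$, so $\varphi^*$ gives the stated bijection $\dot{\bfB}[\la]^*\xrightarrow{\sim}\dot{\bfB}'[f(\la)]^*$.
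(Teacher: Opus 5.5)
\emph{Verdict.} Your proposal is correct. Part (1) follows the paper's route with more detail; part (2) takes a different but valid route.

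\textbf{Part (1).} You prove the same key identity as the paper,
\[
\varphi_*(b_1\diamondsuit_{\zeta'}b_2)=\sum_{f(\zeta)=\zeta'}b_1\diamondsuit_\zeta b_2 .
\]
The paper only says this follows ``from the definition of canonical basis elements''. You instead transport the bar involution, $\rho$ and the bilinear form along $P_\zeta$, and fix the sign by the leading term. This is the same mechanism the paper uses for $T_\chi$ in Lemma \ref{lem:B-dot-X-ab}. You also check that $\phi$ is well defined and dual to $\varphi^*$, which the paper takes for granted.

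\textbf{Part (2).} Here your route is genuinely different.
\begin{itemize}
\item The paper never shows that $\varphi_*$ respects the cell filtration. It uses (1) to write $\varphi^*(\beta_\la(b_1,b_2)^*)=\beta_{\mu'}(c_1,c_2)^*$ for some $\mu'$. It then evaluates on $\varphi_*(c_1^-1_{\mu'}\sigma(c_2)^+)$ and on $b_1^-1_{\la'}\sigma(b_2)^+$, obtaining $\la'\ge\mu'$ and $\mu'\ge\la'$.
\item You prove the structural fact that $P_\zeta$ carries $\dot{\bfU}'[\ge\nu']1_{f(\zeta)}$ into $\dot{\bfU}[\ge\nu]$, by pulling back Weyl modules along $\phi$. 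You then deduce $P_\zeta(\beta_{\la'}(b_1,b_2))=\beta_\la(b_1,b_2)$ from (1) and the uniqueness in Proposition \ref{prop:parametrize-B[la]}.
\end{itemize}
The paper's version is slightly more economical, since it needs only the triangularity of $\beta$ plus evaluation. Yours gives the filtration compatibility of $P_\zeta$ as a by-product.

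\textbf{Points to fix in the write-up.}
\begin{itemize}
\item The idempotent in your final display should be $1_{\la-|b_2|}$, not $1_\la$. Indeed $b_1^-1_\la\sigma(b_2)^+\in 1_{\la-|b_1|}\dot{\bfU}1_{\la-|b_2|}$, so right multiplication by $1_\la$ kills it unless $b_2=1$.
\item The filtration statement must be made coset by coset. Take $\nu$ to be the unique lift of $\nu'$ in $\zeta+\ZZ^\Delta$. It is the factor $1_\zeta$ that kills $\La_\tau$ for $\tau$ outside this coset, where $f(\tau)\ge\nu'$ need not imply $\tau\ge\nu$.
\item The error term lies in $\dot{\bfU}'[>\la']$, so you need the strict version of the filtration statement. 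It follows by applying the $\ge$ version to each $\nu'>\la'$.
\end{itemize}
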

\begin{proof}
    (1) We identify the positive/negative parts of the quantized enveloping algebras for $G$ and $G'$. The natural homomorphism $\varphi_*\colon\hat{\bfU}_{G'}\to\hat{\bfU}_G$ sends $1_{\zeta'}$, where $\zeta'\in X'$, to the element 
    \[1_{f^{-1}(\zeta')}\coloneqq\sum_{\zeta\in f^{-1}(\zeta')}1_\zeta.\]
    (We adopt the convention that a sum over an empty set is defined to be $0$.) More generally, from the definition of canonical basis elements recalled in Definition \ref{def:B-dot} we deduce that
    \[\varphi_*(b_1\diamondsuit_{\zeta'} b_2)=\sum_{\zeta\in f^{-1}(\zeta')}b_1\diamondsuit_\zeta b_2\]
    and this immediately implies the result on the dual canonical basis.\par 
    (2) Let $\la'=f(\la)$. By (1) we have $\varphi^*(\beta_\la(b_1,b_2)^*)\in\dot{\bfB}'^*$. So there exists $\mu'\in X'^+$ and $c_1,c_2\in\bfB(\mu')$ such that $\varphi^*(\beta_\la(b_1,b_2)^*)=\beta_{\mu'}(c_1,c_2)^*$. Since $c_1^-1_{\mu'}\sigma(c_2)^+-\beta_{\mu'}(c_1,c_2)\in\dot{\bfU}'[>\mu']$, we have
    \[\beta_\la(b_1,b_2)^*(\varphi_*(c_1^-1_{\mu'}\sigma(c_2)^+))=1.\]
    Since $$\varphi_*(c_1^-1_{\mu'}\sigma(c_2)^+)=\sum_{\mu\in f^{-1}(\mu')}c_1^-1_{\mu}\sigma(c_2)^+$$ and each summand satisfies $c_1^-1_{\mu}\sigma(c_2)^+-\beta_\mu(c_1,c_2)\in\dot{\bfU}[>\mu]$,  
    there exists $\mu\in X^+$ with $f(\mu)=\mu'$ such that $\la\ge\mu$ and thus $\la'=f(\la)\ge f(\mu)=\mu'$.
    On the other hand, we have
    \[\varphi_*(b_1^-1_{\la'}\sigma(b_2)^+)=\sum_{\xi\in f^{-1}(\la')}b_1^-1_{\xi}\sigma(b_2)^+\]
    which implies that
    \[\beta_{\mu'}(c_1,c_2)^*(b_1^-1_{\la'}\sigma(b_2)^+)=\varphi^*(\beta_\la(b_1,b_2)^*) (b_1^-1_{\la'}\sigma(b_2)^+)=1\]
    and hence $\mu'\ge\la'$ by similar reasoning. So we conclude that $\mu'=\la'$ and then the equality above implies that $c_1=b_1$, $c_2=b_2$ and we are done. 
\end{proof}

\section{Finite generation of quantized coordinate rings}\label{sec:finite-coordinate-ring}
In this section our goal is to show that certain subalgebras of the quantized coordinate ring are finitely generated and become normal after the specialization $v\mapsto1$. These include the quantized coordinate rings of reductive monoids that we will construct in \cite{CJ2}.\par 
We let $R$ be a commutative $\A$-algebra (in which the image of $v$ is not necessarily $1$). Fix a based root datum $\Psi$ as in \S\ref{subsubsec:based-root-data} and keep the notations from \S\ref{sec:quantum-canonical-bases} and \S\ref{sec:Lusztig-group-scheme}. 

\subsection{The associated graded algebra}
\begin{defi}\label{defi:filtration O}
    For any subset $P\subset X^+$, let ${}_R\bfO(P)$ be the $R$-submodule of ${}_R\bfO$ spanned by the elements
    \[\{b^*\mid b\in\bigcup_{\la\in P}\dot{\bfB}[\la]\}.\] 
\end{defi}

\begin{prop}\label{prop:downward-closed}
    Let $P\subset X^+$ be a subset. The following are equivalent:
    \begin{enumerate}
        \item[(i)] ${}_R\bfO(P)$ is a sub-coalgebra of ${}_R\bfO$;
        \item[(ii)] ${}_R\dot{\bfU}[X^+\setminus P]$ is a two-sided ideal of ${}_R\dot{\bfU}$;
        \item[(iii)] $P$ is downward closed in $X^+$, i.e. for all $\la,\mu\in X^+$ with $\mu\le\la$, if $\la\in P$, then $\mu\in P$.
    \end{enumerate}
\end{prop}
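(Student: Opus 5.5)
We will prove (i)$\Leftrightarrow$(ii) by duality and (ii)$\Leftrightarrow$(iii) via the characterization of two-sided cells, reducing throughout to $R=\A$ where possible.

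\emph{(i)$\Leftrightarrow$(ii).} The comultiplication of ${}_R\bfO$ is $c^*\mapsto\sum_{a,b}m_{ab}^c\,a^*\otimes b^*$, dual to the multiplication of ${}_R\dot{\bfU}$. Since ${}_R\bfO\otimes_R{}_R\bfO$ is free on $\{a^*\otimes b^*\}$, ${}_R\bfO(P)$ is a sub-coalgebra if and only if, for every $c\in\dot{\bfB}[P]$, $m_{ab}^c\ne0$ (in $R$) forces $a,b\in\dot{\bfB}[P]$. Contrapositively: whenever $a\notin\dot{\bfB}[P]$ or $b\notin\dot{\bfB}[P]$, the product $ab$ has zero coefficient on every $c\in\dot{\bfB}[P]$. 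This says exactly that ${}_R\dot{\bfU}[X^+\setminus P]$ is stable under left and right multiplication by basis elements, i.e.\ it is a two-sided ideal. Both conditions are stated in terms of the images of the structure constants $m_{ab}^c$ in $R$, so they agree for each fixed $R$.

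\emph{(iii)$\Rightarrow$(ii).} Let $P$ be downward closed. By Lusztig's cell theory (\cite[29.1]{Lusztig-IntroQuantumGroup}, cf.\ Lemma \ref{lem:characterization subset Blambda}), $\dot{\bfU}[\ge\la]$ is a two-sided ideal for each $\la\in X^+$; it is the span of the basis elements that annihilate every $\La_\tau$ with $\tau\not\ge\la$. This ideal is defined over $\A$ with basis $\dot{\bfB}[\ge\la]$. The complement $X^+\setminus P$ is upward closed, so it equals $\bigcup_{\la\in X^+\setminus P}X^+_{\ge\la}$. Hence ${}_R\dot{\bfU}[X^+\setminus P]=\sum_{\la\notin P}{}_R\dot{\bfU}[\ge\la]$, where each summand is spanned by a subset of $\dot{\bfB}$. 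A sum of ideals is an ideal, and base change to $R$ preserves this because the structure constants are already in $\A$.

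\emph{(ii)$\Rightarrow$(iii).} Suppose $\la\in P$, $\mu\in X^+$, $\mu\le\la$, and assume for contradiction that $\mu\notin P$. Then $1_\mu\in\dot{\bfB}[\mu]$ by Corollary \ref{cor:B[la]-W-la}, so $1_\mu$ lies in the ideal $I={}_R\dot{\bfU}[X^+\setminus P]$. The plan is to produce $x,y\in{}_\A\dot{\bfU}$ such that $x1_\mu y$ is an element of $\dot{\bfB}[\la]$ plus terms of $\dot{\bfB}[>\la]$, or more generally such that $x1_\mu y$ has a coefficient equal to $1$ on some element of $\dot{\bfB}[\la]$. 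This would contradict $\la\in P$, even over a ring $R$ in which most integers vanish.

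A natural first attempt is to take $b\in\bfB(\la)$ with $b\eta_\la$ of weight $\mu$, which exists because $\mu$ is a weight of $\La_\la$ by Theorem \ref{theo:Weyl module}, and consider $b^-1_\la\sigma(b)^+=b^-1_\la\cdot 1_\la\sigma(b)^+$. However, this element is not obviously of the form $x1_\mu y$. A better choice is
\[1_\mu\,\sigma(b)^+\;\cdot\;b^-1_\la=\sigma(b)^+\,b^-1_\la\in 1_\la\dot{\bfU}1_\la ,\]
which lies in $I$. One then expands $\sigma(b)^+b^-1_\la$ using the commutation relations. Acting on $\eta_\la$ it gives $\sigma(b)^+b\eta_\la$, which is a multiple of $\eta_\la$ by a scalar that may well not be a unit. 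For this reason it is preferable to argue through the characterization of cells over $\A$ directly.

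\textbf{Main obstacle.} The key step is a uniform-in-$R$ reason that the two-sided ideal generated by $1_\mu$ in ${}_\A\dot{\bfU}$ meets $\dot{\bfB}[\la]$ with unit coefficient. The approach I would try first is the cell-theoretic one: \cite[29.1]{Lusztig-IntroQuantumGroup} shows that the two-sided cell order satisfies $1_\mu\preceq c$ for $c$ in the cell of $\la\ge\mu$. Lusztig's definition of the preorder is via structure constants being nonzero in $\A$. I would aim to upgrade this using the \emph{based-module} property that the elements $\dot{\bfB}[\la]$ act on $\La_\la$ by a basis of $\mathrm{End}$, which holds over $\A$ (cf.\ \cite[29.1.6]{Lusztig-IntroQuantumGroup}). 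Concretely, $1_\mu$ acts on ${}_\A\La_\la$ as a nonzero idempotent projection, and it is a direct summand of the $\A$-basis. Therefore some elementary matrix $E$ in the $\A$-span of $\dot{\bfB}[\la]$, acting on $\La_\la$, factors as $E=E'1_\mu E''$ with integral $E',E''$. Lifting $E'$ and $E''$ to ${}_\A\dot{\bfU}$ yields an element of $I$ whose image modulo $\dot{\bfU}[>\la]$ has a coefficient equal to $1$ on some element of $\dot{\bfB}[\la]$. This contradicts (ii), since $\dot{\bfU}[>\la]$ acts by zero on $\La_\la$.
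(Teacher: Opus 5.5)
Your proofs of (i)$\Leftrightarrow$(ii) (dualizing the structure constants $m_{ab}^c$) and of (iii)$\Rightarrow$(ii) (writing ${}_R\dot{\bfU}[X^+\setminus P]$ as a sum of the ideals ${}_R\dot{\bfU}[\ge\la]$) are fine and essentially match the paper. The gap is in (ii)$\Rightarrow$(iii): the route you sketch under ``main obstacle'' fails, for two reasons.

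\emph{First, the integrality claim is false.} $\dot{\bfB}[\la]$ maps to a $\QQ(v)$-basis of $\mathrm{End}(\La_\la)$, but not to an $\A$-basis of $\mathrm{End}_\A({}_\A\La_\la)$, and ${}_\A\dot{\bfU}\to\mathrm{End}_\A({}_\A\La_\la)$ is not surjective. Take type $A_1$ with $\langle\alpha_i^\vee,\la\rangle=2$. The element $\beta_\la(\theta_i^{(j)},\theta_i^{(k)})$ acts on the basis $\theta_i^{(l)}\eta_\la$ ($0\le l\le2$) as $\begin{bmatrix}2\\k\end{bmatrix}_i$ times the elementary matrix $\theta_i^{(k)}\eta_\la\mapsto\theta_i^{(j)}\eta_\la$. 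For $k=1$ this factor is $v+v^{-1}$. In fact every element of ${}_\A\dot{\bfU}$ sends $\theta_i\eta_\la$ to a vector whose $\eta_\la$-coordinate is divisible by $v+v^{-1}$; this reflects that the Weyl module is not simple mod $2$. So the integral elementary matrix $E$ and the integral lifts of $E',E''$ are not available.

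\emph{Second, the action on $\La_\la$ cannot detect the coefficients you need.} Even granting such lifts, the action does not see the $\dot{\bfB}[\la]$-coordinates of $x1_\mu y$. This element need not lie in $\dot{\bfU}[\ge\la]$, and elements of $\dot{\bfB}[\tau]$ with $\tau<\la$ (e.g.\ $1_\mu$ itself) act nontrivially on $\La_\la$. The action determines $\dot{\bfB}[\la]$-coordinates only for elements of the ideal $\dot{\bfU}[\ge\la]$, not for general elements.

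The missing idea is weight-homogeneity of $\dot{\bfB}$, which removes any integrality issue. A basis vector of $\dot{\bfB}[\la]$ lying in $1_\mu\dot{\bfU}1_\mu$ is itself an element of the ideal, with coefficient $1$ on itself. Such a vector exists: every element of $\dot{\bfB}$ lies in some $1_\zeta\dot{\bfU}1_{\zeta'}$, the images of $\dot{\bfB}[\la]$ form a basis of $\mathrm{End}(\La_\la)$, and $(\La_\la)_\mu\ne0$. So there is $c\in\dot{\bfB}[\la]\cap1_\mu\dot{\bfU}1_\mu$, and then $c=c1_\mu\in{}_R\dot{\bfU}[X^+\setminus P]$, contradicting $\la\in P$. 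This is the paper's proof.

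Your discarded first attempt also works once the weights are tracked. For $b\in\bfB(\la)$ with $|b|=\la-\mu$ one has $b^-1_\la=1_\mu b^-1_\la$, so $b^-1_\la\sigma(b)^+=1_\mu\cdot b^-1_\la\sigma(b)^+$ lies in the ideal. By Proposition \ref{prop:parametrize-B[la]} it equals $\beta_\la(b,b)$ plus an $\A$-combination of elements of $\dot{\bfB}[>\la]$, so its coefficient on $\beta_\la(b,b)\in\dot{\bfB}[\la]$ is $1$.

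Your ``better choice'' misplaces the idempotent: $1_\mu\sigma(b)^+=\sigma(b)^+1_{\mu-|b|}$, so $1_\mu\sigma(b)^+b^-1_\la=0$ for $b\ne1$; the correct insertion is $\sigma(b)^+1_\mu b^-1_\la$. Finally, Theorem \ref{theo:Weyl module}(3) only bounds the weights of $\La_\la$. That every dominant $\mu\le\la$ actually occurs is a separate standard fact, which the paper also uses implicitly.
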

\begin{proof}
    (i)$\Rightarrow$(ii): Suppose that ${}_R\bfO(P)$ is a sub-coalgebra. This means that \[\Delta_\bfO({}_R\bfO(P))\subset{}_R\bfO(P)\otimes_{R}{}_R\bfO(P)\] 
    where $\Delta_\bfO$ is the comultiplication for ${}_R\bfO$. Since ${}_R\dot{\bfU}[X^+\setminus P]$ is the orthogonal complement of ${}_R\bfO(P)$ in ${}_R\dot{\bfU}$ (under the canonical pairing between ${}_R\dot{\bfU}$ and ${}_R\bfO$), we deduce from the inclusion above that ${}_R\dot{\bfU}[X^+\setminus P]$ is a two-sided ideal.\par
    (ii)$\Rightarrow$(iii): Let $\la,\mu\in X^+$ be such that $\mu\le\la$ and $\la\in P$. Then $\mu$ is a weight of $\La_\la$. Since the image of $\dot{\bfB}[\la]$ under the natural map $\dot{\bfU}\to\mathrm{End}_{\QQ(v)}(\La_\la)$ forms a $\QQ(v)$-basis and each element of $\dot{\bfB}$ lies in a weight component $1_\zeta\dot{\bfU}1_{\zeta'}$ for some $\zeta,\zeta'\in X$, there exists an element $c\in\dot{\bfB}[\la]\cap 1_\mu\dot{\bfU}1_\mu$. If $\mu\notin P$, then $1_\mu\in{}_R\dot{\bfU}[X^+\setminus P]$ and hence $c=c1_\mu=1_\mu c\in{}_R\dot{\bfU}[X^+\setminus P]$ by condition (ii). But this contradicts the assumption that $\la\in P$. Therefore we must have $\mu\in P$. \par 
    (iii)$\Rightarrow$(i): Let $\la\in P$ and $c\in\dot{\bfB}[\la]$. For any $\la',\la''\in X^+$, $a\in\dot{\bfB}[\la']$ and $b\in\dot{\bfB}[\la'']$ such that $m_{ab}^c\ne0$, we have $\la\ge\la'$ and $\la\ge\la''$ since $ab\in{}_R\dot{\bfU}[\ge\la']\cap{}_R\dot{\bfU}[\ge\la'']$. By (iii), this implies that $\la'\in P$ and $\la''\in P$. Therefore $\Delta_\bfO(c^*)\in{}_R\bfO(P)\otimes_R{}_R\bfO(P)$. So ${}_R\bfO(P)$ is a sub-coalgebra. 
\end{proof}
\begin{ex}
    Let $\lambda\in X^+_\pos$ (see \S\ref{subsubsec:based-root-data}) and define
    \[X^+_{\le\la}\coloneqq\{\mu\in X^+\mid\mu\le\la\},\quad X^+_{<\la}\coloneqq\{\mu\in X^+\mid\mu<\la\}.\]
    Then we get the $R$-module ${}_R\bfO(X^+_{\le\la})={}_R\mathbf{O}_{\leq\lambda}$ and ${}_R\bfO(X^+_{<\la})={}_R\mathbf{O}_{<\lambda}$ from Definition \ref{def:O-le-lambda}. It is clear that the sets $X^+_{\le\la}$ and $X^+_{<\la}$ are both downward closed in $X^+$. So we see that ${}_R\bfO(X^+_{\le\la})$ and ${}_R\bfO(X^+_{<\la})$ are sub-coalgebras of ${}_R\bfO$. 
\end{ex}

\begin{defi}\label{def:grO}
    For any subset $P\subset X^+$, we define the associated graded module for ${}_R\bfO(P)$ to be 
    \[\gr({}_R\bfO(P))\coloneqq\bigoplus_{\la\in P}{}_R\bfO_{\le\la}/{}_R\bfO_{<\la}.\]
    If $P=X^+$, we simply denote $\gr({}_R\bfO)=\gr({}_R\bfO(X^+))$. When $P$ is a submonoid of $X^+$, then $\gr({}_R\bfO(P))$ admits a natural $P$-graded $R$-algebra structure by Lemma \ref{lem:filtration-O-multiplicative} and it is a subalgebra of $\mathrm{gr}({}_R\bfO)$. \par
    Moreover, since ${}_R\bfO(X^+_{\le\la})$ and ${}_R\bfO(X^+_{<\la})$ are sub-coalgebras of ${}_R\bfO$, the ring $\gr({}_R\bfO)$ inherits a coalgebra structure from ${}_R\bfO$. Note however that in general, the counit on ${}_R\bfO$ does not induce a counit on $\gr({}_R\bfO)$.
\end{defi}

\begin{prop}\label{Prop:O(P)-subalgebra}
    Let $P\subset X^+$ be a downward closed subset. Then ${}_R\bfO(P)$ is a subalgebra of ${}_R\bfO$ if and only if $P$ is also a submonoid of $X^+$.
\end{prop}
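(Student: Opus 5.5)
The plan is to prove the two directions separately. Both rely on Lemma \ref{lem:filtration-O-multiplicative} together with a leading-term computation for products of the elements $1_\la^*$.

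\emph{Sufficiency.} Assume $P$ is downward closed and a submonoid. Then $0\in P$, so the unit $1_0^*$ lies in ${}_R\bfO(P)$. Take $a^*\in\dot{\bfB}[\la]^*$ and $b^*\in\dot{\bfB}[\mu]^*$ with $\la,\mu\in P$. By Lemma \ref{lem:filtration-O-multiplicative}(i) the product $a^*b^*$ lies in ${}_R\bfO_{\le\la+\mu}$. This module is spanned by $\dot{\bfB}[\nu]^*$ for $\nu\in X^+$ with $\nu\le\la+\mu$. Since $\la+\mu\in P$ and $P$ is downward closed, every such $\nu$ lies in $P$. Hence $a^*b^*\in{}_R\bfO(P)$, and by linearity ${}_R\bfO(P)$ is closed under multiplication.

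\emph{Necessity.} Assume ${}_R\bfO(P)$ is a subalgebra. First, the unit $1_0^*$ lies in ${}_R\bfO(P)$. Since $1_0\in\dot{\bfB}[0]$ by Corollary \ref{cor:B[la]-W-la}, and $R\ne0$ (we need $R\neq 0$; if $R=0$ the statement is vacuous or degenerate, and we tacitly exclude it), we get $0\in P$. Next, let $\la,\mu\in P$. By Corollary \ref{cor:B[la]-W-la} we have $1_\la\in\dot{\bfB}[\la]$ and $1_\mu\in\dot{\bfB}[\mu]$, so $1_\la^*,1_\mu^*\in{}_R\bfO(P)$. Lemma \ref{lem:filtration-O-multiplicative}(ii) gives $1_\la^*1_\mu^*=1_{\la+\mu}^*$, and $1_{\la+\mu}\in\dot{\bfB}[\la+\mu]$. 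The dual canonical basis is an $R$-basis of ${}_R\bfO$, and ${}_R\bfO(P)$ is the span of a subset of it. So a basis element belongs to ${}_R\bfO(P)$ exactly when it lies in that subset (again using $R\ne0$). Therefore $\la+\mu\in P$, and $P$ is a submonoid.

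The only delicate point is the identification $1_\la\in\dot{\bfB}[\la]$ for dominant $\la$, which is supplied by Corollary \ref{cor:B[la]-W-la}, together with the multiplicativity $1_\la^*1_\mu^*=1_{\la+\mu}^*$ from Lemma \ref{lem:filtration-O-multiplicative}(ii). Everything else is formal bookkeeping with the basis.
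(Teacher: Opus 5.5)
Your proposal is correct and follows the paper's argument for the ``only if'' direction: the unit $1_0^*$ forces $0\in P$, and $1_\la^*1_\mu^*=1_{\la+\mu}^*$ from Lemma \ref{lem:filtration-O-multiplicative}(ii) forces $\la+\mu\in P$. Your ``if'' direction, via Lemma \ref{lem:filtration-O-multiplicative}(i) plus downward closedness, spells out what the paper leaves implicit, and your caveat that $R\neq 0$ is needed is a legitimate point the paper passes over silently.
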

\begin{proof}
    The unit of ${}_R\bfO$ is $1_0^*$ and for any $\la,\mu\in P\subset X^+$, we have $1_\la^*\cdot1_\mu^*=1_{\la+\mu}^*$ in ${}_R\bfO$ by Lemma \ref{lem:filtration-O-multiplicative}. Thus if ${}_R\bfO(P)$ is a subalgebra then $\la+\mu\in P$ and $0\in P$ so that $P$ is a submonoid of $X^+$.
\end{proof}
\begin{lem}\label{lem:from Xplus to NN}[See \cite{Popov-contraction} p.324]
    There exists a homomorphism of additive monoids $h\colon X^+\to\NN$ such that $h(\la)<h(\mu)$ whenever $\la<\mu$.
\end{lem}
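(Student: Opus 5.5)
We need a monoid homomorphism $h\colon X^+\to\NN$ with $h(\la)<h(\mu)$ whenever $\la<\mu$. Since $X^+$ is a monoid and $h$ is additive, $h$ should be the restriction of a linear functional $\ell\colon X\to\ZZ$. The condition $\la<\mu$ says $\mu-\la\in\NN^\Delta\setminus\{0\}$. Strictness will therefore follow if $\ell(\alpha_i)>0$ for all $i\in\Delta$. We also need $\ell(X^+)\subset\NN$. So the plan is to find a single linear form that is positive on the simple roots and nonnegative on dominant weights.

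The natural candidate is pairing with $2\rho^\vee$, the sum of the positive coroots. I would take $\ell(\la)\coloneqq\langle 2\rho^\vee,\la\rangle$, where $2\rho^\vee\in Y$ is the sum of all positive coroots of $\Psi$. This is an element of $Y$ because the coroots lie in $Y$, so $\ell$ is integer valued and additive.

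The two required properties are checked as follows.
\begin{itemize}
\item Each positive coroot is an $\NN$-combination of the simple coroots $\alpha_i^\vee$, and $\langle\alpha_i^\vee,\la\rangle\ge0$ for $\la\in X^+$. Hence $\ell(X^+)\subset\NN$.
\item The standard identity $\langle 2\rho^\vee,\alpha_i\rangle=2$ for every $i\in\Delta$ holds because $s_i$ permutes the positive coroots other than $\alpha_i^\vee$. Hence $\ell(\mu-\la)>0$ whenever $\mu-\la\in\NN^\Delta\setminus\{0\}$.
\end{itemize}
Setting $h\coloneqq\ell|_{X^+}$ then gives the lemma.

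The only point needing care is the identity $\langle 2\rho^\vee,\alpha_i\rangle=2$ in the abstract based-root-datum setting, where the coroot system is the $W$-orbit of the simple coroots. To avoid root-system theory, I could instead use a cruder functional. Choose $\ell=\sum_i c_i\alpha_i^\vee$ with positive rationals $c_i$ such that $\sum_i c_i a_{ij}>0$ for all $j$. Such $c_i$ exist because the Cartan matrix has finite type, so some positive vector $c$ satisfies $c^{T}A>0$; for instance $c^T=(1,\dots,1)A^{-1}$ has positive entries, since $A^{-1}$ has positive entries in finite type. Clearing denominators makes the $c_i$ integers, and both properties follow as above. I expect this positivity of $A^{-1}$ (or equivalently the coroot identity) to be the only nontrivial input.
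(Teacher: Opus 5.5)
Your proof is correct: $\ell=\langle 2\rho^\vee,\cdot\rangle$ is integer-valued, nonnegative on $X^+$, and equal to $2$ on each $\alpha_i$. This linear height function is the standard construction behind the reference to Popov, which the paper gives in place of a proof of its own.

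One small slip in your optional variant: when $A$ is decomposable, $A^{-1}$ is only entrywise nonnegative, not positive. Its column sums are still positive, so $c_i>0$ and that argument goes through as well.
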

Fix a homomorphism $h\colon X^+\to\NN$ as above. Let $P\subset X^+$ be a downward closed submonoid. For any $n\in\NN$, denote $P_{\le n}\coloneqq\{\la\in P,h(\la)\le n\}$ and ${}_R\bfO_{\le n}(P)\coloneqq{}_R\bfO(P_{\le n})$. For $n\in\ZZ_{<0}$ we let ${}_R\bfO_{\le n}(P)=0$. Then ${}_R\bfO(P)$ becomes a $\ZZ$-filtered $R$-algebra and we can form the associated graded ring
\[\mathrm{gr}_h({}_R\bfO(P))\coloneqq\bigoplus_{n\in\NN}{}_R\bfO_{\le n}(P)/{}_R\bfO_{\le n-1}(P).\]
The following lemma follows from the definitions.
\begin{lem}\label{lem:grh}
    For any downward closed submonoid $P\subset X^+$, we have a canonical isomorphism of $R$-algebras $\mathrm{gr}({}_R\bfO(P))\cong\mathrm{gr}_h({}_R\bfO(P))$.
\end{lem}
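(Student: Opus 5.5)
The plan is to write down an explicit $R$-linear map between the two graded objects that sends dual canonical basis elements to dual canonical basis elements. I will check that it is well defined and bijective using the bases, and then that it is multiplicative using Lemma \ref{lem:filtration-O-multiplicative}(i) together with the strict monotonicity of $h$ from Lemma \ref{lem:from Xplus to NN}.

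\emph{Module structures.} For $\la\in P$, the quotient ${}_R\bfO_{\le\la}/{}_R\bfO_{<\la}$ is a free $R$-module with basis the images of the elements of $\dot{\bfB}[\la]^*$. This holds because both ${}_R\bfO_{\le\la}$ and ${}_R\bfO_{<\la}$ are spanned by subsets of the basis $\dot{\bfB}^*$, and these subsets differ exactly by $\dot{\bfB}[\la]^*$. Similarly, ${}_R\bfO_{\le n}(P)/{}_R\bfO_{\le n-1}(P)$ is free with basis the images of $\bigsqcup_{\la\in P,\,h(\la)=n}\dot{\bfB}[\la]^*$. Since $P$ is downward closed and $h$ is monotone, for $\la\in P$ we have $X^+_{\le\la}\subset P_{\le h(\la)}$. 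Since $h$ is strictly monotone, we also have $X^+_{<\la}\subset P_{\le h(\la)-1}$. Hence ${}_R\bfO_{\le\la}\subset{}_R\bfO_{\le h(\la)}(P)$ and ${}_R\bfO_{<\la}\subset{}_R\bfO_{\le h(\la)-1}(P)$.

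\emph{The map and its bijectivity.} The inclusions above give a well-defined map
\[{}_R\bfO_{\le\la}/{}_R\bfO_{<\la}\to{}_R\bfO_{\le h(\la)}(P)/{}_R\bfO_{\le h(\la)-1}(P),\]
which sends the class of $b^*$ to the class of $b^*$. Summing over $\la\in P$ gives a map $\gr({}_R\bfO(P))\to\gr_h({}_R\bfO(P))$. This map carries a basis bijectively onto a basis, since both bases are indexed by $\dot{\bfB}[P]^*$, with $\dot{\bfB}[\la]^*$ landing in degree $h(\la)$. So it is an isomorphism of $R$-modules.

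\emph{Multiplicativity.} This step, which also checks that the $h$-filtration is multiplicative, is the only point requiring care. Take $a\in\dot{\bfB}[\la]$ and $b\in\dot{\bfB}[\mu]$ with $\la,\mu\in P$. By Lemma \ref{lem:filtration-O-multiplicative}(i) we have
\[a^*b^*=\sum_c\hat m_c^{ab}c^*,\]
where every $c$ with $\hat m_c^{ab}\ne0$ lies in $\dot{\bfB}[\nu]$ for some $\nu\le\la+\mu$. Such $\nu$ satisfy $h(\nu)\le h(\la)+h(\mu)$, so the $h$-filtration is multiplicative. The product in $\gr$ is the sum of the terms with $\nu=\la+\mu$. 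The product in $\gr_h$ is the sum of the terms with $h(\nu)=h(\la+\mu)$. Because $h$ is strictly monotone, for $\nu\le\la+\mu$ we have $h(\nu)=h(\la+\mu)$ if and only if $\nu=\la+\mu$. Hence the two products agree term by term, and the map is an $R$-algebra isomorphism.
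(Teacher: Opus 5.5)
Your argument is correct and is exactly the unpacking that the paper leaves implicit (it only says the lemma follows from the definitions). The map is the identity on classes of dual canonical basis elements, and the strict monotonicity of $h$ ensures two things: the map is well defined, and the $\nu=\la+\mu$ part of $a^*b^*$ coincides with its $h(\nu)=h(\la+\mu)$ part. One small point: when you check that $c^*\in{}_R\bfO_{\le h(\la)+h(\mu)}(P)$, you should also note that $\nu\in P$, which holds because $\la+\mu\in P$ and $P$ is downward closed.
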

For each $b\in\dot{\bf{B}}[\lambda]$, we denote the image of $b^*\in{}_R\bf{O}_{\le\lambda}$ in $\mathrm{gr}({}_R\bf{O})$ by $\bar{b^*}$. Then the elements $\{\bar{b^*},b\in\dot{\bf{B}}\}$ form a basis of $\mathrm{gr}({}_R\bf{O})$. For each $\la,\mu\in X^+$, $a\in\dot{\bf{B}}[\lambda]$ and $b\in\dot{\bf{B}}[\mu]$, the product of $\bar{a^*}$ and $\bar{b^*}$ in $\mathrm{gr}({}_R\bf{O})$ is given by
\[\bar{a^*}\bar{b^*}=\sum_{c\in\dot{\bf{B}}[\lambda+\mu]}\hat{m}_c^{ab}\bar{c^*}.\]

\subsection{Finiteness theorems}
\begin{defi}\label{def:G/U}
    Let ${}_R\mathbf{O}_{G/U}$ be the $R$-submodule of ${}_R\bf{O}$ spanned by the elements
    \[\bigcup_{\lambda\in X^+}\{b^*, b\in\dot{\bf{B}}_{0,\lambda}\}=\bigcup_{\la\in X^+}\{(b^-1_\la)^*, b\in\bfB(\la)\}.\]
    Let ${}_R\mathbf{O}_{U^-\backslash G}$ be the $R$-submodule of ${}_R\bf{O}$ spanned by the elements
    \[\bigcup_{\lambda\in X^+}\{b^*, b\in\sigma(\dot{\bf{B}}_{\lambda,0})\}=\bigcup_{\la\in X^+}\{(1_\la\sigma(b^+))^*,b\in\bfB(\la)\}.\]
    For any subset $P\subset X^+$, we denote
    \[{}_R\bfO_{G/U}(P)\coloneqq{}_R\bfO_{G/U}\bigcap{}_R\bfO(P),\quad{}_R\mathbf{O}_{U^-\backslash G}(P)\coloneqq{}_R\mathbf{O}_{U^-\backslash G}\bigcap{}_R\bfO(P).\]
\end{defi}
The notations used here will be justified in Proposition \ref{prop:U-invariant}. 
\begin{prop}\label{prop:basic-affine-space}
    Let $P$ be a submonoid of $X^+$. 
    \begin{enumerate}
        \item[(i)] The $R$-modules ${}_R\bfO_{G/U}(P)$ and ${}_R\bfO_{U^-\backslash G}(P)$ are $P$-graded subalgebras of ${}_R\bfO$.
        \item[(ii)] There is a natural embedding of $R$-algebras ${}_R\bfO_{G/U}(P)\into R[P]\otimes_R{}_R\bfO_{G/U}$ whose image is the degree $0$ subalgebra with respect to the $X$-grading that assigns $\la-\mu$ to the element $e^\la\otimes(b^-1_\mu)^*$, for any $\la\in P,\mu\in X^+$ and $b\in\bfB(\mu)$. Similarly, ${}_R\bfO_{U^-\backslash G}(P)$ is isomorphic to the degree $0$ subalgebra of $R[P]\otimes_R{}_R\bfO_{U^-\backslash G}$.
    \end{enumerate}
\end{prop}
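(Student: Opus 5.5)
The plan is to show first that the whole of ${}_R\bfO_{G/U}$ (resp. ${}_R\bfO_{U^-\backslash G}$) is an $X^+$-graded subalgebra, with $(b^-1_\la)^*$ in degree $\la$. We may assume $R=\A$. For $a\in\bfB(\la)$ and $b\in\bfB(\mu)$ we must compute the product $(a^-1_\la)^*(b^-1_\mu)^*=\sum_c\hat{m}_c^{a^-1_\la,b^-1_\mu}c^*$ and show that every $c$ with $\hat m_c\ne0$ has the form $c^-1_{\la+\mu}$ with $c\in\bfB(\la+\mu)$.

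We proceed in two steps.
\begin{enumerate}
\item[(a)] Since $\Delta(1_\zeta)=\sum 1_{\zeta'}\otimes 1_{\zeta-\zeta'}$, the coefficient $\hat m_c^{xy}$ vanishes unless the right weight index of $c$ is $\la+\mu$. By Lemma \ref{lem:filtration-O-multiplicative}(i), $c\in\dot\bfB[\le\la+\mu]$.
\item[(b)] We use the module-theoretic description of $\dot\bfB_{0,\la}$. The elements $x\in\dot\bfB$ with $x\eta_\la\ne0$ in $\La_\la$ are exactly $\dot\bfB_{0,\la}$; all others kill $\eta_\la$. This follows from \cite[\S1.9]{LusztigGroupScheme} with $\La_0$ trivial.
\end{enumerate}

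Now use the map $\tau\colon\La_{\la+\mu}\to\La_\la\otimes\La_\mu$ from the proof of Lemma \ref{lem:h_c-identity}. For any $c\in\dot\bfB$ we have $\Delta(c)(\eta_\la\otimes\eta_\mu)=\tau(c\eta_{\la+\mu})$. If $c\notin\dot\bfB_{0,\la+\mu}$, the right side vanishes. The left side is $\sum\hat m_c^{xy}x\eta_\la\otimes y\eta_\mu$, and the vectors $x\eta_\la\otimes y\eta_\mu$ for $x\in\dot\bfB_{0,\la}$, $y\in\dot\bfB_{0,\mu}$ form a basis. Hence $\hat m_c^{a^-1_\la,b^-1_\mu}=0$ for such $c$, and the product lies in the span of $\dot\bfB_{0,\la+\mu}^*$.

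We must still check that the unit $1_0^*$ lies in ${}_R\bfO_{G/U}$; it does, since $\dot\bfB_{0,0}=\{1_0\}$. The $U^-\backslash G$ case follows by the symmetric argument with lowest weight vectors, or by transporting via the anti-automorphism $\sigma$. By Proposition \ref{prop:B-dot-involution} and the description $\sigma(\dot\bfB_{\la,0})$, $\sigma$ induces an anti-isomorphism on ${}_R\bfO$ exchanging the two subspaces. For a submonoid $P$, each $\dot\bfB_{0,\la}\subset\dot\bfB[\la]$, so ${}_R\bfO_{G/U}(P)$ is spanned by the degree-$\la$ pieces with $\la\in P$. It is therefore a graded subalgebra, which proves (i).

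For (ii), the map sends $(b^-1_\la)^*\mapsto e^\la\otimes(b^-1_\la)^*$. It is multiplicative because $R[P]$ is commutative and $P$-graded and the product in (i) preserves degrees. It is injective, and its image is spanned by the $e^\la\otimes(b^-1_\mu)^*$ with $\la=\mu$, which is exactly the degree $0$ part. The same argument applies on the other side.

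The main obstacle is step (b): identifying the annihilator of $\eta_\la$ among $\dot\bfB$ and the compatibility $\Delta(c)\mapsto\tau$ integrally. Both are already used in Lemma \ref{lem:h_c-identity}, so this step should go through.
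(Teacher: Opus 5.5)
Your treatment of ${}_R\bfO_{G/U}(P)$ and of (ii) follows the paper's proof. The one difference is in the key fact that $\hat m_c^{ab}\neq0$ with $a\in\dot\bfB_{0,\la}$, $b\in\dot\bfB_{0,\mu}$ forces $c\in\dot\bfB_{0,\la+\mu}$. The paper simply cites \cite[1.14(a)]{LusztigGroupScheme} for this. You instead reprove it from $\Delta(c)(\eta_\la\otimes\eta_\mu)=\tau(c\eta_{\la+\mu})$ and the fact that every element of $\dot\bfB\setminus\dot\bfB_{0,\la}$ kills $\eta_\la$ (Lusztig's 1.9). This is the same device the paper uses for Lemma~\ref{lem:h_c-identity}, and it is correct, so your proof of this part is self-contained rather than by citation. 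Your worry about integrality is unnecessary. You can work in $\La_\la\otimes\La_\mu$ over $\QQ(v)$, where linear independence of the vectors $x\eta_\la\otimes y\eta_\mu$ already forces the coefficients $\hat m_c^{xy}\in\A$ to vanish. Step (a) is never used.

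The reduction of the $U^-\backslash G$ case is misstated, however.
\begin{itemize}
\item The anti-automorphism $\sigma$ sends $b^-1_\la$ to $1_{-\la}\sigma(b)^-$, not to $1_\la\sigma(b)^+$. So it does not carry $\dot\bfB_{0,\la}$ onto $\sigma(\dot\bfB_{\la,0})$.
\item The lowest-weight version of your argument (using $\xi_{-\la}\in{}^\omega\La_\la$) proves the statement for the span of $\dot\bfB_{\la,0}^*=\{(b^+1_{-\la})^*\}$. That is a different subspace from ${}_R\bfO_{U^-\backslash G}$.
\item The correct symmetry is $\omega\sigma=\sigma\omega$, which sends $b^-1_\la\mapsto 1_\la\sigma(b)^+$; this is what the paper uses.
\item Neither $\sigma$ nor $\omega\sigma$ induces an $R$-linear algebra (anti-)isomorphism of ${}_R\bfO$ in the naive way. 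The map $\omega$ satisfies $\Delta\omega={}^t(\omega\otimes\omega)\Delta$, while $\sigma$ satisfies $\Delta(\sigma u)=(\sigma\otimes\sigma)\bigl(\overline{\Delta(\bar u)}\bigr)$.
\end{itemize}
What actually transports is the vanishing of structure constants over $\A$: $\hat m^{\omega\sigma(y),\omega\sigma(x)}_{\omega\sigma(c)}=\overline{\hat m^{xy}_c}$ for $x,y,c\in\dot\bfB$. Together with $\omega\sigma(\dot\bfB)=\dot\bfB$ (Proposition~\ref{prop:B-dot-involution}), this gives the $U^-\backslash G$ case of (i) and (ii) from the $G/U$ case.
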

\begin{proof}
    We only need to prove the statements for ${}_R\bfO_{G/U}(P)$ since then the case of ${}_R\bfO_{U^-\backslash G}(P)$ would follow by applying the involution $\omega\sigma$.\par 
    (i) Recall that $\dot{\bfB}_{0,0}=\dot{\bfB}[0]=\{1_0\}$. Since $0\in P$, the subset ${}_R\bfO_{G/U}(P)$ contains the multiplicative unit $1_0^*$. For any 
    $a\in\dot{\bfB}_{0,\la}, b\in\dot{\bfB}_{0,\la'}$ with $\la,\la'\in P$,  by \cite[1.14(a)]{LusztigGroupScheme} we see that the product $a^*b^*$ (in ${}_R\bfO$) is an $R$-linear combination of elements $c^*$ with $c\in\dot{\bfB}_{0,\la+\la'}$. This implies that
    ${}_R\bfO_{G/U}(P)$ is a subalgebra of ${}_R\bfO$ and moreover it is a $P$-graded algebra, in which the $\la$-component is the $R$-linear span of $\{b^*,b\in\dot{\bfB}_{0,\la}\}$. \par 
    (ii) We define the sought-for embedding by sending $(b^-1_\la)^*$ with $\la\in P$ and $b\in\bfB(\la)$ to the element $e^\la\otimes(b^-1_\la)^*$. This is an $R$-algebra homomorphism by Lemma \ref{lem:filtration-O-multiplicative} and  \cite[1.14(a)]{LusztigGroupScheme}, and it clearly satisfies all the requirements.
\end{proof}

\begin{lem}\label{lem:ideal-in-grO}
    Let $P\subset X^+$ be a submonoid. Let $I_P\subset\mathrm{gr}({}_R\bfO(P))$ be the $R$-submodule spanned by $\bigcup\limits_{\lambda\in P}\{\bar{b^*},b\in\dot{\bf{B}}[\lambda]\setminus\dot{\bf{B}}_{0,\lambda}\}$. Then $I_P$ is a two-sided ideal of $\rm{gr}({}_R\bfO(P))$ and we have a canonical ring isomorphism $\rm{gr}({}_R\bfO(P))/I_P\cong{}_R\bfO_{G/U}(P)$ sending $\bar{b^*}$ to ${b^*}$, for any $b\in\bigcup\limits_{\la\in P}\dot{\bfB}_{0,\la}$. \par 
    Similarly, let $I_P^-\subset\mathrm{gr}({}_R\bfO(P))$ be the $R$-submodule spanned by $\bigcup\limits_{\lambda\in P}\{\bar{b^*},b\in\dot{\bf{B}}[\lambda]\setminus\sigma(\dot{\bf{B}}_{\lambda,0})\}$. Then $I_P^-$ is a two-sided ideal of $\rm{gr}({}_R\bfO(P))$ and we have a canonical ring isomorphism $\rm{gr}({}_R\bfO(P))/I_P^-\cong{}_R\bfO_{U^-\backslash G}(P)$ sending $\bar{b^*}$ to ${b^*}$, for any $b\in\bigcup\limits_{\la\in P}\sigma(\dot{\bfB}_{\la,0})$. 
\end{lem}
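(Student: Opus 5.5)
The plan is to realize the quotient map directly as an $R$-linear map $\pi\colon\mathrm{gr}({}_R\bfO(P))\to{}_R\bfO_{G/U}(P)$ defined on the basis by $\bar{b^*}\mapsto b^*$ if $b\in\dot{\bfB}_{0,\la}$ and $\bar{b^*}\mapsto0$ otherwise ($b\in\dot{\bfB}[\la]$, $\la\in P$). This is surjective with kernel exactly $I_P$, using $\dot{\bfB}_{0,\la}\subset\dot{\bfB}[\la]$ and that these sets are disjoint for distinct $\la$. So it suffices to show that $\pi$ is multiplicative. Then $I_P=\ker\pi$ is automatically a two-sided ideal and $\pi$ induces the desired ring isomorphism. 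Unitality is clear, since $\dot{\bfB}[0]=\dot{\bfB}_{0,0}=\{1_0\}$.

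For multiplicativity, take $a\in\dot{\bfB}[\la]$ and $b\in\dot{\bfB}[\mu]$ with $\la,\mu\in P$. By the product formula at the end of the previous subsection, $\bar{a^*}\bar{b^*}=\sum_{c\in\dot{\bfB}[\la+\mu]}\hat{m}_c^{ab}\bar{c^*}$. We must therefore show
\[\sum_{c\in\dot{\bfB}_{0,\la+\mu}}\hat{m}_c^{ab}c^*=\begin{cases}a^*b^* & a\in\dot{\bfB}_{0,\la},\ b\in\dot{\bfB}_{0,\mu},\\ 0&\text{otherwise}.\end{cases}\]
In other words, for $c\in\dot{\bfB}_{0,\la+\mu}$ we need two facts: $\hat{m}^{ab}_c=0$ unless $a\in\dot{\bfB}_{0,\la}$ and $b\in\dot{\bfB}_{0,\mu}$; and, when both lie in these sets, $a^*b^*$ involves only $c\in\dot{\bfB}_{0,\la+\mu}$. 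The second fact is \cite[1.14(a)]{LusztigGroupScheme}, as already used in Proposition \ref{prop:basic-affine-space}.

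For the first fact, I use the commutative square from the proof of Lemma \ref{lem:h_c-identity}, namely $\tau(c\,\eta_{\la+\mu})=\Delta(c)(\eta_\la\otimes\eta_\mu)=\sum_{x,y}\hat{m}_c^{xy}\,x\eta_\la\otimes y\eta_\mu$. Write $c=d^-1_{\la+\mu}$. By the coproduct formula \eqref{eq:coproduct-formula}, $\Delta(c)$ is a combination of terms $x^-1_{\la'}\otimes y^-1_{\mu'}$ with $\la'+\mu'=\la+\mu$. Hence any $(x,y)$ with $\hat{m}_c^{xy}\ne0$ has the form $x=x_0^-1_{\la'}$ and $y=y_0^-1_{\mu'}$, and is therefore in $\dot{\bfB}^{\le0}$. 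Now impose $x\in\dot{\bfB}[\la]$ and $y\in\dot{\bfB}[\mu]$. Then $1_{\la'}$ must act nontrivially on weights of $\La_{\la}$, and more precisely Lemma \ref{lem:B(lambda)-characterization} together with Proposition \ref{prop:parametrize-B[la]} should force $\la'\le\la$ and $\mu'\le\mu$. Combined with $\la'+\mu'=\la+\mu$, this gives $\la'=\la$ and $\mu'=\mu$. Then $x=x_0^-1_\la\in\dot{\bfB}[\la]$, so $x_0\in\bfB(\la)$ by Lemma \ref{lem:B(lambda)-characterization}(ii)$\Rightarrow$(i), i.e.\ $x\in\dot{\bfB}_{0,\la}$; similarly $y\in\dot{\bfB}_{0,\mu}$.

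The step needing care is the claim that $\la'\le\la$. I would argue as follows. An element $x^-_01_{\la'}\in\dot{\bfB}[\la]$ acts nonzero on $\La_\la$ (Lemma \ref{lem:characterization subset Blambda}), so $\la'$ is a weight of $\La_\la$ and hence $\la'\le\la$ by Theorem \ref{theo:Weyl module}(3). The statement for $I_P^-$ follows by applying the involution $\omega\sigma$, which swaps the two situations by Proposition \ref{prop:B-dot-involution} and respects the filtration. Alternatively, one can rerun the same argument with $\dot{\bfB}^{\ge0}$ and right actions.
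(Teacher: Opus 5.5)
Your proposal is correct and follows the paper's proof. The key vanishing statement ($\hat{m}_c^{ab}=0$ for $c\in\dot{\bfB}_{0,\la+\mu}$ unless $a\in\dot{\bfB}_{0,\la}$ and $b\in\dot{\bfB}_{0,\mu}$) is obtained exactly as in the paper: expand $\Delta(c)$ via $\bar{r}$ into terms $c_{11}^-1_{\la'}\otimes c_{12}^-1_{\la''}$ with $\la'+\la''=\la+\mu$, use the nontrivial action on Weyl modules to force $\la'\le\la$ and $\la''\le\mu$ (hence equality), and conclude with Lemma \ref{lem:B(lambda)-characterization}. Packaging this as multiplicativity of the explicit map $\pi$, together with \cite[1.14(a)]{LusztigGroupScheme}, simply makes explicit the isomorphism part that the paper leaves implicit.
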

\begin{proof}
    We only prove the first statement. The other will be similar. To show that $I_P$ is a two-sided ideal of $\rm{gr}({}_R\bfO(P))$, we may assume that $R=\A$ and it suffices to prove the following statement: for any $\lambda',\lambda''\in P$ with $\lambda=\lambda'+\lambda''$, any $a\in\dot{\bf{B}}[\lambda']$, $b\in\dot{\bf{B}}[\lambda'']$ and $c\in\dot{\bf{B}}_{0,\lambda}$, if either $a\notin\dot{\bfB}_{0,\la'}$ or $b\notin\dot{\bf{B}}_{0,\lambda''}$, then  $\hat{m}_c^{ab}=0$. Indeed, this translates to the statement that if either $\bar{a^*}\in I_P$ or $\bar{b^*}\in I_P$, then $\bar{a^*}\bar{b^*}\in I_P$, which then implies that $I_P$ is a two-sided ideal of $\rm{gr}({}_R\bfO(P))$. In the following proof for ease of typesetting, we denote $\hat{m}(c,a,b)\coloneqq\hat{m}_c^{ab}$ and all tensor products are over $R$.\par 
    We can write $c=c_1^-1_\lambda$ where $c_1\in\bf{B}(\lambda)$. Following \cite[\S4.26]{Lusztig-Infinity} we write
    \[\Bar{r}(c_1)=\sum_{c_{11},c_{12}\in\bf{B}}p(c_1,c_{11},c_{12})c_{11}\otimes  c_{12}\]
    where $p(c_1,c_{11},c_{12})\in\ZZ[v,v^{-1}]$ and vanishes for all but finitely many pairs $c_{11},c_{12}$. Then by \emph{loc.cit.} Lemma 3.10 we get            
    \[\Delta(c)=\Delta(c_1^-1_\lambda)=\sum_{\substack{c_{11},c_{12}\in\bf{B}\\ p(c_1,c_{11},c_{12})\ne0}}\sum_{\substack{\lambda',\lambda''\in X \\ \lambda'+\lambda''=\lambda}} \hat{m}(c,c_{11}^-1_{\lambda'},c_{12}^-1_{\lambda''})\cdot(c_{11}^-1_{\lambda'})\otimes (c_{12}^-1_{\lambda''})\]
    where the structure constant $\hat{m}(c,c_{11}^-1_{\lambda'},c_{12}^-1_{\lambda''})$ equals $p(c_1,c_{11},c_{12})$ times some explicit power of $v$.\par 
    We are interested in terms in the sum where $a=c_{11}^-1_{\lambda'}\in\dot{\bf{B}}[\lambda_1]$, $b=c_{12}^-1_{\lambda''}\in\dot{\bf{B}}[\lambda_2]$ with $\lambda_1,\lambda_2\in P$ and $\lambda_1+\lambda_2=\lambda$. Then $c_{11}^-1_{\la'}$ (resp. $c_{12}^-1_{\la''}$) acts nontrivially on $\La_{\la_1}$ (resp. $\La_{\la_2}$), hence $\lambda'\le\lambda_1$ and $\lambda''\le\lambda_2$. On the other hand, since $\lambda'+\lambda''=\lambda_1+\lambda_2$, we must have $\lambda'=\lambda_1\in X^+$ and $\lambda''=\lambda_2\in X^+$. By Lemma \ref{lem:B(lambda)-characterization} we get that $c_{11}\in\bf{B}(\lambda')$ and $c_{12}\in\bf{B}(\lambda'')$, and hence $a=c_{11}^-1_{\lambda'}\in\dot{\bf{B}}_{0,\lambda'}$ and $b=c_{12}^-1_{\lambda''}\in\dot{\bf{B}}_{0,\lambda''}$. This finishes the proof.
\end{proof}

Let $P\subset X^+$ be a submonoid. Define an $R$-linear map 
\begin{equation}\label{eq:phi}
    \varphi_P\colon\rm{gr}({}_R\bfO(P))\to{}_R\bfO_{G/U}(P)\otimes_R{}_R\bfO_{U^-\backslash G}(P)
\end{equation}
by the formula
\[\varphi_P(\bar{c^*})=\sum_{a\in\dot{\bf{B}}_{0,\lambda}, b\in\sigma(\dot{\bf{B}}_{\lambda,0})}m_{ab}^{c} a^*\otimes  b^*,\quad\forall c\in\dot{\bf{B}}[\lambda],\lambda\in P.\]

The tensor product ${}_R\bfO_{G/U}(P)\otimes_R{}_R\bfO_{U^-\backslash G}(P)$ has the structure of an $X$-graded algebra where the grading is defined as follows: for $\la,\mu\in P$ and $b_1\in\bfB(\la)$, $b_2\in\bfB(\mu)$, the element $(b_1^-1_\la)^*\otimes(1_\mu\sigma(b_2)^+)^*$ has degree $\la-\mu\in X$. 

\begin{prop}\label{prop:grO-decription}
    The map $\varphi_P$ in \eqref{eq:phi} induces an $R$-algebra isomorphism from $\mathrm{gr}({}_R\bfO(P))$ onto the degree $0$ subalgebra of 
    \[{}_R\bfO_{G/U}(P)\otimes_R{}_R\bfO_{U^-\backslash G}(P),\]
    with respect to the $X$-grading defined above. 
\end{prop}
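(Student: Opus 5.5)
The plan has four steps: show that $\varphi_P$ is well defined and lands in the degree $0$ part, that it is multiplicative, that it is injective, and that it hits every degree $0$ element.

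\emph{Well-definedness and degree.} Let $c\in\dot{\bfB}[\la]$ with $\la\in P$. Each term $a^*\otimes b^*$ occurring in $\varphi_P(\bar{c^*})$ has $a=a_1^-1_\la$ and $b=1_\la\sigma(b_2)^+$, so it has degree $\la-\la=0$. The formula only involves the finitely many pairs $(a,b)$ with $m_{ab}^c\ne0$ (Lemma \ref{lem:structure-constant-finite}). Moreover $a\in\dot{\bfB}_{0,\la}\subset\dot{\bfB}[\la]$ and $b\in\sigma(\dot{\bfB}_{\la,0})\subset\dot{\bfB}[\la]$, so the target is contained in ${}_R\bfO_{G/U}(P)\otimes{}_R\bfO_{U^-\backslash G}(P)$.

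\emph{Multiplicativity.} We may take $R=\A$. Dualizing, $\varphi_P$ on the $\la$-graded piece is the transpose of the $R$-linear map
\[\mu_\la\colon \mathrm{span}(\dot{\bfB}_{0,\la})\otimes\mathrm{span}(\sigma(\dot{\bfB}_{\la,0}))\to \dot{\bfU}[\ge\la]/\dot{\bfU}[>\la],\qquad x\otimes y\mapsto xy.\]
By Proposition \ref{prop:parametrize-B[la]}, $\mu_\la$ sends $b_1^-1_\la\otimes 1_\la\sigma(b_2)^+$ to $\beta_\la(b_1,b_2)$ modulo $\dot{\bfU}[>\la]$. Hence $\mu_\la$ is a bijection of bases, and $\varphi_P(\bar{c^*})$ for $c=\beta_\la(b_1,b_2)$ is exactly $(b_1^-1_\la)^*\otimes(1_\la\sigma(b_2)^+)^*$. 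Now let $c\in\dot{\bfB}[\la+\la']$ and $a\in\dot{\bfB}[\la]$, $a'\in\dot{\bfB}[\la']$. Multiplicativity of $\varphi_P$ reduces to the identity, modulo higher cells,
\[\Delta(xy)=\Delta(x)\Delta(y)\]
for $x\in\dot{\bfB}_{0,\la+\la'}$ and $y\in\sigma(\dot{\bfB}_{\la+\la',0})$. Here we project each $\Delta(x)$ onto $\dot{\bfB}_{0,\la}\otimes\dot{\bfB}_{0,\la'}$, which is legitimate by the computation in the proof of Lemma \ref{lem:ideal-in-grO}, and the analogous statement for $\sigma(\dot{\bfB}_{\cdot,0})$ follows via $\omega\sigma$. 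We then pass to $\dot{\bfU}[\ge\la]/\dot{\bfU}[>\la]\otimes \dot{\bfU}[\ge\la']/\dot{\bfU}[>\la']$, using Lemma \ref{lem:filtration-O-multiplicative}(i) in dual form: $\Delta(\dot{\bfU}[>\la+\la'])$ pairs to zero with $\mathrm{gr}$-degree $(\la,\la')$ terms. Using the product formula for $\bar{a^*}\bar{b^*}$ recorded after Lemma \ref{lem:grh}, both sides of the multiplicativity identity then match. I expect this step to be the main obstacle, because one must check carefully that the components of $\Delta(x)$ and $\Delta(y)$ outside the cells $\la,\la'$ do not contribute after taking the quotient. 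A possibly cleaner route: $\mathrm{gr}_\la$ is identified with $\mathrm{End}(\La_\la)^*$, so that $\mathrm{gr}({}_R\bfO(P))=\bigoplus_\la \La_\la^{*}\otimes\La_\la$ as matrix coefficients. Under this identification the product is the one induced by the Cartan components $\La_{\la+\la'}\into\La_\la\otimes\La_{\la'}$ (the map $\tau$ from Lemma \ref{lem:h_c-identity}). The two tensor factors are then exactly the highest-weight-vector and lowest-weight-vector parts, which realize ${}_R\bfO_{G/U}$ and ${}_R\bfO_{U^-\backslash G}$ via Lemma \ref{lem:ideal-in-grO}.

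\emph{Bijectivity.} Since $\mu_\la$ is a bijection of bases, $\varphi_P$ restricted to the $\la$-component is an isomorphism onto $\mathrm{span}\{a^*\otimes b^*: a\in\dot{\bfB}_{0,\la}, b\in\sigma(\dot{\bfB}_{\la,0})\}$. These pieces are exactly the degree $0$ summands of the tensor product: degree $0$ forces the two weight labels to coincide, $\la=\mu\in P$. Summing over $\la\in P$ gives the stated isomorphism.
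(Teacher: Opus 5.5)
Your plan is the paper's plan, and the peripheral steps are fine. The terms of $\varphi_P(\bar{c^*})$ have degree $0$. Proposition~\ref{prop:parametrize-B[la]} gives $\varphi_P(\bar{c^*})=(b_1^-1_\la)^*\otimes(1_\la\sigma(b_2)^+)^*$ for $c=\beta_\la(b_1,b_2)$, which yields injectivity and identifies the image with the degree $0$ part. Your reduction of multiplicativity to $\Delta(zw)=\Delta(z)\Delta(w)$ is the dual form of the identity $\sum_{a,b,e,f}m_{ab}^cm_{ef}^d\hat{m}_z^{ae}\hat{m}_w^{bf}=\sum_x\hat{m}_x^{cd}m_{zw}^x$ that the paper uses.

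\textbf{The gap.} The step you flag as the main obstacle is the whole content of the multiplicativity proof, and you do not carry it out. The justifications you sketch do not suffice:
\begin{itemize}
\item The computation in the proof of Lemma~\ref{lem:ideal-in-grO} only constrains components $a\otimes e$ of $\Delta(z)$ that already lie in cells $(\la_1,\la_2)$ with $\la_1+\la_2=\la+\la'$.
\item Passing to $\dot{\bfU}[\ge\la]/\dot{\bfU}[>\la]$ does not remove components with $a$ in a cell strictly below $\la$. Such $a$ can pair nontrivially with $c^*$ after multiplying by $b$: take $a=1_\mu$ with $\mu$ a weight of $\La_\la$ not in $W\la$, and $b=c\in\dot{\bfB}[\la]\cap1_\mu\dot{\bfU}$, so that $m_{ab}^c=1$.
\item The dual form of Lemma~\ref{lem:filtration-O-multiplicative}(i) you quote, applied only in degree $(\la,\la')$, bears on the sum over $x$ but says nothing about these stray terms.
\end{itemize}

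\textbf{What closes it.} You need a squeeze on cells; write $\la(a)$ for the cell of $a$.
\begin{itemize}
\item Upper bound: if $m_{ab}^cm_{ef}^d\ne0$ with $c\in\dot{\bfB}[\la]$ and $d\in\dot{\bfB}[\la']$, then $a,b\in\dot{\bfB}[\le\la]$ and $e,f\in\dot{\bfB}[\le\la']$, because each $\dot{\bfU}[\ge\mu]$ is a two-sided ideal.
\item Lower bound: if moreover $\hat{m}_z^{ae}\hat{m}_w^{bf}\ne0$ with $z,w\in\dot{\bfB}[\la+\la']$, then Lemma~\ref{lem:filtration-O-multiplicative}(i), applied to the products $a^*e^*$ and $b^*f^*$, gives $\la+\la'\le\la(a)+\la(e)$ and $\la+\la'\le\la(b)+\la(f)$.
\end{itemize}
Together these force $a,b\in\dot{\bfB}[\la]$ and $e,f\in\dot{\bfB}[\la']$. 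Only then do Lemma~\ref{lem:ideal-in-grO} and its $I_P^-$ counterpart place $a,e$ in $\dot{\bfB}_{0,\la},\dot{\bfB}_{0,\la'}$ and $b,f$ in $\sigma(\dot{\bfB}_{\la,0}),\sigma(\dot{\bfB}_{\la',0})$. The same two facts confine $x$ to $\dot{\bfB}[\la+\la']$. This is exactly how the paper argues.

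\textbf{Your alternative route.} The matrix-coefficient argument does not avoid this as stated. Over $\A$ the cell quotient ${}_\A\dot{\bfU}[\ge\la]/{}_\A\dot{\bfU}[>\la]$ does not map onto $\mathrm{End}_\A({}_\A\La_\la)$. For $\mathrm{SL}_2$ and $\la=\alpha_i$, every element of $1_{\alpha_i}({}_\A\dot{\bfU})1_0$ sends $\theta_i\eta_{\alpha_i}$ into $[2]_i\A\,\eta_{\alpha_i}$. So $\mathrm{gr}_\la$ is not $\mathrm{End}(\La_\la)^*$ integrally. You could run the argument over $\QQ(v)$ and descend, since all modules are $\A$-free and $\varphi_P$ has structure constants in $\A$. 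You would still need to prove two things you only assert: that the product on $\mathrm{gr}$ is the Cartan-component product, and that $\varphi_P$ corresponds to factoring through the highest-weight line.
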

\begin{proof}
    We may assume that $R=\A$. First we show that $\varphi_P$ is a ring homomorphism. Let $c\in\dot{\bf{B}}[\lambda]$ and $d\in\dot{\bf{B}}[\lambda']$ with $\lambda,\lambda'\in P$. One calculates that 
    \begin{equation}
        \begin{split}
            \varphi_P(\bar{c^*})\varphi_P(\bar{d^*})&=(\sum_{a\in\dot{\bf{B}}_{0,\lambda}, b\in\sigma(\dot{\bf{B}}_{\lambda,0})}m_{ab}^{c} a^*\otimes  b^*)(\sum_{e\in\dot{\bf{B}}_{0,\lambda'}, f\in\sigma(\dot{\bf{B}}_{\lambda',0})}m_{ef}^{d} e^*\otimes  f^*)\\
            &=\sum_{\substack{a\in\dot{\bf{B}}_{0,\lambda}, b\in\sigma(\dot{\bf{B}}_{\lambda,0})\\ e\in\dot{\bf{B}}_{0,\lambda'}, f\in\sigma(\dot{\bf{B}}_{\lambda',0})}}m_{ab}^c m_{ef}^d a^*e^*\otimes  b^*f^*\\
            &=\sum_{\substack{a\in\dot{\bf{B}}_{0,\lambda}, b\in\sigma(\dot{\bf{B}}_{\lambda,0})\\ e\in\dot{\bf{B}}_{0,\lambda'}, f\in\sigma(\dot{\bf{B}}_{\lambda',0})}}\sum_{z\in\dot{\bf{B}}_{0,\lambda+\lambda'},w\in\sigma(\dot{\bf{B}}_{\lambda+\lambda',0})}m_{ab}^cm_{ef}^d\hat{m}_z^{ae}\hat{m}_w^{bf}z^*\otimes  w^*,
        \end{split}
    \end{equation}
    and
    \begin{equation}
        \begin{split}
            \varphi_P(\bar{c^*}\bar{d^*})&=\varphi_P(\sum_{x\in\dot{\bf{B}}[\lambda+\lambda']}\hat{m}_x^{cd}\bar{x^*})\\&=\sum_{x\in\dot{\bf{B}}[\lambda+\lambda']}\sum_{z\in\dot{\bf{B}}_{0,\lambda+\lambda'},w\in\sigma(\dot{\bf{B}}_{\lambda+\lambda',0})}\hat{m}_x^{cd}m_{zw}^x z^*\otimes  w^*.
        \end{split}
    \end{equation}
    For fixed $c,d,z,w\in\dot{\bf{B}}$, the structure constants satisfy the equation
    \begin{equation}\label{eq:structure constants}
\sum_{a,b,e,f\in\dot{\bf{B}}}m_{ab}^cm_{ef}^d\hat{m}^{ae}_z\hat{m}^{bf}_w=\sum_{x\in\dot{\bf{B}}}\hat{m}_x^{cd}m_{zw}^x
    \end{equation}
    which follows from the fact that $\hat{\Delta}\colon\hat{\bf{U}}\to\hat{\bfU}\hat{\otimes}\hat{\bfU}$ is a ring homomorphism. 
    In our situation, the elements satisfy further constraints:
    \begin{enumerate}
        \item[(i)] $c\in\dot{\bf{B}}[\lambda]$ and $d\in\dot{\bf{B}}[\lambda']$;
        \item[(ii)] $z\in\dot{\bf{B}}_{0,\lambda+\lambda'}\subset\dot{\bf{B}}[\lambda+\lambda']$ and $w\in\sigma(\dot{\bf{B}}_{\lambda+\lambda',0})\subset\dot{\bf{B}}[\lambda+\lambda']$.
    \end{enumerate}
    Now from (i) and Lemma \ref{lem:filtration-O-multiplicative} we get:
    \begin{enumerate}
        \item[(iii)]  $\hat{m}_x^{cd}\ne0$ only if $x\in\dot{\bf{B}}[\le(\lambda+\lambda')]$;
    \end{enumerate}
    From (ii) we get:
    \begin{enumerate}
        \item[(iv)] $m_{zw}^x\ne0$ only if $x\in\dot{\bf{B}}[\ge(\lambda+\lambda')]$.
    \end{enumerate}
    On the other hand, (i) implies that: 
    \begin{enumerate}
        \item[(v)] $m_{ab}^cm_{ef}^d\ne0$ only if $a,b\in\dot{\bf{B}}[\le\lambda]$ and $e,f\in\dot{\bf{B}}[\le\lambda']$.
    \end{enumerate}
    Combining (v) and (ii), we get that:
    \begin{enumerate}
        \item[(vi)]  $m_{ab}^cm_{ef}^d\hat{m}^{ae}_z\hat{m}^{bf}_w\ne0$ only if $a,b\in\dot{\bf{B}}[\lambda]$ and $e,f\in\dot{\bf{B}}[\lambda']$.
    \end{enumerate}
    Combining (vi) and Lemma \ref{lem:ideal-in-grO} we get that: 
    \begin{enumerate}
        \item[(vii)] $m_{ab}^cm_{ef}^d\hat{m}^{ae}_z\hat{m}^{bf}_w\ne0$ only when the following are all satisfied:
        \[a\in\dot{\bf{B}}_{0,\lambda}, b\in\sigma(\dot{\bf{B}}_{\lambda,0}), e\in\dot{\bf{B}}_{0,\lambda'}, f\in\sigma(\dot{\bf{B}}_{\lambda',0}).\]
    \end{enumerate}
    Finally, combining (iii), (iv), (vii) and \eqref{eq:structure constants} we conclude that \[\varphi_P(\bar{c^*})\varphi_P(\bar{d^*})=\varphi_P(\bar{c^*}\bar{d^*})\] 
    and this shows that $\varphi_P$ is a ring homomorphism.\par 
    Now let $\la\in P$ and $c_1,c_2\in\bfB(\la)$. Let $c\coloneqq\beta_\la(c_1,c_2)\in\dot{\bfB}[\la]$, where the map $\beta_\la$ is defined in Proposition \ref{prop:parametrize-B[la]}. Then we have $c-c_1^-1_\la\sigma(c_2)^+\in\dot{\bfU}[>\la]$ and hence
    \[\varphi_P(\bar{c^*})=(c_1^-1_\lambda)^*\otimes (1_\lambda\sigma(c_2)^+)^*.\]
    By Proposition \ref{prop:parametrize-B[la]} we see that $\varphi_P$ is injective and its image is the $\A$-submodule spanned by the set
    \[\bigcup_{\lambda\in P}\{a^*\otimes  b^*, (a,b)\in\dot{\bf{B}}_{0,\lambda}\times\sigma(\dot{\bf{B}}_{\lambda,0})\}.\]
    This is precisely the degree $0$ subalgebra of ${}_\A\bfO_{G/U}(P)\otimes_\A{}_\A\bfO_{U^-\backslash G}(P)$.
\end{proof}

\begin{cor}\label{cor:grO-description}
    Let $P\subset X^+$ be a submonoid. Then the $R$-algebra 
    \[\mathrm{gr}({}_R\bfO(P))\] 
    is isomorphic to the degree $(0,0)$ subalgebra of the $X\oplus X$-graded $R$-algebra
    \[R[P]\otimes{}_R\bfO_{G/U}\otimes{}_R\bfO_{U^-\backslash G}\]
    in which the grading is defined as follows: for any $\la\in P$, $\mu,\nu\in X^+$ and $b\in\bfB(\mu)$, $b'\in\bfB(\nu)$, the element $e^\la\otimes (b^-1_\mu)*\otimes (1_\nu\sigma(b')^+)^*$ has degree $(\la-\mu,\la-\nu)$. Here $R[P]$ is the monoid algebra of $P$ with $R$-basis $\{e^\la,\la\in P\}$.
\end{cor}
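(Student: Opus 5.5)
The plan is to combine Proposition \ref{prop:grO-decription} with part (ii) of Proposition \ref{prop:basic-affine-space}. By Proposition \ref{prop:grO-decription}, $\varphi_P$ identifies $\gr({}_R\bfO(P))$ with the $X$-degree $0$ part of ${}_R\bfO_{G/U}(P)\otimes_R{}_R\bfO_{U^-\backslash G}(P)$. This part has $R$-basis the elements $(b^-1_\la)^*\otimes(1_\la\sigma(b')^+)^*$ with $\la\in P$ and $b,b'\in\bfB(\la)$. By Proposition \ref{prop:basic-affine-space}(ii), ${}_R\bfO_{G/U}(P)$ is the degree $0$ part of $R[P]\otimes{}_R\bfO_{G/U}$ via $(b^-1_\la)^*\mapsto e^\la\otimes(b^-1_\la)^*$. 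Similarly ${}_R\bfO_{U^-\backslash G}(P)$ is the degree $0$ part of $R[P]\otimes{}_R\bfO_{U^-\backslash G}$. Tensoring, we obtain an embedding into $R[P]\otimes R[P]\otimes{}_R\bfO_{G/U}\otimes{}_R\bfO_{U^-\backslash G}$.

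The remaining step is to collapse the two copies of $R[P]$ into one. Define directly
\[\Phi\colon \gr({}_R\bfO(P))\to R[P]\otimes{}_R\bfO_{G/U}\otimes{}_R\bfO_{U^-\backslash G},\quad \overline{\beta_\la(c_1,c_2)^*}\mapsto e^\la\otimes(c_1^-1_\la)^*\otimes(1_\la\sigma(c_2)^+)^*.\]
This is the composite of $\varphi_P$ with the map $x\otimes y\mapsto e^{\la}\otimes x\otimes y$ on the span of the degree-$0$ basis vectors $(c_1^-1_\la)^*\otimes(1_\la\sigma(c_2)^+)^*$. That map is well defined because $\la$ is determined by either factor.

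To see that $\Phi$ is multiplicative, note the following. Products in ${}_R\bfO_{G/U}$ of elements of weights $\la,\la'$ land in weight $\la+\la'$, by \cite[1.14(a)]{LusztigGroupScheme} as used in Proposition \ref{prop:basic-affine-space}. The same holds in ${}_R\bfO_{U^-\backslash G}$. Also $e^\la e^{\la'}=e^{\la+\la'}$. Hence the map $x\otimes y\mapsto e^\la\otimes x\otimes y$ is a ring homomorphism on the degree $0$ subalgebra. Composed with the ring isomorphism $\varphi_P$, this shows that $\Phi$ is an injective ring homomorphism.

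For the image, note that the grading is multiplicative, since all three factors are graded algebras. A basis element $e^\la\otimes(b^-1_\mu)^*\otimes(1_\nu\sigma(b')^+)^*$ has degree $(0,0)$ exactly when $\mu=\nu=\la\in P$. The degree $(0,0)$ part therefore has $R$-basis $\{e^\la\otimes(c_1^-1_\la)^*\otimes(1_\la\sigma(c_2)^+)^*\mid \la\in P,\ c_1,c_2\in\bfB(\la)\}$. By the bijectivity of $\beta_\la$ in Proposition \ref{prop:parametrize-B[la]}, this is precisely the image of the basis $\{\overline{c^*}\}$ under $\Phi$.

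I expect no serious obstacle. The only point needing care is the bookkeeping that each tensor factor is multiplicatively graded by $X^+$ weight, so that the degree $(0,0)$ condition is preserved under products.
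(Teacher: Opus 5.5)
Your proposal is correct and is essentially the paper's proof: your map $\Phi$ is exactly the composite $\iota_P\circ\varphi_P$, where $\iota_P$ sends $(c_1^-1_\la)^*\otimes(1_\la\sigma(c_2)^+)^*$ to $e^\la\otimes(c_1^-1_\la)^*\otimes(1_\la\sigma(c_2)^+)^*$. The image is then identified with the degree $(0,0)$ part by the same basis comparison. The preliminary detour through two copies of $R[P]$ is unnecessary but harmless.
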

\begin{proof}
    This is a combination of Proposition \ref{prop:grO-decription} and Proposition \ref{prop:basic-affine-space}. We have a canonical $R$-algebra embedding
    \[\iota_P\colon\mathrm{Im}(\varphi_P)=({}_R\bfO_{G/U}(P)\otimes_R{}_R\bfO_{U^-\backslash G}(P))_0\to R[P]\otimes{}_R\bfO_{G/U}\otimes{}_R\bfO_{U^-\backslash G}\]
    defined on homogeneous elements by 
    \[\iota_P((b_1^-1_\la)^*\otimes(1_\la\sigma(b_2)^+)^*)=e^\la\otimes(b_1^-1_\la)^*\otimes(1_\la\sigma(b_2)^+)^*,\quad b_1,b_2\in\bfB(\la), \la\in P.\]
    The image of $\iota_P$ is the degree $(0,0)$ subalgebra and the composition $\iota_P\circ\varphi_P$ gives the desired algebra isomorphism.  
\end{proof}

\begin{cor}\label{cor:grO-finite}
    Let $P$ be a finitely generated submonoid of $X^+$ that is downward closed. Then the $R$-algebras ${}_R\bfO_{G/U}(P)$, ${}_R\bfO_{U^-\backslash G}(P)$, $\mathrm{gr}({}_R\bfO(P))$ and ${}_R\bfO(P)$ are all finitely generated. 
\end{cor}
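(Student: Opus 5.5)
We prove the four finite-generation statements in order: first the two basic affine space algebras, then the associated graded algebra, then ${}_R\bfO(P)$ itself by lifting through the filtration. It suffices to treat $R=\A$, since all the algebras in question are free $\A$-modules compatible with base change, and generators over $\A$ remain generators after $\otimes_\A R$.

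\emph{Step 1: ${}_R\bfO_{G/U}(P)$.} By Proposition \ref{prop:basic-affine-space}(i) this algebra is $P$-graded, and its $\la$-component is the finite free module $V_\la$ spanned by $\{(b^-1_\la)^*,b\in\bfB(\la)\}$. Let $\la_1,\dots,\la_N$ generate $P$. We claim that the finitely many elements of $V_{\la_1},\dots,V_{\la_N}$ generate the algebra, that is, the multiplication map $V_\la\otimes V_\mu\to V_{\la+\mu}$ is surjective for $\la,\mu\in X^+$. The statement is dual to Lemma \ref{lem:h_c-identity}. The map $\tau\colon{}_\A\La_{\la+\mu}\to{}_\A\La_\la\otimes{}_\A\La_\mu$ is split injective, so its dual ${}_\A\La_\la^*\otimes{}_\A\La_\mu^*\to{}_\A\La_{\la+\mu}^*$ is surjective. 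Under the identification of $V_\la$ with ${}_\A\La_\la^*$ (via the basis $b\eta_\la$), this dual map is exactly the product $(a^-1_\la)^*(b^-1_\mu)^*=\sum_c\hat m_c^{a,b}c^*$ of \cite[1.14(a)]{LusztigGroupScheme}. Concretely, $\sum_{a,b}h_c(a,b)\,a^*b^*=c^*$ by the identity in Lemma \ref{lem:h_c-identity}. Hence ${}_R\bfO_{G/U}(P)$ is generated by $\bigcup_iV_{\la_i}$. The case of ${}_R\bfO_{U^-\backslash G}(P)$ follows by applying the involution $\omega\sigma$.

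\emph{Step 2: $\gr({}_R\bfO(P))$.} By Proposition \ref{prop:grO-decription}, $\gr$ is the degree-$0$ part of $A\coloneqq{}_R\bfO_{G/U}(P)\otimes{}_R\bfO_{U^-\backslash G}(P)$ for the $X$-grading $\la-\mu$. Here it is easier to argue directly. The degree-$0$ part is $\bigoplus_{\la\in P}V_\la\otimes V'_\la$, and by Step 1 both $V_\la\otimes V_\mu\to V_{\la+\mu}$ and $V'_\la\otimes V'_\mu\to V'_{\la+\mu}$ are surjective. Since the product on $A$ is componentwise, the product $(V_\la\otimes V'_\la)\otimes(V_\mu\otimes V'_\mu)\to V_{\la+\mu}\otimes V'_{\la+\mu}$ is also surjective. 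So $\gr$ is generated by the finite free modules $V_{\la_i}\otimes V'_{\la_i}$, $i=1,\dots,N$. (This also gives Step 1 again.) This avoids any invariant-theory argument and works over arbitrary $R$, even though the algebras are noncommutative.

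\emph{Step 3: lifting to ${}_R\bfO(P)$.} Choose $h$ as in Lemma \ref{lem:from Xplus to NN}. By Lemma \ref{lem:grh}, $\gr_h({}_R\bfO(P))\cong\gr({}_R\bfO(P))$, and the filtration ${}_R\bfO_{\le n}(P)$ is exhaustive, bounded below (zero in negative degrees), and each piece is spanned by basis elements. Take the finitely many basis elements $b^*$ with $b\in\dot{\bfB}[\la_i]$ as lifts of the generators from Step 2. The standard induction on $n$ then shows that they generate: any element of ${}_R\bfO_{\le n}(P)$ agrees, modulo ${}_R\bfO_{\le n-1}(P)$, with a polynomial in the lifts of filtration degree $\le n$, and the filtration is multiplicative by Lemma \ref{lem:filtration-O-multiplicative}(i). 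Downward closedness is used here: it guarantees that ${}_R\bfO(P)$ is a subalgebra (Proposition \ref{Prop:O(P)-subalgebra}) and that the pieces ${}_R\bfO(P_{\le n})$ are the filtration restricted to $P$.

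The main obstacle is the surjectivity in Step 1, namely matching the product of dual canonical basis elements with the dual of $\tau$ and keeping it integral over $\A$. The split injectivity from \cite[27.1.7]{Lusztig-IntroQuantumGroup}, packaged in Lemma \ref{lem:h_c-identity}, handles this.
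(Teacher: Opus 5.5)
Your proposal is correct and follows essentially the same route as the paper. You generate ${}_R\bfO_{G/U}(P)$ and ${}_R\bfO_{U^-\backslash G}(P)$ in the degrees of the monoid generators via $\sum h_c(a,b)a^*b^*=c^*$ from Lemma \ref{lem:h_c-identity}, pass to $\gr({}_R\bfO(P))$ through Proposition \ref{prop:grO-decription}, and lift along the $h$-filtration using Lemmas \ref{lem:from Xplus to NN} and \ref{lem:grh}; your explicit surjectivity argument for the degree-$0$ part in Step 2 simply fills in a step the paper states without detail (the parenthetical that this ``gives Step 1 again'' is circular and can be dropped).
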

\begin{proof}
    Let $\lambda_1,\dotsc,\lambda_m$ be a set of generators of the monoid $P$. Consider the sets 
    \[\Gamma_P^+=\bigcup_{j=1}^m\dot{\bf{B}}_{0,\lambda_j},\qquad\Gamma_P^-=\bigcup_{j=1}^m\sigma(\dot{\bf{B}}_{\lambda_j,0})\]
    which are both subsets of the finite set $\Gamma\coloneqq\bigcup_{j=1}^m\dot{\bfB}[\la_j]$. \par   
    For any $\lambda',\lambda''\in P$ and any $c\in\dot{\bf{B}}_{0,\lambda}$ where $\lambda=\lambda'+\lambda''$, by Lemma \ref{lem:h_c-identity} there exists a function $h_c\colon\dot{\bf{B}}_{0,\lambda'}\times\dot{\bf{B}}_{0,\lambda''}\to\ZZ[v^{\pm1}]$ such that the following equality holds in ${}_\A\bf{O}$:
    \[\sum_{a\in\dot{\bf{B}}_{0,\lambda'},b\in\dot{\bf{B}}_{0,\lambda''}}h_c(a,b)a^*b^*=c^*.\]
    From this we deduce by induction that $\{b^*,b\in\Gamma_P^+\}$ is a generating set for the $R$-algebra ${}_R\bfO_{G/U}(P)$. Similarly (or by applying the involution $\omega\sigma$), we see that $\{b^*,b\in\Gamma_P^-\}$ is a generating set for the $R$-algebra ${}_R\bfO_{U^-\backslash G}(P)$. Then the degree $0$ subalgebra of ${}_R\bfO_{G/U}(P)\otimes_R{}_R\bfO_{U^-\backslash G}(P)$ is generated by the finite set 
    \[\bigcup_{j=1}^m\dot{\bfB}_{0,\la_j}^*\otimes\sigma(\dot{\bfB}_{\la_j,0})^*.\]
    Then by Proposition \ref{prop:grO-decription} we see that the finite set $\{\bar{b^*},b\in\Gamma\}$ generates the algebra $\rm{gr}({}_R\bfO(P))$. Lifting generators, and using induction thanks to Lemma \ref{lem:from Xplus to NN} and Lemma \ref{lem:grh} we deduce that the finite set $\{b^*,b\in\Gamma\}$ generates the $R$-algebra ${}_R\bfO(P)$. 
\end{proof}

\subsection{Normality}
\begin{lem}\label{lem:normal}
    Let $A$ be a normal integral domain and $X$ a scheme over $\Spec(A)$. Suppose that the structure morphism $X\to\Spec(A)$ is flat and of finite presentation, and all its geometric fibers are integral and normal. Then the $X$ is integral and normal and its ring of global functions $\Gamma(X,\O_X)$ is a normal integral domain. 
\end{lem}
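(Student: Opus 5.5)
The plan is to deduce normality of $X$ from the standard permanence result for flat morphisms with normal fibers. Then we get irreducibility from flatness together with irreducibility of the generic fiber, and finally read off the properties of $\Gamma(X,\O_X)$ from those of $X$.

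\emph{Step 1 (normality).} Let $x\in X$ with image $s\in\Spec(A)$. The local homomorphism $A_s\to\O_{X,x}$ is flat, and $A_s$ is normal because $A$ is. The fiber ring $\O_{X,x}\otimes_{A_s}\kappa(s)$ is a local ring of the fiber $X_s$. Since the geometric fibers are normal, $X_s$ is geometrically normal, in particular normal. The ascent theorem for normality along flat local homomorphisms with normal closed fiber (EGA IV, 6.5.4, or Matsumura, Thm.~23.9) then shows that $\O_{X,x}$ is normal. Hence $X$ is a normal scheme, and in particular it is reduced.

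\emph{Step 2 (irreducibility).} Let $\eta$ be the generic point of $\Spec(A)$ and $K=\mathrm{Frac}(A)$. The generic fiber $X_K$ is integral by hypothesis; in particular it is nonempty and irreducible. For an arbitrary $x\in X$ lying over $s$, flatness of $X\to\Spec(A)$ implies that generizations lift (going down for flat morphisms, EGA IV, 2.3.4). Since $\eta$ generizes $s$, there is $x'\in X_K$ with $x\in\overline{\{x'\}}$. Thus $X_K$ is dense in $X$, and so $X=\overline{X_K}$ is irreducible. Combined with reducedness from Step 1, $X$ is integral.

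I expect this step to be the main point requiring care: one must make sure to use flatness, rather than just the fiberwise hypotheses, to exclude components of $X$ concentrated over a proper closed subset of $\Spec(A)$. Finite presentation is not essential here but guarantees that everything is locally noetherian-free safe. Alternatively, one can argue that $A$-flatness means $\O_X$ is $A$-torsion free, so all associated points of $X$ lie over $\eta$.

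\emph{Step 3 (global functions).} Since $X$ is integral, $\Gamma(X,\O_X)$ embeds into the function field $K(X)=\O_{X,\xi}$, where $\xi$ is the generic point, so it is a domain. Let $f\in\mathrm{Frac}(\Gamma(X,\O_X))\subset K(X)$ be integral over $\Gamma(X,\O_X)$. Then $f$ is integral over every $\O_{X,x}$. By Step 1, $f\in\O_{X,x}$ for all $x$, since these local rings are integrally closed in $K(X)$. An element of $K(X)$ lying in every local ring $\O_{X,x}$ is a global section, by the sheaf property for the integral scheme $X$, i.e.\ $\Gamma(X,\O_X)=\bigcap_x\O_{X,x}$ inside $K(X)$. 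Hence $f\in\Gamma(X,\O_X)$, and $\Gamma(X,\O_X)$ is a normal domain.
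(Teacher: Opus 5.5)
Your Steps 2 and 3 are sound and hold for an arbitrary normal domain $A$. Going-down for flat morphisms needs no finiteness hypotheses, and Step 3 is the argument behind Stacks 0358, which the paper cites for the last assertion.

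\textbf{The gap is in Step 1.} The lemma does not assume $A$ noetherian. The ascent results you invoke (Matsumura 23.9 and its EGA counterpart) are statements about flat local homomorphisms of \emph{noetherian} local rings. When $A$ is not noetherian, neither $A_s$ nor $\O_{X,x}$ need be noetherian, so these results do not apply.

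Bridging this is exactly the role of the finite presentation hypothesis, which you call inessential. A standard repair runs as follows:
\begin{enumerate}
\item Write $A$ as the filtered union of the normalizations of its finitely generated $\ZZ$-subalgebras. These are noetherian normal domains: they are finite over those subalgebras since $\ZZ$ is Nagata, and they lie in $A$ because $A$ is integrally closed.
\item Descend an affine open $\Spec B\ni x$ of $X$ to a flat finitely presented $\Spec B_i$ over such a subring $A_i$, using the limit results of EGA IV, \S8 and \S11.2.
\item Geometric normality of a fiber at a point is insensitive to extension of the residue field, and the locus where the fibers of a flat finitely presented morphism are geometrically normal is open (EGA IV, \S12.1). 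Hence the fibers of $\Spec B_i\to\Spec A_i$ are geometrically normal at every generization of the image $x_i$ of $x$.
\item The noetherian theorem then shows $\O_{\Spec B_i,x_i}$ is normal.
\item Finally, $\O_{X,x}$ is the filtered colimit of these normal local domains, and such a colimit is again a normal domain.
\end{enumerate}
This package is what the paper obtains by citing EGA IV, 11.3.13. In the paper's only use of the lemma (Theorem \ref{theo:O(L)-normal}) the base $\k$ is noetherian, so your argument suffices there, but not for the lemma as stated.

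\textbf{A smaller inaccuracy in Step 1.} Ascent of normality along a flat local homomorphism does \emph{not} follow from normality of the base and of the closed fiber alone. For a counterexample, take $A\to\widehat{A}$ for a normal local domain whose completion is not a domain, as in Nagata's examples. The theorems you cite require every fiber $\O_{X,x}\otimes_{A_s}\kappa(\mathfrak p)$, for $\mathfrak p\in\Spec A_s$, to be normal. Here these rings are localizations of coordinate rings of affine opens of the fibers $X_{\mathfrak p}$, which are normal by hypothesis. So the correct hypothesis is available; you should state and check that one rather than the closed-fiber condition.
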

\begin{proof}
    By \cite[11.3.13]{EGAIV8a15} the scheme $X$ is integral and normal. Then we conclude by \cite[Lemme 8.8.6.1]{EGAII1a8} or \cite[\href{https://stacks.math.columbia.edu/tag/0358}{Lemma 0358}]{Stacks}.
\end{proof}
\begin{lem}\label{lem:filtered-ring-normal}
    Let $A=\bigcup_{n\in\NN}A_n$ be a filtered commutative unital ring such that 
    \begin{itemize}
        \item For any $n\in\NN$, $A_n$ is a subgroup of the additive group of $A$;
        \item $A_n\subset A_{n+1}$ for all $n\in\NN$;
        \item $A_n\cdot A_m\subset A_{n+m}$ for all $m,n\in\NN$.
    \end{itemize}
    Let $A_{-1}=\{0\}$. Suppose that the associated graded ring $\mathrm{gr}(A)=\bigoplus_{n\in\NN}A_n/A_{n-1}$ is a noetherian normal integral domain. Then $A$ is a normal integral domain. 
\end{lem}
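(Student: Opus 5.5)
The plan is to lift the two properties from $\mathrm{gr}(A)$ to $A$ using the "leading term" map. For $0\ne a\in A$, let $\deg(a)$ be the least $n$ with $a\in A_n$; this exists because $A=\bigcup A_n$ and $A_{-1}=0$. Define the symbol $\sigma(a)\in A_n/A_{n-1}$ as the class of $a$, which is nonzero.

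\textbf{Step 1: $A$ is a domain.} Take $a,b\ne0$ of degrees $n,m$. The product $\sigma(a)\sigma(b)$ in $\mathrm{gr}(A)$ is the class of $ab$ in $A_{n+m}/A_{n+m-1}$. Since $\mathrm{gr}(A)$ is a domain, this class is nonzero. Hence $ab\notin A_{n+m-1}$, so $ab\ne0$. The same argument gives $\deg(ab)=\deg a+\deg b$ and $\sigma(ab)=\sigma(a)\sigma(b)$. Note also that $1\notin A_{-1}$, so $A\ne0$.

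\textbf{Step 2: $A$ is integrally closed.} Let $K=\mathrm{Frac}(A)$, and let $x=a/s\in K$ be integral over $A$. I would use the standard criterion: a noetherian domain is normal iff it is completely integrally closed. So $\mathrm{gr}(A)$ is completely integrally closed.

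It then suffices to show that $A$ is completely integrally closed; for this step the noetherian hypothesis on $A$ itself is not needed. Suppose $c\ne0$ and $cx^k\in A$ for all $k$. Writing $x=a/s$, set $d=cs$ and $y_k=cx^k s\cdot s^{k-1}$; more simply, the condition says $c\,a^k\in s^k A$ for all $k$, with $c\ne0$. We want to show $a\in sA$.

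This will be done by descending induction on the "remainder." Consider $a$ modulo $sA$ and choose a representative $a'=a-st\in a+sA$ of minimal degree. If $a'\ne0$, we have $c\,a'^k\in s^kA$, since $a'=s\cdot(a/s-t)$ and $x-t$ is still almost integral. Taking symbols, $\sigma(c)\sigma(a')^k\in\sigma(s)^k\,\mathrm{gr}(A)$. This uses the multiplicativity of symbols from Step 1: if $c a'^k=s^k u$, then $\sigma(c)\sigma(a')^k=\sigma(s)^k\sigma(u)$.

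Therefore $\sigma(a')/\sigma(s)$ is almost integral over $\mathrm{gr}(A)$ inside $\mathrm{Frac}(\mathrm{gr}(A))$. By complete integral closedness, $\sigma(a')=\sigma(s)g$ for some homogeneous $g\in\mathrm{gr}(A)$ of degree $\deg a'-\deg s$. Lift $g$ to some $t'\in A_{\deg a'-\deg s}$. Then $a'-st'$ lies in $A_{\deg a'-1}$ and in the same coset $a+sA$. This contradicts the minimality of $\deg(a')$. Hence $a'=0$ and $x\in A$.

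Since integral elements are almost integral, $A$ is normal.

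\textbf{Main obstacle.} The key point is passing from "integral" to "almost integral" in $\mathrm{gr}(A)$, which is why we invoke complete integral closedness there; the noetherian hypothesis is exactly what makes that available. The remaining bookkeeping is that $x-t$ is almost integral with the same $c$: we have $c(x-t)^k=\sum_j \binom{k}{j}(-t)^{k-j}cx^j\in A$.
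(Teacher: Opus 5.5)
Your proposal is correct and is essentially the paper's proof: multiplicativity of leading symbols gives the domain property, and normality follows by passing to almost integrality, dividing leading symbols in the noetherian normal (hence completely integrally closed) ring $\mathrm{gr}(A)$, lifting the quotient, and lowering the filtration degree of the numerator. The paper repeats the substitution $\alpha\mapsto\alpha-a$, while you take a minimal-degree representative of $a+sA$; this difference is cosmetic. Like the paper, you tacitly use that a quotient of homogeneous elements lying in the graded domain $\mathrm{gr}(A)$ is itself homogeneous, which is easy to justify.
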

\begin{proof}
    For any $a\in A$, let $h(a)\coloneqq\min\{n\in\NN,a\in A_n\}$ and let $\bar{a}\in\mathrm{gr}(A)$ be the image of $a$ in $A_{h(a)}/A_{h(a)-1}$. Then $\bar{a}=0$ if and only if $a=0$. From the assumption that $\mathrm{gr}(A)$ is an integral domain one easily verifies that for any $a,b\in A$ we have $h(ab)=h(a)+h(b)$ and $\overline{ab}=\bar{a}\bar{b}$. Then we deduce that $A$ is an integral domain.\par
    Let $0\ne \alpha=\frac{x}{y}\in\mathrm{Frac}(A)$ be integral over $A$. Then there exists $0\ne c\in A$ such that $a_n\coloneqq c\alpha^n\in A$ for all $n\ge0$ by \cite[\href{https://stacks.math.columbia.edu/tag/00GX}{Lemma 00GX}]{Stacks}. Then we have $\bar{c}(\bar{x}/\bar{y})^n\in\mathrm{gr}(A)$ for all $n\ge0$. Since $\mathrm{gr}(A)$ is noetherian and integrally closed, we get that $\bar{x}/\bar{y}\in\mathrm{gr}(A)$, again by \cite[\href{https://stacks.math.columbia.edu/tag/00GX}{Lemma 00GX}]{Stacks}. Therefore we can find a homogeneous lift $a\in A$ of $\bar{x}/\bar{y}$, of degree $h(a)=h(x)-h(y)$. So we have $h(x-ya)<h(x)$. Replacing $\alpha$ by $\alpha-a$ and (since $\alpha-a$ remains integral) repeating the above argument, we finally conclude that $\alpha\in A$ and hence $A$ is integrally closed. 
\end{proof}
\begin{theo}\label{theo:O(L)-normal}
    Let $\k$ be a noetherian normal integral domain, viewed as $\A$-algebra by $v\mapsto1$. Let $\L$ be a saturated finitely generated submonoid of $X^+$. Then the $\k$-algebras ${}_k\bfO_{G/U}(\L)$, ${}_k\bfO_{U^-\backslash G}(\L)$ and $\mathrm{gr}({}_\k\bfO(\L))$ are noetherian normal integral domains. If moreover $\L$ is downward closed, then the same holds for ${}_\k\bfO(\L)$.
\end{theo}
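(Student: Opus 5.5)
The plan is to reduce everything to the normality of the classical basic affine space $\k[G/U]$, i.e. ${}_\k\bfO_{G/U}$, and then pass through the structural results of the previous subsection.

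First I treat ${}_\k\bfO_{G/U}(\L)$. Over a field $k$ (say an algebraically closed one) with $v\mapsto 1$, I identify ${}_k\bfO_{G/U}$ with the ring of right $U$-invariants $k[G]^U$; the paper announces this identification in Proposition \ref{prop:U-invariant}. If I am not allowed to use it yet, I would argue directly: the $\la$-component spanned by $\dot{\bfB}_{0,\la}^*$ is the space of matrix coefficients against the highest weight vector $\eta_\la$. Since $k[G]$ is a normal domain and $U$ is connected, $k[G]^U$ is a normal domain. Proposition \ref{prop:basic-affine-space}(ii) realizes ${}_k\bfO_{G/U}(\L)$ as the degree-$0$ subalgebra of $k[\L]\otimes_k {}_k\bfO_{G/U}$ for an $X$-grading. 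Geometrically this is the ring of invariants of the torus $T$ acting on $\Spec(k[\L])\times G/U$. Here $k[\L]$ is normal because $\L$ is saturated, and a product of normal varieties over an algebraically closed field is normal. Invariants of a diagonalizable group acting on a normal domain form a normal domain: a degree-$0$ direct summand $A^T\subset A$ satisfies $A^T=A\cap \mathrm{Frac}(A^T)$ and is therefore integrally closed. Finite generation comes from Corollary \ref{cor:grO-finite}; downward closedness is not needed for the $G/U$ part, since that proof only uses $\L$ being finitely generated. The argument for ${}_k\bfO_{U^-\backslash G}(\L)$ is identical, or follows by applying $\omega\sigma$.

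Second, I pass from fields to a general noetherian normal domain $\k$ using Lemma \ref{lem:normal}. Set $A={}_\ZZ\bfO_{G/U}(\L)\otimes_\ZZ\k$. It is free over $\k$ because it has the dual canonical basis, hence flat. It is finitely presented because it is finitely generated over noetherian $\k$. By the field case, its geometric fibers are normal integral. Since $\k$ is normal, Lemma \ref{lem:normal} gives that $A$ is a normal domain, and it is noetherian.

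Third, for $\gr({}_\k\bfO(\L))$ I use Corollary \ref{cor:grO-description}. It identifies this ring with the degree-$(0,0)$ part of $\k[\L]\otimes{}_\k\bfO_{G/U}\otimes{}_\k\bfO_{U^-\backslash G}$. Over geometric fibers this is again a ring of torus invariants of a normal domain, so the same fiberwise argument combined with Lemma \ref{lem:normal} applies. Flatness holds because the ring is free on the images of the dual canonical basis. Finally, when $\L$ is downward closed, the filtration by $h$ from Lemma \ref{lem:from Xplus to NN} and Lemma \ref{lem:grh} lets me apply Lemma \ref{lem:filtered-ring-normal}, which gives that ${}_\k\bfO(\L)$ is a normal domain. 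It is noetherian by Corollary \ref{cor:grO-finite}.

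The main obstacle is the field case of the first step: identifying ${}_k\bfO_{G/U}$ with $k[G]^U$, or at least showing it is a normal domain. Everything else is formal. I would first try to prove that ${}_k\bfO_{G/U}\to k[G]$ lands in the $U$-invariants and is saturated, using $\dot{\bfB}_{0,\la}$ and the Weyl module description in Theorem \ref{theo:Weyl module}. Failing that, I would use the classical fact that $\bigoplus_\la H^0(G/B,\mathcal L_\la)$ is a normal domain, which holds by Kempf vanishing together with the normality of $G/U$ as an open subset of its affine closure.
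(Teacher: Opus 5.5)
Your argument is correct. Its ingredients are the paper's: the identification ${}_\k\bfO_{G/U}=\k[G]^U$, the degree-$0$ descriptions in Proposition \ref{prop:basic-affine-space} and Corollary \ref{cor:grO-description}, the fact that the degree-$0$ part of a graded normal domain is normal, and Lemmas \ref{lem:grh} and \ref{lem:filtered-ring-normal} for ${}_\k\bfO(\L)$. The identification is not really an obstacle. Proposition \ref{prop:U-invariant} is proved independently of this theorem, and the paper's proof uses exactly that identification, so the Kempf-vanishing fallback is unnecessary.

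Where you differ is in passing from fields to a general noetherian normal $\k$.
\begin{itemize}
\item You prove everything over algebraically closed fields. You then apply Lemma \ref{lem:normal} directly to ${}_\k\bfO_{G/U}(\L)$ and $\mathrm{gr}({}_\k\bfO(\L))$. This uses that these rings are free over $\k$ on the images of the dual canonical basis, so base change to geometric fibers is harmless. It also uses that they are finitely presented by Corollary \ref{cor:grO-finite}.
\item The paper works over $\k$ throughout. It applies Lemma \ref{lem:normal} once, to the smooth $\k$-scheme $G/U$, to get $\k[G/U]$ normal. It uses that $\k[\L]$ is normal over the normal base $\k$ when $\L$ is saturated. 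It then notes that $\k[\L]\otimes_\k\k[G/U]$ and the triple tensor product are rings of global functions on schemes smooth over the normal integral scheme $\Spec\k[\L]$. Only after that does it take degree-$0$ parts.
\end{itemize}

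What each route buys:
\begin{itemize}
\item Your route needs only classical facts over algebraically closed fields: normality of products of normal varieties, of $k[\L]$, and of $k[G]^U$. The integral structure enters only through freeness. The cost is that you need the finite-generation results already for normality itself.
\item The paper's route gets normality of ${}_\k\bfO_{G/U}(\L)$ and $\mathrm{gr}({}_\k\bfO(\L))$ without finite presentation of these rings. Finite generation enters only for noetherianity, including in Lemma \ref{lem:filtered-ring-normal}. The cost is invoking normality of $\k[\L]$ and of smooth schemes over a general normal base.
\end{itemize}
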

\begin{proof}
    Since the schemes $G/U$ and $U^-\backslash G$ are both smooth over $\k$ and $\k$ is normal, the algebras $\k[G/U]$ and $\k[U^-\backslash G]$ are normal integral domains by Lemma \ref{lem:normal}.\par 
    By Lemma \cite[Proposition 3.4.1]{Ogus-Log}, the monoid algebra $\k[\L]$ is integrally closed. By Proposition \ref{prop:basic-affine-space}, the $\k$-algebra ${}_\k\bfO_{G/U}(\L)$ is the degree $0$ subalgebra of the $X$-graded integrally closed $\k$-algebra $\k[\L]\otimes_\k\k[G/U]$, hence is integrally closed. Similarly ${}_\k\bfO_{U^-\backslash G}(\L)$ is integrally closed. \par 
    By Corollary \ref{cor:grO-finite}, the $\k$-algebra $\mathrm{gr}({}_\k\bfO(\L))$ is finitely generated and by Corollary \ref{cor:grO-description} it is isomorphic to the degree $(0,0)$ subalgebra of the $X\oplus X$-graded $\k$-algebra $\k[\L]\otimes_\k\k[G/U]\otimes_\k\k[U^-\backslash G]$. The latter algebra is integrally closed since it is the ring of global functions on a scheme that admits a smooth morphism to the integral normal scheme $\Spec(\k[\L])$. Hence its degree $(0,0)$ subalgebra $\mathrm{gr}({}_\k\bfO(\L))$ is also integrally closed. Finally when $\L$ is also downward closed, by Lemma \ref{lem:grh} and Lemma \ref{lem:filtered-ring-normal} we deduce that ${}_\k\bfO(\L)$ is a normal integral domain. By Corollary \ref{cor:grO-finite} it is finitely generated over $\k$ and hence noetherian. 
\end{proof}

\section{Representations and invariants}\label{sec:reps-and-invariants}
In this section, we fix a commutative base ring $\k$ which is viewed as an $\A$-algebra through $v\mapsto1$.
\subsection{Review on algebraic representations}
Let $H$ be an affine algebraic group over $\k$ with coordinate ring $\k[H]$, which is a finitely generated Hopf $\k$-algebra. The \emph{distribution algebra}\footnote{For us, a distribution is an element of the \emph{full} linear dual of $\k[H]$. This differs from the convention in \cite{Jantzen2003}.} of $H$ is the $\k$-algebra ${}_\k\hat{\bfU}(H)\coloneqq\Hom_\k(\k[H],\k)$, in which the multiplication is induced by the coproduct on $\k[H]$, and equipped with a counit map $\varepsilon\colon{}_\k\hat{\bfU}(H)\to\k$ defined by evaluation at $1\in\k[H]$. Let ${}_\k\hat{\bfU}(H)_+\coloneqq\ker(\varepsilon)$ be the \emph{augmentation ideal}. When $H$ is a split reductive group, this recovers the classical limit ($v\mapsto 1$) of Definition \ref{defi:hatU}. As explained there, we equip ${}_\k\hat{\bfU}(H)$ with the weak* topology.\par
Let $\Rep_H$ be the category of algebraic representations of $H$ on $\k$-modules. By definition, this is the category of $\k[H]$-comodules. In other words, an object in $\Rep_H$ is a $\k$-module $M$ equipped with a $\k$-linear \emph{co-action} map 
\[\mu_M\colon M\to \k[H]\otimes_\k M\] 
satisfying the following two conditions:
\begin{itemize}
    \item $(\mathrm{Id}_{\k[H]}\otimes\mu_M)\circ\mu_M=(\Delta_{\k[H]}^{\mathrm{op}}\otimes\mathrm{Id}_M)\circ\mu_M$,
    \item $(\epsilon_{\k[H]}\otimes\mathrm{Id}_M)\circ\mu_M=\mathrm{Id}_M$.
\end{itemize}
Here $\Delta_{\k[H]}$ (resp. $\epsilon_{\k[H]}$) denotes the coproduct (resp. counit) of the Hopf algebra $\k[H]$.
These conditions ensure that $\mu_M$ induces a ring homomorphism ${}_\k\hat{\bfU}(H)\to\mathrm{End}_{\k}(M)$ sending an element $\xi\in{}_\k\hat{\bfU}(H)=\mathrm{Hom}_{\k}(\k[H],\k)$ to the $\k$-linear endomorphism $(\xi\otimes\mathrm{\mathrm{Id}_M})\circ\mu_M$ of $M$. For any $m\in M$, there exists a finitely generated $\k$-submodule $V_m\subset\k[H]$ such that $\mu_M(m)\in V_m\otimes_\k M$ (see for example \cite[VI$_\text{B}$, Lemme 11.8]{SGA3-1}). Thus the annihilator of $m$ in ${}_\k\hat{\bfU}(H)$ contains the open (under the weak* topology) subset $\{\xi\in{}_\k\hat{\bfU}(H),\xi(V_m)=0\}$. This shows that $M$ is a continuous ${}_\k\hat{\bfU}(H)$-module and so we obtain a natural functor
\[\Rep_H\to{}_\k\hat{\bfU}(H)\text{-}\mod.\]
Now suppose that $\k[H]$ is a free $\k$-module, and choose a basis $\Phi$. For any $\varphi\in\Phi$, let $\varphi^*\in{}_\k\hat{\bfU}(H)$ be the element whose value equals to $1$ at $\varphi$ and equals to $0$ at any other element in $\Phi$. Then for any continuous ${}_\k\hat{\bfU}$-module $M$, we can define a $\k$-linear map 
\[\mu_M\colon M\to \k[H]\otimes_\k M,\quad\mu_M(m)\coloneqq\sum_{\varphi\in\Phi}\varphi\otimes \varphi^*\cdot m\]
where the sum is finite by continuity. This gives $M$ the structure of a $\k[H]$-comodule. In this way we get a functor 
\[{}_\k\hat{\bfU}(H)\text{-}\mod\to\Rep_H.\]
The two functors defined above both commute with the forgetful functor to the category of $\k$-modules, and are easily seen to be inverse to each other. In summary we get:
\begin{prop}\label{prop:G-rep-equiv}
    Let $H$ be an affine algebraic group over $\k$ such that $\k[H]$ is a free $\k$-module. Then the functors defined above induce an equivalence of categories
    \[\Rep_H\simeq{}_\k\hat{\bfU}(H)\text{-}\mod.\]
\end{prop}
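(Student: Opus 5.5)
We need to prove the equivalence $\Rep_H\simeq{}_\k\hat{\bfU}(H)\text{-}\mod$ by checking that the two functors constructed above are well defined and mutually inverse. Both functors are the identity on underlying $\k$-modules and on morphisms, so it suffices to check three things. First, the comodule-to-module functor produces continuous module structures and sends comodule maps to module maps. Second, the module-to-comodule functor produces genuine comodules and sends module maps to comodule maps. Third, the two constructions are inverse on structures.

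For the first functor, associativity of the action $\xi\mapsto(\xi\otimes\mathrm{Id}_M)\circ\mu_M$ follows from the coassociativity axiom. The point is that the product in ${}_\k\hat{\bfU}(H)$ is dual to $\Delta_{\k[H]}$, and the $\mathrm{op}$ in the axiom matches the chosen order of multiplication. The unit $\varepsilon$ acts as the identity by the counit axiom. Continuity was already shown above via the finitely generated $V_m$. Any comodule map $f$ satisfies $\mu_N f=(\mathrm{Id}\otimes f)\mu_M$, and therefore commutes with every $\xi$.

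For the second functor, the sum $\mu_M(m)=\sum_\varphi\varphi\otimes\varphi^*m$ is finite. Indeed, the annihilator of $m$ is open, so it contains $\prod_{\varphi\notin F}\k\varphi^*$ for a finite set $F$. The key observation is that, because $\k[H]$ is free with basis $\Phi$, an element $t\in\k[H]\otimes M$ is determined by the values $(\xi\otimes\mathrm{Id})(t)$ for $\xi$ ranging over $\{\varphi^*\}$. Hence to verify the comodule axioms it suffices to pair both sides with $\varphi_1^*\otimes\varphi_2^*$ (resp. with $\varepsilon$) on the $\k[H]$-factors.

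Doing this pairing, the left side of the coassociativity axiom gives $\varphi_1^*(\varphi_2^*m)$. The right side gives $\sum_\varphi\langle\varphi_1^*\otimes\varphi_2^*,\Delta^{\mathrm{op}}\varphi\rangle\varphi^*m$. This sum equals $(\varphi_1^*\varphi_2^*)m$ provided the action of the infinite formal sum $\sum_\varphi c_\varphi\varphi^*$ equals $\sum_\varphi c_\varphi(\varphi^*m)$. Here $c_\varphi=(\varphi_1^*\varphi_2^*)(\varphi)$. This is exactly where continuity is used, and it is the main subtle step.

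To justify that step, write $\xi=\sum_{\varphi\in F}c_\varphi\varphi^*+\xi'$ with $\xi'$ in the open annihilator of $m$. Then $\xi m=\sum_{F}c_\varphi\varphi^*m$ by linearity, since every $\varphi^*$ with $\varphi\notin F$ kills $m$. The counit axiom is handled in the same way: writing $\varepsilon=\sum_\varphi\varphi(1)\varphi^*$ gives $\sum\varphi(1)\varphi^*m=\varepsilon m=m$. Module maps commute with each $\varphi^*$, hence with $\mu$.

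Finally, we check that the two constructions are inverse. Starting from a continuous module, the recovered action of $\xi$ is $\sum_\varphi\xi(\varphi)\varphi^*m=\xi m$, by the same continuity argument. Starting from a comodule, $\sum_\varphi\varphi\otimes(\varphi^*\otimes\mathrm{Id})\mu_M(m)=\mu_M(m)$, by expanding $\mu_M(m)$ in the basis $\Phi$.
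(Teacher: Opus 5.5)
Your proposal is correct and follows the paper's route: the paper constructs exactly these two functors ($\xi\mapsto(\xi\otimes\mathrm{Id}_M)\circ\mu_M$ in one direction, $m\mapsto\sum_{\varphi\in\Phi}\varphi\otimes\varphi^*m$ in the other) and simply asserts that they are easily seen to be mutually inverse, so you have written out that verification, whose only real content is the continuity argument letting $\sum_\varphi c_\varphi\varphi^*$ act termwise on $m$. A cosmetic nit: with the pairing $\langle\varphi_1^*\otimes\varphi_2^*,f\otimes g\rangle=\varphi_1^*(f)\varphi_2^*(g)$ and the product $(\xi\xi')(f)=(\xi\otimes\xi')(\Delta_{\k[H]}f)$, both sides of your coassociativity check come out as $(\varphi_2^*\varphi_1^*)m$ rather than $(\varphi_1^*\varphi_2^*)m$; this is a consistent relabeling and does not affect the argument.
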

Now we fix a based root datum $\Psi$ and the associated split reductive group scheme $(G,B,T)$ over $\k$ as in \S\ref{sec:notation-reductive-group}. Combining Proposition \ref{prop:G-rep-equiv} and Proposition \ref{prop:uni-mod-equiv} we obtain equivalences of categories
\[\Rep_G^{ft}\simeq{}_\k\hat{\bfU}\text{-}\mod^{ft}\simeq{}_\k\dot{\bfU}\text{-}\mod^{ft}\]
in which all functors commute with the natural forgetful functor to $\k\text{-}\mod^X$, the category of $X$-graded $\k$-modules (which is nothing but $\Rep_T$). Here $\Rep_G^{ft}\subset\Rep_G$ is the full subcategory of $G$-modules that are finitely generated as $\k$-modules. This provides the link between the representation theory of quantum enveloping algebras and reductive algebraic groups. Let us recall some basic facts from the latter theory. \par 
We have the induction functor $\Ind_B^G\colon \Rep_B\to\Rep_G$ defined by 
\[\Ind_B^GM=(\k[G]\otimes_\k M)^B,\quad\forall M\in\Rep_B\]
where $B$ (resp. $G$) acts on $\k[G]$ by right (resp. left) regular representation.
\begin{lem}\label{lem:G-invariant}
    For each $G$-module $N$ that is flat over $\k$, the natural map $\k[G]\otimes_\k N\to N$ defined by $f\otimes n\mapsto f(1)n$ induces an isomorphism of $G$-modules $\Ind_B^GN\cong N$, where on the left hand side $N$ is viewed as a $B$-module (via the natural forgetful functor $\mathrm{Res}^G_B\colon\mathrm{Rep}_G\to\Rep_B$). Moreover, we have $N^B=N^G$. 
\end{lem}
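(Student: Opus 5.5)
We prove this by combining the frobenius-reciprocity style properties of $\Ind_B^G$ with the fact that $\k[G]$ is free over $\k$. The key input is the classical identification $\Ind_B^G\k=\k$ over an arbitrary base: $\k[G]^B=\k$ for the right regular action of $B$. Here $G/B$ is proper, smooth and geometrically connected over $\Spec\k$, so the pushforward of its structure sheaf is $\O_{\Spec\k}$. Taking $B$-invariants of functions on $G$ computes the global sections of $\O_{G/B}$, and hence $\k[G]^B=\k$, compatibly with base change; this is Kempf's vanishing/Jantzen II.4.

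Given this, consider $\k[G]\otimes_\k N$ with $B$ acting by right translation on the first factor and via $\mathrm{Res}^G_B$ on $N$. We use the standard isomorphism of $G\times B$-modules
\[\k[G]\otimes_\k N\xrightarrow{\sim}\k[G]\otimes_\k N_{\mathrm{triv}}.\]
In functorial-point language it is $f\mapsto (g\mapsto g^{-1}f(g))$. In comodule language it is the composite of $\mathrm{Id}\otimes\mu_N$ with multiplication and the antipode on $\k[G]$, and its inverse is given by the analogous formula without the antipode inverse. After this untwisting, $B$ acts only on the $\k[G]$ factor and $G$ acts diagonally, via left translation together with the given action on $N$.

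Taking $B$-invariants, we then need $(\k[G]\otimes_\k N_{\mathrm{triv}})^B=\k[G]^B\otimes_\k N$. This is where flatness of $N$ enters. Invariants are the kernel of $x\mapsto \mu(x)-1\otimes x$, a map of $\k$-modules, and tensoring this exact sequence with the flat module $N$ preserves the kernel. Hence we obtain $\k\otimes N=N$. Tracing through the isomorphism, an element of $N$ corresponds to the invariant $g\mapsto g\cdot n$, whose value at $1$ is $n$. So the inverse of the resulting isomorphism is exactly the evaluation map $f\otimes n\mapsto f(1)n$, and $G$-equivariance follows from the construction.

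For $N^B=N^G$, the inclusion $N^G\subset N^B$ is clear. Conversely, if $n\in N^B$, then $1\otimes n\in(\k[G]\otimes N)^B$ for the diagonal $B$-action, because $1$ is $B$-invariant. Under the isomorphism above its image is $n$, and $1\otimes n$ corresponds to the function $g\mapsto g^{-1}n$. That function must be of the form $g\mapsto g^{-1}g\,n'=n'$ for a constant $n'$, so it is constant, meaning $g^{-1}n=n$ for all $g$ and $n\in N^G$. More cleanly: the coaction $\mu_N(n)$ lies in $(\k[G]\otimes N)^B$ and equals the element corresponding to $n$; comparing with $1\otimes n$, which is also $B$-invariant and has the same value at $1$, injectivity of evaluation on invariants gives $\mu_N(n)=1\otimes n$.

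The main obstacle is the input $\k[G]^B=\k$ over an arbitrary ring together with the flat base-change of invariants. I would justify the former by reducing to $\ZZ$ and using $H^0(G/B,\O)=\ZZ$ with cohomology and base change, citing Jantzen II.4.6 and I.4.18.
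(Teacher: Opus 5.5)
Your proposal is correct and is essentially the paper's proof. It uses the same untwisting isomorphism of $B\times G$-modules $\k[G]\otimes_\k N\cong\k[G]\otimes_\k N_{\mathrm{triv}}$, flatness of $N$ to get $(\k[G]\otimes_\k N_{\mathrm{triv}})^B=\k[G]^B\otimes_\k N$, and $\k[G]^B=\k$. The paper just calls this last fact well known; your justification via $\Gamma(G/B,\O_{G/B})=\k$ is fine, and Kempf vanishing is not needed.

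The only deviation is in the second assertion. You obtain $N^B=N^G$ by comparing $1\otimes n$ with the invariant attached to $n$, using injectivity of evaluation at $1$. The paper instead takes $G$-invariants of $\Ind_B^GN\cong N$.

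There is one convention slip. With the paper's conventions ($B$ acting by the right regular representation, and $\mu_N(n)$ being the function $g\mapsto gn$), the untwisting is $F\mapsto(g\mapsto gF(g))$ with no antipode. The $B$-invariant attached to $n$ is then $g\mapsto g^{-1}n$, i.e.\ $(S_{\k[G]}\otimes\mathrm{Id})\mu_N(n)$ rather than $\mu_N(n)$. Your formulas are inverted relative to this, but the argument goes through unchanged.
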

\begin{proof}
    Let $N_1=N$ (resp. $N_2=N$) be the $B\times G$-module on which $G$ (resp. $B$) acts trivially and $B$ (resp. $G$) acts as usual. Consider the composition
    \[\varphi_N\colon\k[G]\otimes_\k N_1\xrightarrow{\mathrm{Id}_{\k[G]}\otimes\mu_N}\k[G]\otimes_\k\k[G]\otimes_\k N_1\xrightarrow{m\otimes\mathrm{Id}_N}\k[G]\otimes_\k N_2\]
    where $\mu_N$ is the co-action map and $m$ is the multiplication map. Informally, if we view elements in $\k[G]\otimes_\k N$ as maps $f\colon G(\k)\to N$, then $\varphi_N(f)(x)=x\cdot f(x)$ for all $x\in G(\k)$ and the formula is functorial on points over every $\k$-algebra. If we view $\k[G]$ as a $B\times G$-module where $B$ (resp. $G$) acts by right (resp. left) regular representation, then $\varphi_N$ is an isomorphism of $B\times G$-modules. Taking $B$-invariants, we get an isomorphism of $G$-modules
    \[\Ind_B^GN=(\k[G]\otimes_\k N_1)^B\cong(\k[G]\otimes_\k N_2)^B=\k[G]^B\otimes_k N=N\]
    where the middle equality follows from the $\k$-flatness of $N$ and the last equality follows from the well-known fact that $\k[G]^B=\k$. Moreover, taking $G$-invariants we get
    \[N^B=(\Ind_B^GN)^G=N^G.\]
\end{proof}
For any $\lambda \in X^+$, we define 
$$S_{\lambda}\coloneqq\mathrm{Ind_{B^-}^G(\k_{\lambda}})\simeq \mathrm{Ind_{B}^G(\k_{w_0(\lambda)}})\simeq (\k_{w_0(\lambda)}\otimes  \k[G])^B,$$
$$W_{\lambda}\coloneqq(S_{-w_0(\lambda)})^{*}\simeq\mathrm{Ind_{B}^G(\k_{-\lambda}})^{*}$$ 
to be respectively the \emph{Schur module} with highest weight $\lambda$ and the \emph{Weyl module} with highest weight $\lambda$.\par
For any $G$-module $M$, we define 
\[M^{(\lambda)}\coloneqq(M^U)_{\lambda}\simeq \Hom_{B}(\k_{\lambda},M)\simeq (\k_{-\lambda}\otimes  M)^B.\]

Recall the universal property of Weyl modules (see \cite[Proposition 21]{Kallen2010} which generalizes \cite[II 2.13]{Jantzen2003} stated over fields): if $\lambda$ is dominant, we have natural isomorphisms of $\k$-modules 
\[M^{(\lambda)}\simeq \Hom_G(\k,M\otimes S_{-w_0(\lambda)})\simeq  \Hom_G(W_{\lambda},M).\]
\begin{prop}\label{prop:Lambda is Weyl}
    For any $\lambda \in X^+$, we have a natural isomorphism of $G$-modules
    $$W_{\lambda}\xrightarrow{\sim} {}_{\k}\Lambda_{\lambda}.$$ As a consequence, the Weyl modules are endowed with a continuous ${}_{\k}\hat{\bfU}$-module structure and a unital ${}_{\k}\dot{\bfU}$-module structure. 
\end{prop}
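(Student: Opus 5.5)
We would construct the map $W_\la\to{}_\k\La_\la$ from the universal property of Weyl modules recalled just above, and then check that it is an isomorphism via a base-change argument. First, ${}_\k\La_\la$ lies in ${}_\k\dot{\bfU}\text{-}\mod^{ft}$ and is free over $\k$ (Theorem \ref{theo:Weyl module}). Through the equivalences $\Rep_G^{ft}\simeq{}_\k\hat{\bfU}\text{-}\mod^{ft}\simeq{}_\k\dot{\bfU}\text{-}\mod^{ft}$ (Propositions \ref{prop:G-rep-equiv} and \ref{prop:uni-mod-equiv}) we therefore regard it as a $G$-module $M$. The highest weight vector $\eta_\la$ has weight $\la$. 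It is killed by all $\theta_i^{(n)+}1_\zeta$ with $n>0$, because $\la+n\alpha_i$ is not a weight (Theorem \ref{theo:Weyl module}(3)). Hence $\eta_\la$ is fixed by $U$: elements of $U(\k')$ are sums $\sum\xi_b b^+1_\zeta$ with $\xi_1=1$, and this holds after any base change. So $\eta_\la\in M^{(\la)}$, and the universal property $M^{(\la)}\simeq\Hom_G(W_\la,M)$ produces a $G$-map $\phi\colon W_\la\to{}_\k\La_\la$ whose associated vector is $\eta_\la$. This construction is functorial in $\k$.

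Next we reduce to the case $\k=\ZZ$. Both $W_\la$ and ${}_\k\La_\la$ are free of finite rank over $\k$ and compatible with base change. For ${}_\k\La_\la$ this holds by definition. For $W_\la$ it follows from the base change properties of $S_\la=\Ind_{B^-}^G\k_\la$ (Kempf vanishing over $\ZZ$, or the results in \cite{Kallen2010}), combined with the fact that $\phi$ is compatible with base change. Since $\phi$ is a map of free modules, it suffices to prove that $\phi_\ZZ$ is an isomorphism. For this in turn it suffices that $\phi_{\mathbb F_p}$ is an isomorphism for every $p$ and that $\phi_\QQ$ is injective: a $\ZZ$-map between free modules of equal rank that is an isomorphism modulo every prime has unit determinant.

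Over a field $k$ we argue as follows. The Weyl module $W_\la$ is generated by its highest weight vector, which is the dual statement to the simplicity of the socle of $S_{-w_0\la}$. Also ${}_k\La_\la$ is generated by $\eta_\la$, since ${}_\A\La_\la={}_\A\dot{\bfU}\eta_\la$. Hence $\phi_k$ is surjective. Finally, $\dim W_\la=\dim S_{-w_0\la}$ is given by Weyl's character formula, and this equals $|\bfB(\la)|=\mathrm{rk}\,\La_\la$, because $\La_\la$ specializes to the characteristic zero simple module of highest weight $\la$. So $\phi_k$ is an isomorphism.

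We expect the main obstacle to be this comparison of ranks and the base change compatibility of $W_\la$. That is the one point where genuinely external input is needed: Kempf vanishing, or the characteristic-free results on Schur modules in \cite{Jantzen2003} and \cite{Kallen2010}. The statement that the Weyl modules inherit ${}_\k\hat{\bfU}$- and ${}_\k\dot{\bfU}$-module structures is then immediate by transport of structure along $\phi$.
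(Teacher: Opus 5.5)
Your proposal is correct, but it closes the argument differently from the paper. Both proofs reduce to $\k=\ZZ$ by the same base-change results. From there the paper works directly over $\ZZ$ and never compares ranks:
\begin{itemize}
\item $\phi$ is injective because $W_\la$ is free over $\ZZ$ and $\phi\otimes\QQ$ is a nonzero map out of the simple module $W_\la\otimes_\ZZ\QQ$;
\item $\phi$ is surjective because it is $\hat{\bfU}$-equivariant and $\eta_\la$ generates ${}_\ZZ\La_\la$.
\end{itemize}

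You get surjectivity fieldwise, although the same generation argument already works over $\ZZ$, so passing to residue fields is unnecessary. You then supply the other half by a rank equality, $\mathrm{rk}\,W_\la=|\bfB(\la)|$, obtained from Weyl's character formula for $S_{-w_0\la}$ over every field.

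This costs more. It needs Kempf vanishing in positive characteristic for the dimension count. It also needs the identification of $\mathrm{rk}\,\La_\la$ with the Weyl dimension. The cleanest proof of that identification is exactly the paper's ingredient: ${}_\QQ\La_\la$ is a nonzero quotient of the simple module $W_\la\otimes\QQ$. So the paper's argument is the economical core of yours.

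A few of your steps are redundant:
\begin{itemize}
\item The fact that $W_\la$ is generated by its highest weight vector is never used.
\item Once $\phi_{\mathbb F_p}$ is an isomorphism for a single $p$, the ranks agree and $\det\phi\neq0$, so injectivity of $\phi_\QQ$ is automatic.
\end{itemize}

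On the other hand, you check explicitly that $\eta_\la$ is $U$-invariant of weight $\la$, using the description of $U(\k')$ inside ${}_{\k'}\hat{\bfU}^+$. That check is what lets the universal property $M^{(\la)}\simeq\Hom_G(W_\la,M)$ produce $\phi$, and the paper leaves it implicit.
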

\begin{proof}
     Note that for any $\k$-algebra $A$ we have isomorphisms of $G_A$-modules (see \cite[Proposition 16]{Kallen2010}):
    \[S_{\lambda}\otimes_\k A\cong\mathrm{Ind_{B_A^-}^{G_A}(A_{\lambda}}),\quad W_{\lambda}\otimes_\k A\cong \Hom_A(S_{-w_0(\lambda)}\otimes_\k A,A).\]
    Thus we may assume that $\k=\ZZ$. According to the universal property of Weyl modules, we have a unique $G$-module homomorphism $\varphi\colon  W_{\lambda}\to {}_{\ZZ}\Lambda_{\lambda}$ sending a highest weight vector $v_{\lambda}$ of $W_{\lambda}$ to $\eta_{\lambda}$. It is injective since $W_{\lambda}\otimes_\ZZ\QQ$ is simple and $W_{\lambda}$ is free of finite type. By Proposition \ref{prop:G-rep-equiv}, $W_\la$ has the structure of ${}_\ZZ\hat{\bfU}$-module such that the map $\varphi$ is ${}_\ZZ\hat{\bfU}$-equivariant. Since $\eta_{\lambda}$ generates ${}_{\ZZ}\Lambda_{\lambda}$ as a ${}_\ZZ\hat{\bfU}$-module, we see that $\varphi$ is surjective and hence an isomorphism.
\end{proof}
Because of this result, we will also refer to ${}_\k\La_\la$ as the Weyl module of highest weight $\la$.  
\subsection{Invariants}
\begin{defi}
    Let $A$ be an $R$-algebra equipped with an $R$-algebra homomorphism $\varepsilon\colon A\to R$ (for instance $A$ could be a Hopf algebra, or more generally an augmented $R$-algebra). Let $M$ be an $A$-module. The submodule of $A$-invariants in $M$ is defined by
    \[H^0(A,M)\coloneqq\mathrm{Hom}_{A\text{-}\mathrm{mod}}(R,M)=\{m\in M\mid am=0,\;\forall a\in\ker(\varepsilon)\}\]
    where $R$ is viewed as an $A$-module via $\varepsilon$. 
\end{defi}
Let $H$ be an affine algebraic group over $\k$ with coordinate ring $\k[H]$ and distribution algebra ${}_\k\hat{\bfU}(H)=\mathrm{Hom}_{\k}(\k[H],\k)$. Let ${}_\k\hat{\bfU}(H)_+\coloneqq\{\xi\in{}_\k\hat{\bfU}(H),\xi(1)=0\}$ be the augmentation ideal. Let $M\in\Rep_H$ be a $\k[H]$-comodule with co-action map $\mu_M\colon M\to\k[H]\otimes_\k M$. The submodule of $H$-invariants in $M$ is defined by
\[M^H\coloneqq\ker(\mu_M-1\otimes\mathrm{Id}_M)=\{m\in M\mid\mu_M(m)=1\otimes m\}.\]
Recall that the co-action map $\mu_M$ induces a ${}_\k\hat{\bfU}(H)$-module structure on $M$.
\begin{lem}\label{lem:invariant-description}
    Suppose that $\k[H]$ is a free $\k$-module. Then for any $\k[H]$-comodule $M$ we have
    \[M^H=H^0({}_\k\hat{\bfU}(H),M)\coloneqq\Set{m\in M\mid \xi\cdot m=0,\forall\xi\in{}_\k\hat{\bfU}(H)_+}.\]
\end{lem}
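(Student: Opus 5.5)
We will compare the two conditions using the basis $\Phi$ of $\k[H]$ and the dual elements $\varphi^*\in{}_\k\hat{\bfU}(H)$ introduced before Proposition \ref{prop:G-rep-equiv}. Fix $m\in M$ and write $\mu_M(m)=\sum_{\varphi\in\Phi}\varphi\otimes m_\varphi$ with $m_\varphi\in M$, only finitely many of them nonzero. By definition of the ${}_\k\hat{\bfU}(H)$-action, an element $\xi$ acts by $\xi\cdot m=\sum_\varphi \xi(\varphi)m_\varphi$. In particular $\varphi^*\cdot m=m_\varphi$. Hence $\mu_M(m)=\sum_\varphi\varphi\otimes\varphi^*\cdot m$, which is the formula used earlier to reconstruct the comodule from the module.

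The inclusion $M^H\subset H^0$ is straightforward. If $\mu_M(m)=1\otimes m$, then $\xi\cdot m=(\xi\otimes\mathrm{Id})(1\otimes m)=\xi(1)m$ for every $\xi$. Since $\xi(1)=0$ for $\xi\in{}_\k\hat{\bfU}(H)_+$, we get $\xi\cdot m=0$. Note that $\xi(1)$, the evaluation of $\xi$ at the unit $1\in\k[H]$, is exactly the counit of ${}_\k\hat{\bfU}(H)$.

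For the converse, suppose $\xi\cdot m=0$ for all $\xi$ in the augmentation ideal. For an arbitrary $\xi$, the difference $\xi-\xi(1)\varepsilon'$ lies in the augmentation ideal, where $\varepsilon'$ denotes the counit of $\k[H]$. Here $\varepsilon'$ is the unit of ${}_\k\hat{\bfU}(H)$, since multiplication is dual to the coproduct. It acts as the identity on $M$ by the counit axiom $(\epsilon\otimes\mathrm{Id})\circ\mu_M=\mathrm{Id}$, and $\varepsilon'(1)=1$. Therefore $\xi\cdot m=\xi(1)m$ for all $\xi$, and in particular $m_\varphi=\varphi^*(1)\,m$. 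Substituting,
\[\mu_M(m)=\sum_\varphi\varphi\otimes\varphi^*(1)m=\Big(\sum_\varphi\varphi^*(1)\varphi\Big)\otimes m=1\otimes m.\]
The middle sum is the expansion of $1\in\k[H]$ in the basis $\Phi$, which is a finite sum.

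The only point requiring care is this last step. Freeness of $\k[H]$ guarantees two things: the coefficient functionals $\varphi^*$ exist in the full dual, and an element of $\k[H]\otimes_\k M$ is determined by the values of $\xi\otimes\mathrm{Id}$ on it. The finiteness of the sums is clear because $\mu_M(m)$ and $1$ each involve only finitely many basis elements. Beyond this, the argument is routine bookkeeping with the comodule axioms.
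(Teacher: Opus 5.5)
Your proof is correct and takes essentially the same route as the paper's. Both arguments use that $\xi-\xi(1)\varepsilon_{\k[H]}$ lies in the augmentation ideal and that $\varepsilon_{\k[H]}$ acts as the identity on $M$, which gives $\xi\cdot m=\xi(1)m$ for every $\xi$; both then use freeness of $\k[H]$ to conclude $\mu_M(m)=1\otimes m$. The only difference is presentation: the paper says the basis coordinate functionals separate $\mu_M(m)-1\otimes m$ from $0$, whereas you expand explicitly in the basis $\Phi$.
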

\begin{proof}
    If an element $m\in M$ satisfies $\mu_M(m)=1\otimes m$, then for any element $\xi\in{}_\k\hat{\bfU}(H)_+$ we have $\xi\cdot m=\xi(1)m=0$. This shows that $M^H\subset H^0({}_\k\hat{\bfU}(H),M)$.\par 
    Conversely suppose $m\in H^0({}_\k\hat{\bfU}(H),M)$. Put $d=\mu_M(m)-1\otimes m$. For every $\k$-linear functional $\xi\colon \k[H]\to\k$, the functional $\xi-\xi(1)\varepsilon_{\k[H]}$ belongs to the augmentation ideal ${}_\k\hat{\bfU}(H)_+$. So we have $(\xi\otimes\mathrm{Id}_M)(d)=0$. Since $\k[H]$ is free, its basis coordinate functionals separate $\k[H]\otimes_\k M$ and therefore $d=0$. 
\end{proof}

\begin{ex}
Let $H=T$ be a split torus over $\k$ with character lattice $X=X^*(T)$. Then $\k[T]=\k[X]$ is the group algebra of $X$ and ${}_\k\hat{\bfU}(T)=\prod_{\la\in X}\k 1_\la$ where $\{1_\la,\la\in X\}$ are mutually orthogonal idempotents. The augmentation ideal is ${}_\k\hat{\bfU}(T)_+=\prod_{\la\in X\setminus0}\k 1_\la$.\par 
The category $\Rep_T$ is canonically equivalent to the category $\k\text{-}\mod^X$ of $X$-graded $\k$-modules. Indeed, for any $M\in\Rep_T$, we have a decomposition $M=\bigoplus\limits_{\la\in X}M_\la$ where $M_\la\coloneqq1_\la M$ and conversely a $\k$-module with such a decomposition labeled by $X$ is naturally a ${}_\k\hat{\bfU}(T)$-module on which $1_\la$ acts as the projector to the $\la$ component.\par 
For any $M\in\Rep_T$ we have
\[M^T=\{m\in M\mid 1_\la m=0,\;\forall\la\in X\setminus0\}=M_0.\]
More generally, for any $\zeta\in X$ we let $\k_\zeta=\k$ with the ${}_\k\hat{\bfU}(T)$-module structure in which $1_\la$ acts by $\delta_{\la\zeta}$, then we have
$(M\otimes_\k \k_{-\la})^T=M_\la$.
\end{ex}
Now we consider the left (resp. right) regular representations on $M=\k[H]$: 
\[\rho_l,\rho_r\colon{}_\k\hat{\bfU}(H)\to\mathrm{End}_{\k\text{-}\mod}(\k[H]).\]
The co-action map for the left regular representation is 
\[\mu_l\coloneqq(S_{\k[H]}\otimes\mathrm{Id})\circ\Delta_{\k[H]}\colon\k[H]\to\k[H]\otimes_\k\k[H]\]
where $S_{\k[H]}$ is the antipode of $\k[H]$. On the other hand, the co-action map for the right regular representation is 
\[\mu_r={}^t\Delta_{\k[H]}\coloneqq\mathrm{sw}\circ\Delta_{\k[H]}\colon\k[H]\to\k[H]\otimes_\k\k[H]\]
where $\mathrm{sw}$ is the $\k$-linear automorphism of $\k[H]\otimes_\k\k[H]$ switching the two factors.\par 
Then for any $\xi,\xi'\in{}_\k\hat{\bfU}(H)$ and $\varphi\in\k[H]$, we have
\[\langle\rho_l(\xi)\cdot\varphi,\xi'\rangle=\langle\varphi,S(\xi)\xi'\rangle,\quad\langle\rho_r(\xi)\cdot\varphi,\xi'\rangle=\langle\varphi,\xi'\xi\rangle.\]
Now let us review some classical results from invariant theory. First we have the following finiteness theorem:
\begin{theo}\label{theo:Kallen-finiteness}
    Let $k$ be an algebraically closed field, $G$ be a reductive group over $k$ and $U\subset G$ be the unipotent radical of a Borel subgroup of $G$. Let $A$ be a finitely generated commutative $k$-algebra on which $G$ acts rationally by $k$-algebra automorphisms. Then the invariant $k$-algebras $A^G$ and $A^U$ are finitely generated. 
\end{theo}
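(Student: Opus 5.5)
The plan is to reduce everything to the finite generation of $G$-invariants, and to obtain the $U$-invariants from it by the classical transfer principle, using the basic affine space $G/U$ whose coordinate ring we already control.

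\textbf{Step 1 ($A^G$).} Finite generation of $A^G$ is the Hilbert--Nagata finiteness theorem for geometrically reductive groups. By Haboush's theorem, a reductive group over an algebraically closed field is geometrically reductive. Nagata then shows that for a geometrically reductive $G$ acting rationally on a finitely generated commutative $k$-algebra $A$, the algebra $A^G$ is finitely generated and $A$ is integral over $A^G$. I would quote these two results rather than reprove them. Reproving them is the genuinely deep input, and in positive characteristic it is the main obstacle, since there is no Reynolds operator. If a more self-contained route were desired, I would try to deduce geometric reductivity from the good filtration of $k[G]$ given by the canonical filtration ${}_k\bfO_{\le\la}/{}_k\bfO_{<\la}$ together with Lemma \ref{lem:ideal-in-grO}. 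I expect that to be hard, however, so I would simply cite it.

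\textbf{Step 2 (transfer principle).} Let $G$ act on $k[G/U]=k[G]^U$ by left translation, where $U$ acts on $k[G]$ on the right. I claim there is a natural isomorphism of $k$-algebras
\[A^U\;\cong\;(A\otimes_k k[G]^U)^G,\]
with $G$ acting diagonally. The argument is the one in the proof of Lemma \ref{lem:G-invariant}. The map $\varphi_A\colon k[G]\otimes A\to k[G]\otimes A$, informally $f\mapsto(x\mapsto x\cdot f(x))$, is an isomorphism of $U\times G$-modules. It intertwines the action in which $G$ acts only on $k[G]$ with the diagonal one. Taking $U$-invariants (on the right) and then $G$-invariants gives
\[(A\otimes k[G]^U)^G\cong (k[G]^U\otimes A)^{G\text{ on }k[G]} .\]
Here we use $k$-flatness, which is automatic over a field. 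Applying Lemma \ref{lem:G-invariant} with $B$ replaced by $U$, so that one uses $k[G]^G=k$, the right-hand side becomes $(A)^U$.

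\textbf{Step 3 (finite generation of $k[G/U]$).} By Proposition \ref{prop:U-invariant}, referred to in the introduction, $k[G]^U$ is identified with ${}_k\bfO_{G/U}={}_k\bfO_{G/U}(X^+)$. Since $X^+$ is a finitely generated submonoid of $X$ (Gordan's lemma), Corollary \ref{cor:grO-finite} with $R=k$ and $P=X^+$ shows that this algebra is finitely generated; note that this part of the corollary does not use downward closedness. If one prefers to avoid the forward reference, the standard argument works too: $k[G/U]=\bigoplus_{\la\in X^+}S_\la$ with $S_\la\cdot S_\mu\to S_{\la+\mu}$ surjective, which is exactly what the proof of Corollary \ref{cor:grO-finite} establishes via Lemma \ref{lem:h_c-identity}.

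\textbf{Step 4 (conclusion).} The algebra $A\otimes_k k[G/U]$ is then a finitely generated commutative $k$-algebra with a rational $G$-action by algebra automorphisms. By Step 1 its $G$-invariants are finitely generated, and by Step 2 these invariants are $A^U$.
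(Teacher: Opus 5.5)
Your proposal is correct, but for $A^U$ it takes a different route from the paper. The paper's proof is pure citation: Haboush--Nagata (via \cite[Theorem A.1.0]{GIT}) for $A^G$, and Grosshans \cite[Theorem 9]{Grosshans1992} for $A^U$. You cite only the first and derive the second from it. You use the transfer principle $A^U\cong(A\otimes_k k[G/U])^G$, which is Lemma \ref{lem:transfer-principle}, stated in the paper right after the theorem. You combine it with finite generation of $k[G]^U$, which you get from Proposition \ref{prop:U-invariant} and Corollary \ref{cor:grO-finite} with $P=X^+$; here $X^+$ is finitely generated by Gordan's lemma and trivially downward closed in itself.

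This is the standard transfer-principle argument for such results. What it buys here is that the $U$-side input, finite generation of the coordinate ring of the basic affine space, comes from the paper's own canonical-basis argument (Lemma \ref{lem:h_c-identity}). Geometric reductivity is then the only black box left. There is no circularity, since neither forward reference depends on the theorem. The paper's version is shorter.

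To make your version airtight, add two identifications that the paper leaves implicit. First, since $k$ is algebraically closed, $G$ is split and all Borel subgroups are conjugate, so you may take $G=\Spec({}_k\bfO)$ with its standard $U$. Second, the comodule invariants $k[G]^U$ coincide with $H^0(\rho_r({}_k\hat{\bfU}^+),{}_k\bfO)$ by Lemma \ref{lem:invariant-description}, because ${}_k\hat{\bfU}^+$ is the image of $\Hom_k(k[U],k)$ under the transpose of $\pi^{>0}$.

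There is a slip in Step 2. On the source of $\varphi_A$, it is $U$ that acts diagonally and $G$ that acts only on $k[G]$. Using that the two actions commute, the chain should read
\[
(k[G]^U\otimes A)^{G,\ \mathrm{diag}}\cong\bigl((k[G]\otimes A)^{U,\ \mathrm{diag}}\bigr)^{G\text{ on }k[G]}=\bigl((k[G]\otimes A)^{G\text{ on }k[G]}\bigr)^{U,\ \mathrm{diag}}=(k[G]^G\otimes A)^U=A^U.
\]
The expression $(k[G]^U\otimes A)^{G\text{ on }k[G]}$ that you wrote equals $(k[G]^U)^G\otimes A=A$, not $A^U$.

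Also, Lemma \ref{lem:G-invariant} cannot literally be applied with $B$ replaced by $U$, since its conclusion rests on $k[G]^B=k$. What you need is only the map $\varphi_N$ from its proof together with $k[G]^G=k$. Alternatively, simply invoke Lemma \ref{lem:transfer-principle}. With this fixed, Steps 3 and 4 go through as written.
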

\begin{proof}
    See \cite[Theorem A.1.0]{GIT} for the finite generation of $A^G$ and \cite[Theorem 9]{Grosshans1992} for the finite generation of $A^U$. 
\end{proof}
It is worth mentioning that this result has been generalized to any noetherian base ring by Franjou and van der Kallen \cite[Theorem 3, Lemma 25]{Kallen2010}, although we will not use the more general version. Let us review some other results of van der Kallen that we will use.
\begin{lem}\label{lem:transfer-principle}
    Let $\k$ be a commutative ring, $G$ be a reductive group over $\k$ and $U\subset G$ the unipotent radical of a Borel subgroup of $G$. Let $M$ be a $G$-module. Then we have a natural isomorphism $M^U\simeq(M\otimes_\k \k[G/U])^G$ where $G$ acts diagonally on the right hand side.
\end{lem}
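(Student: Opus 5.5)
The plan is to deduce the statement from two standard facts: the isomorphism of $G$-modules $\Ind_B^G\k_\la\cong\k[G/U]_{\la}$, and Lemma \ref{lem:G-invariant}. The isomorphism should come from the $\la$-weight space for the right $T$-action on $\k[G/U]=\k[G]^U$. It would be enough to exhibit an isomorphism natural in $M$ and then pass to weight components.

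\textbf{Step 1 (the map).} Identify $\k[G/U]=\k[G]^U$, taking $U$-invariants for the right regular action. I then define $\Phi\colon(M\otimes_\k\k[G]^U)^G\to M$ by $m\otimes f\mapsto f(1)m$, i.e. evaluation at the identity. To build the inverse, I view $M\otimes\k[G]$ as maps $G\to M$ and send $m\in M^U$ to the function $g\mapsto g^{-1}m$. More precisely, one applies $(\mathrm{Id}\otimes S)\circ\mu_M$ composed with the switch; this is exactly the isomorphism $\varphi_N$ from the proof of Lemma \ref{lem:G-invariant}, run in reverse. Because $m$ is $U$-fixed, the function $g\mapsto g^{-1}m$ is right $U$-invariant in $g$. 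It is also invariant under the diagonal $G$-action, since $(x\cdot F)(g)=xF(x^{-1}g)=x(g^{-1}x)m=g^{-1}m$.

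\textbf{Step 2 (untwisting).} The cleanest route is the isomorphism $\varphi_M$ of $B\times G$-modules (here $U\times G$-modules): $M\otimes\k[G]$ with diagonal $G$-action and right $U$-action is isomorphic to $M\otimes\k[G]$ with $G$ acting only on $\k[G]$ and $U$ acting diagonally. Taking $G$-invariants first and using $\k[G]^G=\k$ for the left regular action, I get $(M\otimes\k[G])^G\cong M$. This step uses flatness, or simply the freeness of $\k[G]$, to commute invariants past the tensor product. Taking $U$-invariants afterwards then gives $(M\otimes\k[G]^U)^G\cong M^U$. Invariants for commuting actions can be taken in either order, because both are kernels of the maps $\mu-1\otimes\mathrm{Id}$ and these commute. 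Since $\k[G]$ is $\k$-free, the identity $(M\otimes\k[G])^U=M\otimes\k[G]^U$ for the right action on the second factor holds, as the $U$-coaction only touches that factor.

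\textbf{Obstacle.} The main obstacle is bookkeeping: I need to check carefully that $\varphi_M$ intertwines the right $U$-action on $\k[G]$ with the diagonal action, and that the invariant functors commute with the tensor product over an arbitrary ring $\k$. Both should follow from the freeness of $\k[G]$ and the Hopf algebra identities, exactly as in Lemma \ref{lem:G-invariant}. Finally, naturality in $M$ is clear from the construction.
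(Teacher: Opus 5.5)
\textbf{A genuine gap, plus a sign slip.} The paper gives no argument for this lemma; it cites van der Kallen. Your untwisting route is a self-contained alternative, and once the sign slip below is fixed it proves the lemma for $\k$-flat $M$. For general $M$, though, the step you call bookkeeping carries the real content, and the reason you give for it is not valid.

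The identity $(M\otimes_\k\k[G])^U=M\otimes_\k\k[G]^U$ (with $U$ acting by right translation on the second factor only) does not follow from freeness of $\k[G]$. Invariants are a kernel, and $M\otimes_\k-$ preserves that kernel if $M$ is flat, or if the specific sequence $0\to\k[G]^U\to\k[G]\to\k[U]\otimes_\k\k[G]$ stays exact after every base change. Freeness of the terms alone does not give this. For example, let $\mathbb{G}_{a,\ZZ}$ act on $V=\ZZ e_1\oplus\ZZ e_2$ by $e_1\mapsto 1\otimes e_1$ and $e_2\mapsto 1\otimes e_2+pt\otimes e_1$, for a prime $p$. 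Then $V^{\mathbb{G}_a}=\ZZ e_1$, whereas $(V/pV)^{\mathbb{G}_a}=V/pV$, so $M=\ZZ/p$ breaks the commutation even though everything is free.

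You cannot assume flatness: the lemma is applied in Lemma \ref{lem:power-surjective}(2) to $M=A/J$. The same gap is hidden in your Step 1. Your candidate inverse lands a priori only in $(M\otimes\k[G])^U$, not in $M\otimes\k[G]^U$.

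\textbf{How to close it.} You need that $M\otimes_\k\k[G]^U\to(M\otimes_\k\k[G])^U$ is bijective for every $\k$-module $M$. Within the paper this follows by rerunning the proof of Proposition \ref{prop:U-invariant} with $M$-valued coefficients. An element of $M\otimes_\k{}_\k\bfO$ is a finitely supported $M$-valued function on $\dot{\bfB}$, and the induction on the support goes through verbatim. This gives $(M\otimes_\k{}_\k\bfO)^U=M\otimes_\k{}_\k\bfO_{G/U}$. The right-hand side injects into $M\otimes_\k{}_\k\bfO$ because ${}_\k\bfO_{G/U}$ is spanned by part of the basis $\dot{\bfB}^*$.

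\textbf{The $G$-invariants step.} Your claim $(M\otimes\k[G])^G=M\otimes 1$, for the left regular action on the second factor, does hold for every $M$, but not for the reason you give. Evaluating the invariance identity $F(x^{-1}g)=F(g)$ on the diagonal $x=g$ forces $F=F(1)\otimes 1$, with no flatness needed.

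\textbf{The sign slip in Step 1.} With $\k[G/U]$ the right $U$-invariants and $G$ acting by left translation, the invariant attached to $m\in M^U$ is $g\mapsto g\cdot m$, the image of the constant function $m$ under $\varphi_M$. Your $g\mapsto g^{-1}m$ satisfies $F(gu)=u^{-1}g^{-1}m$ and $xF(x^{-1}g)=xg^{-1}xm$. So it is neither right $U$-invariant nor diagonally invariant; your displayed computation drops a factor of $x$.
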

\begin{proof}
    See \cite[Proof of Proposition 5]{Kallen2014}.
\end{proof}
To state the next result, recall that the \emph{characteristic exponent} $p$ of a field $k$ is defined to be:
\begin{itemize}
    \item $p=\mathrm{char}(k)$ if $\mathrm{char}(k)>0$,
    \item $p=1$ if $\mathrm{char}(k)=0$.
\end{itemize}
\begin{lem}\label{lem:power-surjective}
    Let $\k$ be a commutative ring, $G$ be a reductive group over $\k$ and $U\subset G$ the unipotent radical of a Borel subgroup of $G$. Suppose $G$ acts rationally on a commutative $\k$-algebra $A$ by $k$-algebra automorphisms and $J\subset A$ is a $G$-stable ideal.
    \begin{enumerate}
        \item For any $x\in (A/J)^G$, there exists a positive integer $N$ such that $x^N$ lies in the image of $A^G\to (A/J)^G$. If moreover $\k$ is a field of characteristic exponent $p$, then $N$ can be chosen to be a power of $p$. 
        \item 
        (Similar statement for $U$-invariants) For any $x\in (A/J)^U$, there exists a positive integer $N$ such that $x^N$ lies in the image of $A^U\to (A/J)^U$. If moreover $\k$ is a field of characteristic exponent $p$, then $N$ can be chosen to be a power of $p$. 
    \end{enumerate}
\end{lem}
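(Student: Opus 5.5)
The plan is to take part (1) from the literature and to reduce part (2) to part (1) using the transfer principle, Lemma \ref{lem:transfer-principle}.

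For part (1), this is the property that $G$ is \emph{power reductive}. When $\k$ is a field, it is geometric reductivity: by Haboush's theorem (Seshadri over general bases), an invariant $x\in(A/J)^G$ has some $p$-power that lifts to $A^G$. I would present the argument as follows. Let $V\subset A/J$ be the finite-dimensional $G$-submodule generated by $x$; here $x$ spans a trivial line. Pull $V$ back to a finite-dimensional $G$-stable subspace $\tilde V\subset A$ surjecting onto a space containing $x$. Then apply geometric reductivity to the invariant linear form given by the coordinate of $x$, and use the symmetric power map $S^N(\tilde V)\to A$ to produce a lift of $x^N$ with $N=p^r$. Over an arbitrary commutative ring $\k$, I would cite Franjou--van der Kallen \cite{Kallen2010}, where reductive group schemes over any base are shown to be power reductive in exactly this form. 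If a self-contained reduction were wanted, the route would be to reduce to $\k$ noetherian by a limit argument and then to fields. That reduction is the genuinely hard step, so I would rely on the citation rather than reprove it.

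For part (2), put $A'\coloneqq A\otimes_\k\k[G/U]$ with the diagonal $G$-action and $J'\coloneqq J\otimes_\k\k[G/U]$. The algebra $\k[G/U]={}_\k\bfO_{G/U}$ is a free $\k$-module, with basis the dual canonical basis elements $(b^-1_\la)^*$ (Definition \ref{def:G/U} and Proposition \ref{prop:U-invariant}). Hence $J'\to A'$ is injective, $J'$ is a $G$-stable ideal of the commutative $\k$-algebra $A'$, and by right exactness of $\otimes$
\[A'/J'\cong (A/J)\otimes_\k\k[G/U].\]
Lemma \ref{lem:transfer-principle} then gives natural isomorphisms $A^U\cong (A')^G$ and $(A/J)^U\cong (A'/J')^G$.

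Next I would check two compatibilities. First, these isomorphisms commute with the maps induced by $A\to A/J$; this is naturality of the transfer isomorphism in the $G$-module $M$. Second, they are multiplicative when $M$ is an algebra. The isomorphism is given by $m\mapsto(gU\mapsto g\cdot m)$, i.e.\ $M^U\to(M\otimes\k[G])^{G\times U}$, and the coaction is an algebra map, so multiplicativity holds. With both compatibilities, take $x\in(A/J)^U$ and apply part (1) to the image of $x$ in $(A'/J')^G$. This yields $N$, a power of $p$ when $\k$ is a field, such that $x^N$ lifts to $(A')^G\cong A^U$, which is exactly part (2).

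I expect the main obstacle to be part (1) over an arbitrary base, which is deep and which I would import from \cite{Kallen2010}. The remaining care in part (2) is the flatness of $\k[G/U]$, needed so that $J'$ embeds in $A'$, and the multiplicativity of the transfer isomorphism.
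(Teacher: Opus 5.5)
Your proof is correct. For (2) it is the paper's argument. You tensor with $\k[G/U]$ and use flatness so that $J\otimes_\k\k[G/U]$ remains an ideal with quotient $(A/J)\otimes_\k\k[G/U]$. You then apply Lemma~\ref{lem:transfer-principle} to $A$ and to $A/J$ and conclude by (1). Your check that the transfer isomorphism is natural in $M$ and multiplicative is exactly what the paper's ``then we conclude by (1)'' leaves implicit. Your freeness argument via the elements $(b^-1_\la)^*$ covers the split groups the paper actually uses; the paper simply asserts flatness.

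For (1) over a general base you also rely on Franjou--van der Kallen, as the paper does. The genuine difference is how you make the exponent a power of $p$ over a field.
\begin{itemize}
\item You invoke geometric reductivity in its sharp form, with an invariant of $p$-power degree, and then run the classical argument through $S^{p^r}(\tilde V)\to A$.
\item The paper starts from the arbitrary-$N$ statement and adjoins a variable $Y$ with trivial action. It lifts $(x+Y)^N$ with $N=np^m$ and $p\nmid n$, then reads off the coefficient $nx^{p^m}$ of $Y^{(n-1)p^m}$ in $(x^{p^m}+Y^{p^m})^n$. Characteristic $0$ is handled separately by complete reducibility.
\end{itemize}

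The paper's route needs only the one general-base citation plus an elementary trick, and it works verbatim over any field. Your route needs more input. First, the stronger degree-$p^r$ form of Haboush's theorem; the $Y$-trick is one way to derive that form. Second, Seshadri's theorem or a faithfully flat descent from $\bar k$ when $k$ is not algebraically closed. Third, in characteristic $0$ the required $N=1$ is really linear reductivity rather than Haboush. In exchange, your route makes the field case independent of the deep general-base theorem.
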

\begin{proof}
    (1) For a general ring $\k$, this follows from \cite[Proposition 4]{Kallen2014} and \cite[Theorem 12]{Kallen2010}. Next we assume that $\k=k$ is a field and prove the stronger statement. If $\mathrm{char}(k)=0$ this follows since taking $G$-invariants is an exact functor by complete reducibility. If $p=\mathrm{char}(k)>0$ the statement follows from the argument of \cite[proof of Proposition 41]{Kallen2010}, which we review for the convenience of the reader. Let $Y$ be an independent variable and let $G$ act on the polynomial ring $k[Y]$ trivially. By the case of general base ring, the map $A[Y]^G\to(A/J)[Y]^G$ is power-surjective. Take any element $x\in(A/J)^G$, then there exists $N>0$ such that $(x+Y)^N$ lifts to an element in $A[Y]^G=A^G[Y]$. Write $N=np^m$ where $n$ is prime to $p$ and $m\ge0$. Then we have $(x+Y)^N=(x^{p^m}+Y^{p^m})^n$ and the coefficient $nx^{p^m}$ of $Y^{(n-1)p^m}$ lifts to $A^G$. Since $n$ is invertible in $k$, we get that $x^{p^m}$ lifts to $A^G$.\par 
    (2) Let $\tilde{A}\coloneqq A\otimes_{\k}\k[G/U]$ and equip it with the diagonal $G$-action. Then $\tilde{J}\coloneqq J\otimes_\k\k[G/U]$ is a $G$-stable ideal of $\tilde{A}$. Since $\k[G/U]$ is a flat $\k$-algebra, we have a canonical isomorphism $\tilde{A}/\tilde{J}\simeq(A/J)\otimes_\k\k[G/U]$. After taking $G$-invariants and applying Lemma \ref{lem:transfer-principle} we get isomorphisms
    \[\tilde{A}^G\simeq A^U,\qquad (\tilde{A}/\tilde{J})^G\simeq((A/J)\otimes_\k\k[G/U])^G\simeq(A/J)^U.\]
    Then we conclude by (1).
\end{proof}

\subsection{Quantized coordinate ring of the basic affine space}
Fix a based root datum $\Psi$ and the associated split reductive group scheme $(G,B,T)$ over $\ZZ$ as in \S\ref{sec:notation-reductive-group}. Recall that the quotient $G/U$ is quasi-affine and its affine closure is the so-called \emph{basic affine space}:
\[\overline{G/U}^{\mathrm{aff}}\coloneqq\Spec(\ZZ[G]^U).\]
In this subsection we study the quantization of the basic affine space and prove some related technical results to be used later. Unlike the previous subsections, we now let $R$ be a general commutative $\A$-algebra (so the image of $v$ in $R$ is not necessarily $1$).\par
The following result describes the quantization of $\ZZ[G]^U$ in terms of the dual canonical basis, and justifies the notation in Definition \ref{def:G/U}.
\begin{prop}\label{prop:U-invariant}
    We have equalities of $R$-submodules of ${}_R\bfO$:
    \[H^0(\rho_r({}_R\hat{\bfU}^+),{}_R\bfO)={}_R\bfO_{G/U},\quad H^0(\rho_l({}_R\hat{\bfU}^-),{}_R\bfO)={}_R\bfO_{U^-\backslash G}.\]
\end{prop}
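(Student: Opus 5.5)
The plan is to reduce the first equality to the statement that an element $f\in{}_R\bfO$ is right $\hat{\bfU}^+$-invariant if and only if it vanishes on the left ideal of ${}_R\dot{\bfU}$ generated by the elements $b^+1_\zeta$ with $b\in\bfB\setminus\{1\}$. The second equality then follows by applying the involution $\omega\sigma$, which exchanges $\dot{\bfB}_{0,\la}$ and $\sigma(\dot{\bfB}_{\la,0})$ and intertwines $\rho_r$ on $\hat{\bfU}^+$ with $\rho_l$ on $\hat{\bfU}^-$ up to the antipode; the antipode only changes signs and powers of $v$, so it does not affect invariants.

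First I will make the augmentation ideal of ${}_R\hat{\bfU}^+$ explicit. An element of ${}_R\hat{\bfU}^+$ has the form $\sum_b\xi_b\sum_\la b^+1_\la$, and it lies in the kernel of the counit exactly when $\xi_1=0$. Using $\langle\rho_r(\xi)f,\xi'\rangle=\langle f,\xi'\xi\rangle$, and the fact that $f$ only sees finitely many basis elements, invariance becomes the condition $f(u\,b^+1_\zeta)=0$ for all $u\in{}_R\dot{\bfU}$, all $\zeta$, and all $b\neq1$.

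The inclusion $\supset$ comes from matrix coefficients. For $\la\in X^+$ and $b\in\bfB(\la)$, I claim $(b^-1_\la)^*(u)$ equals the coefficient of $b\eta_\la$ in $u\eta_\la$, expanded in the basis $\{b'\eta_\la\mid b'\in\bfB(\la)\}$ of ${}_R\La_\la$. This uses the description of $\dot{\bfB}_{0,\la}$ recalled at the end of \S2 together with \cite[1.9]{LusztigGroupScheme}. The elements of $\dot{\bfB}_{0,\la}$ map to this basis under $a\mapsto a\eta_\la$. I expect the remaining elements of $\dot{\bfB}$ to annihilate $\eta_\la=\xi_0\otimes\eta_\la$, and this is the point to check against \emph{loc. cit.}; if needed I would instead use the diagram in the proof of Lemma \ref{lem:h_c-identity}. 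Granting this, $u\,b'^+1_\zeta\,\eta_\la=0$ for every $b'\neq1$, because $\eta_\la$ is a highest weight vector. Hence each $(b^-1_\la)^*$ is invariant.

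The inclusion $\subset$ is where I expect the real work, and I would argue by induction along the filtration of Lemma \ref{lem:grh} and Lemma \ref{lem:from Xplus to NN}. Since each ${}_R\bfO(X^+_{\le\la})$ and ${}_R\bfO(X^+_{<\la})$ is a sub-coalgebra, $\rho_r$ preserves the filtration and descends to $\gr({}_R\bfO)$. Let $f$ be invariant with $h$-leading part in degrees $\la$ of maximal $h(\la)$. Using $\varphi_P$ from Proposition \ref{prop:grO-decription} (with $P=X^+$), I would show that under $\varphi$ the induced right action of $b^+$ on $\gr$ acts only on the second tensor factor ${}_R\bfO_{U^-\backslash G}(\la)$. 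That factor is spanned by the $(1_\la\sigma(c)^+)^*$, which are matrix coefficients of ${}^\omega\La_\la$ at a lowest weight vector, and there the invariants of the augmentation ideal are only the multiples of $1_\la^*$. The computation is the same as in the previous paragraph with the roles reversed. Through $\beta_\la$, this means the leading part of $f$ is a combination of $\overline{\beta_\la(c_1,1)^*}=\overline{(c_1^-1_\la)^*}$. Subtracting the corresponding element of ${}_R\bfO_{G/U}$, which is invariant by the previous paragraph, lowers $h$, and induction finishes the argument.

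The main obstacle is verifying that the descended action on $\gr$ really is the right regular action on the second factor under $\varphi$. For this I would use the identity \eqref{eq:structure constants} with one factor replaced by $b^+1_\zeta$, together with the cell bounds (iii)–(vii) from the proof of Proposition \ref{prop:grO-decription}.
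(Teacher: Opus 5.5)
Your outline is correct, but it argues differently from the paper in both directions.

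\textbf{Inclusion of ${}_R\bfO_{G/U}$ into the invariants.} You read $(b^-1_\la)^*$ as the matrix coefficient ``coefficient of $b\eta_\la$ in $u\eta_\la$''. This is valid, because every element of $\dot{\bfB}\setminus\dot{\bfB}_{0,\la}$ kills $\eta_\la$. That fact is the case $\dot{\bfB}_{0,\la}$ of the description of the sets $\dot{\bfB}_{\la,\la'}$ in \cite[\S1.9]{LusztigGroupScheme}, and it is already used tacitly in the proof of Lemma \ref{lem:h_c-identity}.

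The paper treats the mirror statement for $\rho_l({}_R\hat{\bfU}^-)$ and uses only cells. If $b'\neq1$ and $b'^-1_\zeta x$ has the relevant weight, then $b'^-1_\zeta\in\dot{\bfB}[\mu]$ with $\la<\la+|b'|=\zeta\le\mu$. Hence $b'^-1_\zeta x\in\dot{\bfU}[\ge\mu]$, which is killed by $(1_\la\sigma(b)^+)^*$.

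\textbf{Reverse inclusion.} The paper never passes to $\gr$, $h$ or $\varphi_P$. It takes $\la$ maximal in the support of an invariant $f$ and observes that $f(\beta_\la(c_1,c_2))=f(c_1^-1_\la\sigma(c_2)^+)$ by maximality. In your right-handed setting, $c_1^-1_\la\sigma(c_2)^+=c_1^-1_\la\cdot\sigma(c_2)^+1_{\la-|c_2|}$ is killed by invariance when $c_2\neq1$. So subtracting $\sum_{c_1}f(c_1^-1_\la)(c_1^-1_\la)^*$ shrinks the support, and induction finishes.

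Your route reaches the same conclusion with considerably more machinery. What it buys is a structural statement: each graded piece is, as a right module, the span of the $(c^-1_\la)^*$ tensored with a dual Weyl module. That is more than is needed here.

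\textbf{The step you flag as the main obstacle.} It is true, but \eqref{eq:structure constants} is the wrong tool for it. That identity encodes multiplicativity of $\hat{\Delta}$, so it involves the product of $\bfO$, which plays no role in the regular action. What you need is
$f(ab'u)=\sum_{b\in\sigma(\dot{\bfB}_{\la,0})}b^*(b'u)\,f(ab)$
for $f\in{}_R\bfO_{\le\la}$, $a\in\dot{\bfB}_{0,\la}$, $b'\in\sigma(\dot{\bfB}_{\la,0})$ and $u\in{}_R\dot{\bfU}$.

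This follows from associativity $(ab')u=a(b'u)$ in $\dot{\bfU}$. Expand $b'u\in1_\la\dot{\bfU}\cap\dot{\bfU}[\ge\la]$ in $\dot{\bfB}$.
\begin{itemize}
\item Terms $d$ in cells $\mu>\la$ give $ad\in\dot{\bfU}[>\la]$, since $\dot{\bfU}[\ge\mu]$ is a two-sided ideal; $f$ kills these.
\item The remaining terms lie in $\dot{\bfB}[\la]\cap1_\la\dot{\bfU}$. Since $\beta_\la(c_1,c_2)$ has left weight $\la-|c_1|$ and $\beta_\la(1,c)=1_\la\sigma(c)^+$, this set is $\sigma(\dot{\bfB}_{\la,0})$.
\end{itemize}

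Once this is written out, the detour through $\gr$ is superfluous. Evaluating invariance directly on $c_1^-1_\la\sigma(c_2)^+$ at a maximal $\la$, as above, already finishes the proof.
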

\begin{proof}
    We only need to prove the equality for ${}_R\hat{\bfU}^-$ (the other one will follow by applying the involution $\omega$). Let $\la\in X^+$ and $b\in\bfB(\la)$ so that $1_\la\sigma(b)^+\in\dot{\bfB}[\la]$. We claim that $(1_\la\sigma(b)^+)^*$ is left ${}_R\hat\bfU^-$-invariant. For this we may assume $R=\A$ and it suffices to show that for any $b'\in\bfB\backslash\{1\}$ and $x\in{}_{\zeta}\dot{\bfB}_{\chi}$ with $\zeta,\chi\in X$ (see Definition \ref{def:B-dot}), we have 
    \[(1_\la\sigma(b)^+)^*(b'^-1_{\zeta}x)=0.\] 
    Since $1_\la\sigma(b)^+\in 1_\la\dot{\bfU}1_{\la-\nu}$ where $\nu=|b|$, the identity above holds if $\zeta-|b'|\ne\la$. Now assume that $\zeta-|b'|=\la$. According to \cite[Proposition 25.2.6]{Lusztig-IntroQuantumGroup} we have $b'^-1_\zeta\in\dot{\bfB}$. Let $\mu\in X^+$ be the dominant weight such that $b'^-1_\zeta\in\dot{\bfB}[\mu]$. Then $b'^-1_\zeta$ does not act by zero on the Weyl module $\La_\mu$ and we have $\zeta\le\mu$. So we get inequalities
    \[\la\le\la+|b'|=\zeta\le\mu\]
    and since $b'\ne1$ we have strict inequality $\la<\mu$. Since $\dot{\bfU}[\ge\mu]$ is a two-sided ideal of $\dot{\bfU}$ we have $b'^-1_\zeta x\in\dot{\bfU}[\ge\mu]$ and hence $(1_\la\sigma(b)^+)^*(b'^-1_{\zeta}x)=0$. This proves the claim and consequently we verified the inclusion 
    \[{}_R\bfO_{U^-\backslash G}\subset H^0(\rho_l({}_R\hat{\bfU}^-),{}_R\bfO).\]
    Now take any $\rho_l({}_R\hat{\bfU}^-)$-invariant element $\varphi\in{}_R\bfO$. Let $\Supp(\varphi)=\{\la_1,\dotsc,\la_n\}$ be the smallest subset of $X^+$ such that $\varphi(c)=0$ for any $c\in\dot{\bfB}\setminus\bigcup_{i=1}^n\dot{\bfB}[\la_i]$. After reordering, we may and do assume that $\la_i\nleq\la_j$ for any $i>j$. 
    We will show by induction on $n$ that $\varphi\in{}_R\bfO_{U^-\backslash G}$. According to Proposition \ref{prop:parametrize-B[la]}, any element of $\dot{\bfB}[\la_n]$ can be written as $\beta_{\la_n}(b,b')=b^-1_{\la_n}\sigma(b')^++\alpha$ where $\alpha\in{}_R\dot{\bfU}[>\la_n]$ and $b,b'\in \bfB(\la_n)$. By assumption $\varphi(\alpha)=0$. Since $\varphi$ is left ${}_R\hat{\bfU}^-$-invariant, we have $\varphi(b^-1_{\la_n}\sigma(b')^+)=0$ when $b\ne1$. 
    Let 
    \[\varphi'\coloneqq\varphi-\sum_{b'\in\bfB(\la_n)}\varphi(1_{\la_n}\sigma(b')^+)(1_{\la_n}\sigma(b')^+)^*.\] 
    Then we have $\Supp(\varphi')=\Supp(\varphi)\setminus\{\la_n\}$
    and by what we have already shown $\varphi'$ is $\rho_l({}_R\hat{\bfU}^-)$-invariant. Therefore by induction hypothesis we get $\varphi'\in{}_R\bfO_{U^-\backslash G}$ and the same holds for $\varphi$.
\end{proof}

\begin{cor}\label{cor:U-inv-O(P)}
    Let $P\subset X^+$ be a subset that is downward closed. Then ${}_R\bfO(P)$ is an ${}_R\hat{\bfU}$-submodule of ${}_R\bfO$ under both left and right regular representations, and we have 
    \[H^0(\rho_r({}_R\hat{\bfU}^+)\times\rho_l({}_R\hat{\bfU}^-),{}_R\bfO(P))=\bigoplus_{\la\in P}R\cdot1_\la^*.\]
    Furthermore if $P$ is a submonoid of $X^+$, then this space is a subalgebra isomorphic to the monoid algebra $R[P]$.
\end{cor}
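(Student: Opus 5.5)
The plan has three steps: show that ${}_R\bfO(P)$ is stable under both regular representations, compute the joint invariants using Proposition \ref{prop:U-invariant}, and read off the algebra structure from Lemma \ref{lem:filtration-O-multiplicative}.

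\emph{Stability.} By Proposition \ref{prop:downward-closed}, ${}_R\dot{\bfU}[X^+\setminus P]$ is a two-sided ideal of ${}_R\dot{\bfU}$. Its orthogonal complement in ${}_R\bfO$ is exactly ${}_R\bfO(P)$. The pairing formulas $\langle\rho_l(\xi)\varphi,\xi'\rangle=\langle\varphi,S(\xi)\xi'\rangle$ and $\langle\rho_r(\xi)\varphi,\xi'\rangle=\langle\varphi,\xi'\xi\rangle$ then show that ${}_R\bfO(P)$ is stable under $\rho_l$ and $\rho_r$ of ${}_R\dot{\bfU}$. To pass to ${}_R\hat{\bfU}$, I would use continuity: for fixed $\varphi$, the action of $\xi\in{}_R\hat{\bfU}$ on $\varphi$ only involves finitely many terms of $\xi$. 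I also need $S$ to preserve the ideal; this should follow from $S$ differing from $\sigma$ by signs and powers of $v$, together with $\sigma(\dot{\bfB}[\la])=\dot{\bfB}[-w_0\la]$ and the fact that $-w_0$ preserves the dominance order. Alternatively, I could realize $\rho_l$ through the coproduct of ${}_R\bfO(P)$, which is a subcoalgebra by Proposition \ref{prop:downward-closed}.

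\emph{Invariants.} An element invariant under both actions lies in
\[{}_R\bfO_{G/U}\cap{}_R\bfO_{U^-\backslash G}\cap{}_R\bfO(P)\]
by Proposition \ref{prop:U-invariant}, and this intersection is spanned by the common basis vectors. So I would determine which $b\in\dot{\bfB}$ lie in both $\{b^-1_\la\mid b\in\bfB(\la)\}$ and $\{1_\mu\sigma(b')^+\mid b'\in\bfB(\mu)\}$. The first set consists of elements of $1_{\la-|b|}\dot{\bfU}1_\la$ and the second of elements of $1_\mu\dot{\bfU}1_{\mu-|b'|}$. An element in both sets is then a canonical basis element of $\dot{\bfU}^{\le0}$ and of $\dot{\bfU}^{\ge0}$, so by uniqueness in the parametrization $\dot{\bfB}\leftrightarrow\bfB\times\bfB\times X$ (Definition \ref{def:B-dot}) it must be some $1_\la$. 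Thus $b=b'=1$ and $\la=\mu$. Restricting to ${}_R\bfO(P)$, and using that $1_\la\in\dot{\bfB}[\la]$ for dominant $\la$ (Corollary \ref{cor:B[la]-W-la}), leaves exactly $\bigoplus_{\la\in P}R\,1_\la^*$.

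The reverse inclusion comes from the same proposition: each $1_\la^*$ with $\la\in X^+$ lies in both spaces. I still need to check that $H^0$ for the product action is the intersection of the two $H^0$'s. This holds because the augmentation ideal of the product is generated by the two augmentation ideals.

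\emph{Algebra structure.} If $P$ is a submonoid, Lemma \ref{lem:filtration-O-multiplicative}(ii) gives $1_\la^*1_\mu^*=1_{\la+\mu}^*$, and $1_0^*$ is the unit. So $e^\la\mapsto1_\la^*$ is an isomorphism $R[P]\to\bigoplus_{\la\in P}R\,1_\la^*$.

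I expect the main obstacle to be the stability of ${}_R\bfO(P)$ under ${}_R\hat{\bfU}$, namely the continuity and antipode bookkeeping. The identification of the intersection of bases looks formal.
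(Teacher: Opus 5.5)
Your proposal is correct and follows essentially the paper's route: stability of ${}_R\bfO(P)$ comes from Proposition \ref{prop:downward-closed}, the invariants are read off from Proposition \ref{prop:U-invariant}, and the algebra structure comes from Lemma \ref{lem:filtration-O-multiplicative}. (The paper phrases stability via the subcoalgebra property and the coaction maps $\mu_l=(S_\bfO\otimes\mathrm{Id})\circ\Delta_\bfO$, $\mu_r={}^t\Delta_\bfO$, which is the dual form of your two-sided-ideal argument.)

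One correction: you do not need $S$ to preserve ${}_R\dot{\bfU}[X^+\setminus P]$, because in $\langle\varphi,S(\xi)\xi'\rangle$ the element $\xi'$ already lies in this left ideal; dually, $S_\bfO$ only acts on the first tensor factor of $\Delta_\bfO(\varphi)$. That claim is in fact false in general. Since $S$ agrees with $\sigma$ up to units, it maps this ideal onto ${}_R\dot{\bfU}[X^+\setminus(-w_0P)]$, which differs from it whenever $-w_0P\neq P$, for example for the polynomial weights of $GL_2$. Conceptually, inversion does not preserve the coordinate ring of a monoid.
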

\begin{proof}
    By Proposition \ref{prop:downward-closed}, ${}_R\bfO(P)$ is a sub-coalgebra of ${}_R\bfO$. From this one deduces that $\mu_l=(S_\bfO\otimes\mathrm{Id})\circ\Delta_\bfO$ and $\mu_r={}^t\Delta_\bfO$ restrict to homomorphisms
    \[\mu_l,\mu_r\colon{}_R\bfO(P)\to{}_R\bfO\otimes_R{}_R\bfO(P)\]
    which means that ${}_R\bfO(P)$ is an ${}_R\hat{\bfU}$-submodule under both left and right regular representations. The description of invariants follows from Proposition \ref{prop:U-invariant}. The last statement follows from Lemma \ref{lem:filtration-O-multiplicative}.
\end{proof}
We end with the following technical result which will be used for proving the main theorem of \cite{CJ2}.
\begin{prop}\label{prop:nilpotent-deformation}
    Let $R$ be a commutative $\A$-algebra and let $I\subset R$ be a nilpotent ideal. Let $Q\subset P\subset X^+$ be two subsets such that ${}_R\bfO(Q)$ and ${}_R\bfO(P)$ are both $\rho_l({}_R\hat{\bfU})\times \rho_r({}_R\hat{\bfU})$-submodules of ${}_R\bfO$. Let $N\subset{}_R\bfO(P)$ be a $\rho_l({}_R\hat{\bfU})\times \rho_r({}_R\hat{\bfU})$-submodule satisfying the following conditions:
    \begin{enumerate}
        \item[(i)] ${}_R\bfO(P)/N$ is flat as $R$-module;
        \item[(ii)] $N\subset{}_R\bfO(Q)+I\cdot{}_R\bfO(P)$.
    \end{enumerate}
    Then we have $N\subset{}_R\bfO(Q)$. If moreover the following condition is also satisfied:
    \begin{enumerate}
        \item [(iii)] $N\otimes_R R/I={}_{R/I}\bfO(Q)$ as subsets of ${}_{R/I}\bfO(P)$,
    \end{enumerate}
    then we have $N={}_R\bfO(Q)$.
\end{prop}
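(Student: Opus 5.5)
The plan is to pass to the quotient bimodule $M\coloneqq{}_R\bfO(P)/{}_R\bfO(Q)$ and show that the image $\bar N$ of $N$ in $M$ vanishes. By the definition of ${}_R\bfO(P)$, the module $M$ is free over $R$, with basis the images of $\dot{\bfB}[P\setminus Q]^*$. Since ${}_R\bfO(Q)$ is a sub-bimodule, $M$ carries an induced $\rho_l({}_R\hat{\bfU})\times\rho_r({}_R\hat{\bfU})$-action. Condition (ii) says exactly that $\bar N\subset I\cdot M$. Because $I$ is nilpotent, it suffices to prove the self-improvement statement $\bar N\subset I\bar N$: iterating it gives $\bar N\subset I^k\bar N=0$ for $k\gg0$, that is, $N\subset{}_R\bfO(Q)$.

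To obtain $\bar N\subset I\bar N$, I would first use flatness (i). Since ${}_R\bfO(P)/N$ is flat, $\mathrm{Tor}_1^R({}_R\bfO(P)/N,R/I)=0$, so $N\cap I\cdot{}_R\bfO(P)=I\cdot N$. Next take $n\in N$. By (ii) we can write $n=q+i$ with $q\in{}_R\bfO(Q)$ and $i\in I\cdot{}_R\bfO(P)$, and we want $i\in{}_R\bfO(Q)+IN$. This is where the bimodule structure must enter. I would use the fact, from the proof of Lemma~\ref{lem:filtration-O-multiplicative} and \cite[29.1.6]{Lusztig-IntroQuantumGroup}, that for each $\la$ the image of $\dot{\bfB}[\la]$ in $\mathrm{End}(\La_\la)$ is a basis, so the bimodule action on the $\la$-component of $M$ is that of a full matrix algebra. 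Take a weight $\la\in P\setminus Q$ that is maximal for $\le$ in the (finite) support of $\bar n$. Acting on $n$ on both sides by suitable elements of ${}_R\dot{\bfU}$, for instance $1_\mu$'s together with canonical basis elements of $\dot{\bfB}[\la]$, should produce elements of $N$ whose images in $M$ isolate a single coefficient of $\bar n$ along $\dot{\bfB}[\la]^*$ times the element $\overline{1_\la^*}$. Higher cells would be killed, because $\dot{\bfU}[>\la]$ annihilates the relevant functionals, and lower cells would be killed, because $\la$ is maximal in the support.

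If that extraction works, each such coefficient $a$ satisfies $a\cdot 1_\la^*\in N+{}_R\bfO(Q)$ with $a\in I$. One then reverses the action to rebuild the $\la$-part of $\bar n$ from these extracted elements. Combined with $N\cap I\cdot{}_R\bfO(P)=IN$, this should show that the $\la$-component of $\bar n$ lies in $I\bar N$, and a descending induction on the support then gives $\bar N\subset I\bar N$.

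I expect the extraction step to be the main obstacle: showing that the bimodule generated by an element of $N$ contains, modulo ${}_R\bfO(Q)$ and lower cells, the individual coefficients of its top cell. Over a general $R$ there is no cell idempotent in $\hat{\bfU}$, so this must come from the explicit pairing with $\dot{\bfB}[\la]$ and the triangularity of Proposition~\ref{prop:parametrize-B[la]}. If this fails, I would fall back on Proposition~\ref{prop:U-invariant}: first reduce to the bi-invariants of Corollary~\ref{cor:U-inv-O(P)}, which are spanned by the $1_\la^*$, and then regenerate.

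For the second assertion, assume (iii). Once $N\subset{}_R\bfO(Q)$ is known, note that ${}_R\bfO(Q)$ is a direct summand of ${}_R\bfO(P)$. Hence (iii) together with the injectivity of $N/IN\to{}_{R/I}\bfO(P)$ (from flatness) gives ${}_R\bfO(Q)=N+I\cdot{}_R\bfO(Q)$. Iterating this equality and using nilpotency of $I$ yields $N={}_R\bfO(Q)$.
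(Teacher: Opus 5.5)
The step that fails is ``reverse the action to rebuild the $\la$-part of $\bar n$ from these extracted elements.'' Over $\A$, the sub-bimodule of ${}_R\bfO_{\le\la}/{}_R\bfO_{<\la}$ generated by the image of $1_\la^*$ is not the whole graded piece. Its values on the $\la$-cell are governed by the contravariant (Shapovalov) form on the Weyl module ${}_\A\La_\la$, and the image of that form in the dual is in general a proper sublattice. Already for $\mathrm{SL}_2$ with $\langle\alpha_i^\vee,\la\rangle=2$, every element of $1_\la({}_\A\dot{\bfU})1_{\la-\alpha_i}$ sends $\theta_i^-\eta_\la$ to a multiple of $[2]\eta_\la=(v+v^{-1})\eta_\la$. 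So no bimodule translate of $1_\la^*$ takes the value $1$ on $\beta_\la(\theta_i,1)$. Hence knowing $a_b\,1_\la^*\in IN$ for each top coefficient $a_b$ does not let you write the $\la$-part of $\bar n$ as an element of $I\bar N$. Neither this step nor the descending induction built on it goes through. The fallback via Proposition~\ref{prop:U-invariant} runs into the same regeneration problem.

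The missing observation makes regeneration unnecessary. Bimodules are sub-coalgebras, so $P$ and $Q$ are downward closed by Proposition~\ref{prop:downward-closed}. Moreover, any cell $\dot{\bfB}[\mu]$ meeting $1_\la\dot{\bfU}1_\la$ has $\mu\ge\la$. Hence ${}_R\bfO(Q)_{(-\la,\la)}=0$ for every $\la\in P\setminus Q$, and (ii) becomes $N_{(-\la,\la)}\subset N\cap I\cdot{}_R\bfO(P)=IN$. Projecting to this weight space gives $N_{(-\la,\la)}\subset I\,N_{(-\la,\la)}$, so $N_{(-\la,\la)}=0$ by nilpotence of $I$. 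The paper reaches the same conclusion via \cite[Lemma 051H]{Stacks}.

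Your extraction then finishes the proof directly. Let $\la$ be maximal in the support of $\bar n$ and let $b=\beta_\la(b_1,b_2)$ with $n(b)\neq0$. Then $\rho_l(S^{-1}(b_1^-1_\la))\rho_r(1_\la\sigma(b_2)^+)\,n$ lies in $N_{(-\la,\la)}$ and equals $n(b)\,1_\la^*$ on the nose, not merely modulo ${}_R\bfO(Q)$. This uses two facts: no cell above $\la$ lies in $Q$ (downward closedness again), and cells above $\la$ in $P\setminus Q$ are excluded by maximality. Together these give that $n$ vanishes on $\dot{\bfU}[>\la]$, which you need and did not justify. This forces $n(b)=0$, a contradiction, and that is exactly the paper's argument.

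Your treatment of (iii) is correct.
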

\begin{proof}
    Since ${}_R\bfO(P)$ and ${}_R\bfO(Q)$ are preserved by both left and right regular actions of ${}_R\hat{\bfU}$, they are sub-coalgebras of ${}_R\bfO$. Therefore $P$ and $Q$ are both downward closed subsets of $X^+$ by Proposition \ref{prop:downward-closed}. For any $\la\in P\setminus Q$, the weight space ${}_R\bfO(Q)_{(-\la,\la)}=0$ since otherwise there would exist $\mu\in Q$ with $\dot{\bfB}[\mu]\cap 1_\la{}_R\dot{\bfU}1_\la\ne\varnothing$ and so $\la\le\mu$, contradicting the fact that $Q$ is downward closed. Hence by condition (ii) we have $N_{(-\la,\la)}\subset I\cdot{}_R\bfO(P)$. Since ${}_R\bfO(P)/N$ is $R$-flat, its direct summand $M\coloneqq{}_R\bfO(P)_{(-\la,\la)}/N_{(-\la,\la)}$ is also $R$-flat. In particular we have $\mathrm{Tor}_1^R(M,R/I)=0$. Let $\Omega\subset\dot{\bfB}^*$ be the subset that forms an $R$-basis for ${}_R\bfO(P)_{(-\la,\la)}$. Then the images of elements of $\Omega$ in ${}_{R/I}\bfO$ form a basis of the free $R/I$-module $M/IM={}_{R/I}\bfO(P)_{(-\la,\la)}$. From \cite[\href{https://stacks.math.columbia.edu/tag/051H}{Lemma 051H}]{Stacks}, we deduce that $M$ is a free $R$-module with basis $\Omega$ and so we must have $N_{(-\la,\la)}=0$.\par 
    Now take any $\la_1,\la_2\in X$ and suppose there exists an element $0\ne\varphi\in N_{(-\la_1,\la_2)}\setminus{}_R\bfO(Q)$. Let $\mu\in P\setminus Q$ be a maximal element such that there exists $b\in\dot{\bfB}[\mu]\cap1_{\la_1}{}_R\dot{\bfU}1_{\la_2}$ with $\varphi(b)\ne0$. According to Proposition \ref{prop:parametrize-B[la]} we can write $b=\beta_\mu(b_1,b_2)$ with $b_1,b_2\in\bfB(\mu)$ and $\la_1+|b_1|=\la_2+|b_2|=\mu$. Let $\tilde{\varphi}\coloneqq\rho_l(S^{-1}(b_1^-1_{\mu}))\rho_r(1_{\mu}\sigma(b_2)^+)\cdot\varphi$. Then $\tilde\varphi\in N_{(-\mu,\mu)}$ and $\tilde{\varphi}(c)=\varphi(b_1^-1_{\mu}c1_{\mu}\sigma(b_2)^+)$ for all $c\in\dot{\bfB}$. Since $b-b_1^-1_\mu\sigma(b_2)^+\in{}_R\dot{\bfU}[>\mu]$, we have $\varphi(b-b_1^-1_\mu\sigma(b_2)^+)=0$. (Indeed, Suppose there exists $\nu>\mu$ with and element $b'\in\dot{\bfB}[\nu]$ such that $\varphi(b')\ne0$. Since $\varphi\in N\subset{}_R\bfO(P)$ we have $\nu\in P$ and if $\nu\in Q$ then $\mu\in Q$ by the downward-closedness of $Q$, contradicting the assumption that $\mu\notin Q$. So we would have $\nu\in P\setminus Q$ but this contradicts with the maximality of $\mu$.) Thus we get that $\tilde{\varphi}(1_\mu)\ne0$. On the other hand we have $N_{(-\mu,\mu)}=0$ by the previous paragraph. This is a contradiction and we conclude that $N_{(-\la_1,\la_2)}\subset{}_R\bfO(Q)_{(-\la_1,\la_2)}$ for all $\la_1,\la_2\in X$. Then we deduce that $N\subset{}_R\bfO(Q)$.\par 
    Finally, suppose that (iii) also holds. In the short exact sequence of $R$-modules
    \[0\to{}_R\bfO(Q)/N\to{}_R\bfO(P)/N\to{}_R\bfO(P)/{}_R\bfO(Q)\to0\]
    the last term is clearly $R$-flat, and the middle term is $R$-flat by (i). So we deduce that ${}_R\bfO(Q)/N$ is also $R$-flat. Thus  condition (iii) and the short exact sequence 
    \[0\to N\to{}_R\bfO(Q)\to {}_R\bfO(Q)/N\to0\]
    imply that $({}_R\bfO(Q)/N)\otimes_RR/I=0$ and hence ${}_R\bfO(Q)=N+I\cdot{}_R\bfO(Q)$. We iterate this identity to get
    \begin{align*}
        {}_R\bfO(Q)=N+I\cdot{}_R\bfO(Q)&=N+I\cdot(N+I\cdot{}_R\bfO(Q))\\ &=N+I^2\cdot{}_R\bfO(Q)=\dotsm=N+I^m\cdot{}_R\bfO(Q)
    \end{align*}
    which holds for any integer $m\ge1$. Since $I$ is a nilpotent ideal, we conclude that $N={}_R\bfO(Q)$.
\end{proof}

\end{document}